\documentclass[11pt,twoside]{article}
\usepackage{amsmath}
\usepackage{amssymb}
\usepackage{amsthm}
\usepackage{mathrsfs}
\usepackage{simplewick}
\usepackage{times}
\usepackage{color}
\usepackage{hep}
\usepackage{leftidx}
\usepackage{graphicx}
\usepackage{float}
\usepackage{enumerate}
\usepackage{titletoc}
\usepackage{epstopdf}
\usepackage{ulem}
\usepackage{appendix}
\usepackage{enumitem}
\usepackage{hyperref}
\usepackage{booktabs}

\allowdisplaybreaks

\def\cQ{\mathcal Q}

\def\cX{\mathcal X}
\def\cY{\mathcal Y}

\def\na{\nabla}
\def\N{\mathop{\mathbb N\kern 0pt}\nolimits}
\def\Z{\mathop{\mathbb Z\kern 0pt}\nolimits}
\def\Q{\mathop{\mathbb Q\kern 0pt}\nolimits}
\def\R{\mathop{\mathbb R\kern 0pt}\nolimits}
\def\T{\mathop{\mathbb T\kern 0pt}\nolimits}
\def\C{\mathop{\mathbb C\kern 0pt}\nolimits}

\def\ds{\displaystyle}
\def\f{\frac}

\def\p{\partial}

\def\ve{\varepsilon}

\def\ls{\lesssim}

\newcommand{\w}[1]{\langle {#1} \rangle}

\newcommand{\pk}{\dot{P}_k}
\newcommand{\Pk}{P_k}

\definecolor{mygreen}{RGB}{50,145,80}

\newcommand{\Ltw}[1]{\|#1\|_{L^2(w)}}
\newcommand{\Atwo}[1]{[#1]_{A_2}}
\newcommand{\Rj}[1]{R_{#1}}

\DeclareMathOperator{\Div}{div}

\DeclareMathOperator{\supp}{supp}
\DeclareMathOperator{\rot}{rot}
\newcommand{\Sph}{\mathbb{S}^2}

\hypersetup{colorlinks=true,linkcolor=blue,citecolor=red,urlcolor=cyan}

\theoremstyle{plain}
\newtheorem{theorem}{Theorem}[section]

\newtheorem{lemma}[theorem]{Lemma}

\theoremstyle{definition}
\newtheorem{definition}[theorem]{Definition}
\theoremstyle{assumption}

\newtheorem{remark}{Remark}[section]

\numberwithin{equation}{section}

\title{Global classical solutions to 3D  irrotational compressible Euler equations of Chaplygin gases with weakly decaying initial data}

\begin{document}
	\author{
		Gao Mu$^{1}$, \quad Yin Huicheng$^{2,}$
		\footnote{Gao Mu (\texttt{876117149@qq.com}, \texttt{mu\_gao@smail.nju.edu.cn}) is supported by the National key research and development program of China (No.2024YFA1013301).
			Yin Huicheng (\texttt{huicheng@nju.edu.cn}, \texttt{05407@njnu.edu.cn}) is supported by the NSFC (No.12331007, No.12101304).}
		\\[0.5cm]
		\small
		$^{1}$ School of Mathematics, Nanjing University, Nanjing 210009, China\\
		\small
		$^{2}$ School of Mathematical Sciences and IMS, Nanjing Normal University, Nanjing 210023, China
	}
	\date{}
	\maketitle
	
	\thispagestyle{empty}

	\begin{abstract}
We are concerned with the global classical solution problem of 3D compressible isentropic Euler equations of Chaplygin gases
	\begin{equation*}
		\left\{
		\begin{aligned}
			&\p_t\rho+\Div(\rho v)=0,\\
			&\p_t(\rho v)+\Div(\rho v \otimes v)+\nabla p=0,\\
&\rho(0,x)=\bar\rho+\ve\rho_0(x), v(0,x)=\ve v_0(x),
		\end{aligned}
		\right.
	\end{equation*}
	where $\bar\rho>0$
is a constant, $\ve>0$ is small, the state equation is $p=p(\rho)=P_0-\frac{D}{\rho}$ with $P_0$ and $D$ being
some positive constants. For the  3D compressible Euler equations of Chaplygin gases,
which are a prototype of multidimensional nonlinear symmetric hyperbolic systems with totally linearly degenerate eigenvalues, there is
a basic conjecture imposed by A. Majda: it typically has a global classical solution $(\rho, v)$ with $(\rho-\bar\rho, v)\in C([0,\infty), H^s(\Bbb R^3))
\cap C^1([0,\infty), H^{s-1}(\Bbb R^3))$ when
$(\rho_0, v_0)\in H^s(\Bbb R^3)$ with $s>\f52$ unless $(\rho, v)$  itself blows up in finite
time. In this paper, under the assumptions that for any fixed constant $\mu$ with $0<\mu<1/2$, integer $N\ge 15$, $\rot v_0(x)\equiv 0$ and
		\[
		\|(\rho_0, v_0)\|_{H^{N}(\mathbb{R}^3)}+\sum_{|a|\leq 13}
		\|\langle x\rangle^{1+\mu}
		\nabla^a(\rho_0,v_0)\|_{L^2(\mathbb{R}^3)} \le 1,
		\]
we show that the classical solution $(\rho, v)$ exists globally. Our main ingredients include: establishing  a series of new
decay estimates of energy bounds, weighted  pointwise space-time $L^\infty$-$L^2$ estimates and weighted Strichartz-type estimates
for the 3D potential flow equation of Chaplygin gases.
		
		\vskip 0.2 true cm
		\noindent
		\textbf{Keywords.} Compressible Euler equations, Chaplygin gases, quasilinear wave equation,

\qquad\quad strong Huygens' principle, weighted space-time estimate, global existence
		
		\vskip 0.2 true cm
		\noindent
		\textbf{2020 Mathematics Subject Classification.}  35L05, 35L72, 35Q31.
	\end{abstract}
	
	\vskip 0.2 true cm
	
	\addtocontents{toc}{\protect\thispagestyle{empty}}
	\tableofcontents

	\section{Introduction}

	\subsection{Main results and remarks}

	The 3D compressible isentropic Euler equations are
	\begin{equation}\label{Euler}
		\left\{
		\begin{aligned}
			&\p_t\rho+\Div(\rho v)=0,\\
			&\p_t(\rho v)+\Div(\rho v \otimes v)+\nabla p=0,\\
		\end{aligned}
		\right.
	\end{equation}
	where $(t,x)\in [0,+\infty)\times\mathbb{R}^3$, $t=x_0$, $x=(x_1, x_2, x_3)$, $\p_k=\p_{x_k}$
($0\le k\le 3$), $\nabla=(\p_1,\p_2, \p_3)$, $\p=(\p_0, \p_1, \p_2, \p_3)$, $\rho$, $v=(v_1,v_2,v_3)$ and
	$p$ stand for the density, velocity and  pressure, respectively.
	The equation of state for Chaplygin gases (see \cite{CF}) is
	\begin{equation}\label{pressure2}
		p(\rho)=P_0-\frac{D}{\rho},
	\end{equation}
	where $P_0$ and $D$ are some positive constants.

Suppose that \eqref{Euler} is equipped with the following small perturbed initial data:
\begin{equation}\label{irrot:condition}
\begin{split}
&(\rho,v)(0,x)=(\bar\rho+\ve\rho_0(x),\ve v_0(x)),\\
\end{split}
\end{equation}
where $\bar\rho>0$ is a constant, $\ve>0$ is small, $v_0(x)=(v_{1,0}(x), v_{2,0}(x), v_{3,0}(x))$ and $\bar\rho+\ve\rho_0(x)>0$.

It is pointed out that  for Chaplygin gases, \eqref{Euler} is a prototype of multidimensional nonlinear symmetric hyperbolic
systems with totally linearly degenerate eigenvalues (see  Page 89 of \cite{Majda}). So far there has been
a basic and unsolved conjecture (Page 89 of \cite{Majda}):

\vskip 0.1 true cm

{\bf Conjecture.} \eqref{Euler} with \eqref{pressure2}-\eqref{irrot:condition}  has a global classical solution
 $(\rho, v)$ with $(\rho-\bar\rho, v)\in C([0,\infty),$ $H^s(\Bbb R^3))\cap C^1([0,\infty), H^{s-1}(\Bbb R^3))$
 when  $(\rho_0(x), v_0(x))\in H^s(\Bbb R^3)$ with $s>\f52$ unless the solution $(\rho,v)$ itself blows up in finite
time.

\vskip 0.1 true cm

By our knowledge, this conjecture has not been solved yet for any
small perturbed initial data in $H^s(\Bbb R^3)$. As illustrated in Page 89 of \cite{Majda}, the above conjecture is mainly of
mathematical interest but its resolution would elucidate both the nonlinear nature
of the conditions requiring linear degeneracy of each wave field and also might
isolate the fashion in which the shock wave formation arises in quasilinear hyperbolic
systems. In this paper, we are concerned with the
global classical solution problem of \eqref{Euler} with irrotational and weakly decaying
$(\rho_0(x), v_0(x))$. In this case, it holds that
\begin{equation}\label{irrot:condition-1}
		\begin{split}
		&\p_kv_{j,0}=\p_jv_{k,0},\quad 1\le k<j\le3.
		\end{split}
	\end{equation}

Without loss of generality, $\bar\rho$ is
normalized so that the sound speed becomes
\begin{equation}\label{YHC-0}
\begin{split}
c(\bar\rho)=\sqrt{p'(\bar\rho)}=1,
\end{split}
\end{equation}
namely, $\f{D}{{\bar\rho}^2}=1$ holds.

In addition, we assume that
$(\rho_0(x), v_0(x))$ fulfills
\begin{equation}\label{YHC-1}
\begin{split}
\|(\rho_0,v_0)\|_{H^N(\mathbb{R}^3)}
+\sum_{|a|\leq 13}\|\langle x\rangle^{1+\mu}
\nabla^a(\rho_0,v_0)\|_{L^2(\mathbb{R}^3)}
\leq 1,
\end{split}
\end{equation}
where $0<\mu<1/2$ is some fixed constant, integer $N\ge 15$, $\langle x\rangle=(1+|x|^2)^{\f12}$ and $|x|=\sqrt{x_1^2+x_2^2+x_3^2}$.
From now on, we set $A=A(t,x)=1+t+|x|$ and $B=B(t,x)=1+\bigl|t-|x|\bigr|$. Let $C$ be the generic positive constant depending on $\mu$.

Our main result can be stated as
\begin{theorem}\label{thm:main-global-1}
For \eqref{Euler} with \eqref{pressure2}-\eqref{YHC-1},
there exists a positive constant $\varepsilon_0$ depending on $\mu$ such that for
$\varepsilon\in(0,\varepsilon_0)$, \eqref{Euler} has a global classical
solution $(\rho, v)$ with $(\rho-\bar\rho, v)\in C([0,\infty);H^{N}(\mathbb R^3))\cap C^1([0,\infty);H^{N-1}(\mathbb R^3))$
and $\rho(t,x)>\f{\bar\rho}{2}$ for all $(t,x)\in [0,\infty)\times\mathbb R^3$.
Moreover, the following estimates hold:
\begin{equation}\label{YHCCC-01}
	\sup_{t\geq 0}
	(1+t)^{-\frac{\mu}{100}}
	\|(\rho-\bar\rho,v)(t)\|_{H^N(\mathbb{R}^3)}
	\leq C\varepsilon,
\end{equation}
\begin{equation}\label{YHCCC-03}
	\sum_{|a|\leq 7}
	\big\|
	A B^{\frac{\mu}{4}}
	\nabla^a(\rho-\bar\rho,v)
	\big\|_{L^\infty([0,\infty)\times\mathbb{R}^3)}
	\leq C\varepsilon.
\end{equation}
\end{theorem}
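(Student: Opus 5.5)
Since $\rot v_0\equiv 0$ and the vorticity satisfies a transport-type equation, $\rot v\equiv 0$ persists, so I will write $v=\nabla\phi$. The Bernoulli law then determines $\rho$ as a function of $\p_t\phi+\frac12|\nabla\phi|^2$. For Chaplygin gases the enthalpy satisfies $h'(\rho)=D/\rho^3$, and one can solve explicitly for $\rho$. Substituting into the continuity equation gives a quasilinear wave equation for $\phi$:
\[
\p_t^2\phi-\Delta\phi+2\nabla\phi\cdot\nabla\p_t\phi+\nabla\phi\cdot\nabla^2\phi\cdot\nabla\phi-|\nabla\phi|^2\Delta\phi+\dots=0 .
\]
Its quadratic part is of null-form type, $\p_t(|\nabla\phi|^2-(\p_t\phi)^2)$. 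In fact Chaplygin gases are exactly the case where the equation is the timelike minimal-surface (Born--Infeld) type, $\p_\alpha\bigl(\p^\alpha\phi/\sqrt{1+Q(\p\phi,\p\phi)}\bigr)=0$ up to a change of unknown. So every nonlinearity satisfies the null condition, and the cubic terms have the form $\p\phi\,\p\phi\,\p^2\phi$ with the same null structure.

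Because the data only decay like $\langle x\rangle^{-1-\mu}$ in weighted $L^2$, the classical Klainerman vector-field method with boost fields $\Omega_{0i}$ is unavailable: $x\nabla\phi_0$ is not in $L^2$ with enough room. I will therefore run a bootstrap using only $\p$ and, where needed, the rotations $\Omega_{ij}$ and a modified scaling. The bootstrap assumptions are the two conclusions of the theorem with $C\ve$ replaced by $2C\ve$: a slowly growing $H^N$ energy $(1+t)^{\mu/100}$ and the pointwise bound $|\nabla^a\p\phi|\le C\ve A^{-1}B^{-\mu/4}$ for $|a|\le 7$.

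The energy step is standard. Commute $\p^a$ with the quasilinear equation and use the ghost-weight/null-structure energy estimate, whose error is bounded by $\|\p^{\le 8}\p\phi\|_{L^\infty}\lesssim \ve(1+t)^{-1}$ from the pointwise bootstrap. Gronwall then gives growth $(1+t)^{C\ve}\le(1+t)^{\mu/100}$ for small $\ve$.

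The main obstacle is closing the pointwise bound \eqref{YHCCC-03} without boosts. My plan is as follows.

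First, I will derive a linear estimate for $\Box u=F$ in 3D with weakly decaying data. By the strong Huygens principle (Kirchhoff's formula), the homogeneous part is controlled by $A^{-1}B^{-\mu/2}$ times the weighted norm in \eqref{YHC-1}, using Sobolev embedding on spheres together with the $\langle x\rangle^{1+\mu}$ weight and 13 derivatives.

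Second, for the inhomogeneous part I will prove a weighted space-time $L^\infty$--$L^2$ estimate of the form
\[
\|AB^{\mu/4}u\|_{L^\infty}\lesssim \sum_{|a|\le 2}\|A^{1/2}B^{\frac12+\mu}\,\Omega^a F\|_{L^2_{t,x}},
\]
or an $L^1$-in-time variant, obtained from the fundamental solution integrated over backward cones.

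Third, I will estimate $F$, which is cubic or null-quadratic. The null forms gain a factor $B/A$ for one good derivative. To get that factor, I will control good derivatives $\bar\p=\p_t+\p_r$ and angular derivatives through the identity $\bar\p u\sim (S u,\Omega u)/A$, which requires rotations and scaling on the solution. Since scaling $S$ and rotations $\Omega$ commute well with $\Box$ and need only $\langle x\rangle$ weights, I will include $\Omega^a S^b$ in the energy. I will bound these through weighted Strichartz/KSS-type estimates with weight $\langle x\rangle^{-1/2-\mu}$, which give integrable-in-time control of $\|\langle r\rangle^{-1/2-\mu}\p Z\phi\|_{L^2_{t,x}}$.

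Fourth, combining these, the quadratic term contributes $\ve^2$ times a convergent space-time integral. The convergence uses the $B^{-1-\mu/2}$ gain, which integrates in $t-r$. The bootstrap then closes with $C\ve$, which gives the global existence of the classical solution. Finally, $\rho>\bar\rho/2$ follows from the $L^\infty$ bound on $\rho-\bar\rho$.
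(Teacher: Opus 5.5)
You have a genuine gap in the step that closes $|\nabla^a\p\phi|\lesssim\ve A^{-1}B^{-\mu/4}$: nothing in your scheme makes the quadratic null term integrable. Your reduction to the potential equation, the $H^N$ energy step and the final passage from $\phi$ to $(\rho,v)$ are fine.

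Under the weight $\langle x\rangle^{1+\mu}$, decay of type $AB^{\mu}$ comes from an estimate like Lemma~\ref{lem:weighted-Linfty-L2}, whose source norm is $\|A^{1+\mu}\nabla^{\le 2}F\|_{L^1_tL^2_x}$. For a quadratic null form $F$, use $|\bar\p f|\lesssim A^{-1}|Zf|+\frac{B}{A}|\p f|$ and $|\p\phi|\lesssim\ve A^{-1}B^{-\mu/4}$. Even then, near the cone you get at best $\ve(1+t)^{-1+\mu/4}\|Z\p\phi(t)\|_{L^2_x}$, which is not integrable in $t$. In the interior $|x|\le t/2$, where $B\sim A$ and the null structure gains nothing, you have $|F|\lesssim\ve^2A^{-2-\mu/2}$. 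Inserted into Kirchhoff's formula, this only gives $|u(t,0)|\lesssim\ve^2(1+t)^{-\mu/2}$.

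Your proposed space-time bound with weight $A^{1/2}B^{\frac12+\mu}$ has the same defect. On $\{|x|\le t/2\}$ it involves $\|t^{1+\mu}F\|_{L^2_{t,x}}$ with $|F|\sim|\p\phi|\,|\p^2\phi|$. Your bootstrap bounds this only by $\ve^2\bigl(\int^\infty t^{3\mu/2+2\eta}\,dt\bigr)^{1/2}=\infty$. Moreover, no estimate you list produces the ``$B^{-1-\mu/2}$ gain'' you invoke at the end.

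Absolute-value bounds on the quadratic source cannot work; the cancellation must be captured algebraically. The paper does this with the good unknown $V=\phi-\phi\p_t\phi$ of \eqref{eq:V-definition}. You already noticed the key identity: $2\p_t\phi\Delta\phi-2\nabla\phi\cdot\nabla\p_t\phi=\p_tQ_0(\phi,\phi)-2\p_t\phi\,\Box\phi$. Combined with $\Box(\phi\p_t\phi)=\phi\p_t\Box\phi+\p_t\phi\,\Box\phi+2Q_0(\phi,\p_t\phi)$, this gives $\Box V=\mathcal R_3+\mathcal R_4+\mathcal R_5$ with no quadratic part, as in \eqref{eq:box-V-schematic-null}. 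Each cubic term carries an extra factor $\ve A^{-1}B^{-\mu/4}$. That factor is exactly what yields $\|A^{1+\mu/4}\nabla^{\le10}\mathcal R(t)\|_{L^2_x}\lesssim\ve_1^3(1+t)^{-1-\mu/16}$. One then recovers $\phi$ from $\phi=V+\phi\p_t\phi$.

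The second gap is the vector-field count. The weight $\langle x\rangle^{1+\mu}$ pays for exactly one vector field with $|x|$-growing coefficients. $L_j$, $\Omega_{kl}$ and $S$ all cost the same at $t=0$: for example $L_j\phi|_{t=0}=x_j\p_t\phi$ and $S\phi|_{t=0}=x\cdot\nabla\phi$. So excluding boosts is not the point; the paper uses a single $\Gamma\in\{L_j,\Omega_{kl}\}$. Your choice of $S$ is also fine for the radial good derivative, since $(t+r)(\p_t+\p_r)=2S-(t-r)(\p_t-\p_r)$. The problem is the count. Energies of $\Omega^aS^b$ with $|a|+b\ge2$ need data with weights $\langle x\rangle^{2}$ or more, as does $\Omega^{a}F$ with $|a|\le 2$ applied to a source already containing $Z\p\phi$. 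The assumption \eqref{YHC-1} does not provide this.

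With only one vector field, Klainerman--Sobolev is unavailable. So the pointwise decay of $\Gamma\phi$ needed to turn the null structure into a pointwise gain must itself be bootstrapped. That gain is $|\bar\p\p\nabla^a\phi|\lesssim\ve_1(A^{-1-\mu/2}+A^{-2}B^{1-\mu/4})$, as in Lemma~\ref{lem:CVP-derived-good-derivative-estimates}. The paper therefore adds three assumptions beyond your two:
\begin{itemize}
\item the $H^{11}$ energy of $\p\Gamma\phi$;
\item $A^{\mu/2}$-weighted $L^\infty$ bounds on $\Gamma\phi$ and $\p\nabla^{\le6}\Gamma\phi$, closed by Lemma~\ref{lem:weak-weighted-Linfty-L2} applied to $\Box\Gamma V=\Gamma\mathcal R$;
\item $A^{\mu/3}$-weighted $L^2_tL^\infty_x$ bounds (Lemma~\ref{lem:weak-weighted-L2t-Linfty-L2}), which make the cubic terms with a low-order $\Gamma\phi$ factor time-integrable.
\end{itemize}
A bootstrap consisting only of the two conclusions of the theorem, with no normal form, does not close.
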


\begin{remark}\label{YHC-90}
	We can use the following condition instead of \eqref{YHC-1}
	\begin{equation}\label{YHC-91}
		\|(\rho_0,v_0)\|_{H^N(\mathbb R^3)}
		+\||x|^{1+\mu}(\rho_0,v_0)\|_{L^2(\mathbb R^3)}
		\leq 1,
	\end{equation}
	where
\begin{equation}\label{YHC-999}
N>\frac{13(1+\mu)}{\mu}.
\end{equation}
In this case, Theorem \ref{thm:main-global-1} still holds.
	In fact, it follows from the weighted Gagliardo-Nirenberg inequality that there exists some constant $\mu'$
with $0<\mu'<\mu$ such that
	\begin{equation}\label{WY-1}
		\sum_{|a|\leq 13}
		\big\|
		\langle x\rangle^{1+\mu'}
		\nabla^a(\rho_0,v_0)
		\big\|_{L^2(\mathbb R^3)}\leq C,
	\end{equation}
which implies that a condition analogous to \eqref{YHC-1} holds.
		\end{remark}

\begin{remark}\label{YHC-90E}
Consider the 3D irrotational gravity water wave equation
\begin{equation}\label{GWW}
\left\{
\begin{aligned}
&\p_th=G(h)\phi,\\
&\p_t\phi=-h-\frac12|\nabla\phi|^2
+\frac{(G(h)\phi+\nabla h\cdot\nabla\phi)^2}{2(1+|\nabla h|^2)},\\
&h(0,x)=h_0(x), \phi(0,x)=\phi_0(x),
\end{aligned}
\right.
\end{equation}
where $x\in\R^2$, $\nabla=(\p_1,\p_2)$, $h=h(t,x)$, $\phi=\phi(t,x)=\Phi(x,h(t,x))$, $\Phi(x,z)$
is the velocity potential, and $G(h)\phi=\sqrt{1+|\na h|^2}\p_n\Phi$
is the Dirichlet-Neumann operator associated to the domain $\Omega(t)=\{(x,z)\in\R^2\times\R: z<h(t,x)\}$.
Set $\|v\|_Z=\|\w{x}^\mu (1-\Delta)^4v\|_{L^2}$ with $\mu\in (0,1)$ and $U=h+i|\na|^{\f12}(\phi-T_{\p_z\Phi}h|_{z=h(t,x)})$
with $i=\sqrt{-1}$. In \cite{Zheng22}, the author shows that when
\begin{equation}\label{YHC-9100}
N\ge\max\{\frac{33}{\mu(1-\mu)},\f{8}{\mu^2}\}
\end{equation}
and
\begin{equation*}\label{YHC-9100A}
\|h_0\|_{H^{N+\frac12}}+\||\nabla|^{\f12}\phi_0\|_{H^N}
+\|h_0\|_Z+\||\nabla|^{\frac12}\phi_0\|_Z\le\ve,
\end{equation*}
\eqref{GWW} has a global solution $(h, \phi)$ such that $U\in C([0, +\infty), H^N(\R^2))$.
Note that \eqref{GWW} can be changed into a nonlinear Klein-Gordon equation of $U$.
Comparing \eqref{YHC-999} and \eqref{YHC-9100}, one knows that both $\ds\lim_{\mu\to 0+}N=+\infty$ hold.

\end{remark}

In order to prove Theorem \ref{thm:main-global-1}, we now give the reformulation on \eqref{Euler}
under the irrotationality of velocity $v$.
By \eqref{irrot:condition-1} and \eqref{Euler}, one has $\rot v(t,x)\equiv 0$ for all
$(t,x)\in [0, T]\times\Bbb R^3$ as long as $(\rho-\bar\rho, v)\in C([0, T],$ $H^s(\Bbb R^3))\cap C^1([0,T], H^{s-1}(\Bbb R^3))$
with $T>0$ and $s>\f52$. Then we can introduce a potential function $\phi(t,x)$ such that $v=\nabla\phi$ and $\ds\lim_{|x|\to\infty}\phi=0$.
In this case,
it follows from the second equation in \eqref{Euler} that the
	Bernoulli's law holds:
\begin{equation}\label{YHC-2}
\begin{split}
\p_t\phi+\frac12|\nabla\phi|^2+h(\rho)=0,
\end{split}
\end{equation}
where the enthalpy $h(\rho)$ satisfies $h'(\rho)=\frac{p'(\rho)}\rho$ and $h(\bar\rho)=0$.

By \eqref{pressure2}, \eqref{YHC-0} and \eqref{YHC-2}, one has that from the first equation in \eqref{Euler}
that $\phi$ fulfills the following second order quasilinear wave equation
	\begin{equation}\label{potentia1}
		\Box\phi=2\p_t\phi\Delta\phi-2\sum_{k=1}^3\p_k\phi\p_{tk}^2\phi
		+|\nabla\phi|^2\Delta\phi-\sum_{k,j=1}^3\p_k\phi\p_j\phi\p_{kj}^2\phi
	\end{equation}
with the initial data $(\phi,\p_t\phi)|_{t=0}=\ve(\phi_0,\phi_1)$ as
\begin{equation}\label{YHC-3}
\begin{split}
&\phi_{0}(x)=\int_{-\infty}^{x_1}v_{1,0}(s,x_2,x_3)ds,\\
&\phi_{1}(x)=-\frac12\ve(|v_{1,0}|^2+|v_{2,0}|^2+|v_{3,0}|^2)-\f{h(\bar\rho+\ve\rho_0)}{\ve},
\end{split}
\end{equation}
where $\Box=\p_t^2-\Delta$ and $\Delta=\p_1^2+\p_2^2+\p_3^2$. Under the assumption \eqref{YHC-1}, one has
\begin{equation}\label{YHC-4}
\begin{split}
\|\phi_0\|_{H^{N+1}(\mathbb{R}^3)}+\|\phi_1\|_{H^{N}(\mathbb{R}^3)} + \sum_{|a|\le 13} \|\langle x\rangle^{1+\mu} \nabla^a(\na\phi_0,\phi_1)\|_{L^2(\mathbb{R}^3)} \le C_0,
\end{split}
\end{equation}
where $C_0>0$ is some fixed constant. Indeed, by $v_0=\nabla\phi_0$, we have
$\Delta \phi_0=\Div v_0$. Hence, it follows from $\ds\lim_{|x|\to\infty}\phi_0=0$,
Hardy's inequality and \eqref{YHC-1} that
$$\|\phi_0(x)\|_{L_x^2}=\|\hat\phi_0(\xi)\|_{L_{\xi}^2}=\bigg\|\ds\frac{\sum_{j=1}^3\xi_j\hat v_{0,j}(\xi)}{|\xi|^2}\bigg\|_{L^2_{\xi}}
\le\bigg\|\f{\hat v_0(\xi)}{|\xi|}\bigg\|_{L^2_{\xi}}\ls \|\nabla_{\xi}\hat v_0(\xi)\|_{L^2_{\xi}}\le\||x|v_0(x)\|_{L_x^2}\le C.$$
In addition, $\p\phi_0\in H^{N}(\Bbb R^3)$ obviously holds. Together with \eqref{YHC-1} and \eqref{YHC-3}, \eqref{YHC-4} is shown.
	
Next, we focus on the equivalent 3D Cauchy problem:
\begin{equation}\label{eq:Chaplygin-Cauchy}
\left\{
\begin{aligned}
&\Box\phi=2\partial_t\phi\,\Delta\phi-
2\sum_{k=1}^3\partial_k\phi\,\partial^2_{tk}\phi+
|\nabla\phi|^2\Delta\phi-\sum_{k,j=1}^3\partial_k\phi\,\partial_j\phi\,\partial^2_{kj}\phi,\\
&(\phi, \partial_t\phi)(0,x)=\ve(\phi_0(x),\phi_1(x)).
\end{aligned}
\right.
\end{equation}

\begin{theorem}\label{thm:main-global}
Under the condition \eqref{YHC-4},
there exists a positive constant $\varepsilon_0$ depending on $\mu$ such that for
$\varepsilon\in(0,\varepsilon_0)$, \eqref{eq:Chaplygin-Cauchy} has a global classical
solution $\phi\in C([0,\infty);H^{N+1}(\mathbb R^3))\cap C^1([0,\infty);H^N(\mathbb R^3))$.
Moreover, the following estimates hold:
		\begin{equation}\label{YHCCC-1}
			\sup_{t\ge 0}(1+t)^{-\f\mu{100}}\|\partial\phi(t)\|_{H^N(\mathbb R^3)}
			\le C\varepsilon,
		\end{equation}
		
		\begin{equation}\label{YHCCC-2}
			\sup_{t\ge 0}(1+t)^{-\f\mu{100}}\sum_{\Gamma}\|\partial\Gamma\phi(t)\|_{H^{11}(\mathbb R^3)}
			\le C\varepsilon,
		\end{equation}
		
		\begin{equation}\label{YHCCC-3}
			\bigl\|AB^{\frac{\mu}{4}}\phi\bigr\|_{L^\infty([0,\infty)\times\mathbb R^3)}
			+\sum_{|a|\le 7}\bigl\|AB^{\frac{\mu}{4}}\nabla^a\partial\phi\bigr\|_{L^\infty([0,\infty)\times\mathbb R^3)}
			\le C\varepsilon,
		\end{equation}
		
		\begin{equation}\label{YHCCC-4}
			\sum_{\Gamma}\bigl\|A^{\frac{\mu}{2}}\Gamma\phi\bigr\|_{L^\infty([0,\infty)\times\mathbb R^3)}
			+\sum_{\Gamma}\sum_{|a|\le 6}\bigl\|A^{\frac{\mu}{2}}\partial\nabla^a\Gamma\phi\bigr\|_{L^\infty([0,\infty)\times\mathbb R^3)}
			\le C\varepsilon,
		\end{equation}
		
		\begin{equation}\label{YHCCC-5}
			\sum_{\Gamma}\bigl\|A^{\frac{\mu}{3}}\Gamma\phi\bigr\|_{L^2([0,\infty);L^\infty(\mathbb R^3))}
			+\sum_{\Gamma}\sum_{|a|\le 6}\bigl\|A^{\frac{\mu}{3}}\partial\nabla^a\Gamma\phi\bigr\|_{L^2([0,\infty);L^\infty(\mathbb R^3))}
			\le C\varepsilon,
		\end{equation}
	where and below $\Gamma\in
	\{L_j,\Omega_{kl}: 1\le j\le3, 1\le k<l\le3\}$ with $L_j=x_j\p_t+t\p_j$ and $\Omega_{kl}=x_k\p_l-x_l\p_k$.
	\end{theorem}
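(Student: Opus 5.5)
We will prove Theorem \ref{thm:main-global} by a continuity (bootstrap) argument. Local well-posedness for the quasilinear wave equation in \eqref{eq:Chaplygin-Cauchy} gives a classical solution on some $[0,T]$. We then assume that on $[0,T]$ the bounds \eqref{YHCCC-1}--\eqref{YHCCC-5} hold with $C\varepsilon$ replaced by $2C_1\varepsilon$, for a large constant $C_1$ depending on $C_0$ and $\mu$. The goal is to recover the same bounds with $C_1\varepsilon$, once $\varepsilon\le\varepsilon_0$.

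The structural fact we exploit is that the right-hand side of \eqref{eq:Chaplygin-Cauchy} satisfies the null condition. The quadratic part $2\p_t\phi\Delta\phi-2\p_k\phi\p^2_{tk}\phi$ is a combination of the null forms $Q_{0j}$, and the cubic part is also null; this is the Chaplygin/linear degeneracy structure. Because the data decay only like $\langle x\rangle^{-1-\mu}$, we cannot use the scaling vector field or the full Klainerman–Sobolev machinery with good weights. That is why the vector fields are restricted to $\partial$, $L_j$ and $\Omega_{kl}$, and why only $\mu$-powers of decay appear.

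\textbf{Step 1: energy bounds.} Commute $\partial^a$ ($|a|\le N$) and $\partial^a\Gamma$ ($|a|\le 11$) with the equation and use the standard quasilinear energy estimate for $g^{\alpha\beta}(\partial\phi)\partial_{\alpha\beta}$. The error is bounded by $\int_0^t\|\partial^{\le 8}\partial\phi\|_{L^\infty}E$. To get the slow growth $(1+t)^{\mu/100}$ we need this $L^\infty$ coefficient to be essentially integrable in time. The pointwise bound \eqref{YHCCC-3} alone gives only $(1+t)^{-1}$, hence a logarithm, and here the null structure must be used. We write each null form as $\bar\partial\phi\,\partial^2\phi+\partial\phi\,\bar\partial\partial\phi$, where $\bar\partial$ are the good derivatives, and use $|\bar\partial u|\lesssim A^{-1}(|\Gamma u|+|\partial u|)$. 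Combined with the $L^2_tL^\infty_x$ bound \eqref{YHCCC-5}, this yields a time-integrable coefficient of size $\varepsilon A^{-1-\mu/3}$ in one of the factors, up to a harmless logarithmic loss which is absorbed into $(1+t)^{\mu/100}$. For the $\Gamma$-energy we also need the fact that $L_j$ commutes with $\Box$ up to lower order terms while preserving the null structure.

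\textbf{Step 2: pointwise bounds \eqref{YHCCC-3}, \eqref{YHCCC-4}.} We use the weighted $L^\infty$–$L^2$ estimates for $\Box$. By the strong Huygens principle in 3D, the representation formula gives $AB^{\mu/4}|\phi|\lesssim \varepsilon$ from the weighted data norm $\langle x\rangle^{1+\mu}$ in \eqref{YHC-4}, with the loss $\mu\to\mu/4$ paying for the endpoint. For the inhomogeneous part we use a Lindblad/Klainerman-type space-time weighted estimate, bounding $A B^{\mu/4}|\Box^{-1}F|$ by weighted norms of $F$. The source $F$ is cubic or null-quadratic, so pointwise it is at most $\varepsilon^2 A^{-2-\mu/2}B^{-1+\cdots}$ times energy-controlled factors. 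To pass to $\nabla^a\partial\phi$ we commute with translations, using the data weights for $|a|\le 13$. For the $\Gamma$-estimate \eqref{YHCCC-4} we accept a weaker weight $A^{\mu/2}$, obtained by Sobolev on $\Gamma\phi$ combined with the growing energy from Step 1.

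\textbf{Step 3: weighted Strichartz estimate \eqref{YHCCC-5}.} We establish an estimate of the form $\|A^{\mu/3}u\|_{L^2_tL^\infty_x}\lesssim$ (weighted data) $+\|A^{\mu/3}\Box u\|_{L^1_tL^2_x\cap\cdots}$. This endpoint-type estimate fails without weights in 3D, so it has to be proved via a dyadic decomposition in $A$ and Littlewood–Paley, and the extra decay $A^{\mu}$ from the data compensates the logarithmic endpoint failure. We then feed in the source bounds from Steps 1–2.

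\textbf{Main obstacle.} We expect the hardest step to be the coupling of Steps 1 and 3: closing the energy without a logarithmic-in-time loss while having only $\Gamma$ (no scaling field) and weak decay. We would first try to control the bad term $\int\|\partial\Gamma\phi\|_{L^\infty}A^{-1}E\,dt$ by Cauchy–Schwarz in time, using \eqref{YHCCC-5} together with $\|A^{-1+\mu/3}\|_{L^2_t}<\infty$. Choosing $\varepsilon_0$ small then closes the bootstrap and yields global existence together with \eqref{YHCCC-1}--\eqref{YHCCC-5}.
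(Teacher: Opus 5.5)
Your bootstrap architecture matches the paper's in outline: slowly growing energies, Huygens/spherical-mean weighted $L^\infty$--$L^2$ bounds, a weighted $L^2_tL^\infty_x$ norm, and only $L_j,\Omega_{kl}$ as commutators. There is, however, a genuine gap. You never remove the quadratic nonlinearity, and with data weighted only by $\langle x\rangle^{1+\mu}$ a quadratic source, null or not, cannot be closed in the decay, $\Gamma$-energy or Strichartz estimates.

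The missing idea is the normal form (good unknown) $V=\phi-\phi\,\partial_t\phi$. One has $2\partial_t\phi\,\Delta\phi-2\nabla\phi\cdot\nabla\partial_t\phi=2Q_0(\partial_t\phi,\phi)-2\partial_t\phi\,\Box\phi$ and $\Box(fg)=f\Box g+g\Box f+2Q_0(f,g)$. Hence $\Box V=\mathcal R_3+\mathcal R_4+\mathcal R_5$, which is cubic and higher and still null. The paper runs every decay, $\Gamma$-energy and Strichartz argument on $V$ and $\Gamma V$, via Lemmas~\ref{lem:weighted-Linfty-L2}, \ref{lem:weak-weighted-Linfty-L2} and \ref{lem:weak-weighted-L2t-Linfty-L2}. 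Only at the end does it recover $\phi=V+\phi\,\partial_t\phi$, treating the top-order term $\phi\,\nabla^7\partial\partial_t\phi$ by Gagliardo--Nirenberg interpolation between the pointwise and energy bounds.

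This is not a technicality. Without the scaling field the correct bound is $|\bar\partial f|\lesssim A^{-1}\sum_\Gamma|\Gamma f|+\frac{B}{A}|\partial f|$. Your version, without the factor $B$, is false wherever $|t-|x||\gg1$; test it on $f=g(t^2-|x|^2)$, which every $\Gamma$ annihilates. So away from the light cone, where $B\sim A$, the null structure gains nothing. For $F\sim\partial\phi\,\partial^2\phi$ the bootstrap then gives only $|F|\lesssim\varepsilon^2A^{-2}B^{-\mu/2}$, not the $\varepsilon^2A^{-2-\mu/2}B^{-1+\cdots}$ you claim. Already at $x=0$, Kirchhoff's formula then yields at best
\[
|u(t,0)|\le\int_0^t(t-s)\sup_{|y|=t-s}|F(s,y)|\,ds\lesssim\varepsilon^2(1+t)^{-2}\int_0^t(t-s)\bigl(1+|t-2s|\bigr)^{-\mu/2}\,ds\sim\varepsilon^2(1+t)^{-\mu/2}.
\]
The ranges $s\le t/4$ and $s\ge 2t/3$, where $B\sim A$, already produce this size. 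By contrast, \eqref{YHCCC-3} requires $\varepsilon(1+t)^{-1-\mu/4}$ at $x=0$.

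The $L^1_tL^2_x$ form that Lemma~\ref{lem:weighted-Linfty-L2} actually uses fails the same way. On $\{|x|\le(1+t)/2\}$ one only has $A^{1+\mu/4}|\partial\phi|\lesssim\varepsilon$, so $\|A^{1+\mu/4}F\|_{L^2_x}$ can only be bounded by $\varepsilon^2(1+t)^{\mu/100}$, which is not integrable. A third factor supplies exactly the missing $\varepsilon A^{-1}B^{-\mu/4}$, which is how the paper reaches $\varepsilon^3(1+t)^{-1-\mu/16}$. The same extra factor makes the $\Gamma$-energy work, via $\|A\,\partial\nabla^{\le 11}\mathcal R_3\|_{L^2_x}\lesssim\varepsilon^2(1+t)^{-1}\|\partial\phi\|_{H^N}$. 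It also makes the Strichartz pairing work, via $(1+t)^{-1+\mu/6+\mu/100}\in L^2_t$; for a quadratic term the $L^2_x$ factor carries no decay at all.

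Two further steps fail as written.
\begin{itemize}
\item \eqref{YHCCC-4} cannot come from Sobolev plus the growing $\Gamma$-energy, which gives growth $\varepsilon(1+t)^{\mu/100}$ rather than $A^{-\mu/2}$ decay. The paper obtains it from the $A^{\mu}$-weighted spherical-mean estimate, Lemma~\ref{lem:weak-weighted-Linfty-L2}, applied to $\Gamma V$, whose source is controlled through the $L^2_tL^\infty_x$ bootstrap.
\item Your ``main obstacle'' is misplaced. The top-order energy needs no null structure, since $\|\partial\phi\|_{W^{2,\infty}}\lesssim\varepsilon(1+t)^{-1}$ and Gronwall already give $(1+t)^{C\varepsilon}\le(1+t)^{\mu/100}$.
\end{itemize}
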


\begin{remark}\label{Rem-1}	
When $(\rho_0(x), v_0(x))\in C_0^{\infty}(\Bbb R^3)$ and $\rot v_0(x)\equiv 0$,
the global existence of smooth solution to \eqref{Euler} with \eqref{pressure2}-\eqref{irrot:condition}
can be easily derived due to the null form of nonlinearity in \eqref{potentia1} (see \cite{Christodoulou} and \cite{Klainerman2}).
For the corresponding 2D case of \eqref{Euler}
with \eqref{pressure2}-\eqref{irrot:condition}, when $(\rho_0(x), v_0(x))\in C_0^{\infty}(\Bbb R^2)$ and $\rot v_0(x)\equiv 0$,
the smooth solution $(\rho, v)$ also exists globally in terms of \cite{Alinhac1}. In addition, for 2D compressible full Euler equations
of Chaplygin gases, when
the initial and rotational axisymmetric perturbation of a rest state is small, the authors in \cite{HouYin3-0}-\cite{HouYin3-1}
and \cite{Wei}
have proved the global existence of smooth solution $(\rho, v)$.
\end{remark}

\begin{remark}\label{Rem-3}
When $(\rho_0(x), v_0(x))\in C_0^{\infty}(\Bbb R^d)$ with $d\ge 2$ and $\rot v_0(x)\not\equiv 0$, so far
the {\bf Conjecture} is still completely open
except the global solvability on the  small symmetric solution of  \eqref{Euler} with \eqref{pressure2}-\eqref{irrot:condition} (see
\cite{HouYin3-0}-\cite{HouYin3-1} and \cite{Wei}).
\end{remark}

\begin{remark}\label{Rem-3-0}
		We point out that the argument developed in this paper is not restricted to
		\eqref{potentia1}. It can be adapted to more general
		quasilinear wave systems in \cite{HouTaoYin2}:
		\begin{equation}\label{QWE}
			\left\{
			\begin{aligned}
				&\Box u^\iota=\sum_{j,k=1}^m\sum_{|a|+|b|\le1}C^{ab}_{\iota jk}Q_0(\p^au^j,\p^bu^k)
				+\sum_{j,k,l=1}^m \sum_{\alpha,\beta,\mu=0}^3N^{\alpha\beta\mu}_{\iota jkl}\p^2_{\alpha\beta}u^j\p_{\mu}u^k u^l\\
				&\qquad\quad+\sum_{j,k,l=1}^m  \sum_{\alpha,\beta=0}^3N^{\alpha\beta}_{\iota jkl}\p_{\alpha}u^j\p_{\beta}u^ku^l
+\sum_{j,k,l=1}^m\sum_{\alpha,\beta,\mu,\nu=0}^3N_{\iota jkl}^{\alpha\beta\mu\nu}\p^2_{\alpha\beta}u^j\p_{\mu}u^k\p_{\nu}u^l,\\
				&(u,\p_tu)(0,x)=\ve (u_{(0)},u_{(1)})(x),
			\end{aligned}
			\right.
		\end{equation}
		where $\iota =1,\cdots,m$, $u=(u^1,\ldots,u^m)$, $(t,x)\in[0,+\infty)\times\mathbb R^3$,
		$
		Q_0(f,g)=\p_t f\,\p_t g-\ds\sum_{j=1}^3\p_j f\,\p_j g,
		$
		and $u_{(r)}=(u^1_{(r)},\ldots,u^m_{(r)})$ for $r=0,1$. We rewrite 		
		the first term on the right-hand side of \eqref{QWE} as
		\begin{equation}\label{NL:rewrite}
			\sum_{|a|+|b|\le1}
			C^{ab}_{\iota jk}Q_0(\p^a u^j,\p^b u^k)
			=
			\sum_{\alpha,\beta=0}^3
			Q_{\iota jk}^{\alpha\beta}
			\p_\alpha u^j\p_\beta u^k
			+
			\sum_{\alpha,\beta,\mu=0}^3
			Q_{\iota jk}^{\alpha\beta\mu}
			\p^2_{\alpha\beta}u^j\p_\mu u^k .
		\end{equation}
		It is easy to know that the nonlinearity in \eqref{NL:rewrite} satisfies  the null condition, namely,
		for any \((\xi_0,\xi_1,\xi_2,\xi_3)\in\{\pm1\}\times\mathbb S^2\), it holds
		\begin{equation}\label{NC:rewrite}
			\sum_{\alpha,\beta=0}^3
			Q_{\iota jk}^{\alpha\beta}\xi_\alpha\xi_\beta
			\equiv0,
			\qquad
			\sum_{\alpha,\beta,\mu=0}^3
			Q_{\iota jk}^{\alpha\beta\mu}
			\xi_\alpha\xi_\beta\xi_\mu
			\equiv0 .
		\end{equation}
		In \cite{HouTaoYin2}, the authors introduce the following good unknown		
		\[ V^\iota=u^\iota-\frac12\sum_{j,k=1}^m\sum_{|a|+|b|\le1}C^{ab}_{\iota jk}\p^au^j\p^bu^k.\]
		Then it follows from the nonlinear wave system in \eqref{QWE} and direct computation that
		\begin{equation}
			\label{eq:sketch-Vi-equation}
			\Box V^\iota
			=\mathcal R^\iota_3(u)+\mathcal R^\iota_4(u)+\mathcal R^\iota_5(u),\qquad \iota= 1,\cdots,m,
		\end{equation}
		where $\mathcal R^\iota_3,\mathcal R^\iota_4$ and $\mathcal R^\iota_5$ are  cubic,
		quartic and quintic  nonlinearity, respectively. Moreover,  \(\mathcal R^\iota_k\) for $3\le k\le 5$
satisfy the corresponding null condition. Therefore, \eqref{eq:sketch-Vi-equation} admits a complete analogous structure of the crucial equation \eqref{eq:box-V-schematic-null} below. From \eqref{eq:sketch-Vi-equation} and \eqref{QWE},
by the same argument in the paper, the conclusion similar to Theorem \ref{thm:main-global} will hold
under the assumption of $(u_{(0)},u_{(1)})$ as in \eqref{YHC-4}.
\end{remark}

\begin{remark}\label{Rem-2}	
For the 3D quadratic quasilinear wave system
\begin{equation}\label{eq:wave}
\left\{
\begin{aligned}
\square u= &G(\p u,\p^2 u), \qquad\quad (t,x)\in [0,\infty)\times\R^3,\\
(u,\p_tu)&(0,x)=\ve (u_{0},u_{1})(x),\qquad x\in\R^3,
\end{aligned}
\right.
\end{equation}
where $(u_0, u_1)(x)\in (H^{N+1}, H^N)(\R^3)$
with  $N\in\Bbb \Z_+$ being sufficiently large, and the quadratic nonlinearity $G(\p u,\p^2 u)$ satisfies the null condition,
when
\begin{equation}\label{YHCC-00}
\begin{split}
\|x(|\nabla| u_0, u_1)\|_{H^2}+\||\nabla|(|x|^2 (|\nabla|u_0, u_1))\|_{H^1} +
\|(|\nabla|u_0, u_1)\|_{H^{N}}\le 1,
\end{split}
\end{equation}
the authors in \cite{Pusateri} have established the global classical solution $u\in C([0,\infty);H^{N+1}(\mathbb R^3))\cap C^1([0,\infty);$
$H^N(\mathbb R^3))$
by the space-time resonance method. It is pointed out that
the weights $x$ and $|x|^2$ in \eqref{YHCC-00} play essential roles in the proof procedure on the global existence of
$u$ (see the norm in (3.6) of \cite{Pusateri}). However, for the lower weighted Sobolev space in \eqref{YHC-4}
with the weight $\w{x}^{1+\mu}$ ($\mu>0$ is any small fixed constant), it seems difficult for us to apply the corresponding methods and results
in \cite{Pusateri} directly. In the present paper, we will
establish a new series of weighted \(L^\infty\)-\(L^2\) and
Strichartz-type estimates instead of the \(L^\infty\) and weighted
\(H^k\) framework in (3.5)-(3.6) of \cite{Pusateri} to obtain the uniform bounds
\eqref{YHCCC-1}-\eqref{YHCCC-5}.
\end{remark}

\begin{remark}\label{Rem-4}	
For the 3D cubic quasilinear wave system with multiple speeds
\begin{equation}\label{eq:wave-A}
\left\{
\begin{aligned}
\square_{c_k} u^k=&G^k(u,\p u,\p^2 u),\qquad k=1,\cdots,m, \\
(u,\p_tu)&(0,x)=\ve(u_{0},u_{1})(x),
\end{aligned}
\right.
\end{equation}
where $\square_{c_k}=\p_t^2-c_k^2\Delta$ ($c_k\not=0$), $G^k(u,\p u,\p^2 u)=O(|u|^3+|\p u|^3+|\p^2 u|^3)$,
the authors in \cite{GaoLiYin2026} have shown that there is a constant $\ve_0>0$ such that for $\ve\in(0,\ve_0)$,
$N\ge 6$, $\mu\in (0,1)$, and $(u_{0},u_{1})$ satisfies
\begin{equation}\label{initial:data2A}
\|u_{0}\|_{H^{N+1}(\R^3)}+\|u_1\|_{H^N(\R^3)}+\sum_{|a|\le 5}\|\langle x\rangle^{\mu}\p^a_x(\nabla u _0,u_1)\|_{L^2(\R^3)}\le 1,
\end{equation}
\eqref{eq:wave-A} admits a global solution $u\in C([0, \infty);H^{N+1}(\R^3))\cap C^1([0, \infty);H^N(\R^3))$
with the following estimates
\begin{equation}\label{YHCCC-1A}
\|\p u\|_{L^\infty([0,\infty);H^N(\R^3))}\le C\varepsilon,
\end{equation}
\begin{equation}\label{YHCCC-2A}
(1+t)^{\mu^-}\big(\|u(t,\cdot)\|_{L^\infty(\R^3)}+\|\p u(t,\cdot)\|_{L^\infty(\R^3)}\big)\le C\varepsilon,
\end{equation}
\begin{equation}\label{YHCCC-3A}
\|(1+t+|x|)^{\f{1}{2}\mu^-}u\|_{L^{2}([0,\infty); L^\infty(\R^3))}
+\sum_{|b|\le 1}\|(1+t+|x|)^{\f{1}{2}\mu^-}\p_x^b\p u\|_{L^{2}([0,\infty); L^\infty(\R^3))}\le C\varepsilon,
\end{equation}
where $\mu^-$ denotes any fixed positive constant less than $\mu$. Compared with the cubic nonlinearity in \eqref{eq:wave-A},
where the related weight  $\langle x\rangle^{\mu}$ is imposed for the initial data $(u_{0},u_{1})$ in \eqref{initial:data2A},
the required weight $\langle x\rangle^{1+\mu}$ in \eqref{YHC-4}
seems to be rather reasonable for the quadratic nonlinearity in \eqref{eq:Chaplygin-Cauchy} with null form.
\end{remark}

\begin{remark}\label{rmk:1.4-1}
Consider the Cauchy problem of the 3D quasilinear Klein-Gordon equation
\begin{equation*}\label{KG-01}
\left\{
\begin{aligned}
&\Box u+u=F(u,\p u,\p^2u),\quad(t,x)\in[0,\infty)\times\R^3,\\
&(u,\p_tu)(0,x)=\ve(u_0,u_1)(x),
\end{aligned}
\right.
\end{equation*}
where $\ve>0$ is small, $(u_0,u_1)\in (H^{s+1}, H^s)$ with $s>0$ being a suitably large constant,
and the smooth nonlinearity $F(u,\p u,\p^2u)$ is quadratic
and is linear in $\p^2u$. The authors in  \cite{HouYin2} have proved the global classical solution $u\in C([0,\infty); H^{s+1})\cap  C^1([0,\infty);H^{s})$.
With respect to more results for the long time existence of small solutions to 2D and 1D Klein-Gordon equations with slowly decaying
initial data, one can see \cite{Delort97}, \cite{DelortFang}, \cite{HouTaoYin} and \cite{Stingo18}.
It is pointed out that the crucial analyses for the Klein-Gordon equation are not suitable for
the wave equation since the Fourier integral
operators corresponding to the solution expression of linear wave equation contain the singular phase functions $\pm|\xi|$ near $\xi=0$,
so far there are few systematic long time existence results for the nonlinear wave equations with the small
slowly decaying initial data.
\end{remark}

	\subsection{Previous results}
	
	We now briefly recall some basic results related to our work.
	Consider the Cauchy problem of the nonlinear wave equation
	\begin{equation}\label{eq:1.9}
		\begin{cases}
			\square u = F(\partial u,\partial^2 u), \quad (t,x) \in [0,\infty)\times\mathbb{R}^d,\\
			(u,\partial_t u)(0,x)=\varepsilon (u_0, u_1)(x),
		\end{cases}
	\end{equation}
	where $F(\partial u,\partial^2 u)=O(|\partial u|^2+|\partial^2 u|^2)$, $d\geq 2$, $\varepsilon>0$ is sufficiently small.
	When the smooth functions $(u_0,u_1)(x)$ are compactly supported or decay rapidly at infinity, the following systematic
global existence results have been established.
	\vskip 0.1 true cm
	{\bf $\bullet$} For $d \geq 4$,  \eqref{eq:1.9}
	admits a global small data smooth solution $u$ (see \cite{Hormander}, \cite{KlainermanPonce}, \cite{LiChen}).
	\vskip 0.1 true cm
	{\bf $\bullet$} For $d = 3$, if  $F(\partial u,\partial^2 u)$ satisfies the null condition,
	then the global existence of small data solution $u$ to \eqref{eq:1.9} has been shown in \cite{Christodoulou} and \cite{Klainerman2}.
	\vskip 0.1 true cm
	{\bf $\bullet$} For $d=2$, when $F(\partial u,\partial^2 u)$ satisfies both the first and second null conditions, the author in \cite{Alinhac1} proved that \eqref{eq:1.9} has a global smooth solution $u$.
	\vskip 0.1 true cm
	
	When the initial data are not compactly supported but belong to
	higher order spatially weighted Sobolev spaces (the weight $\w{x}^\mu$ with $\mu> 2$), a lot of global and long time
	existence results have also been obtained; see, for example,
	\cite{Asa,CaiLei,HouYin1}.

For the weakly decaying initial data $(u_0, u_1)\in (H^{s+1}, H^{s})$ problem ($s>\f{d}{2}$ is suitably large)
\begin{equation}\label{YHC-9A}
\begin{cases}
&\square u
=G(\partial u,\partial^2 u)=O(|\partial u|^p+|\partial^2 u|^p),\quad p\in\Bbb N,\quad p\ge 2,\\
&(u,\partial_t u)(0,x) = (\varepsilon u_0,\varepsilon u_1)(x),\qquad x\in \Bbb R^d,
\end{cases}
\end{equation}
when some extra regularity assumptions on $(u_0, u_1)$ or nonlinearity structure are imposed, there are some interesting
long time existence works which can be roughly listed  as follows (see the table below)	

	\begin{table}[H]
		\centering
		\caption{Long-time existence results for nonlinear wave equations with weakly decaying data}
		\label{tab:Strichartz-previous-results}
		\renewcommand{\arraystretch}{1.355}
		\vskip 0.2 true cm
		\begin{tabular}{c|c|c|p{4.6cm}|p{2.5cm}|c}
			\hline
			\(d\)
			&
			{\bf Equation type}
			&
			\({\bf p}\)
			&
			{\bf Additional assumption on the initial data}
			&
			{\bf Conclusion}
			&
			{\bf Reference}
			\\
			\hline
			
			\(2\)
			&
			Semilinear
			&
			\(3\)
			&
			$
			(u_0,u_1)
			\in
			H_{\theta}^{{\frac32}^+,{\frac12}^+}
			\times
			H_{\theta}^{{\frac12}^+,{\frac12}^+}
			$
			&
			$
			T_\varepsilon\ge e^{c\varepsilon^{-2}}
			$
			&
			\cite{FangWang2013}
			\\
			\hline
			\(2\)
			&
			Semilinear
			&
			\(4\)
			&
			$
			(u_0,u_1)
			\in
			H_{\theta}^{\frac53,{\frac12}^+}
			\times
			H_{\theta}^{\frac23,{\frac12}^+}
			$
			&
			$T_\varepsilon=\infty$
			&
			\cite{FangWang2013}
			\\
			\hline
			
			\(2\)
			&
			Semilinear
			&
			\(5\)
			&
			None
			&
			$T_\varepsilon=\infty$
			&
			\cite{FangWang2013}
			\\
			\hline
			
			\(3\)
			&
			Quasilinear
			&
			\(2\), null form
			&
			$\bigl\||\nabla|\bigl(|x|^2(|\nabla|u_0,u_1)\bigr) \bigr\|_{H^1}
            +\bigl\| x(|\nabla|u_0,u_1) \bigr\|_{H^2}\le 1$
			&
			$T_\varepsilon=\infty$
			&
			\cite{Pusateri}
			\\
			\hline

			\(3\)
			&
			Quasilinear
			&
			\(3\)
			&
			None
			&
			$
			T_\varepsilon\ge e^{c\varepsilon^{-2}}
			$
			&
			\cite{GaoLiYin2026}
			\\
			\hline

			\(3\)
			&
			Semilinear
			&
			\(3\)
			&
				$
				(u_0,u_1)
				\in
					H_{\theta}^{2,0^+}
				\times
				H_{\theta}^{1,0^+}
			$
			&
			$T_\varepsilon=\infty$
			&
			\cite{MachiharaNakamuraNakanishiOzawa}
			\\
			\hline
			
			\(3\)
			&
			Quasilinear
			&
			\(3\)
			&
			$
			\big\|
			|x|^\mu
			(\nabla u_0,u_1)
			\big\|_{L^2}
			\le 1
			$
			&
			$T_\varepsilon=\infty$
			&
			\cite{GaoLiYin2026}
			\\
			\hline
			
			\(3\)
			&
			Quasilinear
			&
			\(4\)
			&
			None
			&
			$T_\varepsilon=\infty$ and \quad scattering
			&
			As in \cite{FangWang2013,HouYin3}
			\\
			\hline
			
			\(\ge4\)
			&
			Quasilinear
			&
			\(3\)
			&
			None
			&
			$T_\varepsilon=\infty$ and \quad scattering
			&
			\cite{HouYin3}
			\\
			\hline
			
			\(4\)
			&
			Semilinear
			&
			\(2\)
			&
			$
			(u_0,u_1)
			\in
			\dot B_{2,1,\Omega}^{1}
			\times
			\dot B_{2,1,\Omega}^{0}
			$
			&
			$T_\varepsilon=\infty$ and \quad scattering
			&
			\cite{Sterbenz2}
			
			\\
			\hline
				\(\ge 6\)
			&
			Semilinear
			&
			\(2\)
			&
			$
			(u_0,u_1)
			\in
			\dot{B}_{2,1}^{s_c}
			\times
			\dot{B}_{2,1}^{s_c-1}
			$
			&
			$T_\varepsilon=\infty$ and \quad scattering
			&
			\cite{Sterbenz1}
			\\
			\hline
			
		\end{tabular}
	\end{table}
	
	Here $\|f\|_{H_\theta^{s,b}}
	=\|f\|_{H^s}+
	\big\|
	(1-\Delta_{\mathbb{S}^{d-1}})^{\frac b2}f
	\big\|_{H^s}$ and $\|f\|_{\dot B_{2,1,\Omega}^{s}}
	=\|f\|_{\dot B_{2,1}^{s}}+
	\ds\sum_{1\le k<j\le4}
	\|\Omega_{kj}f\|_{\dot B_{2,1}^{s}}$.
	
	It should be emphasized that the results summarized in
	Table~\ref{tab:Strichartz-previous-results} are presented only in a simplified
	form, while the nonlinearities considered in the original references may be generally broader.

	\subsection{Sketch of the proof}
	
	We now give an outline on the proof of Theorem~\ref{thm:main-global}. When
	$(u_0,u_1)(x)$ decay suitably fast at infinity, the proofs on the global existence results mentioned above  essentially
depend on the standard energy estimates for wave
	equations, the smallness of $\|u(t,\cdot)\|_{H^{\lfloor \frac{d}{2} \rfloor+2}(\mathbb{R}^d)}$ and the following Klainerman-Sobolev inequality in \cite{Klainerman3}:
	\begin{equation}\label{eq:1.14}
		|\partial u(t,x)| \leq \frac{C}{(1+|t-r|)^{\frac{1}{2}}(1+t)^{\frac{d-1}{2}}}
		\sum_{|I| \leq \lfloor \frac{d}{2} \rfloor+1} \|Z^I \partial u(t,x)\|_{L_x^2(\mathbb{R}^d)},
	\end{equation}
	where $Z \in \{\partial_t, x_k\partial_j-x_j\partial_k, 1\leq k<j \leq d, x_l\partial_t+t\partial_l, 1\leq l\leq d, t\partial_t + \sum\limits_{k=1}^d x_k\partial_k\}$ and $r=|x|$.
	In order to apply the vector field method and \eqref{eq:1.14} to investigate problem \eqref{eq:1.9}, the weighted norm
	\begin{equation}\label{YHCCC-20}
		\begin{split}
			\|(1+|x|)^2(\nabla u_0,u_1)\|_{L^2(\mathbb{R}^d)}<\infty
		\end{split}
	\end{equation}
	should be satisfied.  However, the
	initial data in \eqref{YHC-4} contain only the spatial weight $\langle x\rangle^{1+\mu}$
($0<\mu<1/2$). Therefore the classical vector field method cannot be applied directly since
 the stronger spatial weighted Sobolev norms are required. For the same reason, as explained in Remark~\ref{Rem-2},
 the space-time resonance method in \cite{Pusateri} is not directly applicable either.
 To overcome these crucial difficulties, we will develop a new bootstrap framework.
 Next we  explain how to introduce the bootstrap assumptions.

 $\bullet$ At first,  for the highest order derivatives of $\phi$, as in \cite{Alinhac1}, the following slow time increasing bound is assumed
		\begin{equation}
		\label{eq:sketch-ordinary-energy-bootstrap}
		\sup_{0\le t\le T}
		(1+t)^{-\eta}
		\|\partial\phi(t)\|_{H^N_x}
		\le
		\varepsilon_1,
	\end{equation}
where $\eta\in(0,\mu/100]$ is any fixed small constant. It is pointed out that for the
quasilinear wave equations with quadratic null form nonlinearities, the highest order energy estimate
\eqref{eq:sketch-ordinary-energy-bootstrap} is closed through utilizing the basic Klainerman-Sobolev inequality \eqref{eq:1.14}
to show
	\begin{equation}\label{decay-1}
	\|\partial\phi(t)\|_{W^{2,\infty}(\R^3)}
	\lesssim
	\varepsilon(1+t)^{-1}.
	\end{equation}
	However, the usual 	Klainerman-Sobolev inequality  is not available for our problem \eqref{eq:Chaplygin-Cauchy}.
	Inspired by \cite{HouTaoYin2}, we introduce a modified good unknown
	\begin{equation}\label{eq:sketch-V-definition}
		V=\phi-\phi\partial_t\phi.
	\end{equation}
	In this case, one has  from the equation \eqref{potentia1} that
\begin{equation}\label{eq:sketch-V-equation}
	\Box V= R(\phi)=\mathcal R_3(\p^{\le3}\phi)+\mathcal R_4(\p^{\le3}\phi)+\mathcal R_5(\p^{\le3}\phi),
	\end{equation}
	where $\mathcal R_3$, $\mathcal R_4$ and $\mathcal R_5$ are the cubic,
quartic and quintic nonlinearities, respectively. Note that the right hand side in \eqref{eq:sketch-V-equation} does not contain
the troublesome quadratic nonlinearity, and $\mathcal R_3$ still admits the null structure which will play a crucial role in the related analysis
later. Nevertheless,
due to the different weights in our bootstrap assumptions from those in \cite{GaoLiYin2026} (see (6.1)-(6.4) in Section 6 of \cite{GaoLiYin2026})
and the different nonlinear structure in the right-hand side of \eqref{eq:sketch-V-equation} containing the loss of derivatives of $\phi$ from the cubic nonlinearity in (1.1) of \cite{GaoLiYin2026},
the method in \cite{GaoLiYin2026} can not be utilized to derive the sharp pointwise decay \eqref{decay-1}.
To overcome this difficulty, our idea is to study the following 3D linear wave equation
\begin{equation}\label{YHC-5}
\Box w=F, \quad (w,\partial_t w)(0,x)=(w_0,w_1)(x)
\end{equation}
so that the weighted terms $AB^\mu w$ and $AB^\mu\partial w$ can be estimated as (see Lemma \ref{lem:weighted-Linfty-L2} below)
\begin{equation}\label{YHC-6}
\begin{aligned}
\|A B^\mu w\|_{L^\infty([0,T]\times\mathbb R^3)}
&\lesssim\sum_{|a|\le 2}\big[\left\|\langle x\rangle^{1+\mu}
\nabla^a \p w(0)
\right\|_{L^2(\R^3)}+\left\|
A^{1+\mu}\nabla^a F\right\|_{L^1([0,T];L^2_x)}\big],
\end{aligned}
\end{equation}
\begin{equation}\label{YHC-7}
\begin{aligned}
\sum_{|a|\le 7}\|A B^\mu \nabla^a\partial w\|_{L^\infty([0,T]\times\mathbb R^3)}
&\lesssim\sum_{|a|\le 10}
\big[\left\|\langle x\rangle^{1+\mu}\nabla^a \p w(0)
\right\|_{L^2(\R^3)}+\left\|A^{1+\mu}\nabla^a F\right\|_{L^1([0,T];L^2_x)}\big],
\end{aligned}
\end{equation}
where $T>0$ is any fixed time. Note that in the proof of \eqref{YHC-6} and \eqref{YHC-7}, we have used
the strong Huygens principle and taking some delicate integral estimates on shifted spheres.
From \eqref{YHC-6} and \eqref{YHC-7}, we are motivated to make the following pointwise bootstrap assumptions:
	\begin{equation}\label{eq:sketch-bootstrap-pointwise-phi}
		\big\|AB^{\frac{\mu}{4}}\phi\big\|_{L^\infty([0,T]\times\mathbb R^3)}
		+\sum_{|a|\le 7}\big\|
		AB^{\frac{\mu}{4}}\nabla^a\partial\phi
		\big\|_{L^\infty([0,T]\times\mathbb R^3)}
		\le
		\varepsilon_1.
	\end{equation}
	Obviously, \eqref{eq:sketch-bootstrap-pointwise-phi} implies that \(\partial\phi\) fulfills \eqref{decay-1}
and further \(\partial\phi\) admits a mild additional
	\(B^{-\mu/4}\) space-time decay.

 $\bullet$ Secondly, in view of the equation \eqref{eq:sketch-V-equation}, \eqref{eq:sketch-bootstrap-pointwise-phi} alone is not sufficient
 	to close the estimates for $V$ and $\phi$ in other left bootstrap
 	assumptions. Thanks to the null structure
 	of $\mathcal R_3$, $\mathcal R_3$ can be controlled by
 	$\bar\partial\phi$ or $\bar\partial\partial\phi$, where $\bar\partial\in\big\{\partial_t+\partial_r,\,
		\frac1r\Omega_{12},\,\frac1r\Omega_{13},\,	\frac1r\Omega_{23}\big\}$ is the good
 	derivative (see \cite{Alinhac:book}).
Note that
	\begin{equation}
		\label{eq:sketch-good-derivative-Gamma}
		|\bar\partial f(t,x)|
		\lesssim
		\frac1{A(t,x)}
		\sum_{\Gamma}|\Gamma f(t,x)|
		+
		\frac{B(t,x)}{A(t,x)}
		|\partial f(t,x)|,
	\end{equation}
	where $\Gamma\in\{L_j,\Omega_{kl}: 1\le j\le3, 1\le k<l\le3\}$.
	Meanwhile, the coefficients of \(\Gamma\) naturally contain first-order spacetime weights.
	Since only the space weight $\langle x\rangle^{1+\mu}$ is imposed in \eqref{YHC-1},
this implies that $\Gamma$ acts on $\phi$ at most once for the $L_x^2$ energy estimate of $\Gamma\phi$.
This motivates us  to make such bootstrap assumptions,
	\begin{equation}
		\label{eq:sketch-bootstrap-energy-Gamma}
		\sup_{0\le t\le T}
		(1+t)^{-\eta}
		\sum_{\Gamma}
		\|\partial\Gamma\phi(t)\|_{H^{11}(\mathbb R^3)}
		\le
		\varepsilon_1
	\end{equation}
	and
	\begin{equation}
		\label{eq:sketch-bootstrap-pointwise-Gamma}
		\sum_{\Gamma}
		\big\|
		A^{\frac{\mu}{2}}
		\Gamma\phi
		\big\|_{L^\infty([0,T]\times\mathbb R^3)}+
		\sum_{\Gamma}
		\sum_{|a|\le 6}
		\big\|
		A^{\frac{\mu}{2}}
		\partial\nabla^a\Gamma\phi
		\big\|_{L^\infty([0,T]\times\mathbb R^3)}
		\le
		\varepsilon_1.
	\end{equation}
	The weight $A^{\frac{\mu}{2}}$ in \eqref{eq:sketch-bootstrap-pointwise-Gamma} is carefully
	selected to guarantee the closure of the bootstrap assumption
	\eqref{eq:sketch-bootstrap-pointwise-phi}.

${\bf \bullet}$ Thirdly, in the closing procedure of
	\eqref{eq:sketch-bootstrap-pointwise-Gamma}, we will come across the cubic terms in \eqref{eq:sketch-V-equation}.
As treated in \cite{GaoLiYin2026}, it is necessary to derive the weighted \(L^2_tL^\infty_x\) Strichartz
	estimates for $\Gamma\phi$ since the extra time decay rate can be supplied.
	This motivates us to introduce the following Strichartz-type bootstrap assumptions,
	\begin{equation}\label{eq:sketch-bootstrap-Strichartz-Gamma}
		\sum_{\Gamma}
		\big\|A^{\frac{\mu}{3}}
		\Gamma\phi\big\|_{L^2([0,T];L^\infty(\mathbb R^3))}
		+\sum_{\Gamma}\sum_{|a|\le6}\big\|A^{\frac{\mu}{3}}\partial\nabla^a\Gamma\phi
		\big\|_{L^2([0,T];L^\infty(\mathbb R^3))}
		\le\varepsilon_1.
	\end{equation}
	Note that the choice of the weight $A^{\frac{\mu}{3}}$ comes from the constraint requirement in the weighted
Strichartz-type estimate for 3D linear wave equation (see Lemma \ref{lem:weak-weighted-L2t-Linfty-L2} below).
	
Based on the above explanations on the introduction of bootstrap assumptions, we now illustrate how to
close the bootstrap assumptions.

\vskip 0.1 true cm

{\bf Step 1. Closing the highest order energy bootstrap assumption \eqref{eq:sketch-ordinary-energy-bootstrap}}

\vskip 0.1 true cm

Rewriting the equation \eqref{potentia1} in the symmetric quasilinear form as
	$$\Box\phi=\sum_{\alpha,\beta=0}^3
	\mathcal Q^{\alpha\beta}(\partial\phi)
	\partial_{\alpha\beta}^2\phi
	\quad\text{with $\mathcal Q^{\alpha\beta}
	=\mathcal Q^{\beta\alpha}$}.$$
	It follows from the standard energy method that
	\begin{equation}
		\label{eq:sketch-energy-integral}
		\|\partial\phi(t)\|_{H^N_x}^2
		\lesssim
		\|\partial\phi(0)\|_{H^N_x}^2
		+
		\int_0^t
		\big(
		\|\partial\phi(\tau)\|_{B^{1+\delta}_{\infty,1}}
		+
		\|\partial\phi(\tau)\|_{B^{1+\delta}_{\infty,1}}^2
		\big)
		\|\partial\phi(\tau)\|_{H^N_x}^2\,d\tau,
	\end{equation}
where the definition of Besov space $B^{1+\delta}_{\infty,1}$ sees Definition \ref {YHC-30} below.

	In addition, the pointwise bootstrap assumption \eqref{eq:sketch-bootstrap-pointwise-phi} implies
	\[
	\|\partial\phi(\tau)\|_{B^{1+\delta}_{\infty,1}}
	+
	\|\partial\phi(\tau)\|_{B^{1+\delta}_{\infty,1}}^2
	\lesssim
	\varepsilon_1(1+\tau)^{-1}.
	\]
This, together with \eqref{eq:sketch-energy-integral}, yields \eqref{eq:sketch-ordinary-energy-bootstrap}.

\vskip 0.1 true cm
	
{\bf Step 2. Closing the bootstrap assumption \eqref{eq:sketch-bootstrap-energy-Gamma} of $\Gamma\phi$}

\vskip 0.1 true cm
	Applying \(\Gamma\) to \eqref{eq:sketch-V-equation} gives
	\[
	\Box\Gamma V=\Gamma\mathcal R(\phi).
	\]
	By the standard energy estimate, one has
	\begin{equation}\label{YHC-8}
		\begin{aligned}
			\sum_{\Gamma}
			\|\p\Gamma V(t)\|_{H^{11}_x}
			\ls
			&
			\sum_{\Gamma}
			\|\p\Gamma V(0)\|_{H^{11}_x}
			+
			\int_0^t
			\sum_{\Gamma}
			\|\nabla^{\le 11}\Gamma\mathcal R(\phi)(\tau)\|_{L^2_x}
			\,d\tau
			\\
			\ls
			&
			\sum_{\Gamma}
			\|\p\Gamma V(0)\|_{H^{11}_x}
			+
			\sum_{k=3}^{5}\int_0^t
			\|A(\tau,\cdot)\p\nabla^{\le 11}\mathcal R_k(\phi)(\tau)\|_{L^2_x}
			\,d\tau
			\\
			\lesssim&
			\varepsilon+\varepsilon_1^3(1+t)^\eta,
		\end{aligned}
	\end{equation}
	where the term \(A(\tau,\cdot)\p\nabla^{\le 11}\mathcal R(\phi)\) in \eqref{YHC-8} is estimated by
	using \eqref{eq:sketch-ordinary-energy-bootstrap} and \eqref{eq:sketch-bootstrap-pointwise-phi}
	with a weighted Moser's estimate.
	Due to $\Gamma V=\Gamma\phi-\Gamma(\phi\partial_t\phi)$,
then \eqref{eq:sketch-bootstrap-energy-Gamma} can be closed by smallness
of the quadratic error $\Gamma(\phi\partial_t\phi)$ in the bootstrap assumptions.
	
\vskip 0.1 true cm
	
{\bf Step 3. Closing the pointwise bootstrap assumption \eqref{eq:sketch-bootstrap-pointwise-phi}}

\vskip 0.1 true cm
Applying the crucial weighted $L^{\infty}$-$L^2$ estimate (see Lemma~\ref{lem:weighted-Linfty-L2} below) to \(V\)
and to its spatial derivatives yields that
\begin{equation}\label{YHC-9}
	\begin{aligned}
		\|AB^{\frac{\mu}{4}}V\|_{L^\infty_{t,x}}
		+\sum_{|a|\le 7}\big\|AB^{\frac{\mu}{4}}
		\nabla^a\p V\big\|_{L^\infty_{t,x}}
		\ls \text{initial data}
		+\sum_{|a|\le 10}\big\|A^{1+\frac{\mu}{4}}\nabla^a\mathcal R(\phi)\big\|_{L^1_tL^2_x}.
	\end{aligned}
\end{equation}

To treat $\big\|A^{1+\frac{\mu}{4}}\nabla^a\mathcal R_3\big\|_{L^1_tL^2_x}$ in \eqref{YHC-9}, we decompose
$\Bbb R^3$ into $\mathcal E_t\cup \mathcal I_t$ with
\begin{equation*}
	\mathcal E_t=\big\{x\in\R^3:r\ge \f{1+t}{2}\big\}, \quad \mathcal I_t=\big\{x\in\R^3:r\le \f{1+t}{2}\big\}.
\end{equation*}
Note that $r^{-1}\ls A^{-1}$ in $\mathcal{E}_t$ and $B \sim A$ in $\mathcal{I}_t$.
Then $\big\|A^{1+\frac{\mu}{4}}\nabla^a\mathcal R_3\big\|_{L^1_tL^2_x}$ can be treated
by the null condition structure of $\mathcal R_3$, \eqref{eq:sketch-good-derivative-Gamma} and
the bootstrap assumption \eqref{eq:sketch-ordinary-energy-bootstrap}, \eqref{eq:sketch-bootstrap-pointwise-phi}, \eqref{eq:sketch-bootstrap-energy-Gamma} and \eqref{eq:sketch-bootstrap-pointwise-Gamma}.

In addition, $\big\|A^{1+\frac{\mu}{4}}\nabla^a\mathcal R_4\big\|_{L^1_tL^2_x}$
and $\big\|A^{1+\frac{\mu}{4}}\nabla^a\mathcal R_5\big\|_{L^1_tL^2_x}$ in \eqref{YHC-9}
can be handled directly. Therefore, by \(V=\phi-\phi\partial_t\phi\), we arrive at
\begin{equation}\label{YHC-10}
	\begin{aligned}
\big\|
AB^{\frac{\mu}{4}}\phi
\big\|_{L^\infty_{t,x}}
+\sum_{|a|\le 7}
\big\|AB^{\frac{\mu}{4}}\nabla^a\partial\phi
\big\|_{L^\infty_{t,x}}
\lesssim
\varepsilon+\varepsilon_1^2+\varepsilon_1^3.
\end{aligned}
\end{equation}

\vskip 0.1 true cm
	
{\bf Step 4.  Closing the pointwise bootstrap assumption \eqref{eq:sketch-bootstrap-pointwise-Gamma}}

\vskip 0.1 true cm

Applying the weighted \(L^\infty\)-\(L^2\) estimate (see Lemma \ref{lem:weak-weighted-Linfty-L2} below)
to \(\Gamma V\) yields that
		\begin{equation}\label{1.29}
		\begin{aligned}
			&\sum_{\Gamma}\big\|A^{\frac{\mu}{2}}\Gamma V
			\big\|_{L^\infty_{t,x}}
			+\sum_{\Gamma}\sum_{|a|\le 6}
			\big\|A^{\frac{\mu}{2}}\p\nabla^a\Gamma V\big\|_{L^\infty_{t,x}}
			\ls \text{initial data}
			+\sum_{\Gamma}
			\sum_{|a|\le 9}
			\big\|
			A^{\frac{\mu}{2}}
			\nabla^a\Gamma\mathcal R(\phi)
			\big\|_{L^1_tL^2_x}.
		\end{aligned}
	\end{equation}
	Note that $\mathcal R_3$ is composed by $\partial^{\le2}\phi\cdot
	\partial^{\le2}\phi\cdot
	\partial^{\le2}\partial\phi$.
	It follows from direct computation on $\nabla^a\Gamma\mathcal R_3$, \eqref{eq:sketch-ordinary-energy-bootstrap}, \eqref{eq:sketch-bootstrap-pointwise-phi}, \eqref{eq:sketch-bootstrap-energy-Gamma}
and \eqref{eq:sketch-bootstrap-Strichartz-Gamma} that

\begin{equation}\label{YHC-11}
	\begin{aligned}
\sum_{\Gamma}
			\sum_{|a|\le 9}
			\big\|
			A^{\frac{\mu}{2}}
			\nabla^a\Gamma\mathcal R(\phi)
			\big\|_{L^1_tL^2_x}	
			\lesssim
	\varepsilon_1^3.
	\end{aligned}
	\end{equation}
Then it holds that
\begin{equation}\label{YHC-12}
	\begin{aligned}
	\sum_{\Gamma}
	\big\|
	A^{\frac{\mu}{2}}\Gamma\phi
	\big\|_{L^\infty_{t,x}}
	+\sum_{\Gamma}
	\sum_{|a|\le 6}
	\big\|
	A^{\frac{\mu}{2}}\partial\nabla^a\Gamma\phi
	\big\|_{L^\infty_{t,x}}
	\ls
	\varepsilon+\varepsilon_1^2+\varepsilon_1^3.
	\end{aligned}
	\end{equation}

\vskip 0.1 true cm

{\bf Step 5. Closing the Strichartz-type bootstrap assumption \eqref{eq:sketch-bootstrap-Strichartz-Gamma}}

 \vskip 0.1 true cm

 It follows from a weighted \(L^2_tL^\infty_x\)-\(L^2\) estimate for $V$ (see Lemma \ref{lem:weak-weighted-L2t-Linfty-L2} below) and analogous argument for \eqref{1.29}
that
\begin{equation}\label{YHC-13}
\begin{aligned}
\sum_{\Gamma}
\big\|A^{\frac{\mu}{3}}\Gamma\phi
\big\|_{L^2_tL^\infty_x}
+\sum_{\Gamma}\sum_{|a|\le 6}
\big\|	A^{\frac{\mu}{3}}\partial\nabla^a\Gamma\phi
\big\|_{L^2_tL^\infty_x}
\lesssim
\varepsilon+\varepsilon_1^2+\varepsilon_1^3.
\end{aligned}
\end{equation}

	Therefore, all bootstrap assumptions are closed by taking
	\(\varepsilon_1=C_0\varepsilon\) with \(C_0\) being suitably large.  By the standard continuity argument,
the global existence in  Theorem~\ref{thm:main-global} is derived. Meanwhile, \eqref{YHCCC-1}-\eqref{YHCCC-5} follow from
the bootstrap bounds. Based on this,  Theorems~\ref{thm:main-global-1}-\ref{thm:main-global}
 can be obtained.

	This paper is organized as follows. In
	Section~\ref{sec:Preliminaries}, we will list the notations for Littlewood-Paley decomposition and prove some basic
	estimates for 3D linear wave equations. These estimates include the weighted
	\(L^\infty\)-\(L^2\) estimates, the \(AB^\mu\)-weighted
	\(L^\infty\)-\(L^2\) estimates, and the weighted Strichartz estimates.
In addition, the properties of both the null condition and the good derivatives are illustrated.
	In Section \ref{sec:CVP-bootstrap-assumptions}, the bootstrap assumptions of $\phi$ are given. Meanwhile,
we derive some estimates for the higher time derivatives of $\phi$.
This will allow us to reduce the
	pointwise arguments on $\phi$ to the case for the spatial derivatives of $\phi$.
	In Section \ref{sec:Chaplygin-energy-estimates}, we complete the closure of the bootstrap
assumptions for the ordinary higher order derivatives of  $\phi$
by applying the standard energy method to \eqref{eq:Chaplygin-Cauchy}.
	In Sections \ref{sec:equation-for-V}-\ref{sec:weighted-Strichartz-estimates}, all remaining bootstrap
arguments are shown.
	The proofs of Theorems \ref{thm:main-global}-\ref{thm:main-global-1}
	are finished  by the continuity argument in Section \ref{sec:Proof-Theorem}.
	In the appendix, Section \ref{section a} includes the definition and properties of $A_2$ weight and some related weighted inequalities;
Section \ref{sec:b} collects several technical lemmas used in this paper.

	\subsection{Notations}\label{subsec:notations}
	
	\begin{description}
		\item [$\blacktriangleright $]
		$\partial=(\partial_t,\nabla)=(\partial_t,\p_x)= (\partial_t,\partial_{x_1},\partial_{x_2},\partial_{x_3})=(\partial_0,\partial_1,\partial_2,\partial_3)$.
		
		\item [$\blacktriangleright $]
		$\nabla^a=\partial_x^a=\partial_1^{a_1}\partial_2^{a_2}\partial_3^{a_3}$ for
		$a=(a_1,a_2,a_3)\in\mathbb N_0^3$, and $|a|=a_1+a_2+a_3$.
		
		\item [$\blacktriangleright $]
		For a space-time multi-index $\alpha=(\alpha_0,\alpha_1,\alpha_2,\alpha_3)\in\mathbb N_0^4$,
		$\partial^\alpha=\partial_t^{\alpha_0}\partial_1^{\alpha_1}
		\partial_2^{\alpha_2}\partial_3^{\alpha_3}$.
		
		\item [$\blacktriangleright $]
		$\p \phi(0) = \ve(\phi_1,\nabla \phi_0).$
		
		\item [$\blacktriangleright $]
		$\Box=\partial_t^2-\Delta$, $\Delta=\ds\sum_{k=1}^3\partial_k^2$.
		
		\item [$\blacktriangleright $]
		$r=|x|=\sqrt{x_1^2+x_2^2+x_3^2}$, $\omega=x/r$ for $r>0$.
		
		\item [$\blacktriangleright $]
		$\langle x\rangle=(1+|x|^2)^{1/2}$.
		
		\item [$\blacktriangleright $]
		$A=A(t,x)=1+t+|x|$ and
		$B=B(t,x)=1+|t-|x||$.
		
		\item [$\blacktriangleright $]
		$Q_0(f,g)=\partial_t f\,\partial_t g-\nabla f\cdot\nabla g$.

		\item [$\blacktriangleright $]
		$\Omega_{kl}=x_k\partial_l-x_l\partial_k$ for $1\le k<l\le3$.
		
		\item [$\blacktriangleright $]
		$L_j=t\partial_j+x_j\partial_t$ for $1\le j\le3$.
		
		\item [$\blacktriangleright $]
		$\Gamma\in\{L_j,\Omega_{kl}: 1\le j\le3, 1\le k<l\le3\}$.
		
		\item [$\blacktriangleright $]
		$\Gamma^\ell$ means the identity operator when $\ell=0$, and one of
		$L_k,\Omega_{kj}$ when $\ell=1$.
		
		\item [$\blacktriangleright $]
		$
			\bar\partial
			\in
			\big\{
			\partial_t+\partial_r,\,
			\frac1r\Omega_{12},\,
			\frac1r\Omega_{13},\,
			\frac1r\Omega_{23}
			\big\}.
		$
		
		\item [$\blacktriangleright $]For $f = (f^1,\cdots,f^m)$, $\p f=   (\p f^1,\cdots,\p f^m)$, $\ds|\p^{\le j}f|=\big(\sum_{0\le|\alpha|
			\le j}|\p^\alpha f|^2\big)^\frac12$.

		\item [$\blacktriangleright $]
		$
			|\bar\partial f|
			=
			|\partial_t f+\partial_r f|
			+\ds\sum_{1\le k<j\le3}
			\big|\frac1r\Omega_{kj}f\big|.
		$

		\item [$\blacktriangleright $]
		For $1\le p<\infty$,
		\begin{equation*}
			\|F\|_{L^p_tL^q_x}
			=
			\big(
			\int_0^T\|F(t,\cdot)\|_{L^q_x}^p\,dt
			\big)^{1/p}.
		\end{equation*}
		
		\item [$\blacktriangleright $]
		$\|F\|_{L^\infty_{t,x}}=\|F\|_{L^\infty([0,T]\times\mathbb R^3)}$.
		
		\item [$\blacktriangleright $]
		If $f=(f^1,\cdots,f^m)$ and $\|\cdot\|$ is a norm, then
		$\|f\|=\ds\sum_{k=1}^m\|f^k\|$.
		
		\item [$\blacktriangleright $] $\|f\|_{L^p(\omega)}=\|f\|_{L^p(\R^d,\omega(x)dx)} = (\int_{\R^d} |f(x)|^p\omega(x)dx)^{1/p}$ for $p \in [1,\infty)$ and $\omega(x)\ge 0$.
		
		\item [$\blacktriangleright $]$\Pk$ (resp. $\pk$) is the nonhomogeneous (resp. homogeneous) Littlewood-Paley projection onto frequency $2^k$ (see the details in \eqref{YHCCC-001}).
		\item [$\blacktriangleright $] \begin{equation*}
			\begin{split}
				\cY_k&=\cY_k^1\cup\cY_k^2,\\
				\cY_k^1&=\{(k_1,k_2,k_3)\in\Z^3: |\max_{l=1,2,3}\{k_l\}-k|\le4,k_1,k_2,k_3\ge-1\},\\
				\cY_k^2&=\{(k_1,k_2,k_3)\in\Z^3: \max_{l=1,2,3}\{k_l\}\ge k+4,
				\max_{l=1,2,3}\{k_l\}-{\rm med}_{l=1,2,3}\{k_l\}\le4,k_1,k_2,k_3\ge-1\}.
			\end{split}
		\end{equation*}
		
		\item [$\blacktriangleright $]$R=(R_1,R_2,R_3)=\ds\f{\nabla}{|\nabla|}$ is the Riesz transformation.
		
		\item [$\blacktriangleright $]
		$
			\mathcal{S}_{\rho}
			=
			\ds\frac{\sin(\rho|\nabla|)}{|\nabla|},
			\qquad
			\mathcal{T}_{\rho}
			=
			\cos(\rho|\nabla|).
		$

		\item [$\blacktriangleright $]
			For non-negative quantities $f$ and $g$, the notation $f\ls g$ means
			$f\le Cg$, while $f\sim g$ means
			$C_1g\le f\le C_2g$, where $C,C_1,C_2>0$ are generic constants independent of $\ve$ but depending on $\mu$.

	\end{description}

	\section{Preliminaries}\label{sec:Preliminaries}
	
	\subsection{Notations for dyadic decomposition}\label{subsec:Notations-for-dyadic-decomposition}
	For function $f$ on $\mathbb{R}^3$, define its Fourier transformation as
	\begin{align*}
		\hat{f}(\xi)=\mathscr{F}f(\xi)=\int_{\mathbb{R}^3} e^{-i x\cdot\xi}f(x)dx\quad\text{with $i=\sqrt{-1}$ and $\xi\in\Bbb R^3$}.
	\end{align*}
	Choose a smooth cutoff function $\psi\colon\mathbb{R}\rightarrow[0,1]$, which equals 1 on $[-5/4,5/4]$ and vanishes outside
	$[-8/5,8/5]$. Set
	\begin{align*}
		&\dot{\psi}_{k}(x)=\psi(|x|/2^{k})-\psi(|x|/2^{k-1}),\quad k\in\mathbb{Z},\\
		&\psi_{k}(x)=\dot{\psi}_k(x),\quad k\in\mathbb{N}_0,\\
		&\psi_{-1}(x)=1-\sum_{k\geq 0}\psi_{k}(x)=\psi(2|x|),\\
		&\dot{\psi}_{I}=\sum_{k\in I\cap\mathbb{Z}}\dot{\psi}_{k}, \quad {\psi}_{I}=\sum_{k\in I\cap\mathbb{Z}\cap[-1,+\infty)}{\psi}_{k},
	\end{align*}
	where $I$ is any subset of $\mathbb{R}$. Note that $\psi_0=1$ on $\{x\in\R^3:|x|\in[4/5,5/4]\}$ and $\supp\psi_0\subseteq\{x\in\R^3:|x|\in [5/8,8/5]\}$. Let $\pk$ be the homogeneous
	Littlewood-Paley projection onto frequency $2^k$:
	\begin{equation*}
		\mathscr{F}(\pk f)(\xi)=\dot{\psi}_{k}(\xi)\mathscr{F}f(\xi),\quad k\in\mathbb{Z}.
	\end{equation*}
	In addition, for any subset $I$, define
	\begin{align*}
		\dot{P_I}f&=\sum_{k\in I\cap\mathbb{Z}}\pk f,\quad
		\dot{P}_{[[k]]}f=\sum_{l\in[k-1,k+1]\cap\mathbb{Z}}\dot{P}_l f.
	\end{align*}
	Let $\Pk$ be the nonhomogeneous
	Littlewood-Paley projection onto frequency $2^k$:
	\begin{equation}\label{YHCCC-001}
		\begin{split}
			\mathscr{F}(\Pk f)(\xi)={\psi}_{k}(\xi)\mathscr{F}f(\xi),\quad k\in\mathbb{Z}\cap[-1,+\infty).
		\end{split}
	\end{equation}
	For any subset $I$, set
	\begin{align*}
		&P_I f=\sum_{k\in I\cap\mathbb{Z}\cap\lbrack-1,+\infty)}\Pk f,\quad
		P_{[[k]]}f= \dot{P}_{[[k]]}f\quad\text{for $k\geq 0$},\quad
		P_{[[-1]]}f= (P_{-1}+P_{0})f.
	\end{align*}
	It is easy to check that $P_{[[k]]}P_k=P_k$ and $\dot{P}_{[[k]]}\dot{P}_k=\dot{P}_k$.
	Let $\psi_{[[k]]}$ and $\dot{\psi}_{[[k]]}$ denote the symbols of $P_{[[k]]}$ and $\dot{P}_{[[k]]}$, respectively. Note that $\psi_{[[k]]} = \dot{\psi}_{[[k]]}$ for $k\in\N_0$.
	
	\begin{definition}
		Denote by $\mathcal{S}_h'(\mathbb{R}^3)$ the space of tempered distributions $u\in\mathcal{S}'(\R^3)$ such that
		\[
		\lim_{\lambda \to \infty} \left\| \theta(\lambda D) u \right\|_{L^\infty} = 0 \quad \text{for any } \theta\in  C_0^{\infty}(\mathbb{R}^3).
		\]
	\end{definition}
	For $f\in\mathcal{S}'(\R^3)$, $f=\ds\sum_{k\ge-1}P_kf$ in $\mathcal{S}'(\R^3)$ holds;
	for $f\in\mathcal{S}'_h(\R^3)$, $f=\ds\sum_{k\in\Z}\pk f$ in $\mathcal{S}'(\R^3)$ holds.
	\begin{lemma}
		Let $u \in \mathcal{S}'(\R^3)$. If $P_{-1}u\in L^p(\R^3)$ for $p\in[1,\infty)$ or $\hat{u}\in L^1_{loc}(\R^3)$, then $u \in \mathcal{S}'_h(\R^3)$.
	\end{lemma}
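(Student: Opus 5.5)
We need to show: for every $\theta\in C_0^\infty(\R^3)$, $\|\theta(\lambda D)u\|_{L^\infty}\to0$ as $\lambda\to\infty$. The basic tool is that $\theta(\lambda D)$ is convolution with the rescaled kernel
\[
K_\lambda(x)=\lambda^{-3}\check\theta(x/\lambda), \qquad \|K_\lambda\|_{L^r}=\lambda^{-3(1-1/r)}\|\check\theta\|_{L^r},
\]
and $\check\theta\in\mathcal S$. Both hypotheses are exploited by splitting $u$ into a low-frequency and a high-frequency part, and showing that only the low-frequency part matters as $\lambda\to\infty$.

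\emph{Case $\hat u\in L^1_{loc}$.} This is the direct case. Choose $R$ with $\supp\theta\subset B(0,R)$. Then $\widehat{\theta(\lambda D)u}=\theta(\lambda\xi)\hat u(\xi)$ is supported in $|\xi|\le R/\lambda$, so it is an $L^1$ function. By Fourier inversion,
\[
\|\theta(\lambda D)u\|_{L^\infty}\le (2\pi)^{-3}\|\theta\|_{L^\infty}\int_{|\xi|\le R/\lambda}|\hat u(\xi)|\,d\xi .
\]
The right-hand side tends to $0$ by dominated convergence, since $\hat u$ is integrable on $B(0,R)$. The only care needed is justifying the inversion. For this I use that $\theta(\lambda\cdot)\hat u$ is a compactly supported $L^1$ distribution, so its inverse Fourier transform is the bounded continuous function given by the integral.

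\emph{Case $P_{-1}u\in L^p$, $1\le p<\infty$.} For $\lambda\ge\lambda_0$ large, $\supp\theta(\lambda\cdot)$ lies inside the ball where $\psi_{-1}=\psi(2|\xi|)\equiv1$, namely $|\xi|\le 5/8$. Hence $\theta(\lambda D)u=\theta(\lambda D)P_{-1}u$. Young's inequality then gives
\[
\|\theta(\lambda D)u\|_{L^\infty}\le\|K_\lambda\|_{L^{p'}}\|P_{-1}u\|_{L^p}=\lambda^{-3/p}\|\check\theta\|_{L^{p'}}\|P_{-1}u\|_{L^p}\to0,
\]
because $p<\infty$. For $p=1$ we have $p'=\infty$ and the decay is $\lambda^{-3}$, so all $p\in[1,\infty)$ are covered uniformly. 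Note that $\theta(\lambda D)u$ is well-defined as a smooth function because its Fourier support is compact.

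The main, though still mild, obstacle is the identity $\theta(\lambda D)u=\theta(\lambda D)P_{-1}u$ at the level of tempered distributions. It holds because $\theta(\lambda\xi)\psi_{-1}(\xi)=\theta(\lambda\xi)$ for large $\lambda$, and multiplication of $\hat u$ by smooth compactly supported symbols is associative.
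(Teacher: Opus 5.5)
Your proof is correct and is essentially the standard argument in the reference the paper cites for this lemma (Bahouri--Chemin--Danchin, Chapter 1); the paper itself omits the details. In the $L^1_{loc}$ case you use Fourier inversion and the vanishing of $\int_{|\xi|\le R/\lambda}|\hat u|\,d\xi$. In the $L^p$ case you reduce to $\theta(\lambda D)u=\theta(\lambda D)P_{-1}u$ for large $\lambda$ (since $\psi_{-1}\equiv1$ on $|\xi|\le 5/8$) and then apply Young's inequality with $\|K_\lambda\|_{L^{p'}}=\lambda^{-3/p}\|\check\theta\|_{L^{p'}}$.
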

	\begin{proof}
		Since its proof can be found in \cite[Chapter 1]{Chemin}, we omit the details here.
	\end{proof}
	\begin{definition}\label{YHC-30}
		Let \( s \in \mathbb{R} \) and \( 1 \leq p, r \leq \infty \). The nonhomogeneous Besov space \( B^s_{p,r}(\R^3) \) consists of all  \( u\in \mathcal{S}' \) with $\|u\|_{B^s_{p,r}(\R^3)}= \big\| \big( 2^{ks} \|P_k u\|_{L^p(\R^3)} \big)_{k \in \mathbb{Z}\cap[- 1,+\infty)} \big\|_{\ell^r(\mathbb{Z}\cap[- 1,+\infty))} < \infty$.  The homogeneous Besov space \( \dot{B}^s_{p,r}(\R^3)\) consists of \( u\in \mathcal{S}'_h \) with $\|u\|_{\dot{B}^s_{p,r}(\R^3)}= \big\|\big( 2^{ks} \|\pk  u\|_{L^p(\R^3)}\big)_{k \in \mathbb{Z}}\big\|_{\ell^r(\mathbb{Z})}<\infty$.
	\end{definition}
	Note that $H^s(\R^3)=B^s_{2,2}(\R^3)$. For $s>0,\ p\in[1,\infty),\ r\in[1,\infty]$, one has $B^s_{p,r}(\R^3) = \dot{B}^s_{p,r}(\R^3)\cap L^p(\R^3)$.
	In addition, we set
	\begin{equation*}
		\begin{split}
			\cX_k&=\cX_k^1\cup\cX_k^2,\\
			\cX_k^1&=\{(k_1,k_2)\in\Z^2: |\max_{l=1,2}\{k_l\}-k|\le4,k_1,k_2\ge-1\},\\
			\cX_k^2&=\{(k_1,k_2)\in\Z^2: \max_{l=1,2}\{k_l\}\ge k+4,
			\max_{l=1,2}\{k_l\}-\min_{l=1,2}\{k_l\}\le4,k_1,k_2\ge-1\};\\
			\cY_k&=\cY_k^1\cup\cY_k^2,\\
			\cY_k^1&=\{(k_1,k_2,k_3)\in\Z^3: |\max_{l=1,2,3}\{k_l\}-k|\le4, k_1,k_2,k_3\ge-1\},\\
			\cY_k^2&=\{(k_1,k_2,k_3)\in\Z^3: \max_{l=1,2,3}\{k_l\}\ge k+4,
			\max_{i=1,2,3}\{k_i\}-{\rm med}_{l=1,2,3}\{k_l\}\le4, k_1,k_2,k_3\ge-1\}.
		\end{split}
	\end{equation*}
	As in \cite[page 799]{IP13}, if $P_k\big(\prod_{\iota=1}^2 P_{k_\iota}f_\iota\big)\neq0$,
	then $(k_1,k_2)\in\cX_k$; if $P_k\big(\prod_{\iota=1}^3 P_{k_\iota}f_\iota\big)\neq0$, then $(k_1,k_2,k_3)\in\cY_k$.
	Obviously, one has $2^k\ls 2^{\max\{k_1,k_2,k_3\}}$ for $(k_1,k_2,k_3)\in\cY_k$.

	\subsection{Weighted $L^\infty$-$L^2$ estimates with the weight $A^{\mu}$}\label{subsec:weighted-linear-Linfty-L2-estimate-1}
			
In this subsection, motivated by Section 3 and especially Corollary~3.8 of \cite{GaoLiYin2026}, we establish some weighted \(L^\infty\)-\(L^2\) estimate for the 3D linear wave equation.  The precise statement on the  weighted $L^\infty$-$L^2$ estimates
will be given in Lemma~\ref{lem:weak-weighted-Linfty-L2} below.

At first, we give an integral estimate on shifted spheres.
			\begin{lemma}\label{lem:weak-geom-sphere}
				For \(0<\mu<1\), \(0\le s\le t\), \(\rho=t-s\ge0\) and
				\(x\in\mathbb R^3\), it holds that
				\begin{equation}\label{eq:weak-geom-sphere-surface}
				\int_{|y-x|=\rho}
				(1+s+|y|)^{-2\mu}\,dS_y
				\ls
				\rho^2 A^{-2\mu}(t,x).
				\end{equation}
			\end{lemma}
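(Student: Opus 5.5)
The plan is to reduce the surface integral to a one-dimensional radial integral and split according to how the radius $\rho$ compares with $|x|$. The key identity is $A(t,x)=1+t+|x|=1+s+\rho+|x|$, since $t=s+\rho$. So it suffices to show that, on average over the sphere $\{|y-x|=\rho\}$, the quantity $1+s+|y|$ is comparable to $1+s+\rho+|x|$, with the sphere's area $4\pi\rho^2$ supplying the factor $\rho^2$.

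\emph{Case 1: $\rho\le |x|/2$ or $\rho\ge 2|x|$.} In this case every $y$ on the sphere satisfies
\[
|y|\ge \bigl||x|-\rho\bigr|\ge \tfrac12\max\{|x|,\rho\}\ge \tfrac14(|x|+\rho),
\]
so $1+s+|y|\gtrsim 1+s+|x|+\rho=A(t,x)$. Hence the integrand is $\lesssim A^{-2\mu}(t,x)$ pointwise on the sphere. Multiplying by the area $4\pi\rho^2$ gives \eqref{eq:weak-geom-sphere-surface} directly; this also covers $x=0$.

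\emph{Case 2: $|x|/2<\rho<2|x|$.} Here the sphere may pass near the origin, so a pointwise bound fails and we must average. For $|x|>0$ I will use the classical formula for radial functions on shifted spheres:
\[
\int_{|y-x|=\rho} f(|y|)\,dS_y=\frac{2\pi\rho}{|x|}\int_{||x|-\rho|}^{|x|+\rho} f(\lambda)\,\lambda\,d\lambda .
\]
Since $\rho/|x|\le 2$ and $|x|+\rho\le 3\rho$, the integral in \eqref{eq:weak-geom-sphere-surface} is
\[
\lesssim \int_0^{3\rho}(a+\lambda)^{-2\mu}\lambda\,d\lambda,\qquad a:=1+s .
\]
I will show this is $\lesssim \rho^2(a+\rho)^{-2\mu}$, treating two subcases.
\begin{itemize}
\item If $\rho\le a$, bound the integrand by $a^{-2\mu}\lambda$. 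This gives $\lesssim \rho^2a^{-2\mu}\sim\rho^2(a+\rho)^{-2\mu}$.
\item If $\rho> a$, split the integral at $\lambda=a$. The piece over $[0,a]$ is $\lesssim a^{2-2\mu}$. On $[a,3\rho]$ use $(a+\lambda)^{-2\mu}\le\lambda^{-2\mu}$, so that piece is $\lesssim\int_a^{3\rho}\lambda^{1-2\mu}\,d\lambda\lesssim \rho^{2-2\mu}$. This step uses exactly $2\mu<2$, i.e.\ the hypothesis $\mu<1$. Since $a<\rho$, both pieces are $\lesssim\rho^{2-2\mu}\sim\rho^2(a+\rho)^{-2\mu}$.
\end{itemize}
Finally, in Case 2 we have $|x|\sim\rho$, so $a+\rho=1+s+\rho\sim 1+s+\rho+|x|=A(t,x)$, which yields \eqref{eq:weak-geom-sphere-surface}.

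The only step requiring care is Case 2, where the sphere can pass through or near the origin and $1+s+|y|$ can be as small as $1+s$. The point is that the area of the part of the sphere where $|y|\le\lambda$ is only $\sim\rho\lambda$, which the radial formula makes precise, and the integrability of $\lambda^{1-2\mu}$ at infinity is not needed because the range is bounded by $3\rho$. The implicit constants depend only on $\mu$, through the factor $1/(2-2\mu)$.
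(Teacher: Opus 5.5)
Your proof is correct and follows essentially the same route as the paper. Both use the radial reduction $\int_{|y-x|=\rho}h(|y|)\,dS_y=\frac{2\pi\rho}{|x|}\int_{||x|-\rho|}^{|x|+\rho}h(\lambda)\lambda\,d\lambda$, with one case where $1+s+|y|\gtrsim A(t,x)$ holds pointwise on the sphere and one case where the $\lambda$-integral is bounded directly using $\mu<1$. The only difference is the case split: the paper compares $\min\{|x|,\rho\}$ with $A(t,x)/8$ rather than comparing $\rho$ with $|x|$, which puts the configuration $|x|\sim\rho\ll 1+s$ into the pointwise case and so removes the need for your subcase $\rho\le 1+s$.
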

			
			\begin{proof}
				Denote $r = |x|$. For any radial function \(h=h(|y|)\), one has
that for  \(r>0\) and \(\rho>0\),
				\begin{equation}\label{eq:sphere-one-dimensional-reduction}
					\begin{aligned}
						\int_{|y-x|=\rho} h(|y|)\,dS_y
						&=
						\rho^2\int_{\mathbb S^2} h(|x+\rho\omega|)\,d\omega  \\
						&=
						2\pi\rho^2
						\int_{-1}^1
						h\!\big((r^2+\rho^2+2r\rho\eta)^{1/2}\big)\,d\eta  \\
						&=
						2\pi\rho^2
						\int_{|r-\rho|}^{r+\rho}
						h(\lambda)\frac{\lambda}{r\rho}\,d\lambda  \\
						&=
						\frac{2\pi\rho}{r}
						\int_{|r-\rho|}^{r+\rho} h(\lambda)\lambda\,d\lambda .
					\end{aligned}
				\end{equation}
				
				We now prove \eqref{eq:weak-geom-sphere-surface}. When \(\rho=0\), \eqref{eq:weak-geom-sphere-surface}
obviously holds. When \(r=0\), one has
				\[
				\int_{|y|=\rho}
				(1+s+|y|)^{-2\mu}\,dS_y
				=
				4\pi\rho^2(1+s+\rho)^{-2\mu}.
				\]
				Due to $A(t,0)=1+t=1+s+\rho$,
				we have
				\[
				\int_{|y|=\rho}
				(1+s+|y|)^{-2\mu}\,dS_y
				\lesssim
				\rho^2 A^{-2\mu}(t,0).
				\]
				
Next,  \(r>0\) and \(\rho>0\) are assumed. Set $h(\lambda)=(1+s+\lambda)^{-2\mu}$ in \eqref{eq:sphere-one-dimensional-reduction}.
				Then
				\begin{equation}
					\label{eq:weak-I-definition}
					\begin{aligned}
						I=
						\int_{|y-x|=\rho}
						(1+s+|y|)^{-2\mu}\,dS_y
						=
						\frac{2\pi\rho}{r}
						\int_{|r-\rho|}^{r+\rho}
						\lambda(1+s+\lambda)^{-2\mu}\,d\lambda .
					\end{aligned}
				\end{equation}
				Let $m_0=\min\{r,\rho\}$ and $M_0=\max\{r,\rho\}$.
				Note that $A(t,x)=1+s+\rho+r$.

				\vskip 0.1 true cm
				\noindent
				\textbf{Case 1. \(m_0\le A(t,x)/8\)}
\vskip 0.1 true cm
				For $\lambda\in[|r-\rho|,r+\rho]=[M_0-m_0,M_0+m_0]$, one has
				\[
				1+s+\lambda
				\ge
				1+s+M_0-m_0
				=
				A(t,x)-2m_0
				\ge
				\frac34A(t,x).
				\]
				Therefore, by \(M_0m_0=r\rho\), we arrive at
				\begin{equation}
					\label{eq:weak-geom-small-min-surface}
					\begin{aligned}
						I
						\lesssim
						\frac{\rho}{r}
						A^{-2\mu}(t,x)
						\int_{M_0-m_0}^{M_0+m_0}
						\lambda\,d\lambda
						=
						\frac{\rho}{r}
						A^{-2\mu}(t,x)
						\cdot 2M_0m_0
						=
						2\rho^2A^{-2\mu}(t,x).
					\end{aligned}
				\end{equation}
				
				\smallskip
				\noindent
				\textbf{Case 2. \(m_0>A(t,x)/8\)}
\vskip 0.1 true cm

			In this case, one has
				$
				r\rho=M_0m_0\ge m_0^2\ge \frac{A^2(t,x)}{64}.
				$
				Moreover, \(r+\rho\le A(t,x)\) holds.  Due to \(0<\mu<1\) and \(A(t,x)\ge1\), we have
				\[
				\int_0^{A(t,x)}
				\lambda(1+s+\lambda)^{-2\mu}\,d\lambda
				\le
				\int_0^{A(t,x)}
				\lambda(1+\lambda)^{-2\mu}\,d\lambda
				\lesssim
				1+\int_1^{A(t,x)} \lambda^{1-2\mu}\,d\lambda
				\ls
				A^{2-2\mu}(t,x).
				\]
				Thus, it follows from \eqref{eq:weak-I-definition} that
				\begin{equation}
					\label{eq:weak-geom-large-min-surface}
					\begin{aligned}
						I
						\lesssim
						\frac{\rho}{r}
						\int_0^{A(t,x)}
						\lambda(1+s+\lambda)^{-2\mu}\,d\lambda
						\ls
						\frac{\rho}{r}A^{2-2\mu}(t,x).
					\end{aligned}
				\end{equation}
				This, together with  \(r\rho\gtrsim A^2(t,x)\), yields
				\begin{equation}\label{eq:weak-geom-large-min-surface-1}
\begin{aligned}
				I\ls\frac{\rho}{r}A^{2-2\mu}(t,x)
				=
				\rho A^{-2\mu}(t,x)\frac{A^2(t,x)}{r}
				\ls
				\rho^2A^{-2\mu}(t,x).
				\end{aligned}
				\end{equation}
				Combining the two cases leads to \eqref{eq:weak-geom-sphere-surface}.
			\end{proof}

			Next, we establish some weighted $L^\infty$-$L^2$ estimates with the weight $A^{\mu}$ for the 3D wave propagators.

			\begin{lemma}\label{lem:weak-weighted-wave-propagators}
				Let \(0<\mu<1\), \(T>0\) and \(s\in[0,T]\).
				For \(\rho\ge0\), define
				\begin{equation*}
					\mathcal S_\rho
					=
					\frac{\sin(\rho|\nabla|)}{|\nabla|},
					\qquad
					\mathcal T_\rho
					=
					\cos(\rho|\nabla|).
				\end{equation*}
				Then
				\begin{equation}
					\label{eq:weak-S-basic}
					\left\|
					A^\mu(t,x)\mathcal S_{t-s}f
					\right\|_{L^\infty([s,T]\times\R^3)}
					\ls
					\sum_{|a|\le2}
					\left\|
					A^\mu(s,\cdot)\nabla^a f
					\right\|_{L^2_x},
				\end{equation}
				\begin{equation}
					\label{eq:weak-T-basic}
					\left\|
					A^\mu(t,x)\mathcal T_{t-s}f
					\right\|_{L^\infty([s,T]\times\R^3)}
					\ls
					\sum_{|a|\le3}
					\left\|
					A^\mu(s,\cdot)\nabla^a f
					\right\|_{L^2_x}.
				\end{equation}
			\end{lemma}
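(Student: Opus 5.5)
We will prove both estimates from the Kirchhoff representation of the 3D wave propagators together with Lemma~\ref{lem:weak-geom-sphere}. Fix $s\in[0,T]$, $t\in[s,T]$, $\rho=t-s$ and $x\in\mathbb R^3$. For $\mathcal S_\rho$ we use
\[
\mathcal S_\rho f(x)=\frac{1}{4\pi\rho}\int_{|y-x|=\rho} f(y)\,dS_y ,
\]
and for $\mathcal T_\rho=\partial_\rho\mathcal S_\rho$ we use
\[
\mathcal T_\rho f(x)=\frac{1}{4\pi\rho^2}\int_{|y-x|=\rho} f(y)\,dS_y+\frac{1}{4\pi\rho}\int_{|y-x|=\rho}\omega\cdot\nabla f(y)\,dS_y .
\]
The case of small $\rho$ (say $\rho\le1$) will be handled separately. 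There $A(t,x)\sim A(s,y)$ for $|y-x|\le 1$, so a localized Sobolev embedding $H^2(B(x,1))\hookrightarrow L^\infty$ applied to the $L^2$-bounded operators $\mathcal T_\rho$ and $\rho^{-1}\sin(\rho|\nabla|)/|\nabla|$ suffices. Concretely, we commute the weight past the multiplier by finite speed of propagation, using a cutoff $\chi(\cdot-x)$ with $\chi$ supported in a ball of radius 2. This accounts for up to three derivatives in \eqref{eq:weak-T-basic}.

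For $\rho\ge1$, the key step is a weighted trace inequality on spheres. Set $g(y)=A^\mu(s,y)f(y)$ and write $f=A^{-\mu}(s,\cdot)g$. By Cauchy--Schwarz on the sphere,
\[
|\mathcal S_\rho f(x)|\le \frac{1}{4\pi\rho}\Big(\int_{|y-x|=\rho}(1+s+|y|)^{-2\mu}dS_y\Big)^{1/2}\Big(\int_{|y-x|=\rho}|g|^2dS_y\Big)^{1/2}.
\]
Lemma~\ref{lem:weak-geom-sphere} bounds the first factor by $\rho A^{-\mu}(t,x)$, so it remains to show
\[
\sup_{\rho,x}\int_{|y-x|=\rho}|g|^2\,dS_y\lesssim \sum_{|a|\le2}\|\nabla^a g\|_{L^2}^2 .
\]
This is the uniform trace inequality for spheres: it follows from $H^{1/2+}\to L^2(\text{sphere})$ trace bounds with constants uniform in the radius once $\rho\ge1$, and to be safe we will simply use $H^2$ via $\sup_{x}|g|\le\|g\|_{H^2}$ combined with a radial integration argument. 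Finally, $\nabla^a g$ differs from $A^\mu\nabla^a f$ by terms where derivatives fall on the weight. Since $|\nabla^b A^\mu(s,\cdot)|\lesssim A^\mu(s,\cdot)$, these give $\|\nabla^a g\|_{L^2}\lesssim \sum_{|b|\le|a|}\|A^\mu(s,\cdot)\nabla^b f\|_{L^2}$, which yields \eqref{eq:weak-S-basic}.

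For $\mathcal T_\rho$ we apply the same argument to both terms of its representation. The first term carries an extra factor $\rho^{-1}\le1$. The second involves $\nabla f$, which costs one more derivative and gives three in total.

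The main obstacle is the uniform trace step. A naive trace bound on spheres of radius $\rho$ has a constant that is fine for $\rho\ge1$ but degenerates as $\rho\to0$, which is exactly why the small-$\rho$ regime is split off. If the trace route becomes messy, the fallback is to avoid traces altogether: bound $\int_{|y-x|=\rho}|g|^2dS_y\le 4\pi\rho^2\|g\|_{L^\infty}^2\lesssim\rho^2\|g\|_{H^2}^2$. This loses the $\rho^2$ normalization needed after dividing by $\rho$. To repair it, we would instead place $L^\infty$ on $g$ and $L^1$ on the weight in Cauchy--Schwarz, which by Lemma~\ref{lem:weak-geom-sphere} with $\mu/2$ gives $\rho^2A^{-\mu}$. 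This produces $\rho^{-1}\rho^2A^{-\mu}\|g\|_{L^\infty}$, which is not acceptable. We will therefore stay with the $L^2$ trace version.
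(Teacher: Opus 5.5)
Your proof is correct and is essentially the paper's. Both use Kirchhoff's formula, Cauchy--Schwarz on the sphere, Lemma~\ref{lem:weak-geom-sphere} for the weight factor, and an $H^2$ sphere-trace bound obtained from the $L^\infty$ embedding for $\rho\le1$ and radial integration for $\rho\ge1$ (exactly Lemma~\ref{lem:sphere-trace-H32}), with $\mathcal T_\rho$ handled through its two Kirchhoff terms at the cost of one more derivative.

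Two minor points. First, your separate finite-speed argument for $\rho\le1$ is valid but unnecessary: $\int_{|y-x|=\rho}|g|^2\,dS_y\le 4\pi\rho^2\|g\|_{L^\infty}^2$ shows the $H^2$ trace bound does not degenerate as $\rho\to0$. Second, $|\nabla^2A^\mu(s,\cdot)|\lesssim A^\mu(s,\cdot)$ fails near $y=0$ because $|y|$ is not $C^2$ there, so, as the paper does, you should first replace $A$ by the equivalent smooth weight $1+s+\langle y\rangle$ before differentiating.
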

			
			\begin{proof}
				Set $\widetilde A(t,x)=1+t+\langle x\rangle$.
				Obviously, $A(t,x)\sim \widetilde A(t,x)$ holds. One can use $\widetilde A(t,x)$
instead of $A(t,x)$ in \eqref{eq:weak-S-basic} and \eqref{eq:weak-T-basic}.
However, for simplicity and without confusion, the notation \(A\) is still applied.
				
				Note that
				\begin{equation}\label{eq:weak-S-sphere}
					\mathcal{S}_{\rho} f(x)
					=
					\frac{\sin(\rho|\nabla|)}{|\nabla|}f(x)
					=
					\frac{\rho}{4\pi}
					\int_{\mathbb S^2}f(x+\rho\omega)\,d\omega,
				\end{equation}
				\begin{equation}\label{eq:weak-T-sphere}
					\mathcal{T}_\rho f(x)
					=
					\cos(\rho|\nabla|)f(x)
					=
					\frac{1}{4\pi}
					\int_{\mathbb S^2}f(x+\rho\omega)\,d\omega
					+
					\frac{\rho}{4\pi}
					\int_{\mathbb S^2}
					\omega\cdot\nabla f(x+\rho\omega)\,d\omega.
				\end{equation}
				
				At first, we prove \eqref{eq:weak-S-basic}. Fix \(t\in[s,T]\) and set
				$
				\rho=t-s.
				$
				By \eqref{eq:weak-S-sphere} and Cauchy's inequality, one has
				\begin{equation}
					\label{eq:weak-S-CS}
					\begin{aligned}
						|\mathcal S_\rho f(x)|
						\lesssim
						\rho
						\big(
						\int_{\mathbb S^2}
						A^{-2\mu}(s,x+\rho\omega)\,d\omega
						\big)^{1/2}
						\big(
						\int_{\mathbb S^2}
						A^{2\mu}(s,x+\rho\omega)
						|f(x+\rho\omega)|^2\,d\omega
						\big)^{1/2}.
					\end{aligned}
				\end{equation}
				By Lemma~\ref{lem:weak-geom-sphere}, it holds
				\begin{equation}
					\label{eq:weak-geom-used}
					A^\mu(t,x)
					\left(
					\int_{\mathbb S^2}
					A^{-2\mu}(s,x+\rho\omega)\,d\omega
					\right)^{1/2}
					\ls 1.
				\end{equation}
				Indeed, for \(\rho>0\), we have that by Lemma \ref{lem:weak-geom-sphere},
				\[
				\int_{\mathbb S^2}
				A^{-2\mu}(s,x+\rho\omega)\,d\omega
				=
				\rho^{-2}
				\int_{|y-x|=\rho}
				(1+s+|y|)^{-2\mu}\,dS_y\ls
				A^{-2\mu}(t,x),
				\]
				and for \(\rho=0\),  \eqref{eq:weak-geom-used} is obvious.
				
				In addition, it follows from Lemma \ref{lem:sphere-trace-H32} that
				\begin{equation}
					\label{eq:weak-weighted-sphere-trace}
					\begin{aligned}
						\rho
						\big(
						\int_{\mathbb S^2}
						A^{2\mu}(s,x+\rho\omega)
						|f(x+\rho\omega)|^2\,d\omega
						\big)^{1/2}
						\lesssim
						\|A^\mu(s,\cdot)f\|_{H^2_x}.
					\end{aligned}
				\end{equation}
				Due to
				\[
				|\nabla^\gamma A^\mu(s,x)|
				\lesssim
				A^\mu(s,x)
				\quad\text{for $|\gamma|\le2$},
				\]
				one has
				\begin{equation}
					\label{eq:weak-weighted-H2-product}
					\|A^\mu(s,\cdot)f\|_{H^2_x}
					\ls
					\sum_{|a|\le2}
					\|A^\mu(s,\cdot)\nabla^a f\|_{L^2_x}.
				\end{equation}
				Combining \eqref{eq:weak-S-CS}-\eqref{eq:weak-weighted-H2-product} yields
				\eqref{eq:weak-S-basic}.
				
				For \eqref{eq:weak-T-basic}, the first term on the right hand side of
				\eqref{eq:weak-T-sphere} can be directly treated by Cauchy's inequality and the trace estimate
				in Lemma~\ref{lem:sphere-trace-H32} as above. The second term may be estimated as
				for \(\mathcal S_\rho(\nabla f)\) since the angular factor \(\omega\) is bounded.
				Consequently,
				\[
				A^\mu(t,x)|\mathcal T_\rho f(x)|
				\ls
				\sum_{|a|\le3}
				\|A^\mu(s,\cdot)\nabla^a f\|_{L^2_x},
				\]
				which gives \eqref{eq:weak-T-basic}.
				
			\end{proof}

			We now derive the weighted $L^\infty$-$L^2$ estimates with the weight $A^\mu$ for 3D linear wave equation.

			\begin{lemma}[{\bf Weighted \(L^\infty\)-\(L^2\) estimates with the weight $A^\mu$}]
				\label{lem:weak-weighted-Linfty-L2}
				Assume \(0<\mu<1\) and $m\in\N_0$. Let \(w\) solve
				\begin{equation*}
					\Box w=F,\quad
					(w,\partial_t w)(0,x)=(w_0,w_1)(x),
				\end{equation*}
				where $(t,x)\in [0,T]\times\mathbb R^3$ and $w_0\in L^2(\R^3)$.
				Then it holds that
				\begin{equation}
					\label{eq:weak-weighted-Linfty-L2-w}
					\begin{aligned}
						\|A^\mu w\|_{L^\infty([0,T]\times\mathbb R^3)}
						\ls
						\sum_{|a|\le2}
						\big[
						\left\|
						\langle x\rangle^\mu\nabla^a \partial w(0)
						\right\|_{L^2_x}
						+
						\left\|
						A^\mu\nabla^aF
						\right\|_{L^1([0,T];L^2_x)}
						\big],
					\end{aligned}
				\end{equation}
				\begin{equation}
					\label{eq:weak-weighted-Linfty-L2-dw}
					\begin{aligned}
						\sum_{|a|\le m}
						\|A^\mu\nabla^a\partial w\|_{L^\infty([0,T]\times\mathbb R^3)}
						&\ls
						\sum_{|a|\le m+3}
						\big[
						\left\|
						\langle x\rangle^\mu
						\nabla^a\partial w(0)
						\right\|_{L^2_x}
						+
						\left\|
						A^\mu\nabla^aF
						\right\|_{L^1([0,T];L^2_x)}
						\big].
					\end{aligned}
				\end{equation}
			\end{lemma}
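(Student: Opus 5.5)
The plan is to use the Duhamel representation
\[
w(t)=\mathcal T_t w_0+\mathcal S_t w_1+\int_0^t\mathcal S_{t-s}F(s)\,ds
\]
and apply Lemma~\ref{lem:weak-weighted-wave-propagators} to each piece. Throughout, $A^\mu(0,x)\sim\langle x\rangle^\mu$, and the weight $A^\mu(t,x)$ can be moved inside the $s$-integral.

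\textbf{The terms $w_1$ and $F$ in the estimate for $w$.} By \eqref{eq:weak-S-basic} with $s=0$, the $w_1$ term is bounded by $\sum_{|a|\le2}\|\langle x\rangle^\mu\nabla^a w_1\|_{L^2}$. For the Duhamel term, \eqref{eq:weak-S-basic} at each fixed $s$ gives
\[
A^\mu(t,x)\,|\mathcal S_{t-s}F(s)|\lesssim\sum_{|a|\le2}\|A^\mu(s,\cdot)\nabla^aF(s)\|_{L^2}.
\]
Integrating in $s$ then yields the $L^1_tL^2_x$ term in \eqref{eq:weak-weighted-Linfty-L2-w}.

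\textbf{The term $\mathcal T_t w_0$: the main obstacle.} The right-hand side of \eqref{eq:weak-weighted-Linfty-L2-w} contains only $\nabla w_0$, and no weighted norm of $w_0$ itself. So \eqref{eq:weak-T-basic} cannot be applied to $w_0$ directly. By \eqref{eq:weak-T-sphere},
\[
\mathcal T_t w_0=\frac{1}{4\pi}\int_{\mathbb S^2}w_0(x+t\omega)\,d\omega+\frac{t}{4\pi}\int_{\mathbb S^2}\omega\cdot\nabla w_0(x+t\omega)\,d\omega .
\]
The second term equals $\mathcal S_t$ applied to the vector $\nabla w_0$ (up to the bounded factor $\omega$), so it is handled exactly as the $w_1$ term with $\nabla^{\le2}\nabla w_0$.

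For the spherical mean in the first term I would use that $w_0\in L^2$ and $\nabla w_0$ decays. Writing the mean as $-\int_t^\infty\partial_\rho(\text{mean at radius }\rho)\,d\rho$ gives
\[
\Big|\frac1{4\pi}\int_{\mathbb S^2}w_0(x+t\omega)\,d\omega\Big|\lesssim\int_{|y-x|\ge t}\frac{|\nabla w_0(y)|}{|y-x|^{2}}\,dy .
\]
(Equivalently, start from $w_0(z)=-\frac1{4\pi}\int\frac{(z-y)\cdot\nabla w_0(y)}{|z-y|^3}dy$.) I split this integral according to $R=\max\{t,1\}$.

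On the near region $t\le|y-x|\le 1$, which is nonempty only when $t<1$, I bound $|\nabla w_0(y)|\lesssim\langle y\rangle^{-\mu}\sum_{|a|\le2}\|\langle\cdot\rangle^\mu\nabla^a\nabla w_0\|_{L^2}$ by weighted Sobolev. Then I integrate $|y-x|^{-2}$ over the unit ball. Since $A(t,x)\sim\langle x\rangle$ there, this gives the required $A^{-\mu}$.

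On the far region $|y-x|\ge R$, Cauchy--Schwarz gives
\[
\int_{|y-x|\ge R}\frac{|\nabla w_0(y)|}{|y-x|^{2}}\,dy\le\|\langle y\rangle^\mu\nabla w_0\|_{L^2}\Big(\int_{|z|\ge R}|z|^{-4}\langle x+z\rangle^{-2\mu}dz\Big)^{1/2}.
\]
I expect the last integral to be $\lesssim R^{-1}(R+|x|)^{-2\mu}\lesssim A^{-2\mu}(t,x)$. I would verify this bound using the shell-by-shell estimate of Lemma~\ref{lem:weak-geom-sphere}, applied with $s=0$ and $\rho=|z|$; it gives a shell integral $\lesssim\rho^2(1+\rho+|x|)^{-2\mu}$, which is integrable against $\rho^{-4}$ for $\rho\ge R$. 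This integral is the most delicate point of the proof.

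\textbf{Derivative estimate \eqref{eq:weak-weighted-Linfty-L2-dw}.} For spatial derivatives I differentiate the Duhamel formula:
\[
\nabla^a\partial_jw=\mathcal T_t\nabla^a\partial_jw_0+\mathcal S_t\nabla^a\partial_jw_1+\int_0^t\mathcal S_{t-s}\nabla^a\partial_jF\,ds .
\]
Here \eqref{eq:weak-T-basic} and \eqref{eq:weak-S-basic} apply directly, since now only derivatives of $w_0$ appear. This uses at most $m+3$ derivatives of $\partial w(0)$ and $m+3$ derivatives of $F$.

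For $\partial_t w$ I use
\[
\partial_t w=\mathcal S_t\Delta w_0+\mathcal T_tw_1+\int_0^t\mathcal T_{t-s}F(s)\,ds .
\]
The term $\mathcal S_t\Delta w_0$ costs $\nabla^{\le m+2}\nabla w_0$. The term $\mathcal T_t\nabla^a w_1$ costs $\nabla^{\le m+3}w_1$ by \eqref{eq:weak-T-basic}. The Duhamel term costs $\|A^\mu\nabla^{\le m+3}F\|_{L^1_tL^2_x}$, again using \eqref{eq:weak-T-basic} at each fixed $s$ and integrating in $s$. These counts match \eqref{eq:weak-weighted-Linfty-L2-dw}.
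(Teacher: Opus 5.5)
Your proof is correct. Apart from the spherical mean of $w_0$ in the Kirchhoff formula, it follows the paper step by step. That covers Duhamel plus Lemma~\ref{lem:weak-weighted-wave-propagators} for $\mathcal S_t w_1$, the source term, the $t\int_{\mathbb S^2}\omega\cdot\nabla w_0$ piece, and all of \eqref{eq:weak-weighted-Linfty-L2-dw}.

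For the spherical mean, the paper first proves the weighted pointwise bound \eqref{eq:weak-weighted-Linfty-initial-displacement-final} on $w_0$ itself. That needs the homogeneous embedding $\|f\|_{L^\infty}\lesssim\|\nabla f\|_{H^1}$ on $\mathcal S'_h$ (hence Lemma~\ref{lem:weighted-data-in-Sh}). It also needs the weighted Hardy inequality of Lemma~\ref{lem:weighted-Hardy-initial-displacement} to absorb the commutator term $\langle x\rangle^{\mu-1}w_0$. The paper then bounds $\int_{\mathbb S^2}\langle x+t\omega\rangle^{-\mu}\,d\omega$ by Cauchy--Schwarz and Lemma~\ref{lem:weak-geom-sphere}.

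You never estimate $w_0$ directly. The radial fundamental theorem of calculus converts the mean into $\int_{|y-x|\ge t}|y-x|^{-2}|\nabla w_0(y)|\,dy$. Lemma~\ref{lem:weak-geom-sphere}, used shell by shell with $s=0$ and $t=\rho=|z|$, does give $\int_{|z|\ge R}|z|^{-4}\langle x+z\rangle^{-2\mu}\,dz\lesssim R^{-1}(1+R+|x|)^{-2\mu}\lesssim A^{-2\mu}(t,x)$. So the far part costs only $\|\langle x\rangle^\mu\nabla w_0\|_{L^2}$, with a factor $R^{-1/2}$ to spare. The near part appears only for $t<1$, where $A\sim\langle x\rangle$. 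It costs the weighted sup of $\nabla w_0$, i.e. $\nabla^{\le2}\nabla w_0$, which is exactly the budget of \eqref{eq:weak-weighted-Linfty-L2-w}.

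Your route is more elementary and self-contained: no Hardy inequality and no $\mathcal S'_h$ bookkeeping. The paper's route instead isolates a pointwise bound on the data, which it reuses verbatim with weight $\langle x\rangle^{1+\mu}$ in \eqref{eq:weighted-Linfty-initial-displacement} for Lemma~\ref{lem:weighted-Linfty-L2}.

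When writing it up, fix three small points, all harmless:
\begin{itemize}
\item Justify that the spherical mean $M(\rho)$ tends to $0$ as $\rho\to\infty$. For instance, $M'\in L^1(1,\infty)$ by your far-region bound, so the limit exists, and $\int_0^\infty\rho^2|M(\rho)|^2\,d\rho\lesssim\|w_0\|_{L^2}^2$ forces it to vanish.
\item The Newtonian representation has a plus sign: $w_0(z)=\frac1{4\pi}\int\frac{(z-y)\cdot\nabla w_0(y)}{|z-y|^3}\,dy$.
\item $\mathcal S_t\nabla^a\Delta w_0$ costs $\nabla^{\le m+3}\nabla w_0$, not $\nabla^{\le m+2}\nabla w_0$; this is still within the budget.
\end{itemize}
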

			
			\begin{proof}
				Note that
				\begin{equation}\label{YHC-36}
					w=w_{\mathrm{hom}}+w_F,
				\end{equation}
				where $w_F(t,x)=\int_0^t
					\mathcal S_{t-s}F(s,\cdot)(x)\,ds$
				and $w_{\mathrm{hom}}(t)
					=\mathcal T_t w_0+\mathcal S_t w_1$.

				We first prove \eqref{eq:weak-weighted-Linfty-L2-w}. It follows from \eqref{eq:weak-S-basic}
                and Minkowski's inequality that
				\begin{equation}
					\label{eq:weak-Duhamel-w}
					\begin{aligned}
						\|A^\mu w_F\|_{L^\infty_{t,x}}
						&\le
						\int_0^T
						\left\|
						A^\mu(t,x)\mathcal S_{t-s}F(s,\cdot)
						\right\|_{L^\infty([s,T]\times\mathbb R^3)}
						\,ds
						\\
						&\ls
						\sum_{|a|\le2}
						\int_0^T
						\|A^\mu(s,\cdot)\nabla^aF(s,\cdot)\|_{L^2_x}
						\,ds
						\\
						&=
						\sum_{|a|\le2}
						\|A^\mu\nabla^aF\|_{L^1_tL^2_x}.
					\end{aligned}
				\end{equation}
				Applying \eqref{eq:weak-S-basic} with \(s=0\) gives
				\begin{equation}
					\label{eq:weak-St-w1-w}
					\|A^\mu \mathcal S_t w_1\|_{L^\infty_{t,x}}
					\ls
					\sum_{|a|\le2}
					\|\langle x\rangle^\mu\nabla^a w_1\|_{L^2_x}.
				\end{equation}
				
				It remains to estimate \(\mathcal T_t w_0\). At first, we derive a weighted pointwise
				bound for $w_0$. For $f\in S'_h(\R^3)$, it follows from Bernstein's inequality that
				\begin{equation}\label{ineq:homogeneous-Sobolev-inequality}
					\|f\|_{L^\infty(\R^3)}
					\ls
					\sum_{k\in\Z}2^{\f32 k}\|\pk f\|_{L^2(\R^3)}
					\ls
					\sum_{k\in\Z}2^{\f12 k}\|\pk\nabla f\|_{L^2(\R^3)}
					\ls
					\|\nabla f\|_{H^{1}(\R^3)}.
				\end{equation}
				In addition, due to $\w{x}^{\mu} w_0 \in S'_h(\R^3)$ (see Lemma \ref{lem:weighted-data-in-Sh}),
                one has
				\begin{equation}
					\label{eq:weak-weighted-Linfty-initial-displacement}
					\begin{aligned}
						\|\langle x\rangle^\mu w_0\|_{L^\infty_x}
						&\lesssim
						\left\|
						\nabla\left(\langle x\rangle^\mu w_0\right)
						\right\|_{H^1_x}
						\ls
						\sum_{1\le |a|\le2}
						\|\langle x\rangle^\mu\nabla^a w_0\|_{L^2_x}
						+
						\|\langle x\rangle^{\mu-1}w_0\|_{L^2_x}.
					\end{aligned}
				\end{equation}
				Because of $\mu-1>-\frac32$,
				applying Lemma~\ref{lem:weighted-Hardy-initial-displacement}
				with parameter \(\mu-1\) yields
				\begin{equation}
					\label{eq:weak-Hardy-w0}
					\|\langle x\rangle^{\mu-1}w_0\|_{L^2_x}
					\ls
					\|\langle x\rangle^\mu\nabla w_0\|_{L^2_x}.
				\end{equation}
				Combining \eqref{eq:weak-weighted-Linfty-initial-displacement} and
				\eqref{eq:weak-Hardy-w0}, we arrive at
				\begin{equation}\label{eq:weak-weighted-Linfty-initial-displacement-final}
					\|\langle x\rangle^\mu w_0\|_{L^\infty_x}
					\ls
					\sum_{1\le |a|\le2}
					\|\langle x\rangle^\mu\nabla^a w_0\|_{L^2_x}.
				\end{equation}
				Note that by the Kirchhoff formula,
				\begin{equation}
					\label{eq:weak-Tt-w0-Kirchhoff}
					\mathcal T_t w_0(x)
					=
					\frac1{4\pi}
					\int_{\mathbb S^2}w_0(x+t\omega)\,d\omega
					+
					\frac{t}{4\pi}
					\int_{\mathbb S^2}\omega\cdot\nabla w_0(x+t\omega)\,d\omega.
				\end{equation}
				For the first term in the right-hand side of
				\eqref{eq:weak-Tt-w0-Kirchhoff}, by
				\eqref{eq:weak-weighted-Linfty-initial-displacement-final},
				Cauchy's inequality and Lemma~\ref{lem:weak-geom-sphere} with \(s=0\),
				we arrive at
				\begin{equation}
					\label{eq:weak-initial-displacement-spherical-average}
					\begin{aligned}
						&
						A^\mu(t,x)
						\big|
						\int_{\mathbb S^2}
						w_0(x+t\omega)\,d\omega
						\big|
						\\
						&\le
						A^\mu(t,x)
						\|\langle y\rangle^\mu w_0\|_{L^\infty_y}
						\int_{\mathbb S^2}
						\langle x+t\omega\rangle^{-\mu}\,d\omega
						\\
						&\lesssim
						A^\mu(t,x)
						\|\langle y\rangle^\mu w_0\|_{L^\infty_y}
						|\Sph|^{\f12}
						\left(
						\int_{\mathbb S^2}
						A^{-2\mu}(0,x+t\omega)\,d\omega
						\right)^{1/2}
						\\
						&\ls
						\|\langle y\rangle^\mu w_0\|_{L^\infty_y}
						\ls
						\sum_{1\le |a|\le2}
						\|\langle x\rangle^\mu\nabla^a w_0\|_{L^2_x}.
					\end{aligned}
				\end{equation}
				For the second term in the right-hand side of
				\eqref{eq:weak-Tt-w0-Kirchhoff}, as in the proof of
				\eqref{eq:weak-S-basic}, one has
				\begin{equation}
					\label{eq:weak-initial-displacement-gradient-average}
					\big\|
					A^\mu
					t
					\int_{\mathbb S^2}
					\omega\cdot\nabla w_0(x+t\omega)\,d\omega
					\big\|_{L^\infty_{t,x}}
					\ls
					\sum_{1\le |a|\le3}
					\|\langle x\rangle^\mu\nabla^a w_0\|_{L^2_x}.
				\end{equation}
				By \eqref{eq:weak-initial-displacement-spherical-average} and
				\eqref{eq:weak-initial-displacement-gradient-average}, we obtain
				\begin{equation}
					\label{eq:weak-Tt-w0-no-zero-order}
					\|A^\mu\mathcal T_t w_0\|_{L^\infty_{t,x}}
					\ls
					\sum_{1\le |a|\le3}
					\|\langle x\rangle^\mu\nabla^a w_0\|_{L^2_x}.
				\end{equation}
				Collecting \eqref{eq:weak-Duhamel-w}, \eqref{eq:weak-St-w1-w} and
				\eqref{eq:weak-Tt-w0-no-zero-order} yields
				\eqref{eq:weak-weighted-Linfty-L2-w}.
				
				We next prove \eqref{eq:weak-weighted-Linfty-L2-dw}. Note that
				\begin{equation*}
					\nabla w_F(t)
					=\int_0^t
					\mathcal S_{t-s}\nabla F(s)\,ds
				\quad\text{and}\quad
				\partial_t w_F(t)
					=\int_0^t \mathcal T_{t-s}F(s)\,ds.
				\end{equation*}
				Thus, for \(|a|\le m\), Lemma~\ref{lem:weak-weighted-wave-propagators} implies
				\begin{equation}
					\label{eq:weak-Duhamel-dw}
					\begin{aligned}
						\|A^\mu\nabla^a\partial w_F\|_{L^\infty_{t,x}}
						\ls
						\sum_{|b|\le |a|+3}
						\|A^\mu\nabla^bF\|_{L^1_tL^2_x}.
					\end{aligned}
				\end{equation}
				On the other hand, it holds $\nabla w_{\mathrm{hom}}
					=\mathcal T_t\nabla w_0+\mathcal S_t\nabla w_1$.
				Therefore, by Lemma~\ref{lem:weak-weighted-wave-propagators},
				\begin{equation}
					\label{eq:weak-homogeneous-space-dw}
					\begin{aligned}
						\|A^\mu\nabla^a\nabla w_{\mathrm{hom}}\|_{L^\infty_{t,x}}
						&\ls
						\sum_{|b|\le |a|+3}
						\|\langle x\rangle^\mu\nabla^b\nabla w_0\|_{L^2_x}
						+
						\sum_{|b|\le |a|+2}
						\|\langle x\rangle^\mu\nabla^b\nabla w_1\|_{L^2_x}
						\\
						&\ls
						\sum_{|b|\le |a|+3}
						\|\langle x\rangle^\mu\nabla^b\partial w(0)\|_{L^2_x}.
					\end{aligned}
				\end{equation}
				
				Due to
				\begin{equation}
					\label{eq:weak-homogeneous-time-identity}
					\begin{aligned}
						\partial_t w_{\mathrm{hom}}(t)
						=
						\partial_t\mathcal T_t w_0
						+
						\partial_t\mathcal S_t w_1
						=
						\mathcal S_t\Delta w_0+\mathcal T_t w_1,
					\end{aligned}
				\end{equation}
				one has
				\begin{equation}
					\label{eq:weak-homogeneous-time-dw}
					\begin{aligned}
						\|A^\mu\nabla^a\partial_t w_{\mathrm{hom}}\|_{L^\infty_{t,x}}
						&\ls
						\sum_{|b|\le |a|+2}
						\|\langle x\rangle^\mu\nabla^b\Delta w_0\|_{L^2_x}
						+
						\sum_{|b|\le |a|+3}
						\|\langle x\rangle^\mu\nabla^b w_1\|_{L^2_x}
						\\
						&\ls
						\sum_{|b|\le |a|+3}
						\|\langle x\rangle^\mu\nabla^b\partial w(0)\|_{L^2_x}.
					\end{aligned}
				\end{equation}
				
				Combining \eqref{eq:weak-Duhamel-dw},
				\eqref{eq:weak-homogeneous-space-dw} and
				\eqref{eq:weak-homogeneous-time-dw}, we obtain that for \(|a|\le m\),
				\begin{equation}\label{YHC-14}
					\|A^\mu\nabla^a\partial w\|_{L^\infty_{t,x}}
					\ls
					\sum_{|b|\le |a|+3}
					\big[
					\|\langle x\rangle^\mu\nabla^b\partial w(0)\|_{L^2_x}
					+\|A^\mu\nabla^bF\|_{L^1_tL^2_x}
					\big].
				\end{equation}
				Summing over \(|a|\le m\) for \eqref{YHC-14} yields \eqref{eq:weak-weighted-Linfty-L2-dw}.
			\end{proof}

	\subsection{Weighted $L^\infty$-$L^2$ estimates with the weight $AB^{\mu}$}\label{subsec:weighted-linear-Linfty-L2-estimate-2}
	
	In this subsection, we continue to establish the weighted \(L^\infty\)-\(L^2\) estimates with the weight \(AB^\mu\),
 which will be used later in Section \ref{sec:weighted-pointwise-estimates-I}. The introduction of the weight
function $B=1+\bigl|t-|x|\bigr|$ is motivated by the Klainerman-Sobolev inequality (see \eqref{eq:1.14}). Although the related proofs in this subsection are somewhat similar to those in Subsection \ref{subsec:weighted-linear-Linfty-L2-estimate-1}, we still provide the
 full details for the reader's convenience.

	At first, we give an integral estimate on shifted spheres as follows.
	
	\begin{lemma}\label{lem:geom-sphere}
		Let $\mu>0$, $0\le s\le t$, $\rho=t-s\ge 0$ and $x\in\mathbb R^3$.
		Then
		\begin{equation}\label{eq:geom-sphere-equiv}
			\int_{|y-x|=\rho}
			(1+s+|y|)^{-2-2\mu}\,dS_y
			\ls
			\rho^2 A^{-2}(t,x)B^{-2\mu}(t,x).
		\end{equation}
	\end{lemma}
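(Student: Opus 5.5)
We follow the proof of Lemma~\ref{lem:weak-geom-sphere}. Write $r=|x|$. The cases $\rho=0$ and $r=0$ are trivial: when $r=0$ we have $A(t,0)=1+t=1+s+\rho$ and $B(t,0)=1+t$, so the integral equals $4\pi\rho^2(1+t)^{-2-2\mu}\ls \rho^2A^{-2}B^{-2\mu}$. For $r,\rho>0$ we apply the one-dimensional reduction \eqref{eq:sphere-one-dimensional-reduction} with $h(\lambda)=(1+s+\lambda)^{-2-2\mu}$, which gives
\begin{equation*}
I=\frac{2\pi\rho}{r}\int_{|r-\rho|}^{r+\rho}\lambda(1+s+\lambda)^{-2-2\mu}\,d\lambda .
\end{equation*}
The key observation is that the lower endpoint satisfies $1+s+|r-\rho|\ge B(t,x)$. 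Indeed, $t-r=s+\rho-r$. If $\rho\ge r$ this is $s+(\rho-r)\ge 0$, so $1+s+|r-\rho|=B$. If $\rho<r$, then $|t-r|\le s+(r-\rho)$ by the triangle inequality, so again $1+s+|r-\rho|\ge B$.

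Set $m_0=\min\{r,\rho\}$, $M_0=\max\{r,\rho\}$, and split into two cases as before.

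\textbf{Case 1: $m_0\le A/8$.} On the interval of integration, $1+s+\lambda\ge A-2m_0\ge\frac34A$. We give up only part of the power: $(1+s+\lambda)^{-2-2\mu}\ls A^{-2}(1+s+|r-\rho|)^{-2\mu}\le A^{-2}B^{-2\mu}$. Since $\int_{M_0-m_0}^{M_0+m_0}\lambda\,d\lambda=2M_0m_0=2r\rho$, this gives $I\ls\frac{\rho}{r}A^{-2}B^{-2\mu}\,r\rho=\rho^2A^{-2}B^{-2\mu}$.

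\textbf{Case 2: $m_0>A/8$.} Here $r\rho\gtrsim A^2$ and $\lambda\le r+\rho\le A$. Write $\lambda\le 1+s+\lambda$, so that
\begin{equation*}
\int_{|r-\rho|}^{r+\rho}\lambda(1+s+\lambda)^{-2-2\mu}\,d\lambda\le\int_{1+s+|r-\rho|}^{\infty}u^{-1-2\mu}\,du\ls_\mu B^{-2\mu}.
\end{equation*}
Hence $I\ls\frac{\rho}{r}B^{-2\mu}=\frac{\rho^2}{r\rho}B^{-2\mu}\ls\rho^2A^{-2}B^{-2\mu}$, using $r\rho\gtrsim A^2$. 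Combining the two cases gives \eqref{eq:geom-sphere-equiv}.

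The main point requiring care is the lower bound $1+s+|r-\rho|\ge B$, which converts the radial decay into decay in $B$. Case 2 is where the extra power $\mu$ on the right is genuinely needed: the integral $\int^\infty u^{-1-2\mu}\,du$ must converge, and this costs a constant depending on $\mu$, consistent with the paper's convention that constants depend on $\mu$.
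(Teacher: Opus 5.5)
Your proof is correct and is essentially the paper's argument: you use the same one-dimensional reduction, followed by the same dichotomy. When the smaller of $r,\rho$ is small relative to $A$, the integrand is bounded pointwise on an interval of length $2\min\{r,\rho\}$; when it is large, $r\rho\gtrsim A^2$ and the tail integral is $\ls B^{-2\mu}$. The difference is only organizational: the paper first splits into $r\le\rho$ and $r\ge\rho$ and handles four subcases, whereas your bound $1+s+|r-\rho|\ge B$, together with splitting the exponent as $A^{-2}\cdot B^{-2\mu}$, collapses them into the two cases of Lemma~\ref{lem:weak-geom-sphere}.
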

	
	\begin{proof}
		Set \(r=|x|\). Substituting $h(\lambda)=(1+s+\lambda)^{-2-2\mu}$ into
		\eqref{eq:sphere-one-dimensional-reduction} yields that for $r>0$,
		\begin{equation}\label{eq:I-definition}
			I=
			\int_{|y-x|=\rho}
			(1+s+|y|)^{-2-2\mu}\,dS_y
			=
			\frac{2\pi\rho}{r}
			\int_{|r-\rho|}^{r+\rho}
			\lambda(1+s+\lambda)^{-2-2\mu}\,d\lambda .
		\end{equation}
		We next prove
		\begin{equation}\label{eq:I-goal}
			I\ls \rho^2 A^{-2}(t,x)B^{-2\mu}(t,x).
		\end{equation}

		\smallskip
		\noindent
		\textbf{Case 1. $r\le \rho$}
\vskip 0.1 true cm

		Note that $B(t,x)=1+|s+\rho-r|=1+s+\rho-r$
		and $A(t,x)=B(t,x)+2r$.
		
		If $r\le A(t,x)/4$, then $B(t,x)\ge A(t,x)/2$. We can obtain
		\begin{equation*}
			\begin{aligned}
				\int_{|r-\rho|}^{r+\rho} \lambda(1+s+\lambda)^{-2-2\mu}\,d\lambda
				\le
				\int_{\rho-r}^{\rho+r} 2\rho(1+s+\rho-r)^{-2-2\mu}\,d\lambda
				\le
				4 r\rho\,B^{-2-2\mu}.
			\end{aligned}
		\end{equation*}
		This, together with $A/2 \le B$, yields
		$I\ls \frac{\rho}{r} r\rho A^{-2}B^{-2\mu}
			\ls \rho^2 A^{-2}B^{-2\mu}$.

		If $r>A(t,x)/4$, then $r\rho/A^2(t,x)\gtrsim 1$. Moreover, it holds that
		\begin{equation*}
			\begin{aligned}
				\int_{|r-\rho|}^{r+\rho} \lambda(1+s+\lambda)^{-2-2\mu}\,d\lambda
				&\le
				\int_{\rho-r}^\infty (1+s+\lambda)^{-1-2\mu}\,d\lambda  \ls
				(1+s+\rho-r)^{-2\mu}
				\ls
				B^{-2\mu}.
			\end{aligned}
		\end{equation*}
		Due to $r\rho/A^2\gtrsim1$, one has
		$I\ls \frac{\rho}{r}B^{-2\mu}
			\ls \rho^2 A^{-2}B^{-2\mu}$.

		\vskip 0.1 true cm
		\noindent
		\textbf{Case 2. $r\ge \rho$}
\vskip 0.1 true cm

		Note that $A(t,x)=1+s+r+\rho$ and $B(t,x)=1+|s+\rho-r|$.

		If $\rho\le A(t,x)/4$, then $1+s+r-\rho=A(t,x)-2\rho\ge A(t,x)/2$. We have
		\begin{equation*}
			\begin{aligned}
				\int_{|r-\rho|}^{r+\rho} \lambda(1+s+\lambda)^{-2-2\mu}\,d\lambda
				\le
				\int_{r-\rho}^{r+\rho} 2r(1+s+r-\rho)^{-2-2\mu}\,d\lambda
				\ls
				r\rho A^{-2-2\mu}(t,x).
			\end{aligned}
		\end{equation*}
		By $B(t,x)\le A(t,x)$, one has
		$I\ls \frac{\rho}{r} r\rho A^{-2-2\mu}(t,x)
			\ls \rho^2 A^{-2}B^{-2\mu}(t,x)$.

		If $\rho>A(t,x)/4$, then $r\rho/A^2(t,x)\gtrsim1$.
		Moreover, by $1+s+r-\rho\ge B(t,x) $, one has
		\begin{equation*}
			\begin{aligned}
				\int_{|r-\rho|}^{r+\rho} \lambda(1+s+\lambda)^{-2-2\mu}\,d\lambda
				&\le
				\int_{r-\rho}^\infty (1+s+\lambda)^{-1-2\mu}\,d\lambda\ls
				(1+s+r-\rho)^{-2\mu}
				\ls B^{-2\mu}(t,x).
			\end{aligned}
		\end{equation*}
		Then $I\ls \frac{\rho}{r} r\rho A^{-2}(t,x) B^{-2\mu}(t,x)
			\ls \rho^2 A^{-2}(t,x)B^{-2\mu}(t,x)$.

		Collecting two cases above yields \eqref{eq:I-goal} when $r>0$.
		
		It remains to consider the case $r=0$. At this time, we have $|y|=\rho$ and
		\begin{equation*}
			\begin{aligned}
				I
				&=
				4\pi\rho^2(1+s+\rho)^{-2-2\mu}
				=
				4\pi\rho^2(1+t)^{-2-2\mu}.
			\end{aligned}
		\end{equation*}
		It follows from $A=B=1+t$ that
		$I\ls\rho^2 A^{-2}(t,x)B^{-2\mu}(t,x)$.
		Thus, \eqref{eq:geom-sphere-equiv} holds.
	\end{proof}

	We next establish the weighted $L^\infty$-$L^2$ estimates with the weight $AB^\mu$ for the 3D wave propagators.

	\begin{lemma}\label{lem:weighted-wave-propagators}
		Assume $\mu>0$, $T>0$ and $s\in[0,T]$.
		For $\rho\ge0$, define
		\begin{equation*}
			\mathcal{S}_{\rho}
			=\frac{\sin(\rho|\nabla|)}{|\nabla|},
			\qquad
			\mathcal{T}_{\rho}
			=\cos(\rho|\nabla|).
		\end{equation*}
		Then it holds that
		\begin{equation}
			\label{eq:Srho-basic-w}
			\left\|
			A(t,x)B^\mu(t,x)
			\mathcal{S}_{t-s}f
			\right\|_{L^\infty([s,T]\times\R^3)}
			\ls
			\sum_{|a|\le2}
			\left\|
			A^{1+\mu}(s,\cdot)\nabla^a f
			\right\|_{L^2_x},
		\end{equation}
		\begin{equation}
			\label{eq:dt-Srho-basic-w}
			\left\|
			A(t,x)B^\mu(t,x)
			\mathcal{T}_{t-s}f
			\right\|_{L^\infty([s,T]\times\R^3)}
			\lesssim
			\sum_{|a|\le3}
			\left\|
			A^{1+\mu}(s,\cdot)\nabla^a f
			\right\|_{L^2_x}.
		\end{equation}
	\end{lemma}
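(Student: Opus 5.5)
The argument copies the proof of Lemma~\ref{lem:weak-weighted-wave-propagators}, with Lemma~\ref{lem:geom-sphere} in place of Lemma~\ref{lem:weak-geom-sphere}. As there, we may replace $A$ by $\widetilde A(t,x)=1+t+\langle x\rangle$, so that $A^{1+\mu}(s,\cdot)$ is smooth in $x$. Fix $t\in[s,T]$ and set $\rho=t-s$.

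For \eqref{eq:Srho-basic-w}, start from the spherical mean representation \eqref{eq:weak-S-sphere}. Insert the factor $A^{-1-\mu}(s,x+\rho\omega)A^{1+\mu}(s,x+\rho\omega)$ and apply Cauchy's inequality on $\mathbb S^2$:
\[
|\mathcal S_\rho f(x)|\lesssim \Big(\int_{\mathbb S^2}(1+s+|x+\rho\omega|)^{-2-2\mu}d\omega\Big)^{1/2}\cdot \rho\Big(\int_{\mathbb S^2}A^{2+2\mu}(s,x+\rho\omega)|f(x+\rho\omega)|^2d\omega\Big)^{1/2}.
\]
For $\rho>0$, rescaling Lemma~\ref{lem:geom-sphere} by $\rho^{-2}$ bounds the first factor by $A^{-1}(t,x)B^{-\mu}(t,x)$. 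When $\rho=0$ we have $t=s$, and the first factor is $(1+t+|x|)^{-1-\mu}\le A^{-1}B^{-\mu}$ because $B\le A$. The second factor is controlled by the sphere trace estimate (Lemma~\ref{lem:sphere-trace-H32}) applied to $A^{1+\mu}(s,\cdot)f$, exactly as in \eqref{eq:weak-weighted-sphere-trace}; this gives $\|A^{1+\mu}(s,\cdot)f\|_{H^2_x}$. Finally, $|\nabla^\gamma A^{1+\mu}(s,x)|\lesssim A^{1+\mu}(s,x)$ for $|\gamma|\le 2$, which holds since $\widetilde A$ is smooth with bounded derivatives and $\widetilde A\ge1$. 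Hence the product rule gives $\|A^{1+\mu}(s,\cdot)f\|_{H^2}\lesssim\sum_{|a|\le2}\|A^{1+\mu}(s,\cdot)\nabla^af\|_{L^2}$. Multiplying through by $A(t,x)B^\mu(t,x)$ yields \eqref{eq:Srho-basic-w}, uniformly in $t\in[s,T]$ and $x$.

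For \eqref{eq:dt-Srho-basic-w}, use \eqref{eq:weak-T-sphere}. The second term is $\rho$ times a spherical mean of $\omega\cdot\nabla f$. Since $\omega$ is bounded, the argument above, applied to $\nabla f$ with the bounded angular factor kept inside the Cauchy inequality, bounds it by $\sum_{|a|\le3}\|A^{1+\mu}(s,\cdot)\nabla^af\|_{L^2}$.

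The first term, the plain spherical mean $\frac1{4\pi}\int_{\mathbb S^2}f(x+\rho\omega)d\omega$, is the main obstacle, because it carries no factor $\rho$ to pair with the trace inequality. For $\rho\ge1$, write it as $\rho^{-1}\cdot\rho\int_{\mathbb S^2}f(x+\rho\omega)\,d\omega$ and apply the $\mathcal S$-type argument directly; this only gains. For $\rho\le1$, the Sobolev trace on a sphere of radius $\rho$ is not uniform, so instead bound pointwise:
\[
\Big|\int_{\mathbb S^2}f(x+\rho\omega)d\omega\Big|\lesssim\sup_{|y-x|\le1}|f(y)|.
\]
Then use $H^2\subset L^\infty$ for $A^{1+\mu}(s,\cdot)f$, together with the fact that $A(s,y)\sim A(t,x)\ge B(t,x)$ when $|y-x|\le1$ and $t-s\le1$. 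This gives $A(t,x)B^\mu(t,x)|f(y)|\lesssim\|A^{1+\mu}(s,\cdot)f\|_{H^2}$, bounded as before. Combining the two terms proves \eqref{eq:dt-Srho-basic-w}.
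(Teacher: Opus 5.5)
Your proof is correct and follows the paper's argument: Cauchy--Schwarz on the spherical-mean formulas, Lemma~\ref{lem:geom-sphere} for the weight factor, the trace Lemma~\ref{lem:sphere-trace-H32} applied to $A^{1+\mu}(s,\cdot)f$, and the product rule. The one difference is your case split for the undifferentiated spherical mean in $\mathcal T_\rho$, which is not needed: Lemma~\ref{lem:sphere-trace-H32} carries the factor $(1+\rho)$ and is uniform in $\rho\ge0$ (its proof already uses $H^{3/2+\kappa}\hookrightarrow L^\infty$ for $\rho\le1$), so the paper treats that term by the same Cauchy--Schwarz/trace argument, and your small-$\rho$ sup bound just reproduces that step by hand.
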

	
	\begin{proof}
		Without loss of generality, $A(t,x)=1+t+\w{x}$ is assumed.

		We first prove \eqref{eq:Srho-basic-w}. Fix $t\in[s,T]$ and set $\rho=t-s$. By
		\eqref{eq:weak-S-sphere}, we have
		\begin{equation*}
			\begin{aligned}
				|\mathcal{S}_{\rho} f(x)|
				&\ls\rho
				\big(
				\int_{\mathbb S^2}
				A^{-2-2\mu}(s,x+\rho\omega)\,d\omega
				\big)^{1/2}
				\big(
				\int_{\mathbb S^2}
				A^{2+2\mu}(s,x+\rho\omega)
				|f(x+\rho\omega)|^2\,d\omega
				\big)^{1/2}.
			\end{aligned}
		\end{equation*}
		Lemma \ref{lem:geom-sphere} gives
		\begin{equation*}
			A(t,x)B^\mu(t,x)
			\big(
			\int_{\mathbb S^2}
			A^{-2-2\mu}(s,x+\rho\omega)\,d\omega
			\big)^{1/2}
			\ls 1,
		\end{equation*}
		where $1+s+\langle y\rangle\sim1+s+|y|$ is used. By Lemma \ref{lem:sphere-trace-H32}, one has
		\begin{equation*}
			\rho
			\big(
			\int_{\mathbb S^2}
			A^{2+2\mu}(s,x+\rho\omega)
			|f(x+\rho\omega)|^2\,d\omega
			\big)^{1/2}
			\lesssim
			\|A^{1+\mu}(s,\cdot)f\|_{H^2_x}.
		\end{equation*}
		Due to $|\nabla^\gamma A^{1+\mu}(s,x)|
			\lesssim
			A^{1+\mu}(s,x)$
			for $|\gamma|\le 2$,
		we can obtain
		\begin{equation}\label{eq:weighted-H2-product-w}
			\|A^{1+\mu}(s,\cdot)f\|_{H^2_x}
			\ls
			\sum_{|a|\le2}
			\|A^{1+\mu}(s,\cdot)\nabla^a f\|_{L^2_x}.
		\end{equation}
		Therefore, \eqref{eq:Srho-basic-w} is proved. For
		\eqref{eq:dt-Srho-basic-w}, the first term in the right-hand side of
		\eqref{eq:weak-T-sphere} can be treated as in the above. The second term in \eqref{eq:weak-T-sphere}
is also treated analogously to 	$\mathcal{S}_{\rho}(\nabla f)$. Then
		\eqref{eq:dt-Srho-basic-w} is proved.
	\end{proof}
	
	We now give the weighted $L^\infty$-$L^2$ estimates with the weight $A B^\mu$ for 3D linear wave equation.
	
		\begin{lemma}[{\bf Weighted $L^\infty$-$L^2$ estimates with the weight $A B^\mu$}]\label{lem:weighted-Linfty-L2}
		Assume $\mu>0$ and $m\in\N_0$. Let $w$ solve
		\begin{equation*}
			\Box w=F,\quad
			(w,\partial_t w)(0,x)=(w_0,w_1)(x),
		\end{equation*}
		where $(t,x)\in [0,T]\times\mathbb R^3$ and $w_0\in L^2(\R^3)$.
		Then one has
		\begin{equation}\label{eq:weighted-Linfty-L2-w}
				\begin{aligned}
					\|A B^\mu w\|_{L^\infty([0,T]\times\mathbb R^3)}
					&\lesssim
					\sum_{|a|\le 2}
					\big[
					\left\|
					\langle x\rangle^{1+\mu}
					\nabla^a \p w(0)
					\right\|_{L^2(\R^3)}
					+
					\left\|
					A^{1+\mu}\nabla^a F
					\right\|_{L^1([0,T];L^2_x)}
					\big],
				\end{aligned}
		\end{equation}
		\begin{equation}\label{eq:weighted-Linfty-L2-dw}
			\begin{aligned}
				\sum_{|a|\le m}
				\|A B^\mu \nabla^a\partial w\|_{L^\infty([0,T]\times\mathbb R^3)}
				&\lesssim
				\sum_{|a|\le m+3}
				\big[
				\left\|
				\langle x\rangle^{1+\mu}
				\nabla^a \p w(0)
				\right\|_{L^2(\R^3)}
				+
				\left\|
				A^{1+\mu}\nabla^a F
				\right\|_{L^1([0,T];L^2_x)}
				\big].
			\end{aligned}
		\end{equation}
	\end{lemma}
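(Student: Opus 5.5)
We follow the proof of Lemma~\ref{lem:weak-weighted-Linfty-L2} line by line, with Lemma~\ref{lem:weighted-wave-propagators} in place of Lemma~\ref{lem:weak-weighted-wave-propagators}. Write $w=w_{\mathrm{hom}}+w_F$ as in \eqref{YHC-36}, with $w_F(t)=\int_0^t\mathcal S_{t-s}F(s)\,ds$ and $w_{\mathrm{hom}}=\mathcal T_tw_0+\mathcal S_tw_1$.

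\emph{Inhomogeneous part.} Minkowski's inequality and \eqref{eq:Srho-basic-w} give
\[
\|AB^\mu w_F\|_{L^\infty_{t,x}}\ls\sum_{|a|\le2}\|A^{1+\mu}\nabla^aF\|_{L^1_tL^2_x}.
\]
For derivatives we use $\nabla w_F=\int_0^t\mathcal S_{t-s}\nabla F\,ds$ and $\partial_tw_F=\int_0^t\mathcal T_{t-s}F\,ds$. Applying \eqref{eq:Srho-basic-w} and \eqref{eq:dt-Srho-basic-w} to $\nabla^aF$ costs at most three derivatives, which is the contribution to \eqref{eq:weighted-Linfty-L2-dw}.

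\emph{Homogeneous part, $\mathcal S_tw_1$ and all derivative terms.} Apply \eqref{eq:Srho-basic-w} with $s=0$, using $A(0,x)\sim\langle x\rangle$:
\[
\|AB^\mu\mathcal S_tw_1\|_{L^\infty_{t,x}}\ls\sum_{|a|\le2}\|\langle x\rangle^{1+\mu}\nabla^aw_1\|_{L^2_x}.
\]
For the derivative terms we use the identities $\nabla w_{\mathrm{hom}}=\mathcal T_t\nabla w_0+\mathcal S_t\nabla w_1$ and $\partial_tw_{\mathrm{hom}}=\mathcal S_t\Delta w_0+\mathcal T_tw_1$. By Lemma~\ref{lem:weighted-wave-propagators} with $s=0$, each piece is bounded by $\sum_{|b|\le|a|+3}\|\langle x\rangle^{1+\mu}\nabla^b\partial w(0)\|_{L^2}$. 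Summing over $|a|\le m$ gives \eqref{eq:weighted-Linfty-L2-dw}.

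\emph{Main obstacle: $\mathcal T_tw_0$ in \eqref{eq:weighted-Linfty-L2-w}.} The right-hand side controls only $\nabla w_0$, so the zeroth-order spherical mean in \eqref{eq:weak-Tt-w0-Kirchhoff} must be handled separately.

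The term $\frac{t}{4\pi}\int_{\mathbb S^2}\omega\cdot\nabla w_0(x+t\omega)\,d\omega$ is $\mathcal S_t$-like acting on $\nabla w_0$. The Cauchy--Schwarz/trace argument of Lemma~\ref{lem:weighted-wave-propagators} therefore bounds it by $\sum_{1\le|a|\le3}\|\langle x\rangle^{1+\mu}\nabla^aw_0\|_{L^2}$.

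For the mean $\frac1{4\pi}\int_{\mathbb S^2}w_0(x+t\omega)\,d\omega$, first prove the weighted pointwise bound
\[
\|\langle x\rangle^{1+\mu}w_0\|_{L^\infty}\ls\sum_{1\le|a|\le2}\|\langle x\rangle^{1+\mu}\nabla^aw_0\|_{L^2}.
\]
Since $\langle x\rangle^{1+\mu}w_0\in\mathcal S_h'$ (Lemma~\ref{lem:weighted-data-in-Sh}), \eqref{ineq:homogeneous-Sobolev-inequality} reduces this to controlling $\|\langle x\rangle^{\mu}w_0\|_{L^2}$. That in turn follows from the weighted Hardy inequality Lemma~\ref{lem:weighted-Hardy-initial-displacement} with parameter $\mu>-\frac32$.

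It then remains to show
\[
A(t,x)B^\mu(t,x)\int_{\mathbb S^2}\langle x+t\omega\rangle^{-1-\mu}\,d\omega\ls1.
\]
Cauchy--Schwarz reduces this to Lemma~\ref{lem:geom-sphere} with $s=0$:
\[
\int_{\mathbb S^2}(1+|x+t\omega|)^{-2-2\mu}\,d\omega\ls A^{-2}B^{-2\mu}.
\]
Taking the square root gives exactly $A^{-1}B^{-\mu}$. The case $t=0$ is trivial, since $\langle x\rangle\sim A\ge B$ there.

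The point to verify carefully is the Hardy/$\mathcal S_h'$ step. It requires $w_0\in L^2$, which is assumed, and that the weight exponent be admissible for the Hardy lemma. Combining the four pieces yields \eqref{eq:weighted-Linfty-L2-w}.
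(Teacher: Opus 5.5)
Your proposal is correct and follows the paper's proof: you decompose $w=w_{\mathrm{hom}}+w_F$ and use Lemma~\ref{lem:weighted-wave-propagators} for the Duhamel term, for $\mathcal S_tw_1$ and for all derivative terms. For $\mathcal T_tw_0$ you use the Kirchhoff splitting, bounding the spherical mean through the weighted $L^\infty$ bound on $\langle x\rangle^{1+\mu}w_0$ (via Lemma~\ref{lem:weighted-data-in-Sh}, \eqref{ineq:homogeneous-Sobolev-inequality} and Lemma~\ref{lem:weighted-Hardy-initial-displacement} with parameter $\mu$) together with Lemma~\ref{lem:geom-sphere} at $s=0$. The Hardy admissibility condition you flag holds automatically, since $\mu>0>-\frac32$.
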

	
	\begin{proof}
		We first prove \eqref{eq:weighted-Linfty-L2-w}. As in Lemma \ref{lem:weak-weighted-Linfty-L2}, decompose
		$w=w_{\mathrm{hom}}+w_F$.
		
		It follows from  \eqref{eq:Srho-basic-w} that
		\begin{equation}\label{eq:wF-zero-order-w}
			\|A B^\mu w_F\|_{L^\infty_{t,x}}
			\ls
			\sum_{|a|\le2}
			\|A^{1+\mu}\nabla^a F\|_{L^1_tL^2_x}.
		\end{equation}
		
		For the term $\mathcal{S}_{t} w_1$, by \eqref{eq:Srho-basic-w} with $s=0$,
		one has
		\begin{equation}\label{eq:St-w1-w}
			\|A B^\mu \mathcal{S}_{t} w_1\|_{L^\infty_{t,x}}
			\ls
			\sum_{|a|\le2}
			\|\langle x\rangle^{1+\mu}\nabla^a w_1\|_{L^2_x}.
		\end{equation}
		
		Next, we estimate $\mathcal{T}_t w_0$.
		At first, as in \eqref{eq:weak-weighted-Linfty-initial-displacement-final}, we can obtain
		\begin{equation}\label{eq:weighted-Linfty-initial-displacement}
			\begin{aligned}
				\left\|
				\langle x\rangle^{1+\mu}w_0
				\right\|_{L^\infty_x}
				&\ls
				\sum_{1\le|a|\le2}
				\left\|
				\langle x\rangle^{1+\mu}
				\nabla^a w_0
				\right\|_{L^2_x}.
			\end{aligned}
		\end{equation}
		Then, by the Cauchy-Schwarz inequality and Lemma~\ref{lem:geom-sphere}, one has
		\begin{equation}\label{eq:initial-displacement-spherical-average}
			\begin{aligned}
				&
				A(t,x)B(t,x)^\mu
				\big|
				\int_{\mathbb S^2}
				w_0(x+t\omega)\,d\omega
				\big|\\
				&\le
				A(t,x)B(t,x)^\mu
				\big\|
				\langle y\rangle^{1+\mu}w_0
				\big\|_{L^\infty_y}
				\int_{\mathbb S^2}
				\langle x+t\omega\rangle^{-1-\mu}\,d\omega
				\\
				&\ls
				A(t,x)B(t,x)^\mu
				\big\|
				\langle y\rangle^{1+\mu}w_0
				\big\|_{L^\infty_y}
				|\Sph|^{\f12}
				\left(\int_{\mathbb S^2}
				A^{-2-2\mu}(0,x+t\omega)\,d\omega\right)^{\f12}
				\\
				&\ls
				\big\|
				\langle y\rangle^{1+\mu}w_0
				\big\|_{L^\infty_y}
				\ls
				\sum_{1\le|a|\le2}
				\big\|
				\langle x\rangle^{1+\mu}
				\nabla^a w_0
				\big\|_{L^2_x}.
			\end{aligned}
		\end{equation}
		On the other hand, as in \eqref{eq:Srho-basic-w}, we have
		\begin{equation}\label{eq:initial-displacement-gradient-average}
			\begin{aligned}
				&
				\big\|
				A B^\mu
				t
				\int_{\mathbb S^2}
				\omega\cdot\nabla  w_0(x+t\omega)\,d\omega
				\big\|_{L^\infty_{t,x}}
				\ls
				\sum_{1\le|a|\le3}
				\left\|
				\langle x\rangle^{1+\mu}
				\nabla^a  w_0
				\right\|_{L^2_x}.
			\end{aligned}
		\end{equation}
		Combining \eqref{eq:initial-displacement-spherical-average} and
		\eqref{eq:initial-displacement-gradient-average} yields
		\begin{equation}\label{eq:dt-St-w0-w}
			\|A B^\mu \mathcal{T}_t  w_0\|_{L^\infty_{t,x}}
			\ls
			\sum_{1\le|a|\le3}
			\|\langle x\rangle^{1+\mu}\nabla^a  w_0\|_{L^2_x}.
		\end{equation}
		Collecting \eqref{eq:wF-zero-order-w}, \eqref{eq:St-w1-w} and
		\eqref{eq:dt-St-w0-w} gives \eqref{eq:weighted-Linfty-L2-w}.
		
		Next, we estimate \eqref{eq:weighted-Linfty-L2-dw}. Note that $\nabla w_F(t)
			=\int_0^t \mathcal{S}_{t-s}\nabla F(s)\,ds$
		and $\partial_t w_F(t)
			=\int_0^t \mathcal{T}_{t-s} F(s)\,ds$.
		Then it follows from \eqref{eq:Srho-basic-w} and \eqref{eq:dt-Srho-basic-w} that
		\begin{equation}\label{eq:Duhamel-dw-corrected}
			\|A B^\mu\partial w_F\|_{L^\infty_{t,x}}
			\ls
			\sum_{|a|\le3}
			\|A^{1+\mu}\nabla^a F\|_{L^1_tL^2_x}.
		\end{equation}
		
		In addition, we have
		$\nabla w_{\mathrm{hom}}
			=\mathcal{T}_t\nabla w_0+\mathcal{S}_{t}\nabla w_1$.
		Therefore, by \eqref{eq:Srho-basic-w} and
		\eqref{eq:dt-Srho-basic-w} with $s=0$, it holds that
		\begin{equation}\label{eq:homogeneous-space-dw-corrected}
			\begin{aligned}
				\|A B^\mu\nabla w_{\mathrm{hom}}\|_{L^\infty_{t,x}}
				&\ls
				\sum_{|a|\le3}
				\|\langle x\rangle^{1+\mu}\nabla^a\nabla w_0\|_{L^2_x}
				+
				\sum_{|a|\le3}
				\|\langle x\rangle^{1+\mu}\nabla^a w_1\|_{L^2_x}.
			\end{aligned}
		\end{equation}
		
		Due to
		\begin{equation}\label{eq:homogeneous-time-identity-dw-corrected}
			\begin{aligned}
				\partial_t w_{\mathrm{hom}}(t)
				=
				\partial_t^2\mathcal{S}_{t} w_0+\partial_t\mathcal{S}_{t} w_1
				=
				\Delta \mathcal{S}_{t} w_0+\mathcal{T}_t w_1
				=
				\mathcal{S}_{t}\Delta w_0+\mathcal{T}_t w_1,
			\end{aligned}
		\end{equation}
		one has
		\begin{equation}\label{eq:homogeneous-time-dw-corrected}
			\begin{aligned}
				\|A B^\mu\partial_t w_{\mathrm{hom}}\|_{L^\infty_{t,x}}
				&\ls
				\sum_{|a|\le2}
				\|\langle x\rangle^{1+\mu}\nabla^a\Delta w_0\|_{L^2_x}
				+
				\sum_{|a|\le3}
				\|\langle x\rangle^{1+\mu}\nabla^a w_1\|_{L^2_x}
				\\
				&\ls
				\sum_{|a|\le3}
				\|\langle x\rangle^{1+\mu}\nabla^a\nabla w_0\|_{L^2_x}
				+
				\sum_{|a|\le3}
				\|\langle x\rangle^{1+\mu}\nabla^a w_1\|_{L^2_x}.
			\end{aligned}
		\end{equation}
		Combining \eqref{eq:Duhamel-dw-corrected},
		\eqref{eq:homogeneous-space-dw-corrected} and
		\eqref{eq:homogeneous-time-dw-corrected}, we obtain
		\begin{equation*}
			\|A B^\mu\partial w\|_{L^\infty_{t,x}}
			\ls
			\sum_{|a|\le3}
			\left\|
			\langle x\rangle^{1+\mu}
			\nabla^a(\nabla w_0,w_1)
			\right\|_{L^2_x}
			+
			\sum_{|a|\le3}
			\|A^{1+\mu}\nabla^aF\|_{L^1_tL^2_x}.
		\end{equation*}
		Analogously, applying the same estimate to $\nabla^a w$
        yields \eqref{eq:weighted-Linfty-L2-dw} for $|a|\le m$.
	   \end{proof}

	\subsection{Weighted Strichartz estimates}
	\label{subsec:Weighted-linear-Strichartz-estimates}
	
	In this subsection, we recall the weighted Strichartz estimates established in
	\cite{GaoLiYin2026}. These estimates will be used in
	Section~\ref{sec:weighted-Strichartz-estimates} to derive the corresponding weighted
	Strichartz estimates for the solution $\phi$ of \eqref{eq:Chaplygin-Cauchy}.
	
	\begin{lemma}[{\bf Weighted Strichartz estimates}]\label{thm:Weighted Strichartz estimate}
		For $\beta_1\in(0,1)$, $\beta_2\in(\beta_1,\min\{\f32\beta_1,1\})$, $t\ge t_0\ge 0$ and integer $k\ge -1$,
		\begin{itemize}
			\item[(1)] when $p\in[2,\infty),\ r\in(2,\infty]$ with $\f1p+\f1r=\f12$, one has
			\begin{equation}\label{thm:Weighted Strichartz estimate 1}
				\|(1+s+|x|)^{\f{1}{p}\beta_1}P_{k}e^{\pm is|\nabla|}f\|_{L^p([t_0,t];L^r(\R^3))}
				\ls2^{\f2p(1+\beta_2)k }\|\w{x}^{\f2p \beta_2}P_kf\|_{L^2(\R^3)};
			\end{equation}
			\item[(2)] when $p\in[2,2+2\beta_2-\beta_1),\ r\in(2+\f{4}{2\beta_2-\beta_1},\infty]$ with $\f1p+\f1r=\f12$, one has
			\begin{equation}\label{thm:Weighted Strichartz estimate 2}
				\|(1+s+|x|)^{\f{1}{p}\beta_1}P_{k}|\nabla|^{-1}e^{\pm is|\nabla|}f\|_{L^p([t_0,t];L^r(\R^3))}
				\ls2^{(\f{2+2\beta_2}{p}-1)k }\|\w{x}^{\f2p \beta_2}P_kf\|_{L^2(\R^3)}.
			\end{equation}
		\end{itemize}
	\end{lemma}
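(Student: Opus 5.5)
The plan is to prove the two non-trivial endpoints and reach the intermediate exponents by interpolation. The endpoint $(p,r)=(\infty,2)$ of part (1) has weight $(1+s+|x|)^0=1$ on the left, so it follows from unitarity of $e^{\pm is|\nabla|}$ on $L^2$ together with $\|P_kf\|_{L^2}\le\|\langle x\rangle^{0}P_kf\|_{L^2}$. The real content is the endpoint $p=2$, $r=\infty$:
\[
\big\|(1+s+|x|)^{\frac{\beta_1}{2}}P_ke^{\pm is|\nabla|}f\big\|_{L^2([t_0,t];L^\infty)}\lesssim 2^{(1+\beta_2)k}\big\|\langle x\rangle^{\beta_2}P_kf\big\|_{L^2}.
\]
Once it is proved, Stein's complex interpolation will cover the pairs $1/p+1/r=1/2$. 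The analytic family is $z\mapsto(1+s+|x|)^{z\beta_1/2}P_ke^{\pm is|\nabla|}\langle x\rangle^{-z\beta_2}$. At $\mathrm{Re}\,z=1$ the exponents are $\beta_1/2$ and $\beta_2$, and at $\mathrm{Re}\,z=0$ they vanish, so at $z=2/p$ one gets exactly $\frac1p\beta_1$ on the left and $\frac2p\beta_2$ on $\langle x\rangle$. The exponent $2^{\frac2p(1+\beta_2)k}$ then interpolates correctly.

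For the $p=2$ endpoint I would decompose the data dyadically in space, $f=\sum_{j\ge0}\chi_jf$ with $\chi_j$ supported in $|y|\sim2^j$ (with $\chi_0$ on the unit ball). I would then use the kernel bound
\[
|K_k(s,z)|\lesssim 2^{3k}(1+2^ks)^{-1}\big(1+2^k\big||z|-s\big|\big)^{-M}
\]
for $P_ke^{\pm is|\nabla|}$, obtained by stationary phase. For each piece $u_j=P_ke^{\pm is|\nabla|}(\chi_jf)$ there are two regimes.
\begin{itemize}
\item For $s\lesssim2^j$: use Bernstein, $\|u_j(s)\|_{L^\infty}\lesssim2^{3k/2}\|\chi_jf\|_{L^2}$. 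The kernel localization also gives $|x|\lesssim s+2^j$ up to rapidly decaying tails, so the weight is $\lesssim2^{j\beta_1}$.
\item For $s\gg2^j$: use the dispersive bound $\|u_j(s)\|_{L^\infty}\lesssim2^{3k}(2^ks)^{-1}\|\chi_jf\|_{L^1}\lesssim2^{2k}s^{-1}2^{3j/2}\|\chi_jf\|_{L^2}$. Here the weight is $\sim s^{\beta_1}$, and $\int_{2^j}^\infty s^{\beta_1-2}\,ds$ converges since $\beta_1<1$.
\end{itemize}

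The sum over $j$ is the step I expect to be the main obstacle. A crude count in the first regime gives $2^{j(1+\beta_1)}2^{3k}\|\chi_jf\|_{L^2}^2$. This is not bounded by $2^{2j\beta_2}$ times the data norm unless $1+\beta_1\le2\beta_2$, which the hypotheses do not give. To fix this, I would not use Bernstein on the whole time interval $[0,2^j]$. Instead I would split that interval at the intermediate scale $s\sim2^{j\theta}2^{-k(1-\theta)}$. Below this scale I use Bernstein; above it I use the dispersive estimate combined with the $L^2$-trace on shifted spheres of Lemma \ref{lem:sphere-trace-H32}. The trace step replaces $\|\chi_jf\|_{L^1}$ by a sphere-averaged $L^2$ norm, as in the proof of Lemma \ref{lem:weak-weighted-wave-propagators}, so it costs only $s\,2^{k}$ instead of $2^{3j/2}$. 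The extra frequency factor $2^{2\beta_2k}$ allowed on the right-hand side is what pays for this trade. The choice of $\theta$ should be dictated by the constraint $\beta_1<\beta_2<\frac32\beta_1$, and this same gap is what gives a geometric factor $2^{-j\delta}$ that makes the $j$-sum converge. For $k=-1$ the same argument applies with $2^k\sim1$.

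Part (2) follows the same scheme with the extra factor $|\nabla|^{-1}$, which gains $2^{-k}$ when $k\ge0$. For low frequencies, however, the kernel of $P_k|\nabla|^{-1}e^{\pm is|\nabla|}$ decays only like $s^{-1}$ without the gain, so the dispersive part in the time integral is summable only for a restricted range. I would redo the endpoint at the pair $p_*=2+2\beta_2-\beta_1$, $r_*=2+\frac{4}{2\beta_2-\beta_1}$, which is precisely where $\int^\infty s^{\frac{p}{2}\cdot\ldots}$-type integrals stop converging. Interpolating between that endpoint and the trivial $L^\infty L^2$ bound then gives \eqref{thm:Weighted Strichartz estimate 2} in the stated range of $p$ and $r$.
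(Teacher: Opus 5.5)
Your reduction of part (1) to the $(p,r)=(2,\infty)$ endpoint by Stein interpolation is sound. The argument you propose for that endpoint, however, cannot work, and that endpoint is the whole content of the lemma. (The paper does not prove the lemma; it quotes Theorem 3.1 of \cite{GaoLiYin2026}.)

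Every bound you use there (Bernstein, the dispersive estimate, the sphere trace of Lemma \ref{lem:sphere-trace-H32}) controls $\|u(s)\|_{L^\infty_x}$ at one fixed time, and you then square-integrate in $s$. Let $m(s)$ be the best constant in $\|(1+s+|x|)^{\beta_1/2}P_ke^{\pm is|\nabla|}f\|_{L^\infty_x}\le m(s)\|\langle x\rangle^{\beta_2}P_kf\|_{L^2}$. For $k\ge0$ and $s\ge1$ take $\hat f=\psi_k(\xi)e^{\mp is|\xi|}$. Then $P_ke^{\pm is|\nabla|}f(0)=(2\pi)^{-3}\int\psi_k^2\,d\xi\sim 2^{3k}$. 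Meanwhile $P_kf=P_k^2e^{\mp is|\nabla|}\delta_0$ lives on the shell $\big||y|-s\big|\lesssim 2^{-k}$, so $\|\langle x\rangle^{\beta_2}P_kf\|_{L^2}\lesssim(1+s)^{\beta_2}2^{3k/2}$. Hence $m(s)\gtrsim 2^{3k/2}(1+s)^{\beta_1/2-\beta_2}$, and $\int_0^\infty m(s)^2\,ds=\infty$ whenever $2\beta_2-\beta_1\le1$. This is automatic when $\beta_1\le\frac12$, since $\beta_2<\frac32\beta_1$; in particular it holds for $\beta_1=\frac{2\mu}{3}<\frac13$, the value needed in Section \ref{sec:weighted-Strichartz-estimates}.

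Moving the focal point to some $x_0$ with $|x_0|=2^j$ shows the same thing for data supported in $|y|\sim2^j$. Bernstein is attained at every $s\in[2^{-k},2^{j-2}]$, with squared weight $\sim2^{j\beta_1}$. So your ``crude count'' $2^{j(1+\beta_1)}2^{3k}\|\chi_jf\|_{L^2}^2$ is a lower bound for any scheme of this type. No splitting of $[0,2^j]$ at an intermediate scale can improve it, and the trace lemma gives no decay in $s$ at all.

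Your far regime fails too. It yields $2^{4k}2^{j(2+\beta_1)}\|\chi_jf\|_{L^2}^2$, far above the allowed $2^{(2+2\beta_2)k}2^{2j\beta_2}\|\chi_jf\|_{L^2}^2$.

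What is missing is orthogonality between different times. One route is a $TT^*$ argument for $g\mapsto(1+s+|x|)^{\beta_1/2}P_ke^{\pm is|\nabla|}(\langle x\rangle^{-\beta_2}g)$. There one must show that the kernel $\int K_k(t,x-y)\langle y\rangle^{-2\beta_2}\overline{K_k(s,x'-y)}\,dy$, times the outer weights, is Schur-bounded in $(t,s)$. The $|t-s|^{-1}$ decay of $K_k$ alone is not integrable. The inserted weight $\langle y\rangle^{-2\beta_2}$, with $\beta_2>\beta_1$, is what has to close the estimate.

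Your plan for \eqref{thm:Weighted Strichartz estimate 2} has a second, independent problem: the interpolation runs the wrong way. The admissible range there is $p\in[2,2+2\beta_2-\beta_1)$, i.e. the pairs next to the $L^2_sL^\infty_x$ endpoint. Interpolating between $p_*=2+2\beta_2-\beta_1$ and $p=\infty$ produces $p\in[p_*,\infty]$, which are exactly the excluded pairs.

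Moreover, the ``trivial'' $L^\infty_sL^2_x$ bound is false for $k=-1$. It would assert $\|P_{-1}|\nabla|^{-1}f\|_{L^2}\lesssim\|P_{-1}f\|_{L^2}$, but the symbol $\psi_{-1}(\xi)|\xi|^{-1}$ is unbounded at $\xi=0$. The restriction in (2) excludes small $r$ because of this low-frequency singularity. For $k\ge0$, part (2) does follow from part (1) via $P_k|\nabla|^{-1}=2^{-k}(2^kP_{[[k]]}|\nabla|^{-1})P_k$, as you indicate, so $k=-1$ is the only real case.

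For $k=-1$ you need two ingredients:
\begin{itemize}
\item the $p=2$ endpoint for $P_{-1}|\nabla|^{-1}e^{\pm is|\nabla|}$, whose kernel is no longer concentrated near the light cone;
\item a bound at some $p<p_*$, since the range is open at $p_*$.
\end{itemize}
You would then interpolate inside $[2,p_*)$.
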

	\begin{proof}
		See Theorem 3.1 of \cite{GaoLiYin2026}.
	\end{proof}

	\begin{lemma}\label{lem:weighted-L2-L2}
		For $\beta_1\in(0,\f32)$, $\beta_2>0$, $t\ge 0 $, integers $k\ge -1$, it holds that
		\begin{equation}\label{YHCCC-33}
			\begin{split}
				\|\w{x}^{\beta_1}P_k e^{\pm it|\nabla|}f\|_{L^2(\R^3)}
				\ls
				(1+t)^{\beta_1+\beta_2 }\|f\|_{L^2(\R^3)}
				+
				\|\w{x}^{\beta_1+\beta_2 } f\|_{L^2(\R^3)}.
			\end{split}
		\end{equation}
	\end{lemma}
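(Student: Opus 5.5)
The plan is to split $\R^3$ into a near region $\{|x|\le 2(1+t)\}$ and a far region $\{|x|>2(1+t)\}$. The localization of $P_k$ is handled with kernel bounds, and a time factor is traded against the extra weight $\w{x}^{\beta_2}$.

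\textbf{Near region.} Here $\w{x}^{\beta_1}\ls (1+t)^{\beta_1}\le(1+t)^{\beta_1+\beta_2}$. By unitarity of $e^{\pm it|\nabla|}$ and $L^2$-boundedness of $P_k$,
\[
\|\w{x}^{\beta_1}\mathbf 1_{|x|\le 2(1+t)}P_ke^{\pm it|\nabla|}f\|_{L^2}\ls (1+t)^{\beta_1}\|f\|_{L^2},
\]
which is already acceptable.

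\textbf{Far region.} Split $f=f\mathbf 1_{|y|\le (1+t)}+f\mathbf 1_{|y|>(1+t)}=:f_1+f_2$.

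For $f_2$, bound $\w{x}^{\beta_1}\mathbf 1_{\text{far}}\le \w{x}^{\beta_1}$. The needed estimate is then that $\w{x}^{\beta_1}P_ke^{\pm it|\nabla|}\w{x}^{-\beta_1}$ costs at most a factor $(1+t)^{\beta_1}$ on $L^2$. To prove it, commute the weight through the propagator, either by interpolating between $\beta_1=0$ and integer weights, or by using $[x_j,e^{it|\nabla|}]=t\,R_je^{it|\nabla|}$ with the boundedness of Riesz transforms and of $[x_j,P_k]=2^{-k}\tilde P_k$. This gives
\[
\|\w{x}^{\beta_1}P_ke^{\pm it|\nabla|}f_2\|_{L^2}\ls \|\w{x}^{\beta_1}f_2\|_{L^2}+(1+t)^{\beta_1}\|f_2\|_{L^2}.
\]
Since $|y|>1+t$ on the support of $f_2$, both terms are $\ls \|\w{x}^{\beta_1+\beta_2}f\|_{L^2}$ because $(1+t)^{\beta_1}\le \w{y}^{\beta_1+\beta_2}$ there. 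For $\beta_1<3/2$ non-integer, use complex interpolation between $\beta=0$ and $\beta=1$ (and $\beta=2$ if needed), or Stein–Weiss interpolation with change of measure.

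For $f_1$, use finite speed of propagation: $e^{\pm it|\nabla|}$ only carries information a distance $t$ modulo Schwartz tails, and $P_k$ spreads by $2^{-k}\le 2$. So for $|x|>2(1+t)$ and $|y|\le 1+t$ we have $|x-y|\ge |x|/2\ge t+1$. The natural tool is the sine/cosine decomposition $e^{it|\nabla|}=\cos(t|\nabla|)+i\sin(t|\nabla|)$: the Kirchhoff kernels of $\cos$ and $\sin/|\nabla|$ are supported on $|x-y|=t$, and $P_k$ and $|\nabla|P_k$ have Schwartz kernels at scale $2^{-k}$. The composed kernel $K$ therefore satisfies
\[
|K(x,y)|\ls_N 2^{3k}\langle 2^k(|x-y|-t)\rangle^{-N}\cdot(\text{polynomial in }t),
\]
and in the far region this is $\ls_N 2^{3k}\langle 2^k|x|\rangle^{-N}(1+t)^{C}$. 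Schur's test then gives $\|\w{x}^{\beta_1}\mathbf 1_{\text{far}}P_ke^{\pm it|\nabla|}f_1\|_{L^2}\ls (1+t)^{-1}\|f\|_{L^2}$. The $k=-1$ block needs the most care, since $|\nabla|^{-1}$ sits near frequency zero; there $P_{-1}$ is smooth, and I would rather write $P_{-1}e^{it|\nabla|}$ as $\cos$ plus $\sin(t|\nabla|)/|\nabla|\cdot|\nabla|$ with $|\nabla|\psi_{-1}$ only Lipschitz. This kernel decays like $\langle x\rangle^{-4}$, which is enough when $\beta_1<3/2$: it makes $\w{x}^{\beta_1}\langle x\rangle^{-4}\in L^2$ over the far region, and this is exactly where the restriction $\beta_1<3/2$ enters.

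\textbf{Main obstacle.} The main obstacle is this far-region/near-data term, that is, making the off-cone tail of $P_ke^{it|\nabla|}$ quantitative, especially at low frequency where the symbol $|\xi|$ is singular. A polynomial tail $\langle x\rangle^{-4}$ and the condition $2\beta_1<3$ are expected to suffice.
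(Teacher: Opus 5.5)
The estimate your $f_2$ step rests on is
\[
\|\langle x\rangle^{\beta}P_ke^{\pm it|\nabla|}g\|_{L^2}\lesssim\|\langle x\rangle^{\beta}g\|_{L^2}+(1+t)^{\beta}\|g\|_{L^2}.
\]
You only justify it for $\beta\le1$, using one commutator $[x_j,e^{\pm it|\nabla|}]=\pm itR_je^{\pm it|\nabla|}$ and then interpolating with $\beta=0$. That is the gap.

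For $\beta_1\in(1,\frac32)$ you need an endpoint above $1$, and there ``Riesz transforms are bounded'' no longer suffices. A second $\xi$-derivative of $\psi_{-1}(\xi)e^{it|\xi|}$ produces the multiplier $it\,\partial_{\xi_i}(\xi_j/|\xi|)\psi_{-1}(\xi)e^{it|\xi|}$, of size $t|\xi|^{-1}$ near $\xi=0$, which is unbounded on $L^2$. This low-frequency singularity of $e^{it|\xi|}$ is the real difficulty of the lemma, not the off-cone tail.

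The missing ingredient is the three-dimensional Hardy inequality $\||\xi|^{-1}h\|_{L^2}\le2\|\nabla_\xi h\|_{L^2}$ with $h=\psi_{-1}\hat g$. It bounds that term by $t(\|g\|_{L^2}+\|xg\|_{L^2})\lesssim(1+t)^2\|g\|_{L^2}+\|\langle x\rangle^2g\|_{L^2}$. Together with $\partial^m\psi_k=O(2^{-mk})$ and $|\xi|\sim2^k$ on $\operatorname{supp}\psi_k$, this gives the case $\beta=2$ uniformly in $k\ge-1$.

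The interpolation must also be set up correctly. The multiplicative statement you write first (``$\langle x\rangle^{\beta_1}P_ke^{\pm it|\nabla|}\langle x\rangle^{-\beta_1}$ costs a factor $(1+t)^{\beta_1}$'') does interpolate. But applied to $f_2$ it only yields $\|\langle x\rangle^{2\beta_1}f\|_{L^2}$, so it needs $\beta_2\ge\beta_1$, and it does not imply the additive bound you then display. The additive bound is equivalent to boundedness on $L^2(A_t^{2\beta}\,dx)$, with $A_t=1+t+\langle x\rangle$, uniformly in $t$. Stein--Weiss interpolation in this family between $\beta=0$ and $\beta=2$ gives every $\beta\in[0,2]$.

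Once this is in place, the rest of your construction is superfluous. The weighted bound holds for every $g$, so applied to $f$ itself it gives the lemma directly, even with $\beta_2=0$. The paper offers no argument to compare with here; it simply quotes Corollary 3.20 of the companion paper.

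The far-region/near-data analysis you single out as the main obstacle is also not right as sketched:
\begin{itemize}
\item For $|x|>2(1+t)$ and $|y|\le1+t$ one only has $|x-y|-t\ge|x|-1-2t\ge1$, not $\gtrsim|x|$.
\item So near $|x|=2(1+t)$ the kernel bound gives no decay in $|x|$ or $t$, and the factors polynomial in $t$ are not absorbed.
\item Hence the claimed $(1+t)^{-1}\|f\|_{L^2}$ does not follow; what one actually gets there is the trivial bound $(1+t)^{\beta_1}\|f\|_{L^2}$.
\item For $k=-1$, the plain Schur test with a tail $|x|^{-4}$ diverges once $\beta_1\ge1$.
\end{itemize}
This is repaired by putting $|x|\le4(1+t)$ into the trivially estimated near region, and pairing $\|f_1\|_{L^1}\lesssim(1+t)^{3/2}\|f\|_{L^2}$ with the tail $t|x|^{-4}$ of the $k=-1$ kernel. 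That step needs only $\beta_1<\frac52$, so it is not where the restriction $\beta_1<\frac32$ comes from.
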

	\begin{proof}
		See Corollary 3.20 of \cite{GaoLiYin2026}.
	\end{proof}

	\begin{lemma}[{\bf Weighted \(L^2_tL^\infty_x\)-\(L^2\) estimates}]
		\label{lem:weak-weighted-L2t-Linfty-L2}
		Assume \(0<\beta_1<\beta_2<1\) and $m\in\N_0$.
		Let \(w\) solve
		\begin{equation*}
			\Box w=F,\quad
			(w,\partial_t w)(0,x)=(w_0,w_1)(x),
		\end{equation*}
		where  \((t,x)\in [0,T]\times\mathbb R^3\). Then
		\begin{equation}
			\label{eq:weak-L2t-Linfty-L2-w}
			\begin{aligned}
				\big\|
				A^{\frac{\beta_1}{2}}w
				\big\|_{L^2([0,T];L^\infty_x)}
				&\lesssim
				\sum_{|a|\le1}
				\big[
				\big\|
				\langle x\rangle^{\beta_2}
				\nabla^a\partial w(0)
				\big\|_{L^2_x}
				+
				\big\|
				A^{\beta_2}\nabla^aF
				\big\|_{L^1([0,T];L^2_x)}
				\big].
			\end{aligned}
		\end{equation}
		Moreover,
		\begin{equation}
			\label{eq:weak-L2t-Linfty-L2-dw}
			\begin{aligned}
				\sum_{|a|\le m}
				\big\|
				A^{\frac{\beta_1}{2}}\nabla^a\partial w
				\big\|_{L^2([0,T];L^\infty_x)}
				&\lesssim
				\sum_{|a|\le m+2}
				\big[
				\big\|
				\langle x\rangle^{\beta_2}
				\nabla^a\partial w(0)
				\big\|_{L^2_x}
				+\big\|
				A^{\beta_2}\nabla^aF
				\big\|_{L^1([0,T];L^2_x)}
				\big].
			\end{aligned}
		\end{equation}
	\end{lemma}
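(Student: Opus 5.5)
We will follow the scheme of Lemma~\ref{lem:weak-weighted-Linfty-L2}, with the pointwise propagator bounds of Lemma~\ref{lem:weak-weighted-wave-propagators} replaced by the frequency-localized weighted Strichartz estimates of Lemma~\ref{thm:Weighted Strichartz estimate}. We take $p=2$, $r=\infty$. The parameters are the given $\beta_1$ and an auxiliary $\beta_2'\in(\beta_1,\min\{\frac32\beta_1,\beta_2,1\})$, so that $\frac1p\beta_1=\frac{\beta_1}{2}$ and $\frac2p\beta_2'=\beta_2'<\beta_2$.

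\textbf{Decomposition.} We write
\[
w=w_{\mathrm{hom}}+w_F,\qquad w_F(t)=\int_0^t\mathcal S_{t-s}F(s)\,ds,
\]
and express the propagators through $e^{\pm i\rho|\nabla|}$ and $|\nabla|^{-1}e^{\pm i\rho|\nabla|}$. Then we split $w=\sum_{k\ge-1}P_kw$ and use $\|\cdot\|_{L^2_tL^\infty_x}\le\sum_k\|P_k\cdot\|_{L^2_tL^\infty_x}$.

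\textbf{Duhamel term.} We apply Minkowski's inequality in $s$, which gives
\[
\|A^{\beta_1/2}P_kw_F\|_{L^2_tL^\infty_x}\le\int_0^T\|A^{\beta_1/2}P_k\mathcal S_{t-s}F(s)\|_{L^2([s,T];L^\infty_x)}\,ds.
\]
After the time translation $t\mapsto t-s$ we need a weight involving $1+s+(t-s)+|x|=A(t,x)$, while Lemma~\ref{thm:Weighted Strichartz estimate} is stated with $(1+\tau+|x|)$. Since $1+s+\tau+|x|\lesssim(1+s)(1+\tau+|x|)$ this costs a factor $(1+s)^{\beta_1/2}$. On the data side, $\langle x\rangle^{\beta_2'}$ combined with $(1+s)^{\beta_1/2}$ is dominated by $A(s,x)^{\beta_2}$, since $\beta_1/2+\beta_2'$ can be compared with $\beta_2$ after using $A(s,\cdot)^{\beta_1/2}\langle x\rangle^{\beta_2'}\lesssim A(s,\cdot)^{\beta_2}$ only if $\beta_1/2+\beta_2'\le\beta_2$. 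That need not hold, so instead we would rather write $1+s+\tau+|x|\le 1+\tau+|x+\text{shift}|$ in the form
\[
A^{\beta_1/2}(s+\tau,x)\lesssim(1+\tau+|x|)^{\beta_1/2}+(1+s)^{\beta_1/2}.
\]
For the second piece we use the unweighted Strichartz estimate ($\beta=0$ version, i.e.\ energy plus Sobolev/Bernstein) with $(1+s)^{\beta_1/2}\le A(s)^{\beta_2}$. The $\tau$-integrability in $L^2_\tau L^\infty_x$ fails for the unweighted estimate in 3D at $p=2$, however. So the honest route is to keep the full weight and note $(1+s)^{\beta_1/2}(1+\tau+|x|)^{\beta_1/2}$: apply Lemma~\ref{thm:Weighted Strichartz estimate} with weight $(1+\tau+|x|)^{\beta_1/2}$ and data $(1+s)^{\beta_1/2}\langle x\rangle^{\beta_2'}P_k F(s)$. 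Since $\beta_1/2+\beta_2'$ may exceed $\beta_2$, we choose $\beta_2'$ close to $\beta_1$ and, if needed, shift weight using Lemma~\ref{lem:weighted-L2-L2} to transport weights across $e^{\pm is|\nabla|}$ (this is the natural use of Lemma~\ref{lem:weighted-L2-L2}: rewrite $\mathcal S_{t-s}=$ combination of $e^{\pm it|\nabla|}e^{\mp is|\nabla|}$ and bound $\|\langle x\rangle^{\beta_2'}P_ke^{\mp is|\nabla|}F(s)\|_{L^2}\lesssim(1+s)^{\beta_2}\|F(s)\|_{L^2}+\|\langle x\rangle^{\beta_2}F(s)\|_{L^2}\lesssim\|A^{\beta_2}(s)F(s)\|_{L^2}$, with $\beta_2-\beta_2'$ playing the role of the lemma's extra exponent). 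This avoids the time-translation problem entirely, since now Lemma~\ref{thm:Weighted Strichartz estimate} is applied on $[0,T]$ with the weight $(1+t+|x|)^{\beta_1/2}=A^{\beta_1/2}$. The restriction of the time integral to $t\ge s$ is handled with $t_0=s$ in Lemma~\ref{thm:Weighted Strichartz estimate}.

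\textbf{Summing over frequencies.} For $k\ge0$, part (1) gives the factor $2^{(1+\beta_2')k}$ for $|\nabla|^{-1}e^{\pm i|\nabla|}$, whose symbol carries $2^{-k}$. This yields $2^{\beta_2'k}$, which we absorb by one derivative and which is summable since $\beta_2'<1$. This explains the loss $|a|\le1$. For $k=-1$ we use part (2). Its admissibility requires $p=2<2+2\beta_2'-\beta_1$, which holds, and $r=\infty$, which is allowed.

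\textbf{Homogeneous part.} The term $\mathcal S_tw_1$ is treated the same way with $s=0$. For $\mathcal T_tw_0=\cos(t|\nabla|)w_0$, high frequencies use $P_kw_0=|\nabla|^{-1}P_k|\nabla|w_0$, which converts the estimate into one on $\nabla w_0$ with weight $\langle x\rangle^{\beta_2}$ (commuting weights with the Riesz-type multiplier in the appendix's $A_2$ framework, since $\langle x\rangle^{2\beta_2}\in A_2$ for $\beta_2<3/2$). The low frequency $P_{-1}w_0$ uses part (2) applied to $P_{-1}|\nabla|w_0$.

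\textbf{Derivative estimate and main obstacle.} The derivative estimate \eqref{eq:weak-L2t-Linfty-L2-dw} follows by applying the above to $\nabla^a\nabla w$ and $\nabla^a\partial_tw$, using $\partial_tw_F=\int\mathcal T_{t-s}F$ and $\partial_tw_{\mathrm{hom}}=\mathcal S_t\Delta w_0+\mathcal T_tw_1$. The $\mathcal T$ propagator costs one more derivative, giving $m+2$. I expect the main obstacle to be the weight transfer in the Duhamel term, namely justifying the use of Lemma~\ref{lem:weighted-L2-L2} and the $A_2$ commutation with frequency projections.
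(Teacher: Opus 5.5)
Your final route is the paper's proof. You factor $e^{\pm i(t-s)|\nabla|}=e^{\pm it|\nabla|}e^{\mp is|\nabla|}$, apply Lemma~\ref{thm:Weighted Strichartz estimate} on $[s,T]$ with $(p,r)=(2,\infty)$ and an auxiliary exponent in $(\beta_1,\min\{\frac32\beta_1,\beta_2\})$, move the weight onto the data at time $s$ via Lemma~\ref{lem:weighted-L2-L2} at cost $(1+s)^{\beta_2}$, sum dyadically with the weighted Bernstein and $A_2$ bounds, and treat $\mathcal T_tw_0$ through $|\nabla|^{-1}|\nabla|w_0$ plus the weighted Riesz bound, with the same derivative counts. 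One detail to fix: apply Lemma~\ref{lem:weighted-L2-L2} to $P_{[[k]]}F(s)$, using $P_k=P_kP_{[[k]]}$ as the paper does, rather than to $F(s)$, since otherwise the factor $2^{\beta_2'k}$ cannot be absorbed by a derivative and summed; the abandoned time-translation detour can simply be deleted.
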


	\begin{proof}
		Choosing $\widetilde\beta
		\in\big(\beta_1,
		\min\big\{\frac32\beta_1,\beta_2\big\}
		\big)$.
		Let \(0\le s\le T\). Writing
		\[
		\mathcal T_{t-s}f
		=
		\frac12 e^{i(t-s)|\nabla|}f
		+
		\frac12 e^{-i(t-s)|\nabla|}f.
		\]
		It follows from
		$e^{\pm i(t-s)|\nabla|}f
		=e^{\pm it|\nabla|}
		e^{\mp is|\nabla|}f,
		$
		Lemma~\ref{thm:Weighted Strichartz estimate} with parameters
		\((\beta_1,\widetilde\beta)\), \(p=2\) and \(r=\infty\) that  for
		each \(k\ge-1\),
		\begin{equation}
			\label{eq:dyadic-shifted-T-Strichartz}
			\begin{aligned}
				\big\|
				A^{\frac{\beta_1}{2}}(t,x)
				P_k\mathcal T_{t-s}f
				\big\|_{L^2([s,T];L^\infty_x)}
				\lesssim
				2^{(1+\widetilde\beta)k}
				\big\|
				\langle x\rangle^{\widetilde\beta}
				P_{k}e^{\mp is|\nabla|}f
				\big\|_{L^2_x}.
			\end{aligned}
		\end{equation}
	By Lemma~\ref{lem:weighted-L2-L2}, we have
		\begin{equation}
			\label{eq:weighted-L2-L2-used-for-shift}
			\begin{aligned}
				\big\|
				\langle x\rangle^{\widetilde\beta}
				P_{k}e^{\mp is|\nabla|}f
				\big\|_{L^2_x}
				&\lesssim_{\beta_1,\beta_2}
				(1+s)^{\beta_2}
				\|P_{[[k]]}f\|_{L^2_x}
				+\big\|
				\langle x\rangle^{\beta_2}
				P_{[[k]]}f
				\big\|_{L^2_x}\\
				&\lesssim
				\big\|
				A^{\beta_2}(s,\cdot)P_{[[k]]}f
				\big\|_{L^2_x}.
			\end{aligned}
		\end{equation}
		Combining \eqref{eq:dyadic-shifted-T-Strichartz}-\eqref{eq:weighted-L2-L2-used-for-shift} and summing over \(k\ge-1\) yield
		\begin{equation}
			\label{eq:shifted-T-L2t-Linfty}
			\begin{aligned}
				\big\|
				A^{\frac{\beta_1}{2}}(t,x)
				\mathcal T_{t-s}f
				\big\|_{L^2([s,T];L^\infty_x)}
				&\lesssim
				\sum_{k\ge-1}
				2^{(1+\widetilde\beta)k}
				\big\|
				A^{\beta_2}(s,\cdot)P_{[[k]]}f
				\big\|_{L^2_x}
				\\
				&\lesssim
				\sum_{|a|\le2}
				\big\|
				A^{\beta_2}(s,\cdot)\nabla^a f
				\big\|_{L^2_x}.
			\end{aligned}
		\end{equation}
		In the last inequality of \eqref{eq:shifted-T-L2t-Linfty} we have used \(1+\widetilde\beta<2\), Lemma~\ref{lem:weighted bernstein},
		\(A^{2\beta_2}(s,x) \sim (1+s)^{2\beta_2}+\w{x}^{2\beta_2}\) and $\w{x}^{2\beta_2} \in A_2(\R^3)$.
		
		Similarly, by
		$\mathcal S_{t-s}f
		=\frac{1}{2i}|\nabla|^{-1}
		\big(e^{i(t-s)|\nabla|}
		-e^{-i(t-s)|\nabla|}
		\big)f$, Lemma~\ref{thm:Weighted Strichartz estimate} with parameters
		\((\beta_1,\widetilde\beta)\), \(p=2\) and \(r=\infty\), we arrive at
		\[
		\big\|
		A^{\frac{\beta_1}{2}}(t,x)
		P_k\mathcal S_{t-s}f
		\big\|_{L^2([s,T];L^\infty_x)}
		\lesssim
		2^{\widetilde\beta k}
		\big\|
		\langle x\rangle^{\widetilde\beta}
		P_{k}e^{\mp is|\nabla|}f
		\big\|_{L^2_x}.
		\]
		Using \eqref{eq:weighted-L2-L2-used-for-shift} and summing over \(k\) yield
		\begin{equation}
			\label{eq:shifted-S-L2t-Linfty}
			\big\|
			A^{\frac{\beta_1}{2}}(t,x)
			\mathcal S_{t-s}f
			\big\|_{L^2([s,T];L^\infty_x)}
			\lesssim
			\sum_{|a|\le1}
			\big\|
			A^{\beta_2}(s,\cdot)\nabla^a f
			\big\|_{L^2_x}.
		\end{equation}
		
		As in \eqref{YHC-36}, decompose $w=w_{\mathrm{hom}}+w_F$.
		We first prove \eqref{eq:weak-L2t-Linfty-L2-w}. It follows from \eqref{eq:shifted-S-L2t-Linfty} and Minkowski's inequality
        that
		\begin{equation}
			\label{eq:weak-L2t-Duhamel-w}
			\begin{aligned}
				\big\|
				A^{\frac{\beta_1}{2}}w_F
				\big\|_{L^2_tL^\infty_x}
				&\le
				\int_0^T
				\big\|
				A^{\frac{\beta_1}{2}}(t,x)
				\mathcal S_{t-s}F(s,\cdot)
				\big\|_{L^2([s,T];L^\infty_x)}
				\,ds
				\\
				&\lesssim
				\sum_{|a|\le1}
				\int_0^T
				\big\|
				A^{\beta_2}(s,\cdot)\nabla^aF(s,\cdot)
				\big\|_{L^2_x}\,ds
				\\
				&=
				\sum_{|a|\le1}
				\big\|
				A^{\beta_2}\nabla^aF
				\big\|_{L^1_tL^2_x}.
			\end{aligned}
		\end{equation}
		In addition, applying \eqref{eq:shifted-S-L2t-Linfty} with \(s=0\) derives
		\begin{equation}
			\label{eq:weak-L2t-St-w1}
			\big\|
			A^{\frac{\beta_1}{2}}\mathcal S_t w_1
			\big\|_{L^2_tL^\infty_x}
			\lesssim
			\sum_{|a|\le1}
			\big\|
			\langle x\rangle^{\beta_2}\nabla^a w_1
			\big\|_{L^2_x}.
		\end{equation}
		In order to estimate \(\mathcal T_t w_0\), writing
		\[
		\mathcal T_t w_0
		=\frac12|\nabla|^{-1}e^{it|\nabla|}|\nabla|w_0
		+\frac12|\nabla|^{-1}e^{-it|\nabla|}|\nabla|w_0.
		\]
	By the same argument used in the proof of \eqref{eq:shifted-S-L2t-Linfty}, we have
		\[
		\big\|
		A^{\frac{\beta_1}{2}}\mathcal T_t w_0
		\big\|_{L^2_tL^\infty_x}
		\lesssim
		\sum_{|a|\le1}
		\big\|
		\langle x\rangle^{\beta_2}\nabla^a|\nabla|w_0
		\big\|_{L^2_x}.
		\]
		Due to \(\beta_2<1\), one has \(\langle x\rangle^{2\beta_2}\in A_2(\mathbb R^3)\).
		Therefore, by Lemma \ref{lem:riesz L2}, one arrives at
		\begin{equation}
			\label{eq:weak-L2t-Tt-w0}
			\big\|
			A^{\frac{\beta_1}{2}}\mathcal T_t w_0
			\big\|_{L^2_tL^\infty_x}
			\lesssim
			\sum_{|a|\le1}
			\big\|
			\langle x\rangle^{\beta_2}
			\nabla^a\nabla w_0
			\big\|_{L^2_x}.
		\end{equation}
		Combining \eqref{eq:weak-L2t-Duhamel-w},
		\eqref{eq:weak-L2t-St-w1} and
		\eqref{eq:weak-L2t-Tt-w0} yields
		\eqref{eq:weak-L2t-Linfty-L2-w}.
		
		We next prove \eqref{eq:weak-L2t-Linfty-L2-dw}. By
		$\nabla w_F(t)
		=\int_0^t
		\mathcal S_{t-s}\nabla F(s)\,ds$
		and
		$
		\partial_t w_F(t)
		=
		\int_0^t
		\mathcal T_{t-s}F(s)\,ds,
		$
		it follows from \eqref{eq:shifted-T-L2t-Linfty}, \eqref{eq:shifted-S-L2t-Linfty}
		 and Minkowski's inequality that for \(|a|\le m\),
		\begin{equation}
			\label{eq:weak-L2t-Duhamel-dw}
			\begin{aligned}
				\big\|
				A^{\frac{\beta_1}{2}}\nabla^a\partial w_F
				\big\|_{L^2_tL^\infty_x}
				\lesssim
				\sum_{|b|\le |a|+2}
				\big\|
				A^{\beta_2}\nabla^bF
				\big\|_{L^1_tL^2_x}.
			\end{aligned}
		\end{equation}
		
		In addition, it follows that $\nabla w_{\mathrm{hom}}
		=\mathcal T_t\nabla w_0+\mathcal S_t\nabla w_1$.
		Therefore, by \eqref{eq:shifted-T-L2t-Linfty} and
		\eqref{eq:shifted-S-L2t-Linfty}, we have that for \(|a|\le m\),
		\begin{equation}
			\label{eq:weak-L2t-homogeneous-space-dw}
			\begin{aligned}
				\big\|
				A^{\frac{\beta_1}{2}}
				\nabla^a\nabla w_{\mathrm{hom}}
				\big\|_{L^2_tL^\infty_x}
				&\lesssim
				\sum_{|b|\le |a|+2}
				\big\|
				\langle x\rangle^{\beta_2}
				\nabla^b\nabla w_0
				\big\|_{L^2_x}
				+
				\sum_{|b|\le |a|+1}
				\big\|
				\langle x\rangle^{\beta_2}
				\nabla^b\nabla w_1
				\big\|_{L^2_x}\\
				&\lesssim
				\sum_{|b|\le |a|+2}
				\big\|
				\langle x\rangle^{\beta_2}
				\nabla^b\partial w(0)
				\big\|_{L^2_x}.
			\end{aligned}
		\end{equation}
		
		On the other hand, one has $
			\partial_t w_{\mathrm{hom}}(t)
			=\mathcal S_t\Delta w_0+\mathcal T_t w_1$.
		Hence, by \eqref{eq:shifted-S-L2t-Linfty} and
		\eqref{eq:shifted-T-L2t-Linfty}, one has that for \(|a|\le m\),
		\begin{equation}\label{eq:weak-L2t-homogeneous-time-dw}
			\begin{aligned}
				\big\|
				A^{\frac{\beta_1}{2}}
				\nabla^a\partial_t w_{\mathrm{hom}}
				\big\|_{L^2_tL^\infty_x}
				&\lesssim
				\sum_{|b|\le |a|+1}
				\big\|
				\langle x\rangle^{\beta_2}
				\nabla^b\Delta w_0
				\big\|_{L^2_x}
				+
				\sum_{|b|\le |a|+2}
				\big\|
				\langle x\rangle^{\beta_2}
				\nabla^b w_1
				\big\|_{L^2_x}
				\\
				&\lesssim
				\sum_{|b|\le |a|+2}
				\big\|
				\langle x\rangle^{\beta_2}
				\nabla^b\partial w(0)
				\big\|_{L^2_x}.
			\end{aligned}
		\end{equation}
		
		Combining \eqref{eq:weak-L2t-Duhamel-dw},
		\eqref{eq:weak-L2t-homogeneous-space-dw} and
		\eqref{eq:weak-L2t-homogeneous-time-dw} yields that for \(|a|\le m\),
		\begin{equation}\label{YHC-15}
		\begin{aligned}
		\big\|
		A^{\frac{\beta_1}{2}}\nabla^a\partial w
		\big\|_{L^2_tL^\infty_x}
		\lesssim
		\sum_{|b|\le |a|+2}
		\big[
		\big\|
		\langle x\rangle^{\beta_2}
		\nabla^b\partial w(0)
		\big\|_{L^2_x}
		+\big\|
		A^{\beta_2}\nabla^bF
		\big\|_{L^1_tL^2_x}
		\big].
		\end{aligned}
		\end{equation}
		Summing over \(|a|\le m\) for \eqref{YHC-15} yields
		\eqref{eq:weak-L2t-Linfty-L2-dw}.
	\end{proof}

	\subsection{Null conditions and good derivatives}\label{subsec:Good-derivative}
	
	In this subsection, we give some important properties of null forms.

	\begin{lemma}\label{lem:null}
		Assume that the constants $N_1^{\alpha\beta}$, $N_2^{\alpha\beta\gamma}$ and
		$N_3^{\alpha\beta\gamma\delta}$ satisfy the null conditions
		\begin{equation}\label{eq:null-condition-tensors}
			\begin{aligned}
				\sum_{\alpha,\beta=0}^3
				N_1^{\alpha\beta}\xi_\alpha\xi_\beta\equiv 0,
				\quad
				\sum_{\alpha,\beta,\gamma=0}^3
				N_2^{\alpha\beta\gamma}\xi_\alpha\xi_\beta\xi_\gamma\equiv0,
				\quad
				\sum_{\alpha,\beta,\gamma,\delta=0}^3
				N_3^{\alpha\beta\gamma\delta}\xi_\alpha\xi_\beta\xi_\gamma\xi_\delta\equiv0,
			\end{aligned}
		\end{equation}
		where $\xi=(\xi_0,\xi_1,\xi_2,\xi_3)=(\pm1,\omega)$ with $\omega\in\mathbb S^2$.
		Then, for smooth functions $f,g,h,w$ on $\mathbb R^{1+3}$, one has that for
		$r=|x|>0$,
		\begin{equation}\label{eq:null-structure-estimates}
			\begin{aligned}
				\big|
				\sum_{\alpha,\beta=0}^3
				N_1^{\alpha\beta}\partial_{\alpha}f\,\partial_{\beta}g
				\big|
				&\lesssim
				|\bar\partial f|\,|\partial g|
				+
				|\partial f|\,|\bar\partial g|,
				\\
				\big|
				\sum_{\alpha,\beta,\gamma=0}^3
				N_2^{\alpha\beta\gamma}\partial^2_{\alpha\beta}f\,\partial_{\gamma}g
				\big|
				&\lesssim
				|\bar\partial\partial f|\,|\partial g|
				+
				|\partial^2 f|\,|\bar\partial g|
				+
				\frac1r|\partial f|\,|\partial g|,
				\\
				\big|
				\sum_{\alpha,\beta,\gamma=0}^3
				N_2^{\alpha\beta\gamma}
				\partial_{\alpha}f\,\partial_{\beta}g\,\partial_{\gamma}h
				\big|
				&\lesssim
				|\bar\partial f|\,|\partial g|\,|\partial h|
				+
				|\partial f|\,|\bar\partial g|\,|\partial h|
				+
				|\partial f|\,|\partial g|\,|\bar\partial h|,
				\\
				\big|
				\sum_{\alpha,\beta,\gamma,\delta=0}^3
				N_3^{\alpha\beta\gamma\delta}
				\partial^2_{\alpha\beta}f\,\partial_{\gamma}g\,\partial_{\delta}h
				\big|
				&\lesssim
				|\bar\partial\partial f|\,|\partial g|\,|\partial h|
				+
				|\partial^2 f|\,|\bar\partial g|\,|\partial h|
				\\
				&\quad+
				|\partial^2 f|\,|\partial g|\,|\bar\partial h|
				+
				\frac1r|\partial f|\,|\partial g|\,|\partial h|,
				\\
				\big|
				\sum_{\alpha,\beta,\gamma,\delta=0}^3
				N_3^{\alpha\beta\gamma\delta}
				\partial_{\alpha}f\,\partial_{\beta}g\,\partial_{\gamma}h\,\partial_{\delta}w
				\big|
				&\lesssim
				|\bar\partial f|\,|\partial g|\,|\partial h|\,|\partial w|
				+
				|\partial f|\,|\bar\partial g|\,|\partial h|\,|\partial w|
				\\
				&\quad+
				|\partial f|\,|\partial g|\,|\bar\partial h|\,|\partial w|
				+
				|\partial f|\,|\partial g|\,|\partial h|\,|\bar\partial w|,
			\end{aligned}
		\end{equation}
		where
		$\bar\partial\in\big\{\partial_t+\partial_r,\,
			\frac1r\Omega_{12},\,
			\frac1r\Omega_{13},\,
			\frac1r\Omega_{23}
			\big\}$.
		\end{lemma}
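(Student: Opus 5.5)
The plan is to work at each point $(t,x)$ with $r>0$, decompose the spacetime gradient into the null frame, and exploit that the null condition kills the purely "bad" component. Write $\omega=x/r$, $L=\partial_t+\partial_r$ and $\underline L=\partial_t-\partial_r$. Then
\[
\partial_t=\tfrac12(L+\underline L),\qquad \partial_j=\tfrac12\omega_j(L-\underline L)+\slashed{\partial}_j,
\]
where $\slashed\partial_j=\partial_j-\omega_j\partial_r=-\sum_k\frac{\omega_k}{r}\Omega_{jk}$ (up to sign conventions) is a combination of the good derivatives $\frac1r\Omega_{kl}$. Hence for any function $f$,
\[
\partial_\alpha f=-\tfrac12\xi_\alpha\,\underline L f+\mathcal G_\alpha(f),\qquad \xi=(-1,\omega),\quad |\mathcal G_\alpha(f)|\lesssim|\bar\partial f|.
\]
Here $\xi$ is chosen so that $\xi_0=-1$ and $\xi_j=\omega_j$. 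This lies in $\{\pm1\}\times\mathbb S^2$, so \eqref{eq:null-condition-tensors} applies to it.

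For the first-order products (the first, third and fifth estimates), I substitute this decomposition into every factor and expand multilinearly. The single term in which every factor carries $\underline L$ is a constant multiple of $\big(\sum N^{\alpha\beta\cdots}\xi_\alpha\xi_\beta\cdots\big)\underline Lf\,\underline Lg\cdots$, which vanishes. Every other term contains at least one factor $\mathcal G(\cdot)$, bounded by $|\bar\partial\cdot|$, while the remaining factors are bounded by $|\partial\cdot|$ since $|\underline L f|\le 2|\partial f|$. This directly gives the stated bounds.

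For the terms containing $\partial^2_{\alpha\beta}f$ (the second and fourth estimates), I apply the decomposition twice: once to $\partial_\beta f$ as a function, and then to the outer derivative $\partial_\alpha$ acting on $\partial_\beta f$. The outer step gives
\[
\partial_\alpha\partial_\beta f=-\tfrac12\xi_\alpha\underline L\partial_\beta f+\mathcal G_\alpha(\partial_\beta f),
\]
and the error term is bounded by $|\bar\partial\partial f|$. For the inner step I commute: $\underline L\partial_\beta f=\partial_\beta\underline L f-[\partial_\beta,\underline L]f$. Since $\omega$ is homogeneous of degree $0$, the commutator $[\partial_\beta,\partial_r]=(\partial_\beta\omega_k)\partial_k$ has coefficients $O(1/r)$, so it contributes $\frac1r|\partial f|$. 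I then apply the decomposition to $\partial_\beta(\underline Lf)$, which gives $-\frac12\xi_\beta\underline L\underline L f$ plus a good part $\mathcal G_\beta(\underline Lf)$. To bound this good part, I commute the good derivatives past $\underline L$. The relations $[L,\underline L]=0$ and $[\frac1r\Omega,\underline L]=O(\frac1r)\Omega\cdot\frac1r$ show that $|\bar\partial\underline L f|\lesssim|\bar\partial\partial f|+\frac1r|\partial f|$.

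Collecting these, $\partial^2_{\alpha\beta}f=\frac14\xi_\alpha\xi_\beta\underline L^2f+O\big(|\bar\partial\partial f|+\frac1r|\partial f|\big)$. I expand the remaining first-order factors as before. The all-bad term is proportional to $\sum N\,\xi\xi\xi(\xi)$, which vanishes. Each remaining term contains a good factor on some slot, giving $|\bar\partial\partial f|$ or $|\partial^2f||\bar\partial g|$ (and so on), together with the $\frac1r|\partial f||\partial g|(|\partial h|)$ error. This yields the second and fourth lines of \eqref{eq:null-structure-estimates}.

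The main obstacle is the bookkeeping for the second-derivative case. Specifically, one must verify that all commutators between $\partial_\beta$, $\partial_r$ and $\frac1r\Omega_{kl}$ produce only $\frac1r|\partial f|$ errors and never uncontrolled $|\partial^2f|$ terms without a good factor. I would check the explicit identities $[\partial_j,\partial_r]=\frac1r(\partial_j-\omega_j\partial_r)$ and $[\partial_j,\Omega_{kl}]=\delta_{jk}\partial_l-\delta_{jl}\partial_k$ first, since they settle this point.
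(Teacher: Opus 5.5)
Your proof is correct and essentially the paper's. The paper writes $\partial_\alpha f=\zeta_\alpha\partial_r f+T_\alpha f$ with $\zeta=(-1,\omega)$, $T_0=\partial_t+\partial_r$, $T_k=\partial_k-\omega_k\partial_r$, so it takes $\partial_r$ rather than $\underline{L}$ as the single bad direction. For second derivatives it bounds the bad-of-good term $\partial_rT_\beta f$, whereas you commute $\underline{L}$ past $\partial_\beta$ and bound the good-of-bad term $\bar\partial\underline{L}f$; the difference is cosmetic.

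You should change the justification of $|\bar\partial\underline{L}f|\lesssim|\bar\partial\partial f|+\frac1r|\partial f|$.

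\begin{itemize}
\item The identities $[L,\underline{L}]=0$ and $[\frac1r\Omega_{ij},\underline{L}]=-\frac1{r^2}\Omega_{ij}$ are true, but they do not prove the bound. Commuting only trades $\bar\partial\underline{L}f$ for $\underline{L}\bar\partial f$. That quantity is not among those on the right-hand side and would still have to be estimated.
\item The direct argument is the Leibniz rule applied to $\underline{L}f=\partial_tf-\omega_k\partial_kf$. Since $L\omega_k=0$ and $\frac1r\Omega_{ij}\omega_k=\frac1r(\omega_i\delta_{jk}-\omega_j\delta_{ik})$, you get $\bar\partial\underline{L}f=\bar\partial\partial_tf-\omega_k\bar\partial\partial_kf+O(\frac1r|\partial f|)$ componentwise. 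This is exactly the required bound.
\item The paper uses the same device for $\partial_rT_kf$: it expands $\partial_rf=\omega_l\partial_lf$ and uses that $\Omega_{jk}\omega_l$ is bounded.
\end{itemize}

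With this replacement, plus the trivial bound $|\underline{L}^2f|\lesssim|\partial^2f|$ (from $\partial_r\omega_k=0$), your argument goes through for both the $N_2$ and $N_3$ second-derivative estimates.
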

	\begin{remark}
		Although Lemma~\ref{lem:null} is similar to \cite[Lemma 2.5]{HouTaoYin2},
the good derivatives $\bar\partial$ in $\Bbb R^{1+3}$
are different from the ones $\{\p_j+\f{x_j}{r}\p_t: j=1,2\}$ in \cite{HouTaoYin2} for 2D case.
We still give the detailed proof on \eqref{eq:null-structure-estimates}.
	\end{remark}
	\begin{proof}
		Set $\zeta=(\zeta_0,\zeta_1,\zeta_2,\zeta_3)=(-1,\omega)$.
		Define the tangential derivatives
		\begin{equation*}
			T_0=\partial_t+\partial_r,
			\qquad
			T_k=\partial_k-\omega_k\partial_r,
			\qquad 1\le k\le3.
		\end{equation*}
Note that
\begin{equation}\label{eq:T-angular-identity-null}
			T_k
			=
			\partial_k-\omega_k\partial_r
			=
			\frac1r\sum_{j=1}^3\omega_j\Omega_{jk}.
		\end{equation}
		Then
		\begin{equation}\label{eq:first-derivative-decomposition-null}
			\partial_\alpha f
			=
			\zeta_\alpha\partial_r f+T_\alpha f,
			\qquad
			\alpha=0,1,2,3,
		\end{equation}
		and
		\begin{equation}\label{eq:T-good-control-null}
			|T_\alpha f|
			\lesssim
			|\bar\partial f|.
		\end{equation}
		In addition, applying
		\eqref{eq:first-derivative-decomposition-null} to $\partial_\beta f$ yields
		\begin{equation}\label{eq:second-derivative-start-null}
			\partial_{\alpha\beta}^2 f
			=
			\partial_\alpha(\partial_\beta f)
			=
			\zeta_\alpha\partial_r(\partial_\beta f)
			+
			T_\alpha(\partial_\beta f).
		\end{equation}
		Due to $\partial_\beta f=\zeta_\beta\partial_r f+T_\beta f$ and $\partial_r\zeta_\beta=0$,
		one has
		\begin{equation}\label{eq:second-derivative-expanded-null}
			\begin{aligned}
				\partial_{\alpha\beta}^2 f
				&=
				\zeta_\alpha\zeta_\beta\partial_r^2f
				+
				\zeta_\alpha\partial_rT_\beta f
				+
				T_\alpha\partial_\beta f.
			\end{aligned}
		\end{equation}
		Write
		\begin{equation}\label{eq:second-derivative-decomposition-null}
			\partial_{\alpha\beta}^2 f
			=\zeta_\alpha\zeta_\beta\partial_r^2f
			+R_{\alpha\beta}(f),
		\end{equation}
		where $R_{\alpha\beta}(f)
			=\zeta_\alpha\partial_rT_\beta f
			+T_\alpha\partial_\beta f$.

		We now estimate $R_{\alpha\beta}(f)$. At first, it holds that
		\begin{equation}\label{eq:Ta-pbeta-control-null}
			|T_\alpha\partial_\beta f|
			\lesssim
			|\bar\partial\partial f|.
		\end{equation}
		It remains to deal with $\partial_rT_\beta f$.
		
		If $\beta=0$, then $T_0=\partial_t+\partial_r$ and
		\begin{equation}\label{eq:prT0-control-null}
			|\partial_rT_0f|
			=
			|(\partial_t+\partial_r)\partial_rf|
			\le
			\sum_{i=1}^{3}|(\partial_t+\partial_r)(\omega_i\p_i)f|
			\lesssim
			|\bar\partial\partial f|.
		\end{equation}
		
		If $1\le k\le3$, due to $\partial_r\omega_j=0$ and $[\Omega_{jk},\partial_r]=0$, it follows from \eqref{eq:T-angular-identity-null}
        that
		\begin{equation}\label{eq:prTi-expanded-null}
			\begin{aligned}
				\partial_rT_k f
				&=
				-\frac1{r^2}\sum_{j=1}^3\omega_j\Omega_{jk}f
				+
				\frac1r\sum_{j=1}^3\omega_j\Omega_{jk}\partial_r f
				\\
				&=
				-\frac1rT_k f
				+
				\frac1r\sum_{j=1}^3\omega_j\Omega_{jk}\partial_r f.
			\end{aligned}
		\end{equation}
		Note that the first term in the second line of \eqref{eq:prTi-expanded-null} is bounded by
		\begin{equation*}
			\frac1r|T_k f|
			\lesssim
			\frac1r|\partial f|.
		\end{equation*}
		For the second term, due to $\partial_r f=\ds\sum_{k=1}^3\omega_k\partial_k f$, one has
		\begin{equation*}
			\begin{aligned}
				\frac1r\Omega_{jk}\partial_r f
				&=
				\sum_{l=1}^3
				\frac{\omega_l}{r}\Omega_{jk}\partial_l f
				+\frac1r\sum_{l=1}^3
				\Omega_{jk}(\omega_l)\partial_l f.
			\end{aligned}
		\end{equation*}
		In addition, $\Omega_{jk}(\omega_l)$ is bounded. Therefore,
		\begin{equation}\label{eq:prTi-control-null}
			|\partial_rT_k f|
			\lesssim
			|\bar\partial\partial f|
			+
			\frac1r|\partial f|.
		\end{equation}
		Combining
		\eqref{eq:Ta-pbeta-control-null}-\eqref{eq:prTi-control-null} yields
		\begin{equation}\label{eq:Rab-control-null}
			|R_{\alpha\beta}(f)|
			\lesssim
			|\bar\partial\partial f|
			+
			\frac1r|\partial f|.
		\end{equation}
		
	For the quadratic null form, by utilizing \eqref{eq:first-derivative-decomposition-null}, one has
		\begin{equation}\label{YHC-16}
			\begin{aligned}
				\sum_{\alpha,\beta=0}^3
				N_1^{\alpha\beta}\partial_\alpha f\,\partial_\beta g
				=
				\sum_{\alpha,\beta=0}^3
				N_1^{\alpha\beta}
				(\zeta_\alpha\partial_r f+T_\alpha f)
				(\zeta_\beta\partial_r g+T_\beta g).
			\end{aligned}
		\end{equation}
		The bad term in \eqref{YHC-16} is
		$\big(
		\ds\sum_{\alpha,\beta=0}^3
		N_1^{\alpha\beta}\zeta_\alpha\zeta_\beta
		\big)
		\partial_r f\,\partial_r g$,
		which vanishes due to the null condition. Hence, all remaining terms in \eqref{YHC-16} contain at least
		one $T$ derivative. This, together with \eqref{eq:T-good-control-null}, yields
		\begin{equation*}
			\big|
			\sum_{\alpha,\beta=0}^3
			N_1^{\alpha\beta}\partial_\alpha f\,\partial_\beta g
			\big|
			\lesssim
			|\bar\partial f|\,|\partial g|
			+
			|\partial f|\,|\bar\partial g|.
		\end{equation*}
		
		For the second estimate in \eqref{eq:null-structure-estimates}, by
		\eqref{eq:first-derivative-decomposition-null} and
		\eqref{eq:second-derivative-decomposition-null}, we arrive at
		\begin{equation}\label{YHC-17}
			\begin{aligned}
				&\sum_{\alpha,\beta,\gamma=0}^3
				N_2^{\alpha\beta\gamma}
				\partial_{\alpha\beta}^2f\,\partial_\gamma g
				=
				\sum_{\alpha,\beta,\gamma=0}^3
				N_2^{\alpha\beta\gamma}
				\left(
				\zeta_\alpha\zeta_\beta\partial_r^2f
				+
				R_{\alpha\beta}(f)
				\right)
				\left(
				\zeta_\gamma\partial_r g+T_\gamma g
				\right).
			\end{aligned}
		\end{equation}
		The bad term in \eqref{YHC-17} is
		$\big(
		\sum_{\alpha,\beta,\gamma=0}^3
		N_2^{\alpha\beta\gamma}
		\zeta_\alpha\zeta_\beta\zeta_\gamma
		\big)
		\partial_r^2f\,\partial_rg$,
		which vanishes by the null condition. This leads to that all remaining terms in \eqref{YHC-17} contain
		either $R_{\alpha\beta}(f)$ or $T_\gamma g$. Together with
		\eqref{eq:T-good-control-null} and \eqref{eq:Rab-control-null}, this yields
		\begin{equation*}
			\begin{aligned}
				\big|
				\sum_{\alpha,\beta,\gamma=0}^3
				N_2^{\alpha\beta\gamma}
				\partial_{\alpha\beta}^2f\,\partial_\gamma g
				\big|
				&\lesssim
				\big(
				|\bar\partial\partial f|
				+
				\frac1r|\partial f|
				\big)
				|\partial g|
				+
				|\partial^2 f|\,|\bar\partial g|.
			\end{aligned}
		\end{equation*}
		The proof of the other left estimates in \eqref{eq:null-structure-estimates} can be analogously done.
	\end{proof}

	Next, we give some estimates of good derivatives by the vector field
$\Gamma\in\{L_j,\Omega_{kl}: 1\le j\le3, 1\le k<l\le3\}$.

	\begin{lemma}\label{lem:good-derivative-one-Gamma}
		It holds that
		\begin{equation}\label{eq:good-derivative-Gamma-correct}
			|\bar\partial f|
			\lesssim
			A^{-1}\sum_{k=1}^3 |L_k f|
			+A^{-1}\sum_{1\le k<j\le3}|\Omega_{kj}f|
			+\frac{B}{A}|\partial f|.
		\end{equation}
		In addition,
		\begin{equation}\label{eq:good-derivative-applied}
			|\bar\partial\nabla^a \partial u|
			\lesssim
			A^{-1}\sum_{\Gamma}|\Gamma\nabla^a \partial u|
			+\frac{B}{A}|\nabla^a\partial^2 u|.
		\end{equation}
		
	\end{lemma}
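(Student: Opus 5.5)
The plan is to treat the angular components and the component $\partial_t+\partial_r$ of $\bar\partial$ separately, and in each to split into the regions $r\ge (1+t)/2$ and $r\le(1+t)/2$, exactly as in the decomposition $\mathcal E_t\cup\mathcal I_t$ from the introduction.

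For the angular components $\frac1r\Omega_{kj}f$: on $\{r\ge (1+t)/2\}$ we have $r\gtrsim A$, so $|\frac1r\Omega_{kj}f|\lesssim A^{-1}|\Omega_{kj}f|$ directly. On $\{r\le(1+t)/2\}$ we have $t-r\gtrsim 1+t$, hence $B\sim A$. We then use $\frac1r\Omega_{kj}=\omega_k\partial_j-\omega_j\partial_k$ to get $|\frac1r\Omega_{kj}f|\lesssim|\partial f|\lesssim \frac BA|\partial f|$.

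For $\partial_t+\partial_r$ we use the standard identities
\[
\sum_{j=1}^3\omega_jL_j = t\partial_r+r\partial_t,\qquad (t+r)(\partial_t+\partial_r)=\sum_{j=1}^3\omega_jL_j+ S,\quad S=t\partial_t+r\partial_r .
\]
The scaling field $S$ is not among our $\Gamma$, so we avoid it by instead writing
\[
(t+r)(\partial_t+\partial_r)f=\sum_j\omega_jL_jf+(t\partial_t+r\partial_r)f,
\]
and then rewriting $t\partial_t+r\partial_r=\sum_j\omega_jL_j+(t-r)(\partial_t-\partial_r)$. The last identity is a direct check, since $\sum_j\omega_jL_j=t\partial_r+r\partial_t$. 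This gives
\[
(t+r)(\partial_t+\partial_r)f=2\sum_j\omega_jL_jf+(t-r)(\partial_t-\partial_r)f .
\]
Here the error term carries the factor $t-r$ and is bounded by $|t-r||\partial f|\le B|\partial f|$.

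Dividing by $t+r$ then gives the bound when $t+r\ge1$, because $t+r\sim A$ there. When $t+r\le1$, $A\sim B\sim1$ and the bound $|\bar\partial f|\lesssim|\partial f|\lesssim\frac BA|\partial f|$ is trivial for the $\partial_t+\partial_r$ part. The same region is harmless for the angular part, since it falls under the $r\le(1+t)/2$ case or directly under $|\frac1r\Omega f|\le|\nabla f|$. This proves \eqref{eq:good-derivative-Gamma-correct}.

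For \eqref{eq:good-derivative-applied} we apply \eqref{eq:good-derivative-Gamma-correct} with $f=\nabla^a\partial u$, noting $|\partial\nabla^a\partial u|\lesssim|\nabla^a\partial^2u|$.

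The only delicate point is making sure no scaling vector field or commutator appears. The identity above avoids $S$ entirely, and \eqref{eq:good-derivative-applied} needs no commutation because $\Gamma$ is applied to $\nabla^a\partial u$ as a given function. So I expect the main, mild obstacle to be just verifying that algebraic identity.
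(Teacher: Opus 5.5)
Your proposal is correct and follows essentially the same route as the paper. It uses the identity $(t+r)(\partial_t+\partial_r)=2\sum_j\omega_jL_j+(t-r)(\partial_t-\partial_r)$, the trivial case $t+r\le1$, and a near/far split for the angular part; the paper cuts at $r=(1+t)/4$ instead of $(1+t)/2$, which is immaterial. One small imprecision: on $\{r\le(1+t)/2\}$ it is not literally true that $t-r\gtrsim 1+t$ for small $t$, but what you actually need, $B\ge 1+t-r\ge (1+t)/2\gtrsim A$, does hold there.
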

	
	\begin{proof}
		We first estimate $(\partial_t+\partial_r)f$. Set
		$L_r=\ds\sum_{k=1}^3\omega_k L_k=t\partial_r+r\partial_t$.
		A direct computation gives
		\begin{equation}\label{eq:Tr-identity}
			(t+r)(\partial_t+\partial_r)
			=
			2L_r+(t-r)(\partial_t-\partial_r).
		\end{equation}
		Therefore, when $t+r>0$,
		\begin{equation}\label{eq:outgoing-good-bound}
			\begin{aligned}
				|(\partial_t+\partial_r)f|
				&\lesssim
				(t+r)^{-1}\sum_{k=1}^3|L_k f|
				+
				\frac{|t-r|}{t+r}|\partial f|.
			\end{aligned}
		\end{equation}
		Note that $|(\partial_t+\partial_r)f|$ can be trivially controlled by
			$(B/A)|\partial f|$ for $t+r\le1$. Then \eqref{eq:outgoing-good-bound} implies
		\begin{equation}\label{eq:outgoing-good-bound-A}
			|(\partial_t+\partial_r)f|
			\lesssim
			A^{-1}\sum_{k=1}^3|L_k f|
			+
			\frac{B}{A}|\partial f|.
		\end{equation}
		
		It remains to estimate $\Omega_{kj}f$.
		If	$r\ge \frac{1+t}{4}$,
		then $A(t,x)\lesssim r$,
		and hence
		\begin{equation}\label{eq:angular-large-r}
			\frac1r\sum_{1\le k<j\le3}|\Omega_{kj}f|
			\lesssim
			A^{-1}\sum_{1\le k<j\le3}|\Omega_{kj}f|.
		\end{equation}

		If	$r<\frac{1+t}{4}$,
		then $B(t,x)\gtrsim 1+t$ and $A(t,x)\lesssim 1+t$.
		Thus, $\frac{B(t,x)}{A(t,x)}\gtrsim1$.
		Due to $\frac1r\ds\sum_{1\le k<j\le3}|\Omega_{kj}f|\ls |\nabla f|\le |\partial f|$,
		 one has
		\begin{equation}\label{eq:angular-small-r}
			\frac1r\sum_{1\le k<j\le3}|\Omega_{kj}f|
			\lesssim
			\frac{B}{A}|\partial f|.
		\end{equation}

		Combining \eqref{eq:outgoing-good-bound-A}, \eqref{eq:angular-large-r}
		and \eqref{eq:angular-small-r} yields
		\eqref{eq:good-derivative-Gamma-correct}.
Applying \eqref{eq:good-derivative-Gamma-correct} to
		$f=\partial\nabla^a u$, we can obtain \eqref{eq:good-derivative-applied} directly.
	\end{proof}

	\section{Bootstrap assumptions}
	\label{sec:CVP-bootstrap-assumptions}

	Set
		\begin{equation}
		\label{eq:CVP-bootstrap-parameter-range}
		N \ge 15,
		\qquad
		0<\mu<\frac12,
		\qquad
		0\le\eta\le\frac{\mu}{100}.
	\end{equation}
	For problem \eqref{eq:Chaplygin-Cauchy} and any fixed $T>0$, we impose the following bootstrap assumptions:

(A)  Energy estimates
	\begin{equation}
		\label{eq:CVP-bootstrap-energy-ordinary}
		\sup_{0\le t\le T}
		(1+t)^{-\eta}
		\|\p\phi(t)\|_{H^N(\R^3)}
		\le
		\ve_1,
	\end{equation}
	\begin{equation}
		\label{eq:CVP-bootstrap-energy-Gamma}
		\sup_{0\le t\le T}
		(1+t)^{-\eta}
		\sum_{\Gamma}
		\|\p\Gamma\phi(t)\|_{H^{11}(\R^3)}
		\le
		\ve_1.
	\end{equation}

(B) Pointwise estimates:
	\begin{equation}\label{eq:CVP-bootstrap-pointwise-phi}
		\big\|
		AB^{\f\mu4}\phi
		\big\|_{L^\infty([0,T]\times\R^3)}
		+
		\sum_{|a|\le 7}
		\big\|
		AB^{\f\mu4}\nabla^a\p\phi
		\big\|_{L^\infty([0,T]\times\R^3)}
		\le
		\ve_1,
	\end{equation}
	\begin{equation}\label{eq:CVP-bootstrap-pointwise-Gamma}
		\sum_{\Gamma}
		\big\|
		A^{\frac{\mu}{2}}
		\Gamma\phi
		\big\|_{L^\infty([0,T]\times\R^3)}
		+\sum_{\Gamma}\sum_{|a|\le 6}\big\|A^{\frac{\mu}{2}}\p\nabla^a\Gamma\phi\big\|_{L^\infty([0,T]\times\R^3)}
		\le
		\ve_1.
	\end{equation}

(C) Weighted  Strichartz-type estimate:
	\begin{equation}
		\label{eq:CVP-bootstrap-Strichartz-Gamma}
		\sum_{\Gamma}
		\big\|
		A^{\frac{\mu}{3}}
		\Gamma\phi
		\big\|_{L^2([0,T];L^\infty(\mathbb R^3))}
		+
		\sum_{\Gamma}
		\sum_{|a|\le 6}
		\big\|
		A^{\frac{\mu}{3}}
		\partial\nabla^a\Gamma\phi
		\big\|_{L^2([0,T];L^\infty(\mathbb R^3))}
		\le
		\varepsilon_1.
	\end{equation}

	From the bootstrap assumptions above, we start to derive some related estimates.

	\begin{lemma}[{\bf Estimates for higher time derivatives}]
		\label{lem:CVP-higher-time-derivative-estimates}
		Let \(\phi\) be a solution to \eqref{eq:Chaplygin-Cauchy}
		on \([0,T]\times\mathbb R^3\).
		Assume that
		\eqref{eq:CVP-bootstrap-energy-ordinary}-\eqref{eq:CVP-bootstrap-Strichartz-Gamma} hold and
		$\ve_1>0$ is small. Then it holds that
		\begin{equation}
			\label{eq:CVP-ttphi-pointwise-zero}
			\sum_{|b|\le 6}\|AB^{\f\mu4}\nabla^b\p_t^2\phi\|_{L^\infty_{t,x}}
			\ls
			\ve_1,
		\end{equation}
		\begin{equation}
			\label{eq:CVP-ttphi-low-pointwise}
			\sum_{|b|\le 5}\|AB^{\f\mu4}\nabla^b\p_t^3\phi\|_{L^\infty_{t,x}}
			\ls
			\ve_1,
		\end{equation}
		\begin{equation}
			\label{eq:CVP-ttphi-low-pointwise-2}
			\sum_{|b|\le 4}\|AB^{\f\mu4}\nabla^b\p_t^4\phi\|_{L^\infty_{t,x}}
			\ls
			\ve_1,
		\end{equation}
		\begin{equation}
			\label{eq:CVP-ttphi-high-L2}
			\begin{aligned}
				\|\p_t^2\phi(t)\|_{H^{N-1}_x}
				+
				\|\p_t^3\phi(t)\|_{H^{N-2}_x}
				\ls
				\|\p\phi(t)\|_{H^N_x}.
			\end{aligned}
		\end{equation}
	where the implicit constants in \eqref{eq:CVP-ttphi-pointwise-zero}-\eqref{eq:CVP-ttphi-high-L2} are independent of $T$.
	\end{lemma}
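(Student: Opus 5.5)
The plan is to use the equation itself to trade time derivatives for spatial ones. Write \eqref{eq:Chaplygin-Cauchy} in the solved form
\[
(1+2\Delta\phi\cdot 0)\,\p_t^2\phi=\Delta\phi+2\p_t\phi\Delta\phi-2\sum_{k}\p_k\phi\,\p_{tk}^2\phi+|\nabla\phi|^2\Delta\phi-\sum_{k,j}\p_k\phi\p_j\phi\p_{kj}^2\phi .
\]
Note that the right-hand side of \eqref{potentia1} contains no $\p_t^2\phi$. Hence
\[
\p_t^2\phi=\Delta\phi+G(\p\phi,\nabla\p\phi),
\]
with $G$ a polynomial that is at least quadratic and is linear in $\nabla\p\phi$. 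This is the basic identity. Everything else follows by differentiating it and inducting on the number of time derivatives.

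\textbf{Step 1: the estimate \eqref{eq:CVP-ttphi-pointwise-zero}.} Apply $\nabla^b$ with $|b|\le6$ to the identity. The linear term $\nabla^b\Delta\phi$ is a component of $\nabla^{b'}\p\phi$ with $|b'|\le7$, so \eqref{eq:CVP-bootstrap-pointwise-phi} bounds it by $\ve_1(AB^{\mu/4})^{-1}$. The nonlinear terms are sums of products $\nabla^{b_1}\p\phi\cdot\nabla^{b_2}\p\phi\,(\cdot\,\nabla^{b_3}\p\phi)$ with all indices $\le7$. In each product, put the weight $AB^{\mu/4}$ on one factor and bound the remaining factors by $\ve_1$, using $A,B\ge1$. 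This gives $\ve_1+\ve_1^2+\ve_1^3\ls\ve_1$.

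\textbf{Step 2: the estimates \eqref{eq:CVP-ttphi-low-pointwise}--\eqref{eq:CVP-ttphi-low-pointwise-2}.} Differentiate the identity in $t$. This gives
\[
\p_t^3\phi=\Delta\p_t\phi+\p_t G,
\]
where $\p_t G$ involves $\p\phi$, $\nabla\p\phi$, $\p_t^2\phi$ and $\nabla\p_t^2\phi$. For $|b|\le5$, the factors $\nabla^{\le6}\p_t^2\phi$ are controlled by Step 1 and the others by the bootstrap assumption, so the same weighted product argument applies. Differentiating once more gives $\p_t^4\phi$ in terms of $\p_t^3\phi$ and lower-order quantities, and for $|b|\le4$ these are controlled by the previous step. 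The derivative counts $6,5,4$ are chosen exactly so that each step consumes one spatial derivative.

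\textbf{Step 3: the estimate \eqref{eq:CVP-ttphi-high-L2}.} In $H^{N-1}$ we have $\|\Delta\phi\|_{H^{N-1}}\le\|\p\phi\|_{H^N}$. For $G$ we use the Moser product estimate
\[
\|fg\|_{H^{N-1}}\ls\|f\|_{L^\infty}\|g\|_{H^{N-1}}+\|f\|_{H^{N-1}}\|g\|_{L^\infty},
\]
with the $L^\infty$ norms of $\p\phi$ and $\nabla\p\phi$ bounded by $\ve_1$ from \eqref{eq:CVP-bootstrap-pointwise-phi}. This gives
\[
\|\p_t^2\phi\|_{H^{N-1}}\ls(1+\ve_1+\ve_1^2)\|\p\phi\|_{H^N}.
\]
For $\p_t^3\phi$ in $H^{N-2}$ we argue the same way. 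The $L^\infty$ bound on $\p_t^2\phi$ comes from Step 1, and the $H^{N-1}$ bound comes from the estimate just proved.

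\textbf{Expected obstacle.} The only delicate point is bookkeeping. In Steps 1--2, every factor must carry at most the number of derivatives covered by \eqref{eq:CVP-bootstrap-pointwise-phi} or by the previously established bounds. Also, since the data have finite regularity, the term $\p_t G$ must be justified, for example by a density argument at the level of classical solutions. No smallness beyond $\ve_1\le1$ is needed, and none of the constants depend on $T$.
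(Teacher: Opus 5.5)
Your proposal is correct and follows essentially the same route as the paper: you solve the equation for $\p_t^2\phi$ (the paper writes it as $a^{kj}(\p\phi)\p_{kj}^2\phi+b^k(\p\phi)\p_{tk}^2\phi$, with your linear $\Delta\phi$ absorbed into $a^{kj}$), apply $\nabla^b$ together with \eqref{eq:CVP-bootstrap-pointwise-phi}, differentiate in $t$ to reach $\p_t^3\phi$ and $\p_t^4\phi$ at the cost of one spatial derivative per step, and close the $L^2$ bounds with Moser-type product estimates. The stray factor $(1+2\Delta\phi\cdot 0)$ in your first display equals $1$ and should simply be deleted; it does not affect the argument.
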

	\begin{proof}
    Although the argument is straightforward, we still provide the details for completeness.
		\eqref{potentia1} can be written as
		\begin{equation}
			\label{eq:CVP-ttphi-coefficient-form}
			\p_t^2\phi
			=
			a^{kj}(\p\phi)\p_{kj}^2\phi
			+
			b^k(\p\phi)\p_{tk}^2\phi,
		\end{equation}
		where and below the repeated spatial indices are summed over $1,2,3$, and
		\begin{equation}
			\label{eq:CVP-coefficient-a}
			a^{kj}(\p\phi)
			=
			\left(
			1+2\p_t\phi+|\nabla\phi|^2
			\right)\delta_{kj}
			-\p_k\phi\p_j\phi,\quad
			b^k(\p\phi)
			=-2\p_k\phi.
		\end{equation}
		
		By \eqref{eq:CVP-bootstrap-pointwise-phi}, one has
		\begin{equation}
			\label{eq:CVP-basic-pointwise-from-bootstrap}
			|\nabla^c\p\phi|
			\ls
			\ve_1A^{-1}B^{-{\f\mu4}},
			\qquad
			|c|\le 7.
		\end{equation}

		We first prove \eqref{eq:CVP-ttphi-pointwise-zero}. Applying
		$\nabla^b$ to \eqref{eq:CVP-ttphi-coefficient-form} gives
		\begin{equation}\label{eq:CVP-ttphi-spatial-Leibniz}
			\begin{aligned}
				\nabla^b\p_t^2\phi
				&=
				\sum_{b_1+b_2=b}
				C_{b_1,b_2}
				\big[
				\nabla^{b_1}a^{kj}(\p\phi)
				\nabla^{b_2}\p_{kj}^2\phi
				+
				\nabla^{b_1}b^k(\p\phi)
				\nabla^{b_2}\p_{tk}^2\phi
				\big].
			\end{aligned}
		\end{equation}
		The terms with $b_1=0$ and $b_2=b$ in \eqref{eq:CVP-ttphi-spatial-Leibniz} are bounded by
		\begin{equation}
			\label{eq:CVP-ttphi-principal-pointwise}
			\begin{aligned}
				&
				|a^{kj}(\p\phi)|
				|\nabla^b\p_{kj}^2\phi|
				+
				|b^k(\p\phi)|
				|\nabla^b\p_{tk}^2\phi|
				\ls
				(1+|\p\phi|+|\p\phi|^2)\sum_{|c|\le |b|+1}
				|\nabla^c\p\phi|.
			\end{aligned}
		\end{equation}
		For $b_1\neq0$, such terms in \eqref{eq:CVP-ttphi-spatial-Leibniz} contain at least one order derivatives of $a^{kj}(\p\phi)$
        or $b^k(\p\phi)$. By \eqref{eq:CVP-coefficient-a}, these terms are bounded by the finite sums of
				$|\nabla^{c_1}\p\phi|
			|\nabla^{c_2}\p\phi|$
		or
		$|\nabla^{c_1}\p\phi||\nabla^{c_2}\p\phi||\nabla^{c_3}\p\phi|$
		with
		$
			|c_1|+|c_2|
			\le
			|b|+1
		$
		or
		$
			|c_1|+|c_2|+|c_3|
			\le
			|b|+1.
		$
		Using \eqref{eq:CVP-basic-pointwise-from-bootstrap} and
		$\ve_1\le1$, we have
		\begin{equation}
			\label{eq:CVP-ttphi-pointwise-intermediate}
			\begin{aligned}
				\sum_{|b|\le 6}|\nabla^b\p_t^2\phi|
				&\ls
				\ve_1A^{-1}B^{-\f\mu4}
				+
				\ve_1^2A^{-2}B^{-\f\mu2}
				+
				\ve_1^3A^{-3}B^{-\f{3\mu}4}
				\ls
				\ve_1A^{-1}B^{-\f\mu4},
			\end{aligned}
		\end{equation}
		which implies \eqref{eq:CVP-ttphi-pointwise-zero}.
		
		We next prove \eqref{eq:CVP-ttphi-low-pointwise}. Differentiating
		\eqref{eq:CVP-ttphi-coefficient-form} with respect to $t$ yields
		\begin{equation}
			\label{eq:CVP-tttphi-coefficient-form}
			\begin{aligned}
				\p_t^3\phi
				&=
				a^{kj}(\p\phi)\p_{tkj}^3\phi
				+
				b^k(\p\phi)\p_k\p_t^2\phi
				+
				\p_ta^{kj}(\p\phi)\p_{kj}^2\phi
				+
				\p_tb^k(\p\phi)\p_{tk}^2\phi.
			\end{aligned}
		\end{equation}
		Then it follows from
		\eqref{eq:CVP-basic-pointwise-from-bootstrap} and
		\eqref{eq:CVP-ttphi-pointwise-zero} that
		\begin{equation}
			\label{eq:CVP-tttphi-pointwise}
			\sum_{|b|\le 5}|\nabla^b\p_t^3\phi|
			\ls
			\ve_1A^{-1}B^{-\f\mu4},
		\end{equation}
		which proves
		\eqref{eq:CVP-ttphi-low-pointwise}.
		
		The estimate \eqref{eq:CVP-ttphi-low-pointwise-2} can be analogously obtained by \eqref{eq:CVP-bootstrap-pointwise-phi}, \eqref{eq:CVP-ttphi-pointwise-zero} and \eqref{eq:CVP-ttphi-low-pointwise}.
		
		Finally, we prove \eqref{eq:CVP-ttphi-high-L2}.
		By \eqref{eq:CVP-ttphi-coefficient-form} and
		$\|\p\phi(t)\|_{W^{1,\infty}_x}\ls\ve_1,$
		we arrive at
		\begin{equation}
			\label{eq:CVP-ttphi-L2-zero}
			\begin{aligned}
				\|\p_t^2\phi(t)\|_{H^{N-1}_x}
				&\ls
				(1+\|\p\phi\|_{L^\infty_x}+\|\p\phi\|_{L^\infty_x}^2)\|\nabla\p\phi\|_{H^{N-1}_x}
				\\
				&\quad+(1+\|\p\phi\|_{L^\infty_x})\|\p\phi\|_{H^{N-1}_x}\|\nabla\p\phi\|_{L^\infty_x}
				\\
				&\ls
				\|\p\phi(t)\|_{H^N_x}.
			\end{aligned}
		\end{equation}
		
		For $\|\p_t^3\phi(t)\|_{H^{N-2}_x}$,
		it follows from
		\eqref{eq:CVP-tttphi-coefficient-form}, \eqref{eq:CVP-bootstrap-pointwise-phi},
		\eqref{eq:CVP-ttphi-pointwise-zero},
		\eqref{eq:CVP-ttphi-low-pointwise} and \eqref{eq:CVP-ttphi-L2-zero} that
		\begin{equation}
			\label{eq:CVP-tttphi-L2}
			\begin{aligned}
				\|\p_t^3\phi(t)\|_{H^{N-2}_x}
				&\ls
				(1+\|\p\phi\|_{L^\infty_x}+\|\p\phi\|_{L^\infty_x}^2)\|\nabla\p^2\phi\|_{H^{N-2}_x}
				\\
				&\quad+(1+\|\p\phi\|_{L^\infty_x})\|\p\phi\|_{H^{N-2}_x}\|\nabla\p^2\phi\|_{L^\infty_x}
				\\
				&\quad+(\|\p^2\phi\|_{L^\infty_x}+\|\p\phi\|_{L^\infty_x}\|\p^2\phi\|_{L^\infty_x})\|\nabla\p\phi\|_{H^{N-2}_x}
				\\
				&\quad+(1+\|\p\phi\|_{L^\infty_x}+\|\p^2\phi\|_{L^\infty_x})(\|\p\phi\|_{H^{N-2}_x}+\|\p^2\phi\|_{H^{N-2}_x})\|\nabla\p\phi\|_{L^\infty_x}
				\\
				&\ls
				\|\p\phi(t)\|_{H^N_x}.
			\end{aligned}
		\end{equation}
		This proves
		\eqref{eq:CVP-ttphi-high-L2}.
	\end{proof}

		\begin{lemma}[{\bf Estimates for higher time derivatives with one vector field $\Gamma$}]
		\label{lem:CVP-higher-time-derivative-estimates-Gamma}
		Let \(\phi\) be a smooth solution of \eqref{eq:Chaplygin-Cauchy}
		on \([0,T]\times\mathbb R^3\). Assume that
		\eqref{eq:CVP-bootstrap-energy-ordinary}-\eqref{eq:CVP-bootstrap-Strichartz-Gamma} hold and
		\(\ve_1>0\) is sufficiently small. Then the following estimates hold:
		\begin{equation}
			\label{eq:CVP-Gamma-ttphi-pointwise-zero}
			\sum_{\Gamma}
			\sum_{|b|\le 5}
			\big\|
			A^{\frac{\mu}{2}}
			\nabla^b\p_t^2\Gamma\phi
			\big\|_{L^\infty_{t,x}}
			\ls
			\ve_1,
		\end{equation}
		\begin{equation}
			\label{eq:CVP-Gamma-ttphi-low-pointwise}
			\sum_{\Gamma}
			\sum_{|b|\le 4}
			\big\|
			A^{\frac{\mu}{2}}
			\nabla^b\p_t^3\Gamma\phi
			\big\|_{L^\infty_{t,x}}
			\ls
			\ve_1,
		\end{equation}
		\begin{equation}
			\label{eq:CVP-Gamma-ttphi-high-L2}
			\begin{aligned}
				&
				\sum_{\Gamma}
				\big(
				\|\p_t^2\Gamma\phi(t)\|_{H^{10}_x}
				+
				\|\p_t^3\Gamma\phi(t)\|_{H^{9}_x}
				\big)
				\ls
				\sum_{\Gamma}
				\|\p\Gamma\phi(t)\|_{H^{11}_x}
				+
				\|\p\phi(t)\|_{H^N_x},
			\end{aligned}
		\end{equation}
		where the implicit constants in \eqref{eq:CVP-Gamma-ttphi-pointwise-zero}-\eqref{eq:CVP-Gamma-ttphi-high-L2} are independent of \(T\).
	\end{lemma}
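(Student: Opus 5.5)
We mimic the proof of Lemma~\ref{lem:CVP-higher-time-derivative-estimates}, now applied to $\Gamma\phi$. The starting point is \eqref{eq:CVP-ttphi-coefficient-form}. We apply $\Gamma$ to it and commute $\Gamma$ through the derivatives. The commutators $[\Gamma,\p_\alpha]$ are linear combinations of $\p_\beta$ with constant coefficients: $[L_j,\p_t]=-\p_j$, $[L_j,\p_k]=-\delta_{jk}\p_t$, and $[\Omega_{kl},\p_j]$ is a combination of $\p_k,\p_l$. This gives
\[
\p_t^2\Gamma\phi = a^{kj}(\p\phi)\p^2_{kj}\Gamma\phi + b^k(\p\phi)\p^2_{tk}\Gamma\phi + \Gamma(a^{kj}(\p\phi))\p^2_{kj}\phi+\Gamma(b^k(\p\phi))\p^2_{tk}\phi + \mathcal C,
\]
where $\mathcal C$ collects the commutator terms $[\Gamma,\p_t^2]\phi$, $a^{kj}[\Gamma,\p^2_{kj}]\phi$ and $b^k[\Gamma,\p^2_{tk}]\phi$. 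Each of these is $\p^2\phi$ times a coefficient bounded by $1+|\p\phi|+|\p\phi|^2$. Since $a^{kj},b^k$ are polynomials in $\p\phi$, $\Gamma(a^{kj}(\p\phi))$ is a combination of $(1+|\p\phi|)(|\p\Gamma\phi|+|\p\phi|)$. A genuine issue is that $[\Gamma,\p_t^2]$ produces $\p_t\p_j\phi$, a second-order term carrying no factor $\Gamma$ and no smallness beyond that of $\p^2\phi$. This is acceptable: by \eqref{eq:CVP-bootstrap-pointwise-phi} and \eqref{eq:CVP-ttphi-pointwise-zero}, $|\nabla^b\p^2\phi|\ls\ve_1A^{-1}B^{-\mu/4}\le \ve_1A^{-\mu/2}$ for $|b|\le6$, because $A^{-1}\le A^{-\mu/2}$.

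We prove \eqref{eq:CVP-Gamma-ttphi-pointwise-zero} next. We apply $\nabla^b$ with $|b|\le5$ and expand by Leibniz. The principal terms are bounded by $(1+|\p\phi|+|\p\phi|^2)\sum_{|c|\le|b|+1}|\nabla^c\p\Gamma\phi|$, which is controlled by $\ve_1A^{-\mu/2}$ via \eqref{eq:CVP-bootstrap-pointwise-Gamma}; here the bound $|c|\le6$ is exactly available. One point needs care: $\p^2_{tk}\Gamma\phi=\nabla\p_t\Gamma\phi$ is of the form $\p\nabla^c\Gamma\phi$, and $\p^2_{kj}\Gamma\phi=\p\nabla\Gamma\phi$, so no $\p_t^2\Gamma\phi$ appears on the right. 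The lower-order terms are products in which each factor is either $\nabla^c\p\phi$, $\nabla^c\p^2\phi$ (with $|c|\le6$), or $\nabla^c\p\Gamma\phi$ (with $|c|\le 6$). Every such product contains at least one factor of size $\ve_1A^{-\mu/2}$ and bounded remaining factors, which gives the bound $\ve_1A^{-\mu/2}$. For \eqref{eq:CVP-Gamma-ttphi-low-pointwise}, we differentiate the $\Gamma$-equation in $t$. The new terms are $\p_t^2\nabla\Gamma\phi$, for which we use the bound just proved with $|b|\le 5$, and $\p^3\phi$, for which we use \eqref{eq:CVP-ttphi-low-pointwise}. Apart from these, we argue in the same way, now with $|b|\le4$.

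For \eqref{eq:CVP-Gamma-ttphi-high-L2}, we take $H^{10}$ norms of the $\Gamma$-equation. We put $L^\infty$ on the lower-order factors and use Moser-type product estimates. The terms $\p\nabla\Gamma\phi$ are bounded by $\|\p\Gamma\phi\|_{H^{11}}$. The terms $\p^2\phi$ appearing in $\mathcal C$ and in $\Gamma(a)\p^2\phi$ are bounded by $\|\p\phi\|_{H^N}$ together with \eqref{eq:CVP-ttphi-high-L2}, where we use $N-1\ge 10$. Products of the form $\p\Gamma\phi\cdot\p^2\phi$ are handled by placing the lower-order factor in $L^\infty$: either $\p\Gamma\phi$, using \eqref{eq:CVP-bootstrap-pointwise-Gamma}, or $\nabla^{\le6}\p^2\phi$. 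The $H^{9}$ bound for $\p_t^3\Gamma\phi$ follows similarly from the $t$-differentiated equation, using the $\p_t^2\Gamma\phi$ bound just obtained and $\|\p_t^3\phi\|_{H^{N-2}}$. The main obstacle is the derivative count in the product estimates. When $\nabla^{10}$ falls mostly on $\p^2\phi$, the other factor $\p\Gamma\phi$ must be placed in $L^\infty$. When it falls on $\p\Gamma\phi$, the factor $\nabla^{\le 5}\p^2\phi$ must be placed in $L^\infty$. The standard splitting at half the derivatives, $5\le 6$, makes both cases work, and all constants are independent of $T$ since only the bootstrap bounds are used.
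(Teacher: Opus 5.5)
Your proposal is correct and follows essentially the same route as the paper. You commute $\Gamma$ through the coefficient form \eqref{eq:CVP-ttphi-coefficient-form}, bound the principal part by \eqref{eq:CVP-bootstrap-pointwise-Gamma}, control the remainder $\mathcal C_\Gamma$ by \eqref{eq:CVP-bootstrap-pointwise-phi} and Lemma~\ref{lem:CVP-higher-time-derivative-estimates}, and then differentiate once more in $t$; your Moser-type derivative count (using $N-1\ge 10$ and $N-2\ge 9$) correctly fills in the proof of \eqref{eq:CVP-Gamma-ttphi-high-L2}, which the paper omits.
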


	\begin{proof}
		The proof of Lemma \ref{lem:CVP-higher-time-derivative-estimates-Gamma}
			is similar to that of Lemma
			\ref{lem:CVP-higher-time-derivative-estimates}.		
		Set $\psi=\Gamma\phi.$
		It follows from \eqref{eq:CVP-ttphi-coefficient-form} that
		\begin{equation}
			\label{eq:CVP-Gamma-coefficient-form}
			\p_t^2\psi
			=a^{kj}(\p\phi)\p_{kj}^2\psi
			+b^k(\p\phi)\p_{tk}^2\psi
			+\mathcal C_\Gamma,
		\end{equation}
		where \(a^{kj}\) and \(b^k\) are defined by
		\eqref{eq:CVP-coefficient-a}, and
		\begin{equation}
			\label{eq:CVP-CGamma-schematic}
			\begin{aligned}
				\mathcal C_\Gamma
				&=\Gamma\bigl(a^{kj}(\p\phi)\bigr)\p_{kj}^2\phi
				+\Gamma\bigl(b^k(\p\phi)\bigr)\p_{tk}^2\phi
				\\
				&\quad+a^{kj}(\p\phi)[\Gamma,\p_{kj}^2]\phi
				+b^k(\p\phi)[\Gamma,\p_{tk}^2]\phi
				+[\p_t^2,\Gamma]\phi.
			\end{aligned}
		\end{equation}
		Then one has
		\begin{equation}\label{eq:CVP-CGamma-estimate}
			\begin{aligned}
				|\mathcal C_\Gamma|
				\ls&\left(
				|\Gamma\p\phi|+|\Gamma\p\phi||\p\phi|
				\right)|\p^2\phi|\\
				&
				+\left(1+|\p\phi|+|\p\phi|^2
				\right)|\p^2\phi|\\
				\ls&\left(1+|\p\phi|+|\p\Gamma\phi|+|\p\phi|^2+|\p\Gamma\phi||\p\phi|
				\right)|\p^2\phi|.
			\end{aligned}
		\end{equation}
		
		By \eqref{eq:CVP-bootstrap-pointwise-phi} and
		Lemma~\ref{lem:CVP-higher-time-derivative-estimates}, we have
		\begin{equation}
			\label{eq:CVP-basic-ordinary-time-pointwise-for-Gamma}
			\sum_{|c|\le 7}
			AB^{\f\mu4}|\nabla^c\p\phi|
			+
			\sum_{|c|\le 6}
			AB^{\f\mu4}|\nabla^c\p^2\phi|
			+
			\sum_{|c|\le 5}
			AB^{\f\mu4}|\nabla^c\p^3\phi|
			\ls
			\ve_1.
		\end{equation}
		In addition, it follows from \eqref{eq:CVP-bootstrap-pointwise-Gamma} that
		\begin{equation}
			\label{eq:CVP-basic-Gamma-pointwise-from-bootstrap}
			\sum_{\Gamma}
			\big(
			A^{\frac{\mu}{2}}|\Gamma\phi|
			+
			\sum_{|c|\le 6}
			A^{\frac{\mu}{2}}
			|\p\nabla^c\Gamma\phi|
			\big)
			\ls
			\ve_1.
		\end{equation}
		
		By \eqref{eq:CVP-CGamma-schematic} and direct computation, similarly to \eqref{eq:CVP-CGamma-estimate}, one can arrive at
			\begin{equation}
			\label{eq:CVP-CGamma-pointwise-schematic}
			\begin{aligned}
				|\nabla^b\mathcal C_\Gamma|
				&\ls
				\sum_{|c|\le |b|}
				|\nabla^c\p^2\phi|
				\\
				&\quad+
				\sum_{|c_1|+|c_2|\le |b|}
				\left(
				|\nabla^{c_1}\p\phi|
				+
				|\nabla^{c_1}\p\Gamma\phi|
				\right)
				|\nabla^{c_2}\p^2\phi|
				\\
				&\quad+
				\sum_{|c_1|+|c_2|+|c_3|\le |b|}
				|\nabla^{c_1}\p\phi|
				\left(
				|\nabla^{c_2}\p\phi|
				+
				|\nabla^{c_2}\p\Gamma\phi|
				\right)
				|\nabla^{c_3}\p^2\phi|.
			\end{aligned}
		\end{equation}
		For \(|b|\le 5\), collecting
		\eqref{eq:CVP-basic-ordinary-time-pointwise-for-Gamma}-\eqref{eq:CVP-CGamma-pointwise-schematic}
		yields
		\begin{equation}
			\label{eq:CVP-CGamma-pointwise-final}
			A^{\frac{\mu}{2}}
			|\nabla^b\mathcal C_\Gamma|
			\ls
			\ve_1+\ve_1^2+\ve_1^3
			\ls
			\ve_1.
		\end{equation}
		
		We next prove \eqref{eq:CVP-Gamma-ttphi-pointwise-zero}. Acting
		\(\nabla^b\) on \eqref{eq:CVP-Gamma-coefficient-form} (\(|b|\le 5\)), one has
		\begin{equation}
			\label{eq:CVP-Gamma-ttphi-Leibniz}
			\begin{aligned}
				\nabla^b\p_t^2\psi
				=\sum_{b_1+b_2=b}
				C_{b_1,b_2}
				\big[
				\nabla^{b_1}a^{kj}(\p\phi)
				\nabla^{b_2}\p_{kj}^2\psi
				+
				\nabla^{b_1}b^k(\p\phi)
				\nabla^{b_2}\p_{tk}^2\psi
				\big]
				+\nabla^b\mathcal C_\Gamma.
			\end{aligned}
		\end{equation}
	Note that \(|b|+1\le 6\), \eqref{eq:CVP-basic-Gamma-pointwise-from-bootstrap} and
		\begin{equation}
			\label{eq:CVP-Gamma-principal-pointwise}
			\begin{aligned}
				&A^{\frac{\mu}{2}}\sum_{b_1+b_2=b}
				\big[
				|\nabla^{b_1}a^{kj}(\p\phi)|
				|\nabla^{b_2}\p_{kj}^2\psi|
				+
				|\nabla^{b_1}b^k(\p\phi)|
				|\nabla^{b_2}\p_{tk}^2\psi|
				\big]
				\\
				\ls&
				\big[
				1+\sum_{|d|\le |b|}|\nabla^d\p\phi|+\Big(\sum_{|d|\le |b|}|\nabla^d\p\phi|\Big)^2
				\big]
				\sum_{|c|\le |b|+1}	A^{\frac{\mu}{2}}
				|\p\nabla^c\Gamma\phi|
				\ls
				\ve_1.
			\end{aligned}
		\end{equation}
	Collecting 	\eqref{eq:CVP-CGamma-pointwise-final} and \eqref{eq:CVP-Gamma-principal-pointwise}
		yields  \eqref{eq:CVP-Gamma-ttphi-pointwise-zero}.
		
		We next prove \eqref{eq:CVP-Gamma-ttphi-low-pointwise}. Differentiating
		\eqref{eq:CVP-Gamma-coefficient-form} with respect to \(t\), we get
		\begin{equation}
			\label{eq:CVP-Gamma-tttphi-coefficient-form}
			\begin{aligned}
				\p_t^3\psi
				&=
				a^{kj}(\p\phi)\p_{tkj}^3\psi
				+
				b^k(\p\phi)\p_i\p_t^2\psi
				+
				\p_ta^{kj}(\p\phi)\p_{kj}^2\psi
				+
				\p_tb^k(\p\phi)\p_{tk}^2\psi
				+
				\p_t\mathcal C_\Gamma.
			\end{aligned}
		\end{equation}
		For \(|b|\le 4\),  it is easy to know
		\begin{equation}\label{YHC-18}
		\begin{aligned}
		|\nabla^b(a^{kj}(\p\phi)\p_{tkj}^3\psi)|\ls\sum_{|c|\le |b|+2}
		|\p\nabla^c\Gamma\phi|\ls\ve_1.
\end{aligned}
		\end{equation}		
   In addition, it holds that due to \(|b|+1\le 5\) and \eqref{eq:CVP-Gamma-ttphi-pointwise-zero},
   \begin{equation}\label{YHC-19}
		\begin{aligned}
		|\nabla^b(b^k(\p\phi)\p_k\p_t^2\psi)|\ls
		\sum_{|d|\le |b|}|\nabla^d\p\phi|\cdot
		\sum_{|c|\le |b|+1}
		|\nabla^c\p_t^2\Gamma\phi|\ls
			\ve_1.
		\end{aligned}
		\end{equation}

		Note that
		\begin{equation}
			\label{eq:CVP-Gamma-time-derivative-coefficients}
			\begin{aligned}
				|\p_ta^{kj}(\p\phi)|
				+|\p_tb^k(\p\phi)|
				\ls|\p^2\phi|
				+|\p\phi||\p^2\phi|.
			\end{aligned}
		\end{equation}
		Then it follows from \eqref{eq:CVP-basic-ordinary-time-pointwise-for-Gamma},
		\eqref{eq:CVP-basic-Gamma-pointwise-from-bootstrap} and
		\eqref{eq:CVP-Gamma-ttphi-pointwise-zero} that the third and fourth terms in
		\eqref{eq:CVP-Gamma-tttphi-coefficient-form} are estimated as
\begin{equation}\label{YHC-20}
			\begin{aligned}
			|\nabla^b(\p_ta^{kj}(\p\phi)\p_{kj}^2\psi
				+\p_tb^k(\p\phi)\p_{tk}^2\psi)|\ls
			\ve_1.
			\end{aligned}
		\end{equation}

		It remains to treat \(\nabla^b\p_t\mathcal C_\Gamma\). By
		\eqref{eq:CVP-CGamma-schematic},
		\eqref{eq:CVP-Gamma-ttphi-pointwise-zero} and
		\eqref{eq:CVP-basic-Gamma-pointwise-from-bootstrap},
		we obtain
		\begin{equation}\label{eq:CVP-dtCGamma-pointwise-final}
			A^{\frac{\mu}{2}}
			|\nabla^b\p_t\mathcal C_\Gamma|
			\ls
			\ve_1+\ve_1^2+\ve_1^3
			\ls
			\ve_1.
		\end{equation}
		
		Combining \eqref{YHC-18}-\eqref{eq:CVP-dtCGamma-pointwise-final} yields
			\begin{equation}\label{eq:CVP-Gamma-tttphi-pointwise}
			\sum_{\Gamma}
			\sum_{|b|\le 4}
			A^{\frac{\mu}{2}}
			|\nabla^b\p_t^3\Gamma\phi|
			\ls
			\ve_1,
		\end{equation}
		which implies \eqref{eq:CVP-Gamma-ttphi-low-pointwise}.
		
		Meanwhile,  \eqref{eq:CVP-Gamma-ttphi-high-L2} can be shown as for \eqref{eq:CVP-ttphi-high-L2}. We omit
the details here.
	\end{proof}

Finally, we establish the weighted estimates for the good derivatives of $\phi$.

	\begin{lemma}[{\bf Weighted estimates for the good derivatives}]
		\label{lem:CVP-derived-good-derivative-estimates}
		Let \(\phi\) be a solution to \eqref{eq:Chaplygin-Cauchy}
		on \([0,T]\times\mathbb R^3\).
		Assume that
		\eqref{eq:CVP-bootstrap-energy-ordinary}-\eqref{eq:CVP-bootstrap-Strichartz-Gamma} hold and
		$\ve_1>0$ is small. Then
		\begin{equation}
			\label{eq:CVP-derived-good-derivative-estimates}
			\begin{aligned}
				|\bar{\p}\phi(t,x)|
				+\sum_{|a|\le 6}|\bar{\p}\p\nabla^a\phi(t,x)|
				\ls
				\ve_1\big[A^{-1-\f\mu2}
				+A^{-2}B^{1-\f\mu4}\big](t,x)
			\end{aligned}
		\end{equation}
		and
		\begin{equation}
			\label{eq:CVP-ttphi-low-good}
			\sum_{|b|\le 5}
			|\bar{\p}\nabla^b\p_t^2\phi(t,x)|
			\ls
			\ve_1\big[A^{-1-\f\mu2}
			+A^{-2}
			B^{1-\f\mu4}\big](t,x).
		\end{equation}
	\end{lemma}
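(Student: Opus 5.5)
The plan is to apply Lemma~\ref{lem:good-derivative-one-Gamma} pointwise and then insert the bootstrap bounds. For a function $f$, \eqref{eq:good-derivative-Gamma-correct} gives
\[
|\bar\partial f|\ls A^{-1}\sum_{\Gamma}|\Gamma f|+\frac{B}{A}|\partial f|.
\]
First take $f=\phi$. Then \eqref{eq:CVP-bootstrap-pointwise-Gamma} gives $A^{-1}|\Gamma\phi|\ls\ve_1A^{-1-\frac\mu2}$. By \eqref{eq:CVP-bootstrap-pointwise-phi}, $|\partial\phi|\ls\ve_1A^{-1}B^{-\frac\mu4}$, so $\frac BA|\partial\phi|\ls\ve_1A^{-2}B^{1-\frac\mu4}$. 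This is the first term of \eqref{eq:CVP-derived-good-derivative-estimates}.

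Next take $f=\partial\nabla^a\phi$ with $|a|\le6$, using \eqref{eq:good-derivative-applied}. The second summand $\frac BA|\nabla^a\partial^2\phi|$ involves either $\nabla^{a'}\partial\phi$ with $|a'|\le7$, controlled by \eqref{eq:CVP-bootstrap-pointwise-phi}, or $\nabla^a\partial_t^2\phi$ with $|a|\le6$, controlled by \eqref{eq:CVP-ttphi-pointwise-zero}. In both cases it is bounded by $\ve_1A^{-2}B^{1-\frac\mu4}$.

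The main step is the term $A^{-1}|\Gamma\partial\nabla^a\phi|$. I will commute $\Gamma$ through: $[\Gamma,\partial]$ is a linear combination of the $\partial_\alpha$ (e.g. $[L_j,\partial_t]=-\partial_j$, $[L_j,\partial_k]=-\delta_{jk}\partial_t$, $[\Omega_{kl},\partial_m]\in\{0,\pm\partial\}$), and likewise for $\nabla^a$. This gives
\[
|\Gamma\partial\nabla^a\phi|\ls\sum_{|c|\le|a|}|\partial\nabla^c\Gamma\phi|+\sum_{|c|\le|a|}|\partial\nabla^c\phi|,
\]
where the second sum also allows $\partial_t$ in place of one spatial derivative. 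The first sum is bounded by $\ve_1A^{-\frac\mu2}$ via \eqref{eq:CVP-bootstrap-pointwise-Gamma}, since $|a|\le6$. The second sum consists of terms like $\nabla^{c}\partial\phi$ or $\nabla^{c}\partial_t\partial\phi$, which are bounded by $\ve_1A^{-1}\le\ve_1A^{-\frac\mu2}$ using \eqref{eq:CVP-bootstrap-pointwise-phi} and \eqref{eq:CVP-ttphi-pointwise-zero}. Multiplying by $A^{-1}$ gives $\ve_1A^{-1-\frac\mu2}$. The only care needed is the bookkeeping of the commutators: I must check that a time derivative produced by a commutator never exceeds the ranges covered by Lemma~\ref{lem:CVP-higher-time-derivative-estimates}, which it does not, because at most one extra $\partial_t$ arises from each commutation.

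For \eqref{eq:CVP-ttphi-low-good} with $|b|\le5$, the same argument applies to $f=\nabla^b\partial_t^2\phi$. The term $\frac BA|\partial\nabla^b\partial_t^2\phi|$ is handled by \eqref{eq:CVP-ttphi-pointwise-zero} and \eqref{eq:CVP-ttphi-low-pointwise}. For $\Gamma\nabla^b\partial_t^2\phi$, commutation gives $\nabla^b\partial_t^2\Gamma\phi$ plus lower-order terms $\partial^{\le2}\nabla^{\le b}$ of $\phi$ and of $\partial\nabla^{\le b+1}\Gamma\phi$. These are bounded by $\ve_1A^{-\frac\mu2}$ using \eqref{eq:CVP-Gamma-ttphi-pointwise-zero}, \eqref{eq:CVP-bootstrap-pointwise-Gamma} and Lemma~\ref{lem:CVP-higher-time-derivative-estimates}. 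Multiplying by $A^{-1}$ concludes the estimate.
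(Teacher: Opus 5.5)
Your proposal follows the paper's proof essentially step for step: apply Lemma~\ref{lem:good-derivative-one-Gamma} to $\phi$, $\partial\nabla^a\phi$ and $\nabla^b\partial_t^2\phi$, commute $\Gamma$ through the derivatives using $[\Gamma,\partial_\alpha]\in\mathrm{span}\{\partial_0,\dots,\partial_3\}$, and close with the bootstrap bounds together with Lemmas~\ref{lem:CVP-higher-time-derivative-estimates} and~\ref{lem:CVP-higher-time-derivative-estimates-Gamma}. One bookkeeping correction: the commutator $[\Gamma,\nabla^b\partial_t^2]\phi$ contains no $\Gamma\phi$ terms, but it does contain $\nabla^{c}\partial_t^3\phi$ with $|c|=|b|-1$ (coming from $[L_j,\partial_j]=-\partial_t$), which is not of the form $\partial^{\le2}\nabla^{\le b}\phi$ you wrote and is controlled by \eqref{eq:CVP-ttphi-low-pointwise}, exactly as in the paper.
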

	
	\begin{proof}
		By
		\eqref{eq:CVP-bootstrap-pointwise-phi},
		\eqref{eq:CVP-bootstrap-pointwise-Gamma} and
		Lemma~\ref{lem:CVP-higher-time-derivative-estimates}, we have
		\begin{equation}
			\label{eq:CVP-derived-basic-second-order-bounds}
			\sum_{|c|\le 5}
			AB^{\f\mu4}
			|\nabla^c\p^{\le 3}\phi|
			\ls
			\ve_1,
		\end{equation}
		\begin{equation}
			\label{eq:CVP-derived-basic-Gamma-bounds}
			\sum_{\Gamma}
			A^{\frac{\mu}{2}}
			|\Gamma\phi|
			+
			\sum_{\Gamma}
			\sum_{|c|\le 6}
			A^{\frac{\mu}{2}}
			|\p\nabla^c\Gamma\phi|
			\ls
			\ve_1.
		\end{equation}
		By Lemma \ref{lem:good-derivative-one-Gamma}, it holds that
		\begin{equation}\label{eq:CVP-good-relation-zero-order-proof}
			|\bar{\p}\phi|
			\ls
			A^{-1}\sum_{\Gamma}
			|\Gamma\phi|
			+A^{-1}B|\p\phi|.
		\end{equation}
		By \eqref{eq:CVP-derived-basic-Gamma-bounds}, the first term on the right-hand side of \eqref{eq:CVP-good-relation-zero-order-proof}
is bounded by
\begin{equation}\label{YHC-22}
A^{-1}\sum_{\Gamma}|\Gamma\phi|\ls\ve_1A^{-1-\f\mu2}.
\end{equation}
		 For the second term on the right-hand side of \eqref{eq:CVP-good-relation-zero-order-proof}, due to
		\eqref{eq:CVP-derived-basic-second-order-bounds} and \(0<\mu<1\), we have
		\begin{equation}
			\label{eq:CVP-weighted-good-B-zero-order-proof}
			\begin{aligned}
				A^{-1}B|\p\phi|
				&=A^{-2}B^{1-\f\mu4}
				\big(AB^{\f\mu4}|\p\phi|\big)
				\ls\ve_1 A^{-2}	B^{1-\f\mu4}.
			\end{aligned}
		\end{equation}
		Thus, it holds that
		\begin{equation}\label{eq:CVP-derived-zero-order-good-final}
			|\bar{\p}\phi|
			\ls
			\ve_1\big[A^{-1-\f\mu2}
			+A^{-2}	B^{1-\f\mu4}\big].
		\end{equation}
		
		We next treat
		\(\bar{\p}\p\nabla^a\phi\) with
		\(|a|\le 6\). Applying Lemma  \ref{lem:good-derivative-one-Gamma} to
		each component of \(\p\nabla^a\phi\) yields
		\begin{equation}
			\label{eq:CVP-good-relation-differentiated-proof}
			\begin{aligned}
				|\bar{\p}\p\nabla^a\phi|
				&\ls
				A^{-1}\sum_{\Gamma}
				|\Gamma\p\nabla^a\phi|
				+
				A^{-1}B|\p^2\nabla^a\phi|.
			\end{aligned}
		\end{equation}
		Due to $[\Gamma,\p_\alpha]\in \mathrm{span}\{\p_0,\p_1,\p_2,\p_3\}$, one has
		\begin{equation}
			\label{eq:CVP-Gamma-partial-commutator-proof}
			\begin{aligned}
				|\Gamma\p\nabla^a\phi|
				&\ls
				|\p\nabla^a\Gamma\phi|
				+
				\sum_{|c|= |a|}
				|\p\nabla^c\phi|
				+
				\sum_{|c|= |a|-1}
				|\p^2\nabla^c\phi|,
			\end{aligned}
		\end{equation}
		where the last sum is absent when \(|a|=0\).

		Combining
		\eqref{eq:CVP-good-relation-differentiated-proof} and
		\eqref{eq:CVP-Gamma-partial-commutator-proof} yields
		\begin{equation}
			\label{eq:CVP-good-relation-differentiated-weighted-proof}
			\begin{aligned}
				|\bar{\p}\p\nabla^a\phi|
				&\ls
				\sum_{\Gamma}
				A^{-1}
				|\p\nabla^a\Gamma\phi|
				+
				A^{-1}
				\sum_{|c| = |a|}
				|\p\nabla^c\phi|
				+
				A^{-1}
				\sum_{|c| = |a|-1}
				|\p^2\nabla^c\phi|
				+
				A^{-1}B
				|\p^2\nabla^a\phi|.
			\end{aligned}
		\end{equation}
		Therefore, by  \eqref{eq:CVP-derived-basic-Gamma-bounds} and
		\eqref{eq:CVP-derived-basic-second-order-bounds}, we obtain
		\begin{equation}
			\label{eq:CVP-differentiated-good-final-proof}
			\sum_{|a|\le 6}
			|\bar{\p}\p\nabla^a\phi|
			\ls
			\ve_1\big[A^{-1-\f\mu2}
			+A^{-2}	B^{1-\f\mu4}\big].
		\end{equation}
		Combining
		\eqref{eq:CVP-derived-zero-order-good-final} and
		\eqref{eq:CVP-differentiated-good-final-proof} derives
		\eqref{eq:CVP-derived-good-derivative-estimates}.
		
		It remains to prove
		\eqref{eq:CVP-ttphi-low-good}. Let \(|b|\le 5\). Applying Lemma \ref{lem:good-derivative-one-Gamma} to \(\nabla^b\p_t^2\phi\)
yields
		\begin{equation}
			\label{eq:CVP-good-ttphi-vector-relation}
			\begin{aligned}
				|\bar{\p}\nabla^b\p_t^2\phi|
				\ls
				\sum_{\Gamma}
				A^{-1}|\Gamma\nabla^b\p_t^2\phi|
				+
				A^{-1}B|\p\nabla^b\p_t^2\phi|.
			\end{aligned}
		\end{equation}
		By \eqref{eq:CVP-derived-basic-second-order-bounds}, the second term on the right-hand side of \eqref{eq:CVP-good-ttphi-vector-relation}
		is estimated  as
		\begin{equation}
			\label{eq:CVP-good-ttphi-B-term}
			\begin{aligned}
				A^{-1}B|\p\nabla^b\p_t^2\phi|
				&=A^{-2}B^{1-\f\mu4}
				\big(AB^{\f\mu4}|\p\nabla^b\p_t^2\phi|\big)
				\ls
				\ve_1A^{-2}B^{1-\f\mu4}.
			\end{aligned}
		\end{equation}
		Next, we estimate the first term in
		\eqref{eq:CVP-good-ttphi-vector-relation}. By
		$[\Gamma,\p_\alpha]\in
		\operatorname{span}\{\p_0,\p_1,\p_2,\p_3\}$,
		one has
		\begin{equation}
			\label{eq:CVP-Gamma-tt-commutator}
			\begin{aligned}
				|\Gamma\nabla^b\p_t^2\phi|
				\ls
				|\nabla^b\p_t^2\Gamma\phi|
				+
				\sum_{|c|\le |b|+1}
				|\nabla^c\p\phi|
				+
				\sum_{|c|\le |b|}
				|\nabla^c\p_t^2\phi|
				+
				\sum_{|c|\le |b|-1}
				|\nabla^c\p_t^3\phi|,
			\end{aligned}
		\end{equation}
		where the last sum is absent when \(|b|=0\).
		
		Due to \(|b|\le 5\), Lemma~\ref{lem:CVP-higher-time-derivative-estimates-Gamma}
		gives
		\begin{equation}
			\label{eq:CVP-Gamma-main-tt-weighted}
			\sum_{\Gamma}
			|\nabla^b\p_t^2\Gamma\phi|
			\ls
			\ve_1A^{-\frac{\mu}{2}}.
		\end{equation}
		In addition, by \eqref{eq:CVP-derived-basic-second-order-bounds},
		\begin{equation}
			\label{eq:CVP-Gamma-commutator-ordinary-weighted}
			\begin{aligned}
				\sum_{|c|\le |b|+1}
				|\nabla^c\p\phi|
				+
				\sum_{|c|\le |b|}
				|\nabla^c\p_t^2\phi|
				+
				\sum_{|c|\le |b|-1}
				|\nabla^c\p_t^3\phi|
				\ls
				\ve_1A^{-1}B^{-\f\mu 4}.
			\end{aligned}
		\end{equation}
		Combining
		\eqref{eq:CVP-good-ttphi-vector-relation}-\eqref{eq:CVP-Gamma-commutator-ordinary-weighted} yields \eqref{eq:CVP-ttphi-low-good}.
		
	\end{proof}

	\section{Energy estimate}\label{sec:Chaplygin-energy-estimates}

	We rewrite 	\eqref{eq:Chaplygin-Cauchy}  as
	\begin{equation}\label{eq:Chaplygin-energy-reformulation}
		\Box\phi
		=\sum_{\alpha,\beta=0}^3
		\cQ^{\alpha\beta}(\partial\phi)
		\partial_{\alpha\beta}^2\phi,
	\end{equation}
	where $\cQ^{00}=0$, $\cQ^{0j}=\cQ^{j0}
		=-\partial_j\phi$ for $1\le j\le3$,
	and $\cQ^{kj}
		=\left(
		2\partial_t\phi+|\nabla\phi|^2
		\right)\delta_{kj}
		-\partial_k\phi\,\partial_j\phi$ for
		$1\le k,j\le3$.	
	Meanwhile, it holds that
	\begin{equation}\label{eq:Chaplygin-energy-symmetry}
		\cQ^{\alpha\beta}
		=
		\cQ^{\beta\alpha},
		\qquad
		0\le\alpha,\beta\le3.
	\end{equation}
	
	\begin{lemma}[{\bf Energy estimate}]\label{lem:Chaplygin-energy-estimate}
		Let \(\phi\) be a smooth solution to
		\eqref{eq:Chaplygin-Cauchy} on \([0,T]\times\mathbb R^3\).
		Assume that the initial data satisfy \eqref{YHC-4} and
		\eqref{eq:CVP-bootstrap-parameter-range}-\eqref{eq:CVP-bootstrap-Strichartz-Gamma}
		hold. Suppose that \(\ve_1>0\) is small. Then
		\begin{equation}\label{eq:Chaplygin-energy-growth-epsilon1}
			\sup_{0\le t\le T}(1+t)^{-\eta}\|\partial\phi(t)\|_{H^N(\mathbb R^3)}
			\ls
			\ve +\ve_1^2.
		\end{equation}
	\end{lemma}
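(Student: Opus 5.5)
We run the standard quasilinear energy method on the symmetric form \eqref{eq:Chaplygin-energy-reformulation} and use the pointwise bootstrap bound \eqref{eq:CVP-bootstrap-pointwise-phi} to make the coefficient of the Gronwall integral integrable up to a slowly growing factor. For $|a|\le N$ we apply $\nabla^a$ to \eqref{eq:Chaplygin-energy-reformulation}, which gives
\[
\Box\nabla^a\phi-\sum_{\alpha,\beta}\cQ^{\alpha\beta}\p^2_{\alpha\beta}\nabla^a\phi=G_a,\qquad G_a=\sum_{b<a}C_{a,b}\nabla^{a-b}\cQ^{\alpha\beta}\,\p^2_{\alpha\beta}\nabla^b\phi .
\]
We multiply this by $\p_t\nabla^a\phi$ and use the symmetry \eqref{eq:Chaplygin-energy-symmetry} together with $\cQ^{00}=0$. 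This produces the modified energy
\[
E_a(t)=\frac12\int\Big(|\p_t\nabla^a\phi|^2+|\nabla\nabla^a\phi|^2+\sum_{k,j}\cQ^{kj}\p_k\nabla^a\phi\,\p_j\nabla^a\phi\Big)dx .
\]
Since $|\cQ|\ls|\p\phi|+|\p\phi|^2\ls\ve_1$ by \eqref{eq:CVP-bootstrap-pointwise-phi}, the energy $E_a$ is equivalent to $\|\p\nabla^a\phi\|_{L^2}^2$. The energy identity then reads
\[
\frac{d}{dt}E_a=\int\Big(\tfrac12(\p_t\cQ^{kj})\p_k\nabla^a\phi\,\p_j\nabla^a\phi-(\p_j\cQ^{\alpha j})\,\p_\alpha\nabla^a\phi\,\p_t\nabla^a\phi+\dots\Big)dx+\int G_a\,\p_t\nabla^a\phi\,dx .
\]
The first group is bounded by $\|\p^2\phi\|_{L^\infty}(1+\|\p\phi\|_{L^\infty})\|\p\phi\|_{H^N}^2$.

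For the commutator $G_a$ we use a Moser-type product estimate. The only issue is that $G_a$ can contain $\p_t^2\nabla^b\phi$ or $\p_t\nabla\nabla^{a-1}$-type terms carrying one time derivative too many. Each such term is treated in one of two ways: either the derivative is a spatial derivative on $\cQ$ paired with $\p^2\nabla^{b}\phi$ with $|b|\le N-1$, which is fine, or a time derivative falls on $\p\phi$ inside $\cQ$, and then we use Lemma~\ref{lem:CVP-higher-time-derivative-estimates}, in particular \eqref{eq:CVP-ttphi-high-L2}, to trade $\p_t^2\phi$ for spatial derivatives. This gives
\[
\|G_a\|_{L^2}\ls\big(\|\p\phi\|_{W^{1,\infty}}+\|\p\phi\|_{W^{1,\infty}}^2\big)\|\p\phi\|_{H^N}.
\]
Here the $W^{1,\infty}$ norm also includes $\p_t^2\phi$, which is bounded by \eqref{eq:CVP-ttphi-pointwise-zero}.

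The pointwise bounds \eqref{eq:CVP-bootstrap-pointwise-phi} and \eqref{eq:CVP-ttphi-pointwise-zero} give $\|\p\phi\|_{W^{1,\infty}}+\|\p^2\phi\|_{L^\infty}\ls\ve_1(1+t)^{-1}$, since $AB^{\mu/4}\ge 1+t$. Summing over $|a|\le N$ we obtain, with $E=\sum_{|a|\le N}E_a\sim\|\p\phi\|_{H^N}^2$,
\[
E(t)\le E(0)+C\ve_1\int_0^t(1+\tau)^{-1}E(\tau)\,d\tau ,
\]
and Gronwall's inequality yields $E(t)\le E(0)(1+t)^{C\ve_1}$. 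Because $E(0)\ls\ve^2$ by \eqref{YHC-4}, we get $\|\p\phi(t)\|_{H^N}\ls\ve(1+t)^{C\ve_1/2}$. Choosing $\ve_1$ small so that $C\ve_1/2\le\eta$ gives \eqref{eq:Chaplygin-energy-growth-epsilon1}; the bound is even $\ls\ve$, and $\ve+\ve_1^2$ follows trivially. If $\eta=0$ is allowed, this route fails, and one instead bootstraps directly: inserting \eqref{eq:CVP-bootstrap-energy-ordinary} into the integral gives the term $\ve_1\cdot\ve_1^2\int(1+\tau)^{-1+2\eta}d\tau$. That integral is not bounded without decay better than $(1+\tau)^{-1}$, so the $(1+t)^{\eta}$ growth is genuinely needed.

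The main obstacle is the derivative-counting in the commutator step. At top order $|a|=N$, the term $\nabla^a\cQ\cdot\p^2\phi$ involves $\p\nabla^N\p\phi$-type quantities only through $\p\phi$, so it stays within $H^N$. But time derivatives of $\cQ$ in the energy identity bring in $\p_t^2\phi$, and these must be eliminated via the equation and Lemma~\ref{lem:CVP-higher-time-derivative-estimates}. Equivalently, one can use the Besov bound $\|\p\phi\|_{B^{1+\delta}_{\infty,1}}\ls\ve_1(1+t)^{-1}$ from \eqref{eq:CVP-bootstrap-pointwise-phi}, which only needs seven derivatives and is therefore available. We expect this step, rather than the Gronwall argument, to require the most care.
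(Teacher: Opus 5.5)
Your proof is correct, and its skeleton is the paper's. You commute $\nabla^a$ through the symmetric form \eqref{eq:Chaplygin-energy-reformulation}, use $\cQ^{00}=0$ and the symmetry to absorb the principal term into a modified energy equivalent to $\|\partial\nabla^a\phi\|_{L^2}^2$, and bound $\partial\cQ$ and the commutator by $\varepsilon_1(1+t)^{-1}\|\partial\phi\|_{H^N}$; the $\partial_t^2\phi$ in $\partial_t\cQ$ is removed via the equation, as in \eqref{eq:CVP-ttphi-pointwise-zero}.

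Two differences are worth recording. First, for the commutator you use a classical Moser/Klainerman--Majda estimate, which needs only $\|\nabla\partial\phi\|_{L^\infty}$. The paper instead runs a Littlewood--Paley decomposition and measures $\cQ$ and $\partial\phi$ in $B^{1+\delta}_{\infty,1}$; with seven pointwise derivatives available the two routes are interchangeable. Incidentally, the obstacle you describe in $G_a$ does not occur. Only spatial derivatives are commuted and $\cQ^{00}=0$, so every factor $\partial^2_{\alpha\beta}\nabla^b\phi$ in $G_a$ carries a spatial derivative and no $\partial_t$ lands on $\cQ$ there. Thus $\partial_t^2\phi$ enters only through $\partial_t\cQ$ in the energy identity, in $L^\infty$, and \eqref{eq:CVP-ttphi-high-L2} is not needed.

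Second, you close with a genuine Gronwall argument, $E(t)\le E(0)(1+t)^{C\varepsilon_1}$. This requires $\varepsilon_1\lesssim\eta$, but it gives the stronger bound $\lesssim\varepsilon$ without using \eqref{eq:CVP-bootstrap-energy-ordinary} at all. The paper instead substitutes the energy bootstrap bound into the time integral, obtaining $\varepsilon+\varepsilon_1^2$ with an implicit constant of size $\eta^{-1}$. Both versions need $\eta>0$, even though \eqref{eq:CVP-bootstrap-parameter-range} nominally allows $\eta=0$.
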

	
	\begin{proof}
		Acting $\partial_x^a$ with $a\in\N_0^3$ on two sides of
		\eqref{eq:Chaplygin-energy-reformulation} yields
		\begin{equation}\label{eq:Chaplygin-commuted-equation}
			\begin{aligned}
				\Box\partial_x^a\phi
				&=
				I_1^a+I_2^a,
				\\
				I_1^a
				=
				\sum_{\alpha,\beta=0}^3
				\cQ^{\alpha\beta}
				\partial_x^a\partial_{\alpha\beta}^2\phi,\quad
				&I_2^a
				=
				\sum_{\alpha,\beta=0}^3
				\sum_{\substack{b+c=a,\\ |b|\le |a|-1}}
				C_{abc}
				\partial_x^c\cQ^{\alpha\beta}
				\partial_x^b\partial_{\alpha\beta}^2\phi,
			\end{aligned}
		\end{equation}
		where $I_2^a$ vanishes for $|a|=0$.
		
		Multiplying \eqref{eq:Chaplygin-commuted-equation} by
		$\partial_t\partial_x^a\phi$ and integrating the resulting equality over
		$[0,t]\times\mathbb R^3$, one has
		\begin{equation}\label{eq:Chaplygin-basic-energy}
			\begin{aligned}
				\|\partial_x^a\partial\phi(t)\|_{L^2(\mathbb R^3)}^2
				&\ls
				\|\partial_x^a\partial\phi(0)\|_{L^2(\mathbb R^3)}^2
				+
				\big|
				\int_0^t\int_{\mathbb R^3}
				\partial_t\partial_x^a\phi\,
				I_1^a\,dx\,d\tau
				\big|
				\\
				&\quad+
				\int_0^t
				\|\partial_t\partial_x^a\phi(\tau)\|_{L^2(\mathbb R^3)}
				\|I_2^a(\tau)\|_{L^2(\mathbb R^3)}
				d\tau.
			\end{aligned}
		\end{equation}
		At first, we deal with the last term in the first two lines of
		\eqref{eq:Chaplygin-basic-energy}. Note that
		\begin{equation}\label{eq:Chaplygin-principal-identity}
			\begin{aligned}
				\cQ^{\alpha\beta}
				\partial_t\partial_x^a\phi\,
				\partial_{\alpha\beta}^2\partial_x^a\phi
				&=
				\partial_\alpha
				\big(
				\cQ^{\alpha\beta}
				\partial_t\partial_x^a\phi\,
				\partial_\beta\partial_x^a\phi
				\big)
				-
				\partial_\alpha\cQ^{\alpha\beta}
				\partial_t\partial_x^a\phi\,
				\partial_\beta\partial_x^a\phi
				\\
				&\quad-
				\frac12\partial_t
				\big(
				\cQ^{\alpha\beta}
				\partial_\alpha\partial_x^a\phi\,
				\partial_\beta\partial_x^a\phi
				\big)
				+
				\frac12\partial_t\cQ^{\alpha\beta}
				\partial_\alpha\partial_x^a\phi\,
				\partial_\beta\partial_x^a\phi,
			\end{aligned}
		\end{equation}
		where the Einstein summation convention for
		$\alpha,\beta=0,1,2,3$ and the symmetric condition
		\eqref{eq:Chaplygin-energy-symmetry} are used in
		\eqref{eq:Chaplygin-principal-identity}.
		
		Due to
		\begin{equation}\label{eq:Chaplygin-Q-pointwise}
			|\cQ^{\alpha\beta}|
			\ls
			|\partial\phi|
			+
			|\partial\phi|^2,
		\end{equation}
		it follows from \eqref{eq:CVP-bootstrap-pointwise-phi} that
		\begin{equation}\label{eq:Chaplygin-Q-Linfty}
			\begin{aligned}
				\|\cQ^{\alpha\beta}(t)\|_{L^\infty}
				&\ls
				\|\partial\phi(t)\|_{L^\infty}
				+
				\|\partial\phi(t)\|_{L^\infty}^2
				\\
				&\ls
				\|\partial\phi(t)\|_{W^{2,\infty}}
				+
				\|\partial\phi(t)\|_{W^{2,\infty}}^2
				\\
				&\ls
				(1+t)^{-1}\varepsilon_1
				+
				(1+t)^{-2}\varepsilon_1^2
				\\
				&\ls
				\varepsilon_1.
			\end{aligned}
		\end{equation}
		
		Direct computation yields
		\begin{equation}\label{eq:Chaplygin-spatial-Q-derivative}
			\begin{aligned}
				\|\nabla\cQ^{\alpha\beta}(t)\|_{L^\infty}
				&\ls
				\|\nabla\partial\phi(t)\|_{L^\infty}
				+
				\|\partial\phi(t)\|_{L^\infty}
				\|\nabla\partial\phi(t)\|_{L^\infty}
				\\
				&\ls
				\|\partial\phi(t)\|_{B_{\infty,1}^{1+\delta}}
				+
				\|\partial\phi(t)\|_{B_{\infty,1}^{1+\delta}}^2.
			\end{aligned}
		\end{equation}
		Next we estimate $\partial_t\cQ^{\alpha\beta}$. Due to
		$\partial_t^2\phi
			=\Delta\phi	+2\ds\sum_{j=1}^3\cQ^{0j}\partial_{tj}^2\phi
			+\ds\sum_{k,j=1}^3\cQ^{kj}\partial_{kj}^2\phi$,
		by \eqref{eq:Chaplygin-Q-Linfty}, one has
		\begin{equation}\label{eq:Chaplygin-ttphi-Linfty}
			\begin{aligned}
				\|\partial_t^2\phi(t)\|_{L^\infty}
				&\ls
				\|\nabla\partial\phi(t)\|_{L^\infty}
				+
				\|\cQ(t)\|_{L^\infty}
				\|\nabla\partial\phi(t)\|_{L^\infty}
				\\
				&\ls
				\|\partial\phi(t)\|_{B_{\infty,1}^{1+\delta}}
				+
				\|\partial\phi(t)\|_{B_{\infty,1}^{1+\delta}}^2.
			\end{aligned}
		\end{equation}
		Consequently,
		\begin{equation}\label{eq:Chaplygin-time-Q-derivative}
			\begin{aligned}
				\|\partial_t\cQ^{\alpha\beta}(t)\|_{L^\infty}
				&\ls
				\|\partial^2\phi(t)\|_{L^\infty}
				+\|\partial\phi(t)\|_{L^\infty}
				\|\partial^2\phi(t)\|_{L^\infty}\\
				&\ls\|\partial\phi(t)\|_{B_{\infty,1}^{1+\delta}}
				+\|\partial\phi(t)\|_{B_{\infty,1}^{1+\delta}}^2.
			\end{aligned}
		\end{equation}
		Combining
		\eqref{eq:Chaplygin-spatial-Q-derivative} and
		\eqref{eq:Chaplygin-time-Q-derivative}, one has
		\begin{equation}\label{eq:Chaplygin-full-Q-derivative}
			\|\partial\cQ^{\alpha\beta}(t)\|_{L^\infty}
			\ls
			\|\partial\phi(t)\|_{B_{\infty,1}^{1+\delta}}
			+
			\|\partial\phi(t)\|_{B_{\infty,1}^{1+\delta}}^2.
		\end{equation}
		
		It follows from \eqref{eq:Chaplygin-Q-Linfty},
		\eqref{eq:Chaplygin-full-Q-derivative} and integration over
		$[0,t]\times\mathbb R^3$ for
		\eqref{eq:Chaplygin-principal-identity} that
		\begin{equation}\label{eq:Chaplygin-I1-estimate}
			\begin{aligned}
				\big|
				\int_0^t\int_{\mathbb R^3}
				\partial_t\partial_x^a\phi\,
				I_1^a\,dx\,d\tau
				\big|
				&\ls
				\varepsilon_1
				\|\partial_x^a\partial\phi(0)\|_{L^2}^2
				+
				\varepsilon_1
				\|\partial_x^a\partial\phi(t)\|_{L^2}^2
				\\
				&\quad+
				\int_0^t
				\big(
				\|\partial\phi(\tau)\|_{B_{\infty,1}^{1+\delta}}
				+
				\|\partial\phi(\tau)\|_{B_{\infty,1}^{1+\delta}}^2
				\big)
				\|\partial_x^a\partial\phi(\tau)\|_{L^2}^2
				d\tau.
			\end{aligned}
		\end{equation}

		Next, we treat $\|I_2^a\|_{L^2(\mathbb R^3)}$ with $|a|\ge1$ in
		\eqref{eq:Chaplygin-commuted-equation}. It is easy to find that
		\begin{equation}\label{eq:Chaplygin-I2-frequency}
			\begin{aligned}
				\|I_2^a\|_{L^2}
				&\ls
				\big\|
				\|P_kI_2^a\|_{L^2}
				\big\|_{\ell_k^2}\\
				&\ls
				\sum_{\alpha,\beta=0}^3
				\sum_{\substack{b+c=a,\\ |b|\le |a|-1}}
				\big\|
				\sum_{(k_1,k_2)\in\mathcal X_k}
				\big\|
				P_k
				\big(
				P_{k_1}\partial_{\alpha\beta}^2\partial_x^b\phi
				P_{k_2}\partial_x^c\cQ^{\alpha\beta}
				\big)
				\big\|_{L^2}
				\big\|_{\ell_k^2}.
			\end{aligned}
		\end{equation}
		Note that $\cQ^{00}=0$ and $|c|\ge1$ in \eqref{eq:Chaplygin-I2-frequency}. When
		$\max\{k_1,k_2\}=k_1$, one has that by Bernstein's inequality
		\begin{equation}\label{eq:Chaplygin-I2-first-frequency-case}
			\begin{aligned}
				&
				\big\|
				P_k
				\big(
				P_{k_1}\partial_{\alpha\beta}^2\partial_x^b\phi
				P_{k_2}\partial_x^c\cQ^{\alpha\beta}
				\big)
				\big\|_{L^2}
				\\
				&\ls
				\|P_{k_1}\partial\partial_x\partial_x^b\phi\|_{L^2}
				\|P_{k_2}\partial_x^c\cQ^{\alpha\beta}\|_{L^\infty}
				\\
				&\ls
				2^{k_1(|b|+1)}
				2^{k_2(|c|-1)}
				\|P_{k_1}\partial\phi\|_{L^2}
				\|P_{k_2}\partial_x\cQ^{\alpha\beta}\|_{L^\infty}
				\\
				&\ls
				2^{k_1|a|}
				\|P_{k_1}\partial\phi\|_{L^2}
				\|P_{k_2}\partial_x\cQ^{\alpha\beta}\|_{L^\infty}.
			\end{aligned}
		\end{equation}
		Therefore,
		\begin{equation}\label{eq:Chaplygin-I2-first-frequency-sum}
			\begin{aligned}
				&
				\big\|
				\sum_{\substack{(k_1,k_2)\in\mathcal X_k,\\
				\max\{k_1,k_2\}=k_1}}
				\big\|
				P_k
				\big(
				P_{k_1}\partial_{\alpha\beta}^2\partial_x^b\phi
				P_{k_2}\partial_x^c\cQ^{\alpha\beta}
				\big)
				\big\|_{L^2}
				\big\|_{\ell_k^2}
				\ls
				\|\partial\phi\|_{H^{|a|}}
				\|\cQ^{\alpha\beta}\|_{B_{\infty,1}^{1+\delta}}.
			\end{aligned}
		\end{equation}
		When $\max\{k_1,k_2\}=k_2$, we can analogously obtain
		\begin{equation}\label{eq:Chaplygin-I2-second-frequency-case}
			\begin{aligned}
				&
				\big\|
				\sum_{\substack{(k_1,k_2)\in\mathcal X_k,\\
				\max\{k_1,k_2\}=k_2}}
				\big\|
				P_k
				\big(
				P_{k_1}\partial_{\alpha\beta}^2\partial_x^b\phi
				P_{k_2}\partial_x^c\cQ^{\alpha\beta}
				\big)
				\big\|_{L^2}
				\big\|_{\ell_k^2}
				\ls
				\|\partial_x\cQ^{\alpha\beta}\|_{H^{|a|-1}}
				\|\partial\phi\|_{B_{\infty,1}^{1+\delta}}.
			\end{aligned}
		\end{equation}
		Note that
		$B_{p,r}^s(\mathbb R^3)\cap L^\infty(\mathbb R^3)$ is an algebra for
		$s>0$ and $p,r\in[1,\infty]$. Then, for $1\le |a|\le N$, one has
		\begin{equation}\label{eq:Chaplygin-Q-Besov}
			\begin{aligned}
				\|\cQ^{\alpha\beta}\|_{B_{\infty,1}^{1+\delta}}
				&\ls
				\|\partial\phi\|_{B_{\infty,1}^{1+\delta}}
				+
				\|\partial\phi\|_{B_{\infty,1}^{1+\delta}}
				\|\partial\phi\|_{L^\infty}
				\\
				&\ls
				\|\partial\phi\|_{B_{\infty,1}^{1+\delta}}
				+
				\|\partial\phi\|_{B_{\infty,1}^{1+\delta}}^2.
			\end{aligned}
		\end{equation}
		Moreover,
		\begin{equation}\label{eq:Chaplygin-Q-high-Sobolev}
			\begin{aligned}
				\|\partial_x\cQ^{\alpha\beta}\|_{H^{|a|-1}}
				\ls
				\|\cQ^{\alpha\beta}\|_{H^{|a|}}
				\ls
				\|\partial\phi\|_{H^{|a|}}
				\left(
				1+
				\|\partial\phi\|_{L^\infty}
				\right).
			\end{aligned}
		\end{equation}
		Hence,
		\begin{equation}\label{eq:Chaplygin-I2-final}
			\|I_2^a\|_{L^2}
			\ls
			\|\partial\phi\|_{H^{|a|}}
			\big(
			\|\partial\phi\|_{B_{\infty,1}^{1+\delta}}
			+
			\|\partial\phi\|_{B_{\infty,1}^{1+\delta}}^2
			\big).
		\end{equation}

		Collecting \eqref{eq:Chaplygin-basic-energy},
		\eqref{eq:Chaplygin-I1-estimate},
		\eqref{eq:Chaplygin-I2-final}, and using the smallness of
		$\varepsilon_1$, we arrive at
		\begin{equation}\label{eq:Chaplygin-energy-integral}
			\begin{aligned}
				\|\partial\phi(t)\|_{H^N(\mathbb R^3)}^2
				&\ls
				\|\partial\phi(0)\|_{H^N(\mathbb R^3)}^2
				+\int_0^t
				\big(
				\|\partial\phi(\tau)\|_{B_{\infty,1}^{1+\delta}}
				+\|\partial\phi(\tau)\|_{B_{\infty,1}^{1+\delta}}^2
				\big)
				\|\partial\phi(\tau)\|_{H^N(\mathbb R^3)}^2
				d\tau.
			\end{aligned}
		\end{equation}
		
		It follows from \eqref{eq:CVP-bootstrap-pointwise-phi} and $W^{2,\infty}(\R^3) \hookrightarrow B_{\infty,1}^{1+\delta}(\R^3)$ that
		\begin{equation}\label{eq:Chaplygin-time-coefficient}
			\begin{aligned}
				\|\partial\phi(\tau)\|_{B_{\infty,1}^{1+\delta}}
				+\|\partial\phi(\tau)\|_{B_{\infty,1}^{1+\delta}}^2
				\ls
				\varepsilon_1(1+\tau)^{-1}.
			\end{aligned}
		\end{equation}
		Substituting \eqref{eq:Chaplygin-time-coefficient} into
		\eqref{eq:Chaplygin-energy-integral} yields
		\begin{equation}\label{eq:Chaplygin-before-Gronwall}
			\begin{aligned}
				\|\partial\phi(t)\|_{H^N}^2
				&\ls
				\|\partial\phi(0)\|_{H^N}^2
				+
				\int_0^t
				\varepsilon_1(1+\tau)^{-1}
				\|\partial\phi(\tau)\|_{H^N}^2
				d\tau.
			\end{aligned}
		\end{equation}
		By Gronwall's inequality, \eqref{YHC-4} and \eqref{eq:CVP-bootstrap-energy-ordinary},
		\begin{equation}\label{eq:Chaplygin-after-Gronwall}
			\begin{aligned}
				\|\partial\phi(t)\|_{H^N}
				\ls
				\ve + \ve_1^2 \int_{0}^{t} (1+\tau)^{-1+\eta}d\tau
				\ls
				\ve + \ve_1^2 \eta^{-1}(1+t)^\eta.
			\end{aligned}
		\end{equation}
		Therefore, $\ds\sup_{0\le t\le T}(1+t)^{-\eta}\|\partial\phi(t)\|_{H^N(\mathbb R^3)}
			\ls \ve +\ve_1^2$ holds, which yields \eqref{eq:Chaplygin-energy-growth-epsilon1}.
	\end{proof}

	\section{Equation of the good unknown}
	\label{sec:equation-for-V}
	
	Introduce a good unknown as follows
	\begin{equation}
		\label{eq:V-definition}
		V=\phi-\phi\partial_t\phi.
	\end{equation}
	Note that	
	\begin{equation}
		\label{eq:quadratic-part-Q0-identity}
		2\partial_t\phi\Delta\phi
		-
		2\sum_{k=1}^3\partial_k\phi\,\partial_{tk}^2\phi
		=
		2Q_0(\partial_t\phi,\phi)
		-
		2\partial_t\phi\,\Box\phi .
	\end{equation}
	Then we have
	\begin{equation}\label{eq:Chaplygin-final-form-V-section}
		\Box\phi
		=
		2Q_0(\partial_t\phi,\phi)
		+\mathcal N(\phi),
		\end{equation}
where $\mathcal N(\phi)
		=\mathcal N_3(\phi)+\mathcal N_4(\phi)$ with
	\begin{equation}
		\label{eq:N3-definition-V-section}
		\begin{split}
			\mathcal N_3(\phi)
			&=
			-4(\partial_t\phi)^2\Delta\phi
			+
			4\sum_{k=1}^3
			\partial_t\phi\,\partial_k\phi\,\partial_{tk}^2\phi
			+
			|\nabla\phi|^2\Delta\phi
			-
			\sum_{k,j=1}^3
			\partial_k\phi\,\partial_j\phi\,\partial_{kj}^2\phi
		\end{split}
	\end{equation}
and
	\begin{equation}
		\label{eq:N4-definition-V-section}
		\begin{split}
			\mathcal N_4(\phi)
			&=
			-2\partial_t\phi\,|\nabla\phi|^2\Delta\phi
			+
			2\sum_{k,j=1}^3
			\partial_t\phi\,\partial_k\phi\,\partial_j\phi\,\partial_{kj}^2\phi.
		\end{split}
	\end{equation}
	By
	\begin{equation}
		\label{eq:box-product-identity-V-section}
		\Box(fg)
		=f\Box g+g\Box f+2Q_0(f,g),
	\end{equation}
	we then have
	\begin{equation}
		\label{eq:box-V-first-computation}
		\begin{split}
			\Box V
			&=\Box\phi
			-\Box(\phi\partial_t\phi)=
			\Box\phi
			-\phi\partial_t\Box\phi
			-(\partial_t\phi)\Box\phi
			-2Q_0(\phi,\partial_t\phi).
		\end{split}
	\end{equation}
	Substituting \eqref{eq:Chaplygin-final-form-V-section} into
	\eqref{eq:box-V-first-computation} yields
	
		\begin{equation}
		\label{eq:box-V-final-expanded}
		\begin{split}
			\Box V
			&=
			\mathcal N_3(\phi)
			+
			\mathcal N_4(\phi)
			-
			2(\partial_t\phi)Q_0(\partial_t\phi,\phi)
			-
			2\phi\,\partial_t Q_0(\partial_t\phi,\phi)\\
			&\quad
			-
			(\partial_t\phi)\mathcal N_3(\phi)
			-
			\phi\partial_t\mathcal N_3(\phi)
			-
			(\partial_t\phi)\mathcal N_4(\phi)
			-
			\phi\partial_t\mathcal N_4(\phi).
		\end{split}
	\end{equation}
	In addition,
	\begin{equation}
		\label{eq:dt-Q0-identity-V-section}
		\partial_tQ_0(\partial_t\phi,\phi)
		=
		Q_0(\partial_t^2\phi,\phi)
		+
		Q_0(\partial_t\phi,\partial_t\phi).
	\end{equation}
It follows from direct computation that

	\begin{equation}
		\label{eq:box-V-schematic-null}
		\Box V
		=\mathcal R(\phi)=
		\mathcal R_3(\phi)
		+\mathcal R_4(\phi)
		+\mathcal R_5(\phi)
	\end{equation}
	with
	\begin{equation}
		\label{eq:R3R4R5-definition}
		\begin{split}
			\mathcal R_3(\phi)
			&=
			\mathcal N_3(\phi)
			-
			2(\partial_t\phi)Q_0(\partial_t\phi,\phi)
			-
			2\phi\partial_tQ_0(\partial_t\phi,\phi),  \\
			\mathcal R_4(\phi)
			&=
			\mathcal N_4(\phi)
			-
			(\partial_t\phi)\mathcal N_3(\phi)
			-
			\phi\partial_t\mathcal N_3(\phi),  \\
			\mathcal R_5(\phi)
			&=
			-
			(\partial_t\phi)\mathcal N_4(\phi)
			-
			\phi\partial_t\mathcal N_4(\phi),
		\end{split}
	\end{equation}
	where \(\mathcal R_3\), \(\mathcal R_4\) and \(\mathcal R_5\) are cubic,
	quartic and quintic in \(\phi\), respectively. Moreover, \(\mathcal R_k\) for $3\le k\le 5$
	satisfy the null conditions.

	\section{Energy estimate for $\Gamma\phi$}
	\label{sec:CVP-one-Gamma-energy-closure}
	
	In this section, based on the equation \eqref{eq:box-V-schematic-null}, we derive the energy estimate for $\Gamma\phi$.

	\begin{lemma}[{\bf Energy estimate for $\Gamma\phi$}]
		\label{lem:CVP-one-Gamma-energy-closure}
	Let \(\phi\) be a smooth solution to
	\eqref{eq:Chaplygin-Cauchy} on \([0,T]\times\mathbb R^3\).
	Assume that the initial data satisfy \eqref{YHC-4} and
	\eqref{eq:CVP-bootstrap-parameter-range}-\eqref{eq:CVP-bootstrap-Strichartz-Gamma}
		hold. Suppose that \(\ve_1>0\) is small. Then
		\begin{equation}
			\label{eq:CVP-one-Gamma-energy-conclusion}
			\sup_{0\le t\le T}
			(1+t)^{-\eta}
			\sum_{\Gamma}
			\|\p\Gamma\phi(t)\|_{H^{11}_x}
			\ls
			\ve+\ve_1^2+\ve_1^3.
		\end{equation}
		\end{lemma}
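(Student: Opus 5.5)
We follow the sketch in Step 2 of the introduction and work with the good unknown $V=\phi-\phi\partial_t\phi$ from \eqref{eq:V-definition}, which satisfies \eqref{eq:box-V-schematic-null}. Since $[\Box,\Gamma]=0$ for $\Gamma\in\{L_j,\Omega_{kl}\}$, we have $\Box\Gamma V=\Gamma\mathcal R(\phi)$. Commuting further with $\nabla^a$ for $|a|\le 11$ only costs lower-order terms, because $[\nabla,\Gamma]\in\mathrm{span}\{\partial\}$. The standard linear energy estimate then gives
\[
\sum_\Gamma\|\partial\Gamma V(t)\|_{H^{11}}\ls \sum_\Gamma\|\partial\Gamma V(0)\|_{H^{11}}+\int_0^t\sum_\Gamma\sum_{|a|\le11}\big(\|\nabla^a\Gamma\mathcal R(\tau)\|_{L^2}+\|\nabla^a\partial\mathcal R(\tau)\|_{L^2}\big)d\tau .
\]
The data term is $\ls\ve$ by \eqref{YHC-4}. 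Here $\Gamma$ at $t=0$ involves $x_j\partial_t$, and $\langle x\rangle\le\langle x\rangle^{1+\mu}$; the time derivatives at $t=0$ are expressed through the equation.

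\textbf{Bounding the source.} We bound $\Gamma\mathcal R$ in the crude form $|\Gamma f|\ls A|\partial f|$ where convenient, but we prefer to distribute $\Gamma$ by Leibniz onto individual factors. Each $\mathcal R_k$, $k=3,4,5$, is a product of factors $\partial^{\le3}\phi$. Note that $\phi$ appears undifferentiated in $\phi\,\partial_tQ_0$ and in $\phi\,\partial_t\mathcal N_3$.

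In $\nabla^a\Gamma\mathcal R_3$ we put the factor carrying the most derivatives, or the $\Gamma$, in $L^2$ and the remaining two factors in $L^\infty$:
\begin{itemize}
\item When $\Gamma$ falls on a factor, we use \eqref{eq:CVP-bootstrap-energy-Gamma} together with Lemma \ref{lem:CVP-higher-time-derivative-estimates-Gamma} for that factor in $L^2$. The other two factors decay like $\ve_1 A^{-1}B^{-\mu/4}$ by \eqref{eq:CVP-bootstrap-pointwise-phi} and Lemma \ref{lem:CVP-higher-time-derivative-estimates}, giving an integrand $\ls\ve_1^3(1+\tau)^{-2+\eta}$.
\item When $\Gamma$ falls on a factor that must be taken in $L^\infty$ (low-order, $\le 6$ derivatives), we use \eqref{eq:CVP-bootstrap-pointwise-Gamma}. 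The high-derivative factor goes in $L^2$ via \eqref{eq:CVP-bootstrap-energy-ordinary} and \eqref{eq:CVP-ttphi-high-L2}, using $N\ge15\ge 11+3+1$. The third factor supplies $\ve_1A^{-1}$, so the integrand is $\ls\ve_1^3(1+\tau)^{-1-\mu/2+\eta}$, which is integrable since $\eta\le\mu/100$.
\item The undifferentiated $\phi$ is placed in $L^\infty$ with $|\phi|\ls\ve_1A^{-1}$, or $|\Gamma\phi|\ls\ve_1A^{-\mu/2}$.
\end{itemize}
The quartic and quintic terms carry an extra factor $\ve_1 A^{-1}$ and are easier. This gives $\sum_\Gamma\|\partial\Gamma V(t)\|_{H^{11}}\ls\ve+\ve_1^3$, and in fact the bound is uniform in $t$, which is better than what is needed.

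\textbf{Returning to $\phi$.} We write $\Gamma\phi=\Gamma V+\Gamma(\phi\partial_t\phi)$ and bound
\[
\|\partial\nabla^{\le11}\Gamma(\phi\partial_t\phi)\|_{L^2}.
\]
Each term is a product of two factors, one carrying $\Gamma$ or the top derivatives (in $L^2$, at most $\ve_1(1+t)^\eta$) and the other in $L^\infty$ (at most $\ve_1$, even without decay). The factor $\phi$ or $\Gamma\phi$ without derivative is handled in $L^\infty$ by the same pointwise bounds. One subtlety is the term $\Gamma\phi\cdot\partial\nabla^{\le 11}\partial_t\phi$ with $\Gamma\phi$ in $L^\infty$ and the second factor in $L^2$; this is fine. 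Another is $\|\partial\phi\cdot\partial\Gamma\phi\|$, where one factor must be in $L^\infty$, which is again fine. Altogether this contribution is $\ls\ve_1^2(1+t)^\eta$, giving \eqref{eq:CVP-one-Gamma-energy-conclusion}.

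\textbf{Main obstacle.} The delicate step is the case where $\Gamma$ lands on a low-order factor while another factor carries $\ge 7$ derivatives and sits in $L^2$. There the decay available is only $A^{-\mu/2}\cdot A^{-1}$ instead of $A^{-2}$, and integrability hinges on $\mu/2>\eta$. If this margin fails for some term (for example, when the undifferentiated $\phi$ carries $\Gamma$), we will use the null structure via Lemma \ref{lem:null} and \eqref{eq:CVP-derived-good-derivative-estimates} to gain an extra $A^{-\mu/2}$. We will also check carefully the term $\phi\,\partial_tQ_0(\partial_t\phi,\phi)$, which involves $\partial^3\phi$: the $L^2$ norm of up to $14$ derivatives then requires $N\ge15$, supplied by \eqref{eq:CVP-ttphi-high-L2}.
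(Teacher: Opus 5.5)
There is a genuine gap: a loss of derivatives on $\Gamma\phi$ at top order. The energy estimate for $\nabla^a\Gamma V$, $|a|\le 11$, has source $\nabla^a\Gamma\mathcal R$, and $\mathcal R_3$ contains factors $\partial^2\phi$ and $\partial^3\phi$. For instance, $-2\phi\,Q_0(\partial_t^2\phi,\phi)$ contains $-2\phi\,\partial_t\phi\,\partial_t^3\phi$. When $\nabla^{11}$ and $\Gamma$ all fall on the last factor, you get (modulo commutators) $\phi\,\partial_t\phi\,\nabla^{11}\partial_t^3\Gamma\phi$. Likewise $\mathcal N_3$ produces $(\partial\phi)^2\,\nabla^{11}\partial^2\Gamma\phi$. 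These need $14$ (resp. $13$) derivatives of $\Gamma\phi$ in $L^2$. However, \eqref{eq:CVP-bootstrap-energy-Gamma} and \eqref{eq:CVP-Gamma-ttphi-high-L2} control only $12$, e.g. $\|\partial\Gamma\phi\|_{H^{11}}$ and $\|\partial_t^3\Gamma\phi\|_{H^9}$, and such factors certainly cannot go in $L^\infty$. So your first bullet fails exactly at top order. The condition $N\ge 15$ you invoke in your last paragraph helps only when the top factor carries no $\Gamma$. Since $\Box V=\mathcal R$ is not a quasilinear equation for $V$ (the $\phi\,\partial^3\phi$ term is genuinely third order), no integration by parts removes the loss. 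The same defect appears in your recovery step. There $\phi\,\partial\Gamma\partial_t\phi$ in $H^{11}$ contains $\phi\,\nabla^{11}\partial\partial_t\Gamma\phi$, i.e. $13$ derivatives of $\Gamma\phi$, which your statement that the $L^2$ factor is at most $\ve_1(1+t)^\eta$ does not cover.

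The missing idea is to trade $\Gamma$ for the weight $A$, so that all derivatives land on $\phi$, where $N\ge15$ leaves room. The paper does this for the whole source. Using $|\Gamma f|\ls A|\partial f|$ and $[\Gamma,\partial_\alpha]\in\mathrm{span}\{\partial\}$, it bounds $\|\nabla^{\le 11}\Gamma\mathcal R\|_{L^2}\ls\|A\,\partial\nabla^{\le11}\mathcal R\|_{L^2}$. It then writes $A\le(1+\tau)^{-1}A^2$ and places the two non-top factors in $L^\infty$ with weight $A$ each, using $A|\nabla^{\le1}\partial^{\le4}\phi|\ls\ve_1$ (undifferentiated $\phi$ included) and Lemma~\ref{lem:weighted-dyadic-Linfty-sum} for the dyadic sum. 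The top factor goes in unweighted $L^2$, controlled by $\|\partial\phi\|_{H^N}$. This gives $\ve_1^2(1+\tau)^{-1}\|\partial\phi(\tau)\|_{H^N}$, hence $\ve_1^3(1+t)^\eta$ after integration. Your claim of a bound uniform in $t$ therefore does not survive, but the $(1+t)^\eta$ growth is all the lemma needs. In the recovery, write $\Gamma=\sum_\beta Z^\beta_\Gamma\partial_\beta$ with $|Z^\beta_\Gamma|\ls A$. Then bound $\|A\nabla^{\le11}(\phi\,\partial^3\phi)\|_{L^2}\ls\ve_1\|\partial\phi\|_{H^N}$ using $|\phi|\ls\ve_1A^{-1}$ from \eqref{eq:CVP-bootstrap-pointwise-phi}. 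The remaining terms give $\ve_1\sum_\Gamma\|\partial\Gamma\phi\|_{H^{11}}$, which is absorbed. Your Leibniz splitting remains usable for the non-top-order terms, but at top order you must switch to this weight-trading bound. With it, neither the $\Gamma$-pointwise and Strichartz bootstraps nor the null structure are needed for this lemma.
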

		
	\begin{proof}
		Note that $\Box\Gamma V=\Gamma\mathcal R(\phi)$.
		By the standard energy estimate, $|\Gamma f|\ls A(t,x)|\p f|$ and $[\Gamma,\p_\alpha]\in\operatorname{span}\{\p_0,\p_1,\p_2,\p_3\}$,
		one has
		\begin{equation}
			\label{eq:CVP-Gamma-V-energy-estimate}
			\begin{aligned}
				\sum_{\Gamma}
				\|\p\Gamma V(t)\|_{H^{11}_x}
				\ls
				&
				\sum_{\Gamma}
				\|\p\Gamma V(0)\|_{H^{11}_x}
				+
				\int_0^t
				\sum_{\Gamma}
				\|\nabla^{\le 11}\Gamma\mathcal R(\phi)(\tau)\|_{L^2_x}
				\,d\tau
				\\
				\ls
				&
				\sum_{\Gamma}
				\|\p\Gamma V(0)\|_{H^{11}_x}
				+
				\int_0^t
				\sum_{\Gamma}
				\|\Gamma\nabla^{\le 11}\mathcal R(\phi)(\tau)\|_{L^2_x}
				\,d\tau
				\\
				&+
				\int_0^t
				\|\p\nabla^{\le 10}\mathcal R(\phi)(\tau)\|_{L^2_x}
				\,d\tau
				\\
				\ls
				&
				\sum_{\Gamma}
				\|\p\Gamma V(0)\|_{H^{11}_x}
				+
				\sum_{k=3}^{5}\int_0^t
				\|A(\tau,\cdot)\p\nabla^{\le 11}\mathcal R_k(\phi)(\tau)\|_{L^2_x}
				\,d\tau .
			\end{aligned}
		\end{equation}
		In addition, by \eqref{YHC-4}, one has
		\begin{equation}
			\label{eq:CVP-Gamma-V-initial}
			\begin{aligned}
				\sum_{\Gamma}
				\|\p\Gamma V(0)\|_{H^{11}_x}
				\ls
				\|\w{x}\nabla^{\le 11}\p^2(\phi-\phi\p_t\phi)(0)\|_{L^{2}_x} + \|\p(\phi-\phi\p_t\phi)(0)\|_{H^{11}_x}
				\ls
				\ve.
			\end{aligned}
		\end{equation}

		We now estimate $ \int_0^t\|A(\tau,\cdot)\p\nabla^{\le 11}\mathcal R_3(\phi)(\tau)\|_{L^2_x}
		\,d\tau$.
		
		By the Littlewood-Paley decomposition, $(1+\tau+|x|)\approx(1+\tau)+\w{x}$,
		$\w{x}^{2}\in A_2(\R^3)$ and Lemma \ref{lem:weighted bernstein}, one has
		\begin{equation}
			\label{eq:CVP-Apartial-R3-LP-start}
			\begin{aligned}
				&\|A(\tau,\cdot)\p\nabla^{\le 11}\mathcal R_3(\phi)(\tau)\|_{L^2_x}\\
				&\ls
				\sum_{k\ge -1}
				\|(1+\tau)P_k\p\nabla^{\le 11}\mathcal R_3(\phi)(\tau)\|_{L^2_x}
				+
				\sum_{k\ge -1}
				\|\w{x}P_k\p\nabla^{\le 11}\mathcal R_3(\phi)(\tau)\|_{L^2_x}\\
				&\ls
				\sum_{k\ge -1}
				2^{11k}\|(1+\tau)P_k\p\mathcal R_3(\phi)(\tau)\|_{L^2_x}
				+
				\sum_{k\ge -1}
				2^{11k}\|\w{x}P_k\p\mathcal R_3(\phi)(\tau)\|_{L^2_x}\\
				&\ls
				\sum_{k\ge -1}
				2^{11k}\|A(\tau,\cdot)P_k\p\mathcal R_3(\phi)(\tau)\|_{L^2_x}.
			\end{aligned}
		\end{equation}
		Since \(\p \mathcal R_3(\phi)\) is a cubic nonlinearity with such forms
		$\p^{\le3}\phi\cdot \p^{\le3}\phi\cdot \p^{\le3}\p\phi$,
		then it follows from
		$A(\tau,x)\le(1+\tau)^{-1}A(\tau,x)^2$
		and the weighted \(L^2\)-boundedness of \(P_k\) (see Lemma \ref{lem:A2}) that
		\begin{equation}
			\label{eq:CVP-R3-dyadic-product}
			\begin{aligned}
				&
				\|A(\tau,\cdot)P_k\p \mathcal R_3(\phi)(\tau)\|_{L^2_x}
				\\
				&\ls
				\sum_{(k_1,k_2,k_3)\in\cY_k}
				\|A(\tau,\cdot)P_{k_1}\p^{\le3}\phi
				\cdot
				P_{k_2}\p^{\le3}\phi
				\cdot
				P_{k_3}\p^{\le3}\p\phi\|_{L^2_x}
				\\
				&\ls
				(1+\tau)^{-1}
				\sum_{(k_1,k_2,k_3)\in\cY_k}
				\|A^2(\tau,\cdot)P_{k_1}\p^{\le3}\phi
				\cdot
				P_{k_2}\p^{\le3}\phi
				\cdot
				P_{k_3}\p^{\le3}\p\phi\|_{L^2_x}.
			\end{aligned}
		\end{equation}
		For \(-1\le k\le3\), by H\"older's inequality and Lemma \ref{lem:CVP-higher-time-derivative-estimates}, we have
		\begin{equation}
			\label{eq:CVP-R3-low-frequency}
			\begin{aligned}
				&
				2^{11k}
				\|A(\tau,\cdot)P_k\p \mathcal R_3(\phi)(\tau)\|_{L^2_x}
				\\
				&\ls
				(1+\tau)^{-1}2^{-\frac12 k}
				\big(
				\sum_{j\ge-1}
				\|A(\tau,\cdot)P_j\p^{\le3}\phi(\tau)\|_{L^\infty_x}
				\big)^2
				\|\p^{\le3}\p\phi(\tau)\|_{H^1_x}
				\\
				&\ls
				(1+\tau)^{-1}2^{-\frac12 k}
				\big(
				\sum_{j\ge-1}
				\|A(\tau,\cdot)P_j\p^{\le3}\phi(\tau)\|_{L^\infty_x}
				\big)^2
				\|\p\phi(\tau)\|_{H^4_x}.
			\end{aligned}
		\end{equation}
		For $k\ge 4$, recalling the definition of $\cY_k$, one has $\max_{i=1,2,3}\{k_i\}\ge k-4\ge 0$.
		Therefore, applying H\"older's inequality, Lemma \ref{lem:weighted bernstein} and Lemma \ref{lem:CVP-higher-time-derivative-estimates} yields
		\begin{equation}\label{eq:CVP-R3-high-frequency}
			\begin{aligned}
				&2^{11k}
				\|A(\tau,\cdot)P_k\p\mathcal R_3(\phi)(\tau)\|_{L^2_x}\\
				&\ls
				(1+\tau)^{-1}2^{-\frac12 k}
				\sum_{\substack{(k_1,k_2,k_3)\in\cY_k\\
						\max\{k_i\}=k_1}}
				2^{(11+\frac12)k_1}
				\|P_{k_1}\p^{\le3}\phi\|_{L^2_x}
				\|AP_{k_2}\p^{\le3}\phi\|_{L^\infty_x}
				\|AP_{k_3}\p^{\le3}\p\phi\|_{L^\infty_x}\\
				&\quad+
				(1+\tau)^{-1}2^{-\frac12 k}
				\sum_{\substack{(k_1,k_2,k_3)\in\cY_k\\
						\max\{k_i\}=k_3}}
				\|AP_{k_1}\p^{\le3}\phi\|_{L^\infty_x}
				\|AP_{k_2}\p^{\le3}\phi\|_{L^\infty_x}
				2^{(11+\frac12)k_3}
				\|P_{k_3}\p^{\le 3}\p\phi(\tau)\|_{L^2_x}\\
				&\ls
				(1+\tau)^{-1}2^{-\frac12 k}
				\sum_{\substack{(k_1,k_2,k_3)\in\cY_k\\
						\max\{k_i\}=k_1}}
				2^{(10+\frac12)k_1}
				\|P_{k_1}\p^{\le3}\p\phi(\tau)\|_{L^2_x}
				\|AP_{k_2}\p^{\le3}\phi\|_{L^\infty_x}
				\|AP_{k_3}\p^{\le4}\phi\|_{L^\infty_x}\\
				&\quad+
				(1+\tau)^{-1}2^{-\frac12 k}
				\sum_{\substack{(k_1,k_2,k_3)\in\cY_k\\
						\max\{k_i\}=k_3}}
				\|AP_{k_1}\p^{\le3}\phi\|_{L^\infty_x}
				\|AP_{k_2}\p^{\le3}\phi\|_{L^\infty_x}
				2^{(11+\frac12)k_3}
				\|P_{k_3}\p^{\le3}\p\phi(\tau)\|_{L^2_x}\\
				&\ls
				(1+\tau)^{-1}2^{-\frac12 k}
				\big(
				\sum_{j\ge-1}
				\|A(\tau,\cdot)P_j\p^{\le4}\phi(\tau)\|_{L^\infty_x}
				\big)^2
				\|\p^{\le 3}\p\phi(\tau)\|_{H^{12}_x}
				\\
				&\ls
				(1+\tau)^{-1}2^{-\frac12 k}
				\big(
				\sum_{j\ge-1}
				\|A(\tau,\cdot)P_j\p^{\le4}\phi(\tau)\|_{L^\infty_x}
				\big)^2
				\|\p\phi(\tau)\|_{H^{N}_x},
			\end{aligned}
		\end{equation}
		where the condition $N\ge 15$ is used in the last inequality of \eqref{eq:CVP-R3-high-frequency}.
		Summing \eqref{eq:CVP-R3-low-frequency} and
		\eqref{eq:CVP-R3-high-frequency} over \(k\), and using
		$\ds\sum_{k\ge -1}2^{-\f12 k}<\infty$,
		we arrive at
		\begin{equation}
			\label{eq:CVP-Apartial-R3-before-bootstrap}
			\begin{aligned}
				&
				\|A(\tau,\cdot)\p\nabla^{\le 11}
				\mathcal R_3(\phi)(\tau)\|_{L^2_x}
				\\
				&\ls
				(1+\tau)^{-1}
				\big(
				\sum_{j\ge-1}
				\|A(\tau,\cdot)P_j\p^{\le4}\phi(\tau)\|_{L^\infty_x}
				\big)^2
				\|\p\phi(\tau)\|_{H^N_x}.
			\end{aligned}
		\end{equation}
		By Lemma \ref{lem:weighted-dyadic-Linfty-sum}, \eqref{eq:CVP-bootstrap-pointwise-phi} and
		Lemma~\ref{lem:CVP-higher-time-derivative-estimates},
		\[
		\sum_{j\ge-1}
		\|A(\tau,\cdot)P_j\p^{\le4}\phi(\tau)\|_{L^\infty_x}
		\ls
		\|A(\tau,\cdot)\nabla^{\le 1}\p^{\le 4}\phi(\tau)\|_{L^\infty_x}
		\ls
		\ve_1.
		\]
		Hence
		\begin{equation}
			\label{eq:CVP-Apartial-R3-final}
			\|A(\tau,\cdot)\p\nabla^{\le 11}
			\mathcal R_3(\phi)(\tau)\|_{L^2_x}
			\ls
			\ve_1^2(1+\tau)^{-1}
			\|\p\phi(\tau)\|_{H^N_x}.
		\end{equation}
		More easily, one has
		\begin{equation}
			\label{eq:CVP-Apartial-R45-final}
			\|A(\tau,\cdot)\p\nabla^{\le 11}
			(\mathcal R_4+\mathcal R_5)(\phi)(\tau)\|_{L^2_x}
			\ls
			\ve_1^3(1+\tau)^{-1}
			\|\p\phi(\tau)\|_{H^N_x}.
		\end{equation}
		From \eqref{eq:CVP-Apartial-R3-final} and
		\eqref{eq:CVP-Apartial-R45-final}, we obtain
		\begin{equation}
			\label{eq:CVP-Apartial-R-final-source}
			\sum_{k=3}^{5}
			\|A(\tau,\cdot)\p\nabla^{\le 11}
			\mathcal R_k(\phi)(\tau)\|_{L^2_x}
			\ls
			\ve_1^2(1+\tau)^{-1}
			\|\p\phi(\tau)\|_{H^N_x}.
		\end{equation}
		Substituting
		\eqref{eq:CVP-Gamma-V-initial} and
		\eqref{eq:CVP-Apartial-R-final-source} into
		\eqref{eq:CVP-Gamma-V-energy-estimate}, and using
		$\|\p\phi(t)\|_{H^N_x}
		\le\ve_1(1+t)^\eta,
		$
		we obtain
		\begin{equation}
			\label{eq:CVP-Gamma-V-final}
			\begin{aligned}
				\sum_{\Gamma}
				\|\p\Gamma V(t)\|_{H^{11}_x}
				&\ls
				\ve
				+
				\ve_1^3
				\int_0^t
				(1+\tau)^{-1+\eta}\,d\tau
				\ls_\eta
				\ve
				+
				\ve_1^3(1+t)^\eta.
			\end{aligned}
		\end{equation}

		It remains to recover \(\Gamma\phi\).
        Note that
		\begin{equation}
			\label{eq:CVP-Gamma-recovery-identity}
			\Gamma\phi
			=
			\Gamma V
			+
			(\Gamma\phi)\p_t\phi
			+
			\phi\,\Gamma\p_t\phi.
		\end{equation}
		By $\|fg\|_{H^s_x}\ls\|f\|_{L^\infty_x}\|g\|_{H^s_x}
		+\|g\|_{L^\infty_x}\|f\|_{\dot H^s_x}
		$ for $s> 0 $ and Lemma \ref{lem:CVP-higher-time-derivative-estimates},
        one has
		\begin{equation}
			\label{eq:CVP-recovery-first-term}
			\begin{aligned}
				&
				\sum_{\Gamma}
				\|\p((\Gamma\phi)\p_t\phi)(t)\|_{H^{11}_x}
				\\
				&\ls
				\sum_{\Gamma}
				\|(\p\Gamma\phi)\p_t\phi(t)\|_{H^{11}_x}
				+
				\sum_{\Gamma}
				\|(\Gamma\phi)\p\p_t\phi(t)\|_{H^{11}_x}
				\\
				&\ls
				\|\p_t\phi(t)\|_{L^{\infty}_x}
				\sum_{\Gamma}
				\|\p\Gamma\phi(t)\|_{H^{11}_x}
				+
				\|\p_t\phi(t)\|_{H^{11}_x}
				\sum_{\Gamma}
				\|\p\Gamma\phi(t)\|_{L^{\infty}_x}
				\\
				&\quad+
				\sum_{\Gamma}
				\|\Gamma\phi(t)\|_{L^\infty_x}
				\|\p\p_t\phi(t)\|_{H^{11}_x}
				+
				\|\p\p_t\phi(t)\|_{L^{\infty}_x}
				\sum_{\Gamma}
				\|\Gamma\phi(t)\|_{\dot{H}^{11}_x}
				\\
				&\ls
				\ve_1
				\sum_{\Gamma}
				\|\p\Gamma\phi(t)\|_{H^{11}_x}
				+
				\ve_1
				\|\p\phi(t)\|_{H^N_x}
				+
				\ve_1
				\|\p\phi(t)\|_{H^N_x}
				+
				\ve_1
				\sum_{\Gamma}
				\|\nabla\Gamma\phi(t)\|_{H^{10}_x}
				\\
				&\ls
				\ve_1
				\sum_{\Gamma}
				\|\p\Gamma\phi(t)\|_{H^{11}_x}
				+
				\ve_1
				\|\p\phi(t)\|_{H^N_x}
			\end{aligned}
		\end{equation}
		and
		\begin{equation}
			\label{eq:CVP-recovery-second-term-1}
			\begin{aligned}
				&
				\sum_{\Gamma}
				\|\p(\phi\,\Gamma\p_t\phi)(t)\|_{H^{11}_x}
				\\
				&\ls
				\sum_{\Gamma}
				\|(\p\phi)\Gamma\p_t\phi(t)\|_{H^{11}_x}
				+
				\sum_{\Gamma}
				\|\phi\,\p\Gamma\p_t\phi(t)\|_{H^{11}_x}
				\\
				&\ls
				\|\p\phi(t)\|_{L^\infty_x}
				\sum_{\Gamma}
				\|\Gamma\p_t\phi(t)\|_{H^{11}_x}
				+
				\sum_{\Gamma}
				\|\Gamma\p_t\phi(t)\|_{L^\infty_x}
				\|\p\phi(t)\|_{H^{11}_x}
				+\sum_{\Gamma}
				\|\phi\,\p\Gamma\p_t\phi(t)\|_{H^{11}_x}.
			\end{aligned}
		\end{equation}
		We now treat $\ds\sum_{\Gamma}
		\|\phi\,\p\Gamma\p_t\phi(t)\|_{H^{11}_x}$ in \eqref{eq:CVP-recovery-second-term-1}. Due to
		\[
		\Gamma
		=
		\sum_{\beta=0}^3 Z^\beta_\Gamma(t,x)\p_\beta,
		\qquad
		|Z^\beta_\Gamma(t,x)|\ls A(t,x),
		\qquad
		|\nabla Z^\beta_\Gamma(t,x)|\ls1,
		\qquad
		\nabla^{\ge2}Z^\beta_\Gamma=0,
		\]
		one has
		$\p_\alpha\Gamma\p_t\phi
		=\ds\sum_{\beta=0}^3Z^\beta_\Gamma
		\p_{\alpha\beta t}^3\phi
		+\ds\sum_{\beta=0}^3
		c_{\alpha\Gamma}^{\beta}
		\p_{\beta t}^2\phi$,
		where \(c_{\alpha\Gamma}^{\beta}\) are constants. Therefore,
		\begin{equation}
			\label{eq:CVP-recovery-phi-pGamma-ptphi}
			\begin{aligned}
				&\sum_{\Gamma}
				\|\phi\,\p\Gamma\p_t\phi(t)\|_{H^{11}_x}\\
				&\ls
				\sum_{\Gamma}
				\sum_{\alpha,\beta=0}^3
				\|Z^\beta_\Gamma\phi\,\p_{\alpha\beta t}^3\phi(t)\|_{H^{11}_x}
				+\sum_{\Gamma}
				\sum_{\alpha,\beta=0}^3
				\|\phi\,\p_{\beta t}^2\phi(t)\|_{H^{11}_x}\\
				&\ls
				\|A(t,\cdot)\nabla^{\le 11}(\phi\,\p^3\phi)(t)\|_{L^{2}_x}
				+\|\phi\,\p^3\phi(t)\|_{H^{10}_x}
				+\|\phi\,\p^2\phi(t)\|_{H^{11}_x}.
			\end{aligned}
		\end{equation}
		As in the derivation of
		\eqref{eq:CVP-Apartial-R3-LP-start}-\eqref{eq:CVP-Apartial-R3-final}, we have
		\begin{equation}
			\label{eq:CVP-recovery-phi-2}
			\|A(t,\cdot)\nabla^{\le 11}(\phi\,\p^3\phi)(t)\|_{L^{2}_x}
			\ls
			\ve_1
			\|\p\phi(t)\|_{H^N_x}.
		\end{equation}
		Combining \eqref{eq:CVP-recovery-second-term-1}-\eqref{eq:CVP-recovery-phi-2}, together with Lemma \ref{lem:CVP-higher-time-derivative-estimates}, one arrives at
		\begin{equation}
			\label{eq:CVP-recovery-second-term}
			\begin{aligned}
				&
				\sum_{\Gamma}
				\|\p(\phi\,\Gamma\p_t\phi)(t)\|_{H^{11}_x}
				\\
				&\ls
				\|\p\phi(t)\|_{L^\infty_x}
				\sum_{\Gamma}
				\|\Gamma\p_t\phi(t)\|_{H^{11}_x}
				+
				\sum_{\Gamma}
				\|\Gamma\p_t\phi(t)\|_{L^\infty_x}
				\|\p\phi(t)\|_{H^{11}_x}
				\\
				&\quad+
				\ve_1
				\|\p\phi(t)\|_{H^N_x}
				+
				\|\phi\,\p^3\phi(t)\|_{H^{10}_x}
				+
				\|\phi\,\p^2\phi(t)\|_{H^{11}_x}
				\\
				&\ls
				\ve_1
				\sum_{\Gamma}
				\|\p\Gamma\phi(t)\|_{H^{11}_x}
				+
				\ve_1
				\|\p\phi(t)\|_{H^N_x}.
			\end{aligned}
		\end{equation}
		Collecting
		\eqref{eq:CVP-Gamma-recovery-identity},
		\eqref{eq:CVP-recovery-first-term} and
		\eqref{eq:CVP-recovery-second-term} yields
		\[
		\sum_{\Gamma}
		\|\p\Gamma\phi(t)\|_{H^{11}_x}
		\ls
		\sum_{\Gamma}
		\|\p\Gamma V(t)\|_{H^{11}_x}
		+
		\ve_1
		\sum_{\Gamma}
		\|\p\Gamma\phi(t)\|_{H^{11}_x}
		+
		\ve_1
		\|\p\phi(t)\|_{H^N_x}.
		\]
		Due to the smallness of \(\ve_1>0\), one has
		\begin{equation}
			\label{eq:CVP-Gamma-phi-recovery-final}
			\sum_{\Gamma}
			\|\p\Gamma\phi(t)\|_{H^{11}_x}
			\ls
			\sum_{\Gamma}
			\|\p\Gamma V(t)\|_{H^{11}_x}
			+
			\ve_1
			\|\p\phi(t)\|_{H^N_x}.
		\end{equation}
		It follows from
		\eqref{eq:CVP-Gamma-V-final} and
		$\|\p\phi(t)\|_{H^N_x}\le\ve_1(1+t)^\eta$
		that
		\[
		\sum_{\Gamma}
		\|\p\Gamma\phi(t)\|_{H^{11}_x}
		\ls
		\ve
		+
		\ve_1^3(1+t)^\eta
		+
		\ve_1^2(1+t)^\eta.
		\]
		Therefore, $\ds\sup_{0\le t\le T}
		(1+t)^{-\eta}
		\ds\sum_{\Gamma}
		\|\p\Gamma\phi(t)\|_{H^{11}_x}
		\ls
		\ve+\ve_1^2+\ve_1^3$ holds,
		which implies \eqref{eq:CVP-one-Gamma-energy-conclusion}.
	\end{proof}

	\section{Weighted pointwise estimates for $\phi$}
	\label{sec:weighted-pointwise-estimates-I}
	
	In this section, based on the equation \eqref{eq:box-V-schematic-null}, we derive the weighted pointwise estimates for $\phi$ and
	$\p\phi$.

	\begin{lemma}[{\bf Weighted pointwise estimates for $\phi$}]
		\label{lem:CVP-weighted-pointwise-estimate-new}
		Let \(\phi\) be a smooth solution to
		\eqref{eq:Chaplygin-Cauchy} on \([0,T]\times\mathbb R^3\).
		Assume that the initial data satisfy \eqref{YHC-4} and
		\eqref{eq:CVP-bootstrap-parameter-range}-\eqref{eq:CVP-bootstrap-Strichartz-Gamma}
		hold. Suppose that \(\ve_1>0\) is small. Then
		\begin{equation}
			\label{eq:CVP-weighted-conclusion-new}
			\begin{aligned}
				&
				\big\|
				AB^{\frac{\mu}{4}}\phi
				\big\|_{L^\infty_{t,x}}
				+
				\sum_{|a|\le 7}
				\big\|
				AB^{\frac{\mu}{4}}
				\nabla^a\p\phi
				\big\|_{L^\infty_{t,x}}
				\ls
				\ve+\ve_1^2+\ve_1^3.
			\end{aligned}
		\end{equation}
	\end{lemma}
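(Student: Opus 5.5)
I will apply Lemma~\ref{lem:weighted-Linfty-L2} with $\mu$ replaced by $\mu/4$ to $w=V$ and to $w=\nabla^a V$, using $\Box V=\mathcal R(\phi)$ from \eqref{eq:box-V-schematic-null}, and then recover $\phi$ from $V=\phi-\phi\p_t\phi$. With $m=7$, Lemma~\ref{lem:weighted-Linfty-L2} bounds $\|AB^{\mu/4}V\|_{L^\infty_{t,x}}+\sum_{|a|\le7}\|AB^{\mu/4}\nabla^a\p V\|_{L^\infty_{t,x}}$ by
\[
\sum_{|a|\le 10}\big[\|\w{x}^{1+\frac\mu4}\nabla^a\p V(0)\|_{L^2_x}+\|A^{1+\frac\mu4}\nabla^a\mathcal R(\phi)\|_{L^1_tL^2_x}\big].
\]
The data term is $\ls\ve$ by \eqref{YHC-4}, since $\p V(0)$ is $\p\phi(0)$ plus products of data, and $13\ge 10+2$ derivatives are available. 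Recovery is immediate: $|AB^{\mu/4}(\phi\p_t\phi)|\le \|\phi\|_{L^\infty}AB^{\mu/4}|\p_t\phi|\ls\ve_1^2$ by \eqref{eq:CVP-bootstrap-pointwise-phi}, and $\nabla^a\p(\phi\p_t\phi)$ for $|a|\le7$ is handled the same way using Lemma~\ref{lem:CVP-higher-time-derivative-estimates} for $\p_t^2\phi$ and the higher time derivatives. The only issue here is that up to eight derivatives may fall on $\p_t\phi$; the resulting factor $\phi$ supplies smallness, and for the top orders I would use the equation to convert time derivatives into spatial ones.

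The main work is bounding $\int_0^T\|A^{1+\mu/4}\nabla^{\le10}\mathcal R_3\|_{L^2_x}d\tau$ by $\ve_1^3$. The quartic and quintic terms carry an extra factor $\ve_1A^{-1}$ and are easy. For $\mathcal R_3$, I split $\R^3=\mathcal E_\tau\cup\mathcal I_\tau$.

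On $\mathcal I_\tau$ we have $B\sim A$. I place the two low-order factors in $L^\infty$ with weight $A^{-1}B^{-\mu/4}$ each, so that $A^{1+\mu/4}\cdot A^{-2}B^{-\mu/2}\ls A^{-1-\mu/4}$. The remaining factor goes in $L^2$ through $\|\p\phi\|_{H^N}\le\ve_1(1+\tau)^\eta$. This gives an integrand $\ve_1^3(1+\tau)^{-1-\mu/4+\eta}$, which is integrable.

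On $\mathcal E_\tau$ I use the null structure. By Lemma~\ref{lem:null}, each term of $\nabla^a\mathcal R_3$ contains a good derivative $\bar\p$ on some factor, or a $\frac1r\ls A^{-1}$ factor. I bound the $\bar\p$ factor by Lemma~\ref{lem:CVP-derived-good-derivative-estimates} in $L^\infty$, giving $\ve_1[A^{-1-\mu/2}+A^{-2}B^{1-\mu/4}]$. The other low factor contributes $\ve_1A^{-1}B^{-\mu/4}$, and the high factor again goes in $L^2$. The $A^{-1-\mu/2}$ piece yields $A^{1+\mu/4}A^{-2-\mu/2}$, which is integrable in time. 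The $A^{-2}B^{1-\mu/4}$ piece combined with $B^{-\mu/4}$ gives $A^{-1+\mu/4}B^{1-\mu/2}$, which is \emph{not} good near the cone.

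This is the expected obstacle: the $B$-growth term. When the good derivative falls on the high-derivative factor, Lemma~\ref{lem:CVP-derived-good-derivative-estimates} is unavailable. Instead, I would write $\bar\p\nabla^a\p\phi$ via \eqref{eq:good-derivative-applied} as $A^{-1}\Gamma\nabla^a\p\phi+\frac BA\nabla^a\p^2\phi$. The $\Gamma$-part goes in $L^2$ using \eqref{eq:CVP-bootstrap-energy-Gamma} with $\ve_1(1+\tau)^\eta$. The remaining bad part must be traded against $B$-decay from the two low factors, which gives $B^{-\mu/2}$ in $L^\infty$. So I would avoid the $B$-term by using the $\Gamma$ representation on whichever factor carries $\bar\p$, placing $\Gamma\phi$ in $L^\infty$ with weight $A^{-\mu/2}$ (from \eqref{eq:CVP-bootstrap-pointwise-Gamma}) or in $L^2$. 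Then the $\mathcal E_\tau$ bound becomes $A^{1+\mu/4}\cdot A^{-1}\cdot A^{-\mu/2}\cdot A^{-1}\cdot(1+\tau)^{\eta}$, which is integrable since $\eta\le\mu/100$.

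For the term $\frac BA\p^2$, I would instead exploit $V$'s structure, i.e.\ that $\mathcal R_3$'s worst factor is $\phi\,Q_0(\p_t^2\phi,\phi)$. If that fails, I would put the $B$-weighted factor in the weighted $L^2$-estimate, using $B^{1-\mu/4}\ls A^{1-\mu/4}$ together with the Strichartz bound \eqref{eq:CVP-bootstrap-Strichartz-Gamma} and Cauchy--Schwarz in time. Summing everything gives $\ve+\ve_1^2+\ve_1^3$.
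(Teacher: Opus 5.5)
Your architecture matches the paper's. It applies Lemma~\ref{lem:weighted-Linfty-L2} with $\mu/4$ to $V$ and splits into $\mathcal E_t/\mathcal I_t$. When $\bar\p$ falls on a low-order factor it uses the null structure plus Lemma~\ref{lem:CVP-derived-good-derivative-estimates}; when $\bar\p$ falls on the high-order factor it uses \eqref{eq:good-derivative-applied} with \eqref{eq:CVP-bootstrap-energy-Gamma}; and it recovers $\phi$ from $\phi=V+\phi\p_t\phi$. As written, however, the exterior estimate does not close, and the cause is an arithmetic slip. When $\bar\p$ is on a low-order factor, the weight is
\[
A^{1+\frac\mu4}\cdot A^{-2}B^{1-\frac\mu4}\cdot A^{-1}B^{-\frac\mu4}=A^{-2+\frac\mu4}B^{1-\frac\mu2},
\]
not $A^{-1+\frac\mu4}B^{1-\frac\mu2}$. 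You dropped the $A^{-1}$ of the second low-order factor, although you kept it in the $A^{-1-\mu/2}$ piece. Since $B\le A$ and $1-\frac\mu2>0$, this weight is at most $A^{-1-\frac\mu4}\le(1+t)^{-1-\frac\mu4}$, which is integrable against $\|\p\phi\|_{H^N}\le\ve_1(1+t)^\eta$. The same one-line bound handles the remainder $\frac BA|\nabla^a\p^2\phi|$ when $\bar\p$ is on the high-order factor: $A^{1+\frac\mu4}\cdot A^{-2}B^{-\frac\mu2}\cdot A^{-1}B\le A^{-1-\frac\mu4}$. This is exactly what the paper does, and there is no difficulty near the cone, where $B\sim1$ and the weight is $A^{-2+\mu/4}$. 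Because you misdiagnosed this term, you leave it open, and neither fallback works. ``Exploit $V$'s structure'' is not an argument. The bound \eqref{eq:CVP-bootstrap-Strichartz-Gamma} controls $\Gamma\phi$ and $\p\nabla^a\Gamma\phi$, not the factor $\nabla^a\p^2\phi$ that carries the $B$-weight.

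The recovery step also has a hole at top order. The term $AB^{\frac\mu4}\phi\,\nabla^a\p_\alpha\p_t\phi$ with $|a|=7$ contains nine derivatives of $\phi$. That is one more than \eqref{eq:CVP-bootstrap-pointwise-phi} and Lemma~\ref{lem:CVP-higher-time-derivative-estimates} provide, and using the equation to trade $\p_t$ for $\nabla$ does not reduce the count. The bound $AB^{\frac\mu4}|\phi|\le\ve_1$ already uses up the whole weight, so you need $\sup_t\|\nabla^7\p\p_t\phi(t)\|_{L^\infty_x}\ls\ve_1$. Sobolev embedding from \eqref{eq:CVP-bootstrap-energy-ordinary} only gives $\ve_1(1+t)^\eta$, which is not uniform in $T$. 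The missing step is the paper's Gagliardo--Nirenberg interpolation, with $G_\alpha=\p_\alpha\p_t\phi$ and $\theta=\frac2{13}$. It interpolates between the decaying bound $\|\nabla^6G_\alpha\|_{L^\infty_x}\ls\ve_1(1+t)^{-1}$ and the slowly growing bound $\|G_\alpha\|_{H^{14}_x}\ls\ve_1(1+t)^\eta$ (using $N\ge15$). The result is a decaying, hence uniform, bound.
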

	
	\begin{proof}
		Applying Lemma~\ref{lem:weighted-Linfty-L2} to \eqref{eq:box-V-schematic-null}, with \(\mu/4\)
		instead of \(\mu\), we have
		\begin{equation}
			\label{eq:CVP-linear-estimate-new}
			\begin{aligned}
				\|AB^{\frac{\mu}{4}}V\|_{L^\infty_{t,x}}
				+
				\sum_{|a|\le 7}
				\big\|
				AB^{\frac{\mu}{4}}
				\nabla^a\p V
				\big\|_{L^\infty_{t,x}}
				&\ls
				\sum_{|a|\le 10}
				\big\|
				\langle x\rangle^{1+\frac{\mu}{4}}
				\nabla^a
				\p V(0)
				\big\|_{L^2_x}
				\\
				&\quad+
				\sum_{|a|\le 10}
				\big\|
				A^{1+\frac{\mu}{4}}
				\nabla^a\mathcal R(\phi)
				\big\|_{L^1_tL^2_x}.
			\end{aligned}
		\end{equation}
	It follows from $\|f\|_{L^\infty(\R^3)}\ls\|f\|_{\dot{H}^1(\R^3)}+\|f\|_{\dot{H}^2(\R^3)}$ for $f\in \mathcal{S}'_h(\R^3)$
     and $N\ge 15$ that
		\begin{equation}
			\label{eq:CVP-initial-bound-new}
			\begin{aligned}
				\sum_{|a|\le 10}
				\big\|
				\langle x\rangle^{1+\frac{\mu}{4}}
				\nabla^a
				\p V(0)
				\big\|_{L^2_x}
				&\ls
				\big\|
				\langle x\rangle^{1+\frac{\mu}{4}}
				\nabla^{\le 10}
				\p \phi(0)
				\big\|_{L^2_x}
				(1+\big\|
				\p \phi(0)
				\big\|_{H^{11}_x})
				\\
				&\quad+\|\phi(0)\|_{L^\infty_x}
				\big\|
				\langle x\rangle^{1+\frac{\mu}{4}}
				\nabla^{\le 10}
				\p \p_t\phi(0)
				\big\|_{L^2_x}\\
				&\ls
				\big\|
				\langle x\rangle^{1+\frac{\mu}{4}}
				\nabla^{\le 11}
				\p \phi(0)
				\big\|_{L^2_x}
				(1+\big\|
				\p \phi(0)
				\big\|_{H^{11}_x})\\
				&\ls
				\ve.
			\end{aligned}
		\end{equation}
		
		We next treat $\ds\sum_{|a|\le 10}
				\big\|
				A^{1+\frac{\mu}{4}}
				\nabla^a\mathcal R(\phi)
				\big\|_{L^1_tL^2_x}$ in \eqref{eq:CVP-linear-estimate-new}. Define
		\begin{equation}
			\label{eq:CVP-exterior-and-interior-regions}
			\mathcal E_t=\big\{x\in\R^3:r\ge \f{1+t}{2}\big\}, \quad \mathcal I_t=\big\{x\in\R^3:r\le \f{1+t}{2}\big\}.
		\end{equation}
		In $\mathcal{E}_t$, one has
		\begin{equation}
			\label{eq:CVP-exterior-r}
			r^{-1}\ls A^{-1},
		\end{equation}
		while in $\mathcal{I}_t$, due to $r\le 1+t-r$, one can obtain
		\begin{equation}
			\label{eq:CVP-interior-B=A}
			B=1+|t-r|\le A = 1+t-r+2r\le 3(1+t-r)\le 3B.
		\end{equation}

		Set
		\begin{equation}
			\label{eq:CVP-R3-splitting-new}
			\mathcal R_3(\phi)
			=\mathcal R_{3,0}(\phi)
			+\mathcal R_{3,1}(\phi),
		\end{equation}
		where
		\begin{equation}
			\label{eq:CVP-R30-R31-definition-new}
			\begin{aligned}
				\mathcal R_{3,0}(\phi)
				&=
				\mathcal N_3(\phi)
				-
				2\p_t\phi Q_0(\p_t\phi,\phi)
				=
				N^{\alpha\beta\gamma\delta}\p_{\alpha\beta}\phi \p_\gamma\phi \p_\delta\phi,
				\\
				\mathcal R_{3,1}(\phi)
				&=
				-2\phi\,\p_tQ_0(\p_t\phi,\phi).
			\end{aligned}
		\end{equation}
		It is pointed out that \(\mathcal R_{3,0}\) is a cubic nonlinearity with null form (see \eqref{eq:null-condition-tensors}),
and \(\mathcal R_{3,1}\) contains the solution \(\phi\) itself.
		
		We first estimate \(\mathcal R_{3,0}\) and claim that
		\begin{equation}
			\label{eq:CVP-R30-time-estimate-new}
			\begin{aligned}
				&
				\sum_{|a|\le 10}
				\big\|
				A^{1+\frac{\mu}{4}}(t,\cdot)
				\nabla^a\mathcal R_{3,0}(\phi)(t)
				\big\|_{L^2_x}
			\ls
				\ve_1^2
				(1+t)^{-1-\frac{\mu}{4}}
				\big(
				\|\p\phi(t)\|_{H^N_x}
				+
				\sum_{\Gamma}
				\|\p\Gamma\phi(t)\|_{H^{11}_x}
				\big).
			\end{aligned}
		\end{equation}
		
		Indeed, for \(|a|\le 10\), one has
		\begin{equation}\label{eq:CVP-R30-Leibniz-new}
			\begin{aligned}
				\nabla^a\mathcal R_{3,0}(\phi)
				=\sum_{\substack{a_1+a_2+a_3=a}}
				C_{a_1,a_2,a_3}
				N^{\alpha\beta\gamma\delta}
				\p_{\alpha\beta}^2\nabla^{a_1}\phi\,
				\p_\gamma\nabla^{a_2}\phi\,
				\p_\delta\nabla^{a_3}\phi,
			\end{aligned}
		\end{equation}
		where \(|a_1|+|a_2|+|a_3|\le 10\), and at least two of
		\(|a_1|,|a_2|,|a_3|\) are not larger than $5$. Hence this implies
that for each term in \eqref{eq:CVP-R30-Leibniz-new}, its \(L^2_x\) estimate can be controlled by the \(L^\infty_x\)-norm
of 	two factors and the \(L^2_x\)-norm of the remaining factor.
		
		In $\mathcal E_t=\{x\in\R^3:r\ge (1+t)/2\}$,
		it follows from Lemma~\ref{lem:null} that
		\begin{equation}
			\label{eq:CVP-R30-exterior-null-Leibniz-new}
			\begin{aligned}
				|\nabla^a\mathcal R_{3,0}|
				\ls
				\sum_{a_1+a_2+a_3=a}
				\big[
				&
				|\bar\p\p\nabla^{a_1}\phi|\,
				|\p\nabla^{a_2}\phi|\,
				|\p\nabla^{a_3}\phi|
				\\
				&+
				|\p^2\nabla^{a_1}\phi|\,
				|\bar\p\nabla^{a_2}\phi|\,
				|\p\nabla^{a_3}\phi|
				\\
				&+
				|\p^2\nabla^{a_1}\phi|\,
				|\p\nabla^{a_2}\phi|\,
				|\bar\p\nabla^{a_3}\phi|
				\\
				&+
				r^{-1}
				|\p\nabla^{a_1}\phi|\,
				|\p\nabla^{a_2}\phi|\,
				|\p\nabla^{a_3}\phi|
				\big].
			\end{aligned}
		\end{equation}
		We next treat  the four terms on the right-hand side of \eqref{eq:CVP-R30-exterior-null-Leibniz-new}  separately.
		
		For the first term in \eqref{eq:CVP-R30-exterior-null-Leibniz-new}, define
		\[
		I_{1,a_1,a_2,a_3}
		=
		\big\|
		\mathbf 1_{\mathcal E_t}
		A^{1+\frac{\mu}{4}}
		|\bar\p\p\nabla^{a_1}\phi|\,
		|\p\nabla^{a_2}\phi|\,
		|\p\nabla^{a_3}\phi|
		\big\|_{L^2_x}.
		\]
		If \(|a_1|\le 5\), then at least one of \(|a_2|\) and \(|a_3|\) is less than
		\(5\). Without loss of generality, \(|a_2|\le 5\)
is assumed. Then by \eqref{eq:CVP-bootstrap-pointwise-phi}, Lemma \ref{lem:CVP-derived-good-derivative-estimates}, $B^{1-\f\mu2}\le A^{1-\f\mu2}$ for $\mu\in(0,\f12)$, one has
		\begin{equation}
			\label{eq:CVP-R30-I1-low-good-new}
			\begin{aligned}
				I_{1,a_1,a_2,a_3}
				&\ls
				\big\|
				\mathbf 1_{\mathcal E_t}
				A^{1+\frac{\mu}{4}}
				\ve_1
				\big(
				A^{-1-\frac{\mu}{2}}
				+
				A^{-2}B^{1-\frac{\mu}{4}}
				\big)
				\ve_1A^{-1}B^{-\frac{\mu}{4}}
				|\p\nabla^{a_3}\phi|
				\big\|_{L^2_x}
				\\
				&\ls
				\ve_1^2
				(1+t)^{-1-\frac{\mu}{4}}
				\|\p\nabla^{a_3}\phi(t)\|_{L^2_x}
				\\
				&\ls
				\ve_1^2
				(1+t)^{-1-\frac{\mu}{4}}
				\|\p\phi(t)\|_{H^N_x}.
			\end{aligned}
		\end{equation}

If \(|a_1|>5\), then \(|a_2|\le 5\) and \(|a_3|\le 5\). Using
		Lemma~\ref{lem:good-derivative-one-Gamma},
		\([\Gamma,\partial]\in{\rm span}\{\partial_0,\partial_1,\partial_2,\partial_3\}\),
		\eqref{eq:CVP-bootstrap-pointwise-phi}, Lemma \ref{lem:CVP-higher-time-derivative-estimates} and $B\le A$, we
arrive at
		\begin{equation}
			\label{eq:CVP-R30-I1-high-good-new}
			\begin{aligned}
				I_{1,a_1,a_2,a_3}
				&\ls
				\ve_1^2
				\big\|
				\mathbf 1_{\mathcal E_t}
				A^{1+\frac{\mu}{4}}
				A^{-2}B^{-\frac{\mu}{2}}
				\big(
				A^{-1}|\p\nabla^{a_1}\Gamma\phi|
				+
				A^{-1}B|\p^2\nabla^{\le a_1}\phi|
				\big)
				\big\|_{L^2_x}
				\\
				&\ls
				\ve_1^2
				(1+t)^{-2+\frac{\mu}{4}}
				\sum_{\Gamma}
				\|\p\Gamma\phi(t)\|_{H^{10}_x}
				+
				\ve_1^2
				(1+t)^{-1-\frac{\mu}{4}}
				\|\p\phi(t)\|_{H^N_x}.
			\end{aligned}
		\end{equation}

		The second and third terms in
		\eqref{eq:CVP-R30-exterior-null-Leibniz-new} can be estimated analogously. Namely, set
		\[
		I_{2,a_1,a_2,a_3}
		=
		\big\|
		\mathbf 1_{\mathcal E_t}
		A^{1+\frac{\mu}{4}}
		|\p^2\nabla^{a_1}\phi|\,
		|\bar\p\nabla^{a_2}\phi|\,
		|\p\nabla^{a_3}\phi|
		\big\|_{L^2_x}
		\]
		and
		\[
		I_{3,a_1,a_2,a_3}
		=
		\big\|
		\mathbf 1_{\mathcal E_t}
		A^{1+\frac{\mu}{4}}
		|\p^2\nabla^{a_1}\phi|\,
		|\p\nabla^{a_2}\phi|\,
		|\bar\p\nabla^{a_3}\phi|
		\big\|_{L^2_x}.
		\]
		Then
		\begin{equation}
			\label{eq:CVP-R30-I23-new}
			\begin{aligned}
				I_{2,a_1,a_2,a_3}
				+
				I_{3,a_1,a_2,a_3}
				\ls
				\ve_1^2
				(1+t)^{-1-\frac{\mu}{4}}
				\|\p\phi(t)\|_{H^N_x}
				+\ve_1^2
				(1+t)^{-2+\frac{\mu}{4}}
				\sum_{\Gamma}
				\|\p\Gamma\phi(t)\|_{H^{11}_x}.
			\end{aligned}
		\end{equation}
		
		For the fourth term in \eqref{eq:CVP-R30-exterior-null-Leibniz-new},  set
		\[
		I_{4,a_1,a_2,a_3}
		=
		\big\|
		\mathbf 1_{\mathcal E_t}
		A^{1+\frac{\mu}{4}}
		r^{-1}
		|\p\nabla^{a_1}\phi|\,
		|\p\nabla^{a_2}\phi|\,
		|\p\nabla^{a_3}\phi|
		\big\|_{L^2_x}.
		\]
		Due to \(r^{-1}\ls A^{-1}\) on
		\(\mathcal E_t\), one has
		\begin{equation}
			\label{eq:CVP-R30-rminus-new}
			\begin{aligned}
				I_{4,a_1,a_2,a_3}
				&\ls
				\ve_1^2
				\big\|
				\mathbf 1_{\mathcal E_t}
				A^{1+\frac{\mu}{4}}
				A^{-1}
				A^{-2}B^{-\frac{\mu}{2}}
				|\p\nabla^{\max \{a_i\}}\phi|
				\big\|_{L^2_x}
				\\
				&\ls
				\ve_1^2
				(1+t)^{-2+\frac{\mu}{4}}
				\|\p\phi(t)\|_{H^N_x}.
			\end{aligned}
		\end{equation}
		Combining \eqref{eq:CVP-R30-I1-low-good-new}-\eqref{eq:CVP-R30-rminus-new} and summing over
		\(a_1+a_2+a_3=a\), we then have that for \(|a|\le 10\),
		\begin{equation}
			\label{eq:CVP-R30-exterior-final-new}
			\begin{aligned}
				&
				\sum_{|a|\le 10}
				\big\|
				\mathbf 1_{\mathcal E_t}
				A^{1+\frac{\mu}{4}}
				\nabla^a\mathcal R_{3,0}(\phi)(t)
				\big\|_{L^2_x}\\
				&
				\ls
				\ve_1^2
				(1+t)^{-1-\frac{\mu}{4}}
				\|\p\phi(t)\|_{H^N_x}
				+
				\ve_1^2
				(1+t)^{-2+\frac{\mu}{4}}
				\sum_{\Gamma}
				\|\p\Gamma\phi(t)\|_{H^{11}_x}.
			\end{aligned}
		\end{equation}
		
		In $\mathcal I_t$,  \(B\sim A\) holds. Therefore,
		\begin{equation}
			\label{eq:CVP-R30-interior-new}
			\begin{aligned}
				\big\|
				\mathbf 1_{\mathcal I_t}
				A^{1+\frac{\mu}{4}}
				\p^2\nabla^{a_1}\phi\,
				\p\nabla^{a_2}\phi\,
				\p\nabla^{a_3}\phi
				\big\|_{L^2_x}
				&\ls
				\ve_1^2
				\big\|
				\mathbf 1_{\mathcal I_t}
				A^{1+\frac{\mu}{4}}
				A^{-2}B^{-\frac{\mu}{2}}
				|\p^{\le2}\nabla^{\max \{a_i\}}\phi|
				\big\|_{L^2_x}\\
				&
				\ls
				\ve_1^2
				(1+t)^{-1-\frac{\mu}{4}}
				\|\p\phi(t)\|_{H^N_x}.
			\end{aligned}
		\end{equation}
		Hence
		\begin{equation}
			\label{eq:CVP-R30-interior-final-new}
			\begin{aligned}
				\sum_{|a|\le 10}
				\big\|
				\mathbf 1_{\mathcal I_t}
				A^{1+\frac{\mu}{4}}
				\nabla^a\mathcal R_{3,0}(\phi)(t)
				\big\|_{L^2_x}
				\ls
				\ve_1^2
				(1+t)^{-1-\frac{\mu}{4}}
				\|\p\phi(t)\|_{H^N_x}.
			\end{aligned}
		\end{equation}
		Note that $\mu \in (0,1/2)$. Combining \eqref{eq:CVP-R30-exterior-final-new} and \eqref{eq:CVP-R30-interior-final-new} yields \eqref{eq:CVP-R30-time-estimate-new}.

		Secondly, we deal with \(\mathcal R_{3,1}\). It is claimed that
		\begin{equation}
			\label{eq:CVP-R31-time-estimate-new}
			\begin{aligned}
				&
				\sum_{|a|\le 10}
				\big\|
				A^{1+\frac{\mu}{4}}(t,\cdot)
				\nabla^a\mathcal R_{3,1}(\phi)(t)
				\big\|_{L^2_x}
				\ls
				\ve_1^2
				(1+t)^{-1-\frac{\mu}{4}}
				\big(
				\|\p\phi(t)\|_{H^N_x}
				+\sum_{\Gamma}
				\|\p\Gamma\phi(t)\|_{H^{11}_x}
				\big).
			\end{aligned}
		\end{equation}
		Indeed, note that
		\begin{equation}
			\label{eq:CVP-R31-expanded-new}
			\mathcal R_{3,1}(\phi)
			=-2\phi Q_0(\p_t^2\phi,\phi)
			-2\phi Q_0(\p_t\phi,\p_t\phi).
		\end{equation}
		Let
		\[
		(U_1,W_1)=(\p_t^2\phi,\phi),
		\qquad
		(U_2,W_2)=(\p_t\phi,\p_t\phi).
		\]
		For \(|a|\le 10\), one has
		\begin{equation}
			\label{eq:CVP-R31-Leibniz-new}
			\begin{aligned}
				\nabla^a\mathcal R_{3,1}(\phi)
				=-2\sum_{\nu=1}^{2}
				\sum_{a_0+a_1+a_2=a}
				C_{a_0,a_1,a_2}
				\nabla^{a_0}\phi\,
				Q_0(\nabla^{a_1}U_\nu,\nabla^{a_2}W_\nu).
			\end{aligned}
		\end{equation}
		
		In $\mathcal{E}_t$, it follows from Lemma \ref{lem:null} that
		\begin{equation}\label{eq:CVP-R31-exterior-null-new}
			\begin{aligned}
				|\nabla^a\mathcal R_{3,1}|
				\ls
				\sum_{\nu=1}^2
				\sum_{a_0+a_1+a_2=a}
				|\nabla^{a_0}\phi|
				\big(
				|\bar\p\nabla^{a_1}U_\nu|\,
				|\p\nabla^{a_2}W_\nu|
				+
				|\p\nabla^{a_1}U_\nu|\,
				|\bar\p\nabla^{a_2}W_\nu|
				\big).
			\end{aligned}
		\end{equation}
		It suffices to treat the first term in \eqref{eq:CVP-R31-exterior-null-new} since the second term can be analogously estimated.
		
		Set
		\[
		J_{a_0,a_1,a_2}^{\nu}
		=
		\big\|
		\mathbf 1_{\mathcal E_t}
		A^{1+\frac{\mu}{4}}
		|\nabla^{a_0}\phi|\,
		|\bar\p\nabla^{a_1}U_\nu|\,
		|\p\nabla^{a_2}W_\nu|
		\big\|_{L^2_x}.
		\]
		If \(|a_0|>5\), then \(|a_1|\le 5\) and \(|a_2|\le 5\). Therefore, it follows from \eqref{eq:CVP-bootstrap-pointwise-phi}, Lemma \ref{lem:CVP-derived-good-derivative-estimates} and $B^{1-\f\mu2}\le A^{1-\f\mu2}$ for $\mu\in(0,\f12)$ that
		\begin{equation}
			\label{eq:CVP-R31-high-phi-new}
			\begin{aligned}
				J_{a_0,a_1,a_2}^{\nu}
				&\ls
				\ve_1^2
				\big\|
				\mathbf 1_{\mathcal E_t}
				A^{1+\frac{\mu}{4}}
				A^{-1}B^{-\frac{\mu}{4}}
				\big(
				A^{-1-\frac{\mu}{2}}
				+
				A^{-2}B^{1-\frac{\mu}{4}}
				\big)
				|\nabla^{a_0}\phi|
				\big\|_{L^2_x}\\
				&\ls
				\ve_1^2
				(1+t)^{-1-\frac{\mu}{4}}
				\|\p\phi(t)\|_{H^N_x}.
			\end{aligned}
		\end{equation}
		If \(|a_0|\le 5\) and \(|a_1|\le 5\), then by \eqref{eq:CVP-bootstrap-pointwise-phi} and Lemma \ref{lem:CVP-derived-good-derivative-estimates}, we get
		\begin{equation}
			\label{eq:CVP-R31-low-good-new}
			\begin{aligned}
				J_{a_0,a_1,a_2}^{\nu}
				&\ls
				\ve_1^2
				\big\|
				\mathbf 1_{\mathcal E_t}
				A^{1+\frac{\mu}{4}}
				A^{-1}B^{-\frac{\mu}{4}}
				\big(
				A^{-1-\frac{\mu}{2}}
				+
				A^{-2}B^{1-\frac{\mu}{4}}
				\big)
				|\p\nabla^{a_2}W_\nu|
				\big\|_{L^2_x}
				\\
				&\ls
				\ve_1^2
				(1+t)^{-1-\frac{\mu}{4}}
				\|\p\phi(t)\|_{H^N_x}.
			\end{aligned}
		\end{equation}
		If \(|a_0|\le 5\) and \(|a_1|>5\), then
		\(|a_2|\le 5\) holds.
It follows from Lemma~\ref{lem:good-derivative-one-Gamma}, Lemma~\ref{lem:CVP-higher-time-derivative-estimates-Gamma}
and Lemma~\ref{lem:CVP-higher-time-derivative-estimates} that
		\begin{equation}
			\label{eq:CVP-R31-high-good-new}
			\begin{aligned}
				J_{a_0,a_1,a_2}^{\nu}
				&\ls
				\ve_1^2
				\big\|
				\mathbf 1_{\mathcal E_t}
				A^{1+\frac{\mu}{4}}
				A^{-2}B^{-\frac{\mu}{2}}
				\big(
				A^{-1}|\Gamma\nabla^{a_1}U_\nu|
				+
				A^{-1}B|\p\nabla^{a_1}U_\nu|
				\big)
				\big\|_{L^2_x}
				\\
				&\ls
				\ve_1^2
				(1+t)^{-2+\frac{\mu}{4}}
				\sum_{\Gamma}
				\|\p\Gamma\phi(t)\|_{H^{11}_x}
				+
				\ve_1^2
				(1+t)^{-1-\frac{\mu}{4}}
				\|\p\phi(t)\|_{H^N_x}.
			\end{aligned}
		\end{equation}

		Combining \eqref{eq:CVP-R31-high-phi-new}-\eqref{eq:CVP-R31-high-good-new} yields
		\begin{equation}
			\label{eq:CVP-R31-exterior-final-new}
			\begin{aligned}
				&
				\sum_{|a|\le 10}
				\big\|
				\mathbf 1_{\mathcal E_t}
				A^{1+\frac{\mu}{4}}
				\nabla^a\mathcal R_{3,1}(\phi)(t)
				\big\|_{L^2_x}
				\\
				&
				\ls
				\ve_1^2
				(1+t)^{-2+\frac{\mu}{4}}
				\sum_{\Gamma}
				\|\p\Gamma\phi(t)\|_{H^{11}_x}
				+
				\ve_1^2
				(1+t)^{-1-\frac{\mu}{4}}
				\|\p\phi(t)\|_{H^N_x}.
			\end{aligned}
		\end{equation}
		
		In  $\mathcal I_t$,  \(B\sim A\) holds. Then one has
		\begin{equation}
			\label{eq:CVP-R31-interior-new}
			\begin{aligned}
				\big\|
				\mathbf 1_{\mathcal I_t}
				A^{1+\frac{\mu}{4}}
				\nabla^{a_0}\phi\,
				\p\nabla^{a_1}U_\nu\,
				\p\nabla^{a_2}W_\nu
				\big\|_{L^2_x}
				&\ls
				\ve_1^2
				\big\|
				\mathbf 1_{\mathcal I_t}
				A^{1+\frac{\mu}{4}}
				A^{-2}B^{-\frac{\mu}{2}}
				\mathcal Z_{a_0,a_1,a_2}^{\nu}
				\big\|_{L^2_x}\\
				&\ls
				\ve_1^2
				(1+t)^{-1-\frac{\mu}{4}}
				\|\p\phi(t)\|_{H^N_x},
			\end{aligned}
		\end{equation}
		where \(\mathcal Z_{a_0,a_1,a_2}^{\nu}\) stands for the unique factor among
		\(\nabla^{a_0}\phi\), \(\p\nabla^{a_1}U_\nu\) and
		\(\p\nabla^{a_2}W_\nu\),  which is put in \(L^2_x\) (when $a_0=0$, $\nabla^{a_0}\phi=\phi$ is never  put into  \(L^2_x\)).
		Therefore,
		\begin{equation}
			\label{eq:CVP-R31-interior-final-new}
			\begin{aligned}
				\sum_{|a|\le 10}
				\big\|
				\mathbf 1_{\mathcal I_t}
				A^{1+\frac{\mu}{4}}
				\nabla^a\mathcal R_{3,1}(\phi)(t)
				\big\|_{L^2_x}
				\ls
				\ve_1^2
				(1+t)^{-1-\frac{\mu}{4}}
				\|\p\phi(t)\|_{H^N_x}.
			\end{aligned}
		\end{equation}
	Owing to $\mu\in(0,\f12)$, combining \eqref{eq:CVP-R31-exterior-final-new} and \eqref{eq:CVP-R31-interior-final-new}
		yields \eqref{eq:CVP-R31-time-estimate-new}.

		Collecting \eqref{eq:CVP-R30-time-estimate-new} and
		\eqref{eq:CVP-R31-time-estimate-new}, we arrive at
		\begin{equation}
			\label{eq:CVP-R3-time-estimate-new}
			\begin{aligned}
				\sum_{|a|\le 10}
				\big\|
				A^{1+\frac{\mu}{4}}(t,\cdot)
				\nabla^a\mathcal R_3(\phi)(t)
				\big\|_{L^2_x}
				\ls
				\ve_1^2
				(1+t)^{-1-\frac{\mu}{4}}
				\big(
				\|\p\phi(t)\|_{H^N_x}
				+
				\sum_{\Gamma}
				\|\p\Gamma\phi(t)\|_{H^{11}_x}
				\big).
			\end{aligned}
		\end{equation}
		Using the energy bootstrap assumptions
		\eqref{eq:CVP-bootstrap-energy-ordinary} and
		\eqref{eq:CVP-bootstrap-energy-Gamma}, together with
		\(\eta\le \mu/100\), we obtain
		\begin{equation}
			\label{eq:CVP-R3-time-estimate-integrable-new}
			\begin{aligned}
				\sum_{|a|\le 10}
				\big\|
				A^{1+\frac{\mu}{4}}(t,\cdot)
				\nabla^a\mathcal R_3(\phi)(t)
				\big\|_{L^2_x}
				\ls
				\ve_1^3
				(1+t)^{-1-\frac{\mu}{4}+\eta}
				\ls
				\ve_1^3
				(1+t)^{-1-\frac{\mu}{16}}.
			\end{aligned}
		\end{equation}
		
		Analogously,
		\begin{equation}
			\label{eq:CVP-R45-time-estimate-new}
			\begin{aligned}
				\sum_{|a|\le 10}
				\big\|
				A^{1+\frac{\mu}{4}}(t,\cdot)
				\nabla^a
				\big(
				\mathcal R_4(\phi)+\mathcal R_5(\phi)
				\big)(t)
				\big\|_{L^2_x}
				\ls
				(\ve_1^4+\ve_1^5)
				(1+t)^{-1-\frac{\mu}{16}}.
			\end{aligned}
		\end{equation}
		It follows from
		\eqref{eq:CVP-R3-time-estimate-integrable-new},
		\eqref{eq:CVP-R45-time-estimate-new} and the smallness of \(\ve_1\) that
		\begin{equation}
			\label{eq:CVP-R-time-estimate-new}
			\sum_{|a|\le 10}
			\big\|
			A^{1+\frac{\mu}{4}}(t,\cdot)
			\nabla^a\mathcal R(\phi)(t)
			\big\|_{L^2_x}
			\ls
			\ve_1^3
			(1+t)^{-1-\frac{\mu}{16}}.
		\end{equation}
		Therefore,
		\begin{equation}
			\label{eq:CVP-R-L1-estimate-new}
			\sum_{|a|\le 10}
			\big\|
			A^{1+\frac{\mu}{4}}
			\nabla^a\mathcal R(\phi)
			\big\|_{L^1_tL^2_x}
			\ls
			\ve_1^3.
		\end{equation}
		Substituting \eqref{eq:CVP-initial-bound-new} and
		\eqref{eq:CVP-R-L1-estimate-new} into
		\eqref{eq:CVP-linear-estimate-new} yields
		\begin{equation}
			\label{eq:CVP-V-final-bound-new}
			\begin{aligned}
				&
				\|AB^{\frac{\mu}{4}}V\|_{L^\infty_{t,x}}
				+
				\sum_{|a|\le 7}
				\big\|
				AB^{\frac{\mu}{4}}
				\nabla^a\p V
				\big\|_{L^\infty_{t,x}}
				\ls
				\ve+\ve_1^3.
			\end{aligned}
		\end{equation}
		It remains to recover \(\phi\) from \(V\). Due to $\phi=V+\phi\p_t\phi$
		and
		$\big\|AB^{\frac{\mu}{4}}\phi\p_t\phi
			\big\|_{L^\infty_{t,x}}\ls\ve_1^2,$
		one has
		\begin{equation}
			\label{eq:CVP-zero-order-phi-new}
			\big\|
			AB^{\frac{\mu}{4}}\phi
			\big\|_{L^\infty_{t,x}}
			\ls
			\ve+\ve_1^2+\ve_1^3.
		\end{equation}
		For \(|a|\le 7\), it holds that
		\begin{equation}\label{eq:CVP-recovery-Leibniz-new}
			\begin{aligned}
				\left|
				\nabla^a\p(\phi\p_t\phi)
				\right|
				\ls
				\sum_{a_1+a_2=a}
				\left(
				|\nabla^{a_1}\p\phi|
				|\nabla^{a_2}\p_t\phi|
				+
				|\nabla^{a_1}\phi|
				|\nabla^{a_2}\p\p_t\phi|
				\right).
			\end{aligned}
		\end{equation}
		Note that all terms in \eqref{eq:CVP-recovery-Leibniz-new} can be directly controlled by the bootstrap assumptions and
		Lemma~\ref{lem:CVP-higher-time-derivative-estimates} except the following top-order terms
		\begin{equation}\label{YHC-23-0}
		\begin{aligned}
		AB^{\frac{\mu}{4}}
		\phi\,\nabla^a\p_\alpha\p_t\phi
		\quad\text{for $|a|=7$ and $0\le\alpha\le3$.}
		\end{aligned}
		\end{equation}
		
Next, we treat the terms in \eqref{YHC-23-0}.
Set $G_\alpha=\p_\alpha\p_t\phi$. It follows from \eqref{eq:CVP-bootstrap-pointwise-phi} and
Lemma~\ref{lem:CVP-higher-time-derivative-estimates} that
		\begin{equation}
			\label{eq:CVP-interpolation-low}
			\|\nabla^{6}G_\alpha(t)\|_{L^\infty_x}
			\ls
			\ve_1(1+t)^{-1}.
		\end{equation}

		On the other hand, by \eqref{eq:CVP-bootstrap-energy-ordinary} and 	Lemma~\ref{lem:CVP-higher-time-derivative-estimates},
one has
		\begin{equation}
			\label{eq:CVP-interpolation-high}
			\|G_\alpha(t)\|_{H^{N-1}_x}
			\ls
			\|\p\phi(t)\|_{H^N_x}
			\ls
			\ve_1(1+t)^\eta.
		\end{equation}
		By the Gagliardo-Nirenberg inequality $\|\nabla f \|_{L^\infty(\R^3)} \ls \|f\|_{L^\infty(\R^3)}^{1-\theta}\|f\|_{\dot{H}^M(\R^3)}^\theta $
with  $1=\theta(M-\f32)$ and $\theta \in [1/M,1]$,
		we obtain that for $M=8$ and $\theta=\f{2}{13}$,
		\begin{equation}
			\label{eq:CVP-top-interpolation}
			\begin{aligned}
				\|\nabla^7G_\alpha(t)\|_{L^\infty_x}
				&\ls
				\|\nabla^{6}G_\alpha(t)\|_{L^\infty_x}^{1-\theta}
				\|G_\alpha(t)\|_{H^{14}_x}^{\theta}
				\\
				&\ls
				\left(
				\ve_1(1+t)^{-1}
				\right)^{1-\theta}
				\left(
				\ve_1(1+t)^\eta
				\right)^\theta
				\\
				&=
				\ve_1
				(1+t)^{-1+\f{15}{13}\eta}
				\ls
				\ve_1,
			\end{aligned}
		\end{equation}
		where $N\ge 15$ and $\eta\in(0,\f{1}{200})$ are used.
		Therefore, for \(|a|=7\),
		\begin{equation}
			\label{eq:CVP-top-recovery-product}
			\begin{aligned}
				\big\|
				AB^{\frac{\mu}{4}}
				\phi\,\nabla^a\p_\alpha\p_t\phi
				\big\|_{L^\infty_x}
				\ls
				\big\|
				AB^{\frac{\mu}{4}}\phi
				\big\|_{L^\infty_x}
				\|\nabla^7G_\alpha(t)\|_{L^\infty_x}
				\ls
				\ve_1^2.
			\end{aligned}
		\end{equation}
		This, together with the estimates on the related lower order derivative terms, yields
		\begin{equation}
			\label{eq:CVP-recovery-error-new}
			\sum_{|a|\le 7}
			\big\|
			AB^{\frac{\mu}{4}}
			\nabla^a\p
			\big(
			\phi\p_t\phi
			\big)
			\big\|_{L^\infty_{t,x}}
			\ls
			\ve_1^2.
		\end{equation}
		
		Combining
		\eqref{eq:CVP-V-final-bound-new},
		\eqref{eq:CVP-zero-order-phi-new} and
		\eqref{eq:CVP-recovery-error-new}, we arrive at
		\[
		\big\|
		AB^{\frac{\mu}{4}}\phi
		\big\|_{L^\infty_{t,x}}
		+
		\sum_{|a|\le 7}
		\big\|
		AB^{\frac{\mu}{4}}
		\nabla^a\p\phi
		\big\|_{L^\infty_{t,x}}
		\ls
		\ve+\ve_1^2+\ve_1^3,
		\]
		which proves \eqref{eq:CVP-weighted-conclusion-new}.
	\end{proof}

	\section{Weighted pointwise estimates for $\Gamma\phi$}
	\label{sec:weighted-pointwise-estimates-II}
	
	In this section, based on the equation \eqref{eq:box-V-schematic-null}, we establish the weighted pointwise estimates for $\Gamma\phi$
	and $\p\Gamma\phi$.

	\begin{lemma}[{\bf Weighted pointwise estimates  for $\Gamma\phi$}]
		\label{lem:CVP-weighted-pointwise-Gamma-closure}
		Assume that the initial data satisfy \eqref{YHC-4} and
		\eqref{eq:CVP-bootstrap-parameter-range}-\eqref{eq:CVP-bootstrap-Strichartz-Gamma}
		hold on \([0,T]\). Suppose that \(\ve_1>0\) is small. Then
		\begin{equation}
			\label{eq:CVP-weighted-Gamma-conclusion}
			\begin{aligned}
				\sum_{\Gamma}
				\big\|
				A^{\frac{\mu}{2}}
				\Gamma\phi
				\big\|_{L^\infty([0,T]\times\R^3)}
				+
				\sum_{\Gamma}
				\sum_{|a|\le 6}
				\big\|
				A^{\frac{\mu}{2}}
				\p\nabla^a\Gamma\phi
				\big\|_{L^\infty([0,T]\times\R^3)}
				\ls
				\ve+\ve_1^2+\ve_1^3.
			\end{aligned}
		\end{equation}
		\end{lemma}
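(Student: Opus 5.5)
The estimate \eqref{eq:CVP-weighted-Gamma-conclusion} will follow the route of Lemma~\ref{lem:CVP-weighted-pointwise-estimate-new}, with Lemma~\ref{lem:weak-weighted-Linfty-L2} in place of Lemma~\ref{lem:weighted-Linfty-L2}. Since $\Gamma$ commutes with $\Box$, we have $\Box\Gamma V=\Gamma\mathcal R(\phi)$. Applying Lemma~\ref{lem:weak-weighted-Linfty-L2} with weight $\mu/2$ and $m=6$ gives
\[
\sum_\Gamma\big(\|A^{\frac\mu2}\Gamma V\|_{L^\infty_{t,x}}+\sum_{|a|\le6}\|A^{\frac\mu2}\p\nabla^a\Gamma V\|_{L^\infty_{t,x}}\big)\ls \sum_\Gamma\sum_{|a|\le 9}\big[\|\w{x}^{\frac\mu2}\nabla^a\p\Gamma V(0)\|_{L^2_x}+\|A^{\frac\mu2}\nabla^a\Gamma\mathcal R(\phi)\|_{L^1_tL^2_x}\big].
\]
At $t=0$ we have $\Gamma=x\cdot\p$-type, so the data term is bounded by $\|\w{x}^{1+\frac\mu2}\nabla^{\le 10}\p^{\le2}V(0)\|_{L^2}$. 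Using \eqref{YHC-4} together with the equation to express $\p_t^2\phi(0)$, this is $\ls\ve$.

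For the source, we expand $\nabla^a\Gamma\mathcal R_3$ by Leibniz and commute $\Gamma$ past $\p$; the commutators $[\Gamma,\p]$ produce only ordinary derivatives. Every term is then a product of three factors from $\{\p^{\le2}\nabla^{\le9}\phi,\ \p^{\le 2}\nabla^{\le 9}\Gamma\phi\}$, with $\Gamma$ landing on at most one factor. Since $|a|\le 9$, at most one factor carries more than $5$ spatial derivatives.

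\emph{Terms where $\Gamma$ hits a factor that is put in $L^2_x$.} Bound the other two factors in $L^\infty$ by \eqref{eq:CVP-bootstrap-pointwise-phi} and Lemma~\ref{lem:CVP-higher-time-derivative-estimates}, giving a gain of $\ve_1^2A^{-2}B^{-\mu/2}$. Put the $\Gamma$ factor in $L^2$ through \eqref{eq:CVP-bootstrap-energy-Gamma} and Lemma~\ref{lem:CVP-higher-time-derivative-estimates-Gamma}. This yields $\ve_1^3(1+t)^{-2+\frac\mu2+\eta}$, which is integrable in time.

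\emph{Terms where the $\Gamma$ factor is low order.} Use the Strichartz bootstrap \eqref{eq:CVP-bootstrap-Strichartz-Gamma} for it in $L^2_tL^\infty_x$, and put the remaining high-order ordinary factor in $L^2_x$ via \eqref{eq:CVP-bootstrap-energy-ordinary}. The leftover factor is bounded by $\ve_1A^{-1}$. This gives a bound of the form $\ve_1^2\|(1+t)^{-1+\frac\mu6+\eta}\|_{L^2_t}\cdot\|A^{\frac\mu3}\Gamma\text{-factor}\|_{L^2_tL^\infty_x}\ls\ve_1^3$ after Cauchy--Schwarz in time.

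The zero-order factor $\phi$ appearing in $\mathcal R_{3,1}$, and $\Gamma\phi$ undifferentiated, are handled the same way via $\|AB^{\mu/4}\phi\|_{L^\infty}\le\ve_1$. When $\nabla^{a_0}\phi$ with $a_0\neq0$ must go in $L^2$, we write it as $\nabla^{a_0-1}\nabla\phi$. $\mathcal R_4$ and $\mathcal R_5$ are easier, since they carry an extra factor $\ve_1A^{-1}$. Altogether \eqref{YHC-11} holds: $\sum_\Gamma\sum_{|a|\le9}\|A^{\frac\mu2}\nabla^a\Gamma\mathcal R\|_{L^1_tL^2_x}\ls\ve_1^3$.

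Finally we recover $\Gamma\phi$ through $\Gamma\phi=\Gamma V+\Gamma\phi\,\p_t\phi+\phi\,\Gamma\p_t\phi$. The first product is $\ls\ve_1^2A^{-\mu/2}$ by the bootstrap assumptions. The term $\phi\,\Gamma\p_t\phi=\phi(\p_t\Gamma\phi+[\Gamma,\p_t]\phi)$ is likewise bounded by the pointwise bootstraps. For the derivatives $\p\nabla^a$, $|a|\le6$, the only top-order term is $\phi\,\p\nabla^6\p_t\Gamma\phi$, which needs $7$ derivatives on $\Gamma\phi$. We will interpolate, as in \eqref{eq:CVP-top-interpolation}, between $\|\p\nabla^{\le6}\Gamma\phi\|_{L^\infty}\le\ve_1A^{-\mu/2}$ and the $H^{11}$ bound \eqref{eq:CVP-bootstrap-energy-Gamma}, using the ample factor $A^{-1}$ from $\phi$.

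The main obstacle is the top-order source terms in which the $\Gamma$ factor carries $\ge6$ derivatives and cannot go in $L^\infty$ or Strichartz. There the time decay must come entirely from the two ordinary factors, and we need $\frac\mu2+\eta<1$. This is where the weak weight $A^{\mu/2}$ (rather than $AB^{\mu}$) is essential, and it works because $\mu<\frac12$.
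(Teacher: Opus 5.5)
Your proposal is correct and follows the paper's proof essentially step by step. Both arguments apply Lemma~\ref{lem:weak-weighted-Linfty-L2} with $\mu/2$ to $\Box\Gamma V=\Gamma\mathcal R(\phi)$, then split the cubic source by whether the $\Gamma$-factor is top order (energy bound, giving $(1+t)^{-2+\mu/2+\eta}$) or low order (Strichartz bootstrap plus Cauchy--Schwarz in time, using $(1+t)^{-1+\mu/6+\eta}\in L^2_t$), and both recover $\Gamma\phi$ by interpolation for the lone top-order term $\phi\,\nabla^6\p\p_t\Gamma\phi$. One small step is left implicit in both your write-up and the paper: the Strichartz bootstrap only controls $\p\nabla^{\le 6}\Gamma\phi$, so low-order factors $\p_t^2\Gamma\phi$ and $\p_t^3\Gamma\phi$ must first be rewritten via \eqref{eq:CVP-Gamma-coefficient-form}.
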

	
	\begin{proof}
		It follows from  \eqref{eq:box-V-schematic-null} that
		\begin{equation}
			\label{eq:CVP-Gamma-V-equation-pointwise-II}
			\Box\Gamma V
			=\Gamma\mathcal R(\phi).
		\end{equation}
		Applying Lemma~\ref{lem:weak-weighted-Linfty-L2} to \(\Gamma V\), with
		\(\mu/2\) in place of \(\mu\), we have
		\begin{equation}\label{eq:CVP-Gamma-V-linear-pointwise-II}
			\begin{aligned}
				&\sum_{\Gamma}
				\big\|
				A^{\frac{\mu}{2}}\Gamma V
				\big\|_{L^\infty_{t,x}}
				+
				\sum_{\Gamma}
				\sum_{|a|\le 6}
				\big\|
				A^{\frac{\mu}{2}}
				\p\nabla^a\Gamma V
				\big\|_{L^\infty_{t,x}}
				\\
				&\ls
				\sum_{\Gamma}
				\sum_{|a|\le 9}
				\big\|
				\langle x\rangle^{\frac{\mu}{2}}
				\nabla^a\p\Gamma V(0)
				\big\|_{L^2_x}
				+
				\sum_{\Gamma}
				\sum_{|a|\le 9}
				\big\|
				A^{\frac{\mu}{2}}
				\nabla^a\Gamma\mathcal R(\phi)
				\big\|_{L^1_tL^2_x}.
			\end{aligned}
		\end{equation}
		In addition, direct computation yields
		\begin{equation}
			\label{eq:CVP-Gamma-V-initial-pointwise-II}
			\sum_{\Gamma}
			\sum_{|a|\le 9}
			\big\|
			\langle x\rangle^{\frac{\mu}{2}}
			\nabla^a\p\Gamma V(0)
			\big\|_{L^2_x}
			\ls
			\sum_{|a|\le 9}
			\big\|
			\langle x\rangle^{1+\frac{\mu}{2}}
			\nabla^a\p^{\le 1}\p V(0)
			\big\|_{L^2_x}
			\ls\ve.
		\end{equation}
		
		We next estimate $\ds\sum_{\Gamma}
		\sum_{|a|\le 9}\big\|A^{\frac{\mu}{2}}
		\nabla^a\Gamma\mathcal R_3(\phi)
		\big\|_{L^1_tL^2_x}$ in \eqref{eq:CVP-Gamma-V-linear-pointwise-II}.
		Note that
		\begin{equation}
			\label{eq:CVP-R3-schematic-structure-II}
			\mathcal R_3(\phi)
			=\sum
			\p^{\le2}\phi\cdot
			\p^{\le2}\phi\cdot
			\p^{\le2}\p\phi.
		\end{equation}
		It follows from \eqref{eq:CVP-R3-schematic-structure-II} that  for \(|a|\le 9\),
		\begin{equation}
			\label{eq:CVP-Gamma-R3-schematic-Leibniz-II}
			\begin{aligned}
				|\nabla^a\Gamma\mathcal R_3(\phi)|
				\ls
				&\sum_{|a_1|+|a_2|+|a_3|\le |a|}
				\Big(
				|\p^{\le2}\nabla^{a_1}\Gamma\phi|\,
				|\p^{\le2}\nabla^{a_2}\phi|\,
				|\p^{\le2}\nabla^{a_3}\p\phi|
				\\
				&\qquad\qquad\qquad +
				|\p^{\le2}\nabla^{a_1}\phi|\,
				|\p^{\le2}\nabla^{a_2}\Gamma\phi|\,
				|\p^{\le2}\nabla^{a_3}\p\phi|
				\\
				&\qquad\qquad\qquad +
				|\p^{\le2}\nabla^{a_1}\phi|\,
				|\p^{\le2}\nabla^{a_2}\phi|\,
				|\p^{\le2}\nabla^{a_3}\p\Gamma\phi|
				\Big)
				\\
				&+
				\sum_{|b_1|+|b_2|+|b_3|\le |a|}
				|\p^{\le2}\nabla^{b_1}\phi|\,
				|\p^{\le2}\nabla^{b_2}\phi|\,
				|\p^{\le2}\nabla^{b_3}\p\phi|.
			\end{aligned}
		\end{equation}

		We now treat each term on the right-hand side of \eqref{eq:CVP-Gamma-R3-schematic-Leibniz-II}.

If the derivative order of  $\p^{\le2}\nabla^{a_1}\Gamma\phi$ for $\Gamma\phi$ is top, that is $|a_1|>4$, then $|a_2|\le 4$ and $|a_3| \le 4$. By  \eqref{eq:CVP-bootstrap-energy-Gamma}, \eqref{eq:CVP-bootstrap-pointwise-phi},
Lemma \ref{lem:CVP-higher-time-derivative-estimates} and Lemma \ref{lem:CVP-higher-time-derivative-estimates-Gamma}, we have
		\begin{equation}
			\label{eq:CVP-Gamma-R3-high-Gamma-II}
			\begin{aligned}
				&
				\big\|
				A^{\frac{\mu}{2}}
				\p^{\le2}\nabla^{a_1}\Gamma\phi\,
				\p^{\le2}\nabla^{a_2}\phi\,
				\p^{\le2}\nabla^{a_3}\p\phi
				\big\|_{L^2_x}
				\\
				&\ls
				\ve_1^2
				\left\|
				A^{\frac{\mu}{2}}A^{-2}B^{-\frac{\mu}{2}}
				\p^{\le2}\p\nabla^{a_1}\Gamma\phi
				\right\|_{L^2_x}
				\\
				&\ls
				\ve_1^2
				(1+t)^{-2+\frac{\mu}{2}}
				\sum_{\Gamma}
				\|\p^{\le2}\p\Gamma\phi(t)\|_{H^{9}_x}
				\\
				&\ls
				\ve_1^3
				(1+t)^{-2+\frac{\mu}{2}+\eta}.
			\end{aligned}
		\end{equation}
		The same estimate holds when the derivative order of \(\p^{\le2}\nabla^{a_2}\Gamma\phi\) for $\Gamma\phi$ or
		\(\p^{\le2}\nabla^{a_3}\p\Gamma\phi\) for $\p\Gamma\phi$ is top. For \(0<\mu<1/2\) and \(\eta\le\mu/100\), it holds that
		\[
		\int_0^T
		(1+t)^{-2+\frac{\mu}{2}+\eta}\,dt
		\ls 1.
		\]
		Thus,
\begin{equation}\label{YHC-23}
\begin{aligned}
&\big\|A^{\frac{\mu}{2}}
\p^{\le2}\nabla^{a_1}\Gamma\phi\,
\p^{\le2}\nabla^{a_2}\phi\,
\p^{\le2}\nabla^{a_3}\p\phi
\big\|_{L_t^1L^2_x}
+\big\|A^{\frac{\mu}{2}}\p^{\le2}\nabla^{a_1}\phi\,
\p^{\le2}\nabla^{a_2}\Gamma\phi\,
\p^{\le2}\nabla^{a_3}\p\phi\big\|_{L_t^1L^2_x}\\
&\quad +\big\|A^{\frac{\mu}{2}}\p^{\le2}\nabla^{a_1}\phi\,
\p^{\le2}\nabla^{a_2}\phi\,
\p^{\le2}\nabla^{a_3}\p\Gamma\phi\|_{L_t^1L^2_x}\ls\ve_1^3.
\end{aligned}
		\end{equation}
		
		When the derivative order for \(\Gamma\phi\) or \(\p\Gamma\phi\) of the related terms in the first summation
of \eqref{eq:CVP-Gamma-R3-schematic-Leibniz-II} is low, that is $|a_3|\le 4$ in \eqref{eq:CVP-Gamma-R3-low-Gamma-II}, it follows from
		\eqref{eq:CVP-bootstrap-energy-ordinary}, \eqref{eq:CVP-bootstrap-pointwise-phi}, \eqref{eq:CVP-bootstrap-Strichartz-Gamma}, Lemma \ref{lem:CVP-higher-time-derivative-estimates} 	and Lemma \ref{lem:CVP-higher-time-derivative-estimates-Gamma} that
		\begin{equation}
			\label{eq:CVP-Gamma-R3-low-Gamma-II}
			\begin{aligned}
				&\big\|A^{\frac{\mu}{2}}
				\p^{\le3}\nabla^{a_1}\phi\,
				\p^{\le3}\nabla^{a_2}\phi\,
				\p^{\le3}\nabla^{a_3}\Gamma\phi
				\big\|_{L^1_tL^2_x}\\
				&\ls
				\ve_1
				\int_0^T
				(1+t)^{-1+\frac{\mu}{6}}
				\|\p^{\le2}\p\phi(t)\|_{H^{9}_x}
				\big\|
				A^{\frac{\mu}{3}}
				\p^{\le3}\nabla^{a_3}\Gamma\phi(t)
				\big\|_{L^\infty_x}
				\,dt
				\\
				&\ls
				\ve_1^2
				\big\|
				(1+t)^{-1+\frac{\mu}{6}+\eta}
				\big\|_{L^2(0,T)}
				\sum_{\Gamma}
				\big(
				\big\|
				A^{\frac{\mu}{3}}\Gamma\phi
				\big\|_{L^2_tL^\infty_x}
				+\sum_{|c|\le 6}
				\big\|
				A^{\frac{\mu}{3}}
				\p\nabla^c\Gamma\phi
				\big\|_{L^2_tL^\infty_x}
				\big)
				\\
				&\ls
				\ve_1^3,
			\end{aligned}
		\end{equation}
		where $2\left(-1+\frac{\mu}{6}+\eta\right)<-1$ is used.
		
		On the other hand, the terms in the second summation of \eqref{eq:CVP-Gamma-R3-schematic-Leibniz-II}
are easier to be estimated. Therefore,
		\begin{equation}
			\label{eq:CVP-R3-source-final-II}
			\sum_{\Gamma}
			\sum_{|a|\le 9}
			\big\|
			A^{\frac{\mu}{2}}
			\nabla^a\Gamma\mathcal R_3(\phi)
			\big\|_{L^1_tL^2_x}
			\ls
			\ve_1^3.
		\end{equation}
		
	Analogously, it holds that	
		\begin{equation}
			\label{eq:CVP-R4-R5-source-final-II}
			\sum_{\Gamma}
			\sum_{|a|\le 9}
			\big\|
			A^{\frac{\mu}{2}}
			\nabla^a\Gamma\mathcal R_4(\phi)
			\big\|_{L^1_tL^2_x}
			+\sum_{\Gamma}
			\sum_{|a|\le 9}
			\big\|
			A^{\frac{\mu}{2}}
			\nabla^a\Gamma\mathcal R_5(\phi)
			\big\|_{L^1_tL^2_x}
			\ls
			\ve_1^4+\ve_1^5.
		\end{equation}
		Combining \eqref{eq:CVP-R3-source-final-II} and
		\eqref{eq:CVP-R4-R5-source-final-II} yields
		\begin{equation}
			\label{eq:CVP-Gamma-source-main-claim-II}
			\sum_{\Gamma}
			\sum_{|a|\le 9}
			\big\|
			A^{\frac{\mu}{2}}
			\nabla^a\Gamma\mathcal R(\phi)
			\big\|_{L^1_tL^2_x}\ls
			\ve_1^3.
		\end{equation}
		Substituting \eqref{eq:CVP-Gamma-V-initial-pointwise-II} and
		\eqref{eq:CVP-Gamma-source-main-claim-II} into
		\eqref{eq:CVP-Gamma-V-linear-pointwise-II}, we obtain
		\begin{equation}
			\label{eq:CVP-Gamma-V-pointwise-final-II}
			\begin{aligned}
				\sum_{\Gamma}
				\big\|
				A^{\frac{\mu}{2}}\Gamma V
				\big\|_{L^\infty_{t,x}}
				+\sum_{\Gamma}
				\sum_{|a|\le 6}
				\big\|
				A^{\frac{\mu}{2}}
				\p\nabla^a\Gamma V
				\big\|_{L^\infty_{t,x}}
				\ls
				\ve+\ve_1^3.
			\end{aligned}
		\end{equation}
		
		We next estimate \(\Gamma\phi\) from \(\Gamma V\). Note that
			\begin{equation}
			\label{eq:CVP-Gamma-zero-recovery-II}
			\Gamma\phi
			=\Gamma V+\Gamma(\phi\p_t\phi)
		\end{equation}
		and
		\begin{equation}
			\label{eq:CVP-Gamma-zero-recovery-error-II}
			\begin{aligned}
				\sum_{\Gamma}
				A^{\frac{\mu}{2}}|\Gamma(\phi\p_t\phi)|
				\ls
				\sum_{\Gamma}
				A^{\frac{\mu}{2}}
				\left(
				|\Gamma\phi|\,|\p_t\phi|
				+
				|\phi|\,|\p\Gamma\phi|
				+
				|\phi|\,|\p\phi|
				\right)
				\ls
				\ve_1^2.
			\end{aligned}
		\end{equation}
		For \(|a|\le 6\), one has
		\begin{equation}
			\label{eq:CVP-Gamma-recovery-Leibniz-II}
			\begin{aligned}
				|\nabla^a\p\Gamma(\phi\p_t\phi)|
				&\ls
				\sum_{|a_1|+|a_2|\le |a|}
				\Big(
				|\nabla^{a_1}\p\Gamma\phi|\,
				|\nabla^{a_2}\p\phi|
				+
				|\nabla^{a_1}\Gamma\phi|\,
				|\nabla^{a_2}\p^2\phi|
				\\
				&\qquad\qquad\qquad+
				|\nabla^{a_1}\p\phi|\,
				|\nabla^{a_2}\p\Gamma\phi|
				+
				|\nabla^{a_1}\phi|\,
				|\nabla^{a_2}\p^2\Gamma\phi|
				\Big)
				\\
				&+
				\sum_{|b|\le |a|+1}
				|\nabla^b\p\phi|
				\sum_{|c|\le |a|}
				|\nabla^c\p^2\phi|.
			\end{aligned}
		\end{equation}
		We point out that all terms in \eqref{eq:CVP-Gamma-recovery-Leibniz-II} can be
		controlled directly by the bootstrap assumptions and the higher order time
		derivative estimates except the top-order terms of the form
		\begin{equation}\label{YHC-26}
			\begin{aligned}
		A^{\frac{\mu}{2}}
		\phi\,\nabla^a\p_\alpha\p_t\Gamma\phi
		\quad\text{for $|a|=6$}.
		\end{aligned}
		\end{equation}
		We next handle the terms in \eqref{YHC-26}. As in
		Section~\ref{sec:weighted-pointwise-estimates-I}, set
		$G_{\alpha,\Gamma}
		=\p_\alpha\p_t\Gamma\phi$.
		It follows from \eqref{eq:CVP-bootstrap-pointwise-Gamma} and
		Lemma~\ref{lem:CVP-higher-time-derivative-estimates-Gamma} that
		\begin{equation}
			\label{eq:CVP-Gamma-interpolation-low-II}
			\|\nabla^{5}G_{\alpha,\Gamma}(t)\|_{L^\infty_x}
			\ls
			\ve_1(1+t)^{-\frac{\mu}{2}}.
		\end{equation}
		On the other hand, by \eqref{eq:CVP-bootstrap-energy-Gamma} and Lemma \ref{lem:CVP-higher-time-derivative-estimates-Gamma},
		\begin{equation}
			\label{eq:CVP-Gamma-interpolation-high-II}
			\|G_{\alpha,\Gamma}(t)\|_{H^{10}_x}
			\ls
			\sum_{\Gamma}
			\|\p\Gamma\phi(t)\|_{H^{11}_x}
			\ls
			\ve_1(1+t)^\eta.
		\end{equation}
		Interpolating between
		\eqref{eq:CVP-Gamma-interpolation-low-II} and
		\eqref{eq:CVP-Gamma-interpolation-high-II}, we have
		\begin{equation}
			\label{eq:CVP-Gamma-interpolation-top-II}
			\|\nabla^{6}G_{\alpha,\Gamma}(t)\|_{L^\infty_x}
			\ls
			\ve_1(1+t)^{-\frac{\mu}{3}}.
		\end{equation}
		Due to $|\phi(t,x)|
		\ls
		\ve_1 A^{-1}(t,x)B^{-\frac{\mu}{4}}(t,x)$,
		one has
		\begin{equation}
			\label{eq:CVP-Gamma-recovery-top-error-II}
			\begin{aligned}
				A^{\frac{\mu}{2}}
				|\phi\,\nabla^{6}\p_\alpha\p_t\Gamma\phi|
				\ls
				\ve_1^2
				A^{-1+\frac{\mu}{2}}
				B^{-\frac{\mu}{4}}
				(1+t)^{-\frac{\mu}{3}}
				\ls
				\ve_1^2.
			\end{aligned}
		\end{equation}
		Consequently,
		\begin{equation}
			\label{eq:CVP-Gamma-recovery-error-final-II}
			\begin{aligned}
				\sum_{\Gamma}
				\sum_{|a|\le 6}
				\big\|
				A^{\frac{\mu}{2}}
				\nabla^a\p\Gamma(\phi\p_t\phi)
				\big\|_{L^\infty_{t,x}}
				\ls
				\ve_1^2.
			\end{aligned}
		\end{equation}
		Combining \eqref{eq:CVP-Gamma-zero-recovery-II},
		\eqref{eq:CVP-Gamma-zero-recovery-error-II},
		\eqref{eq:CVP-Gamma-V-pointwise-final-II} and
		\eqref{eq:CVP-Gamma-recovery-error-final-II}, we arrive at
		\begin{equation}
			\label{eq:CVP-Gamma-final-before-absorb-II}
			\begin{aligned}
				\sum_{\Gamma}
				\big\|
				A^{\frac{\mu}{2}}\Gamma\phi
				\big\|_{L^\infty_{t,x}}
				+
				\sum_{\Gamma}
				\sum_{|a|\le 6}
				\big\|
				A^{\frac{\mu}{2}}
				\p\nabla^a\Gamma\phi
				\big\|_{L^\infty_{t,x}}
				\ls
				\ve+\ve_1^2+\ve_1^3,
				\end{aligned}
		\end{equation}
		which yields \eqref{eq:CVP-weighted-Gamma-conclusion}.
	\end{proof}

	\section{Weighted Strichartz estimates for $\Gamma\phi$}
	\label{sec:weighted-Strichartz-estimates}
	
	In this section, based on  the equation \eqref{eq:box-V-schematic-null}, we derive  the weighted
Strichartz estimates for $\Gamma\phi$ and $\p\Gamma\phi$.

		\begin{lemma}[{\bf Weighted Strichartz estimates for $\Gamma\phi$ and
	$\p\Gamma\phi$}]
		\label{lem:CVP-weighted-Strichartz-Gamma-closure}
		Assume that the initial data satisfy \eqref{YHC-4} and
		\eqref{eq:CVP-bootstrap-parameter-range}-\eqref{eq:CVP-bootstrap-Strichartz-Gamma}
		hold on \([0,T]\). Suppose that \(\ve_1>0\) is small. Then
		\begin{equation}
			\label{eq:CVP-weighted-Strichartz-Gamma-conclusion}
			\begin{aligned}
			\sum_{\Gamma}
			\big\|
			A^{\frac{\mu}{3}}
			\Gamma\phi
			\big\|_{L^2([0,T];L^\infty(\mathbb R^3))}
			+
			\sum_{\Gamma}
			\sum_{|a|\le 6}
			\big\|
			A^{\frac{\mu}{3}}
			\partial\nabla^a\Gamma\phi
			\big\|_{L^2([0,T];L^\infty(\mathbb R^3))}
				\ls
				\ve+\ve_1^2+\ve_1^3.
			\end{aligned}
		\end{equation}
		\end{lemma}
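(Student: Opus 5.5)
We mirror the proof of Lemma~\ref{lem:CVP-weighted-pointwise-Gamma-closure}, replacing the weighted $L^\infty$-$L^2$ estimate by the weighted $L^2_tL^\infty_x$-$L^2$ estimate of Lemma~\ref{lem:weak-weighted-L2t-Linfty-L2}. Starting from $\Box\Gamma V=\Gamma\mathcal R(\phi)$, we apply Lemma~\ref{lem:weak-weighted-L2t-Linfty-L2} with $\beta_1=\frac{2\mu}{3}$ and $\beta_2=\frac{\mu}{2}$. The constraint $0<\beta_1<\beta_2<1$ is not met by this pair, since $\frac{2\mu}{3}>\frac{\mu}{2}$. We therefore take $\beta_2\in(\frac{2\mu}{3},\min\{\mu,1\})$, say $\beta_2=\frac{3\mu}{4}$, so that the source norm carries the weight $A^{3\mu/4}$. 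This yields
\[
\sum_\Gamma\|A^{\frac\mu3}\Gamma V\|_{L^2_tL^\infty_x}+\sum_\Gamma\sum_{|a|\le6}\|A^{\frac\mu3}\p\nabla^a\Gamma V\|_{L^2_tL^\infty_x}\ls \sum_\Gamma\sum_{|a|\le8}\big[\|\w{x}^{\frac{3\mu}4}\nabla^a\p\Gamma V(0)\|_{L^2_x}+\|A^{\frac{3\mu}4}\nabla^a\Gamma\mathcal R(\phi)\|_{L^1_tL^2_x}\big].
\]
The initial data term is bounded by $\ve$ exactly as in \eqref{eq:CVP-Gamma-V-initial-pointwise-II}, using \eqref{YHC-4} and $\frac{3\mu}{4}<1+\mu$.

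For the source term we use the schematic expansion \eqref{eq:CVP-Gamma-R3-schematic-Leibniz-II} with $|a|\le 8$. When the factor carrying $\Gamma$ has top order, we argue as in \eqref{eq:CVP-Gamma-R3-high-Gamma-II}: the other two factors are placed in $L^\infty$ with bound $\ve_1^2A^{-2}B^{-\mu/2}$, and the $\Gamma$ factor is placed in $L^2$ via \eqref{eq:CVP-bootstrap-energy-Gamma} and Lemma~\ref{lem:CVP-higher-time-derivative-estimates-Gamma}. This gives a contribution $\ve_1^3(1+t)^{-2+\frac{3\mu}{4}+\eta}$, which is integrable because $\mu<\frac12$.

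When the $\Gamma$ factor has low order, there are two options. The first is to place the two ordinary factors in $L^\infty$ and $L^2$, and the $\Gamma$ factor in $L^2_tL^\infty_x$ using \eqref{eq:CVP-bootstrap-Strichartz-Gamma}. The time factor is then $(1+t)^{-1+\frac{3\mu}{4}-\frac{\mu}{3}+\eta}$, and it must lie in $L^2_t$; this holds since $-2+\frac{5\mu}{6}+2\eta<-1$. The second option is simpler: use \eqref{eq:CVP-bootstrap-pointwise-Gamma} directly. This gives $A^{\frac{3\mu}{4}-\frac\mu2}\cdot A^{-1}B^{-\mu/4}\cdot\|\p\phi\|_{H^N}$, i.e.\ a decay $(1+t)^{-1+\frac{\mu}4+\eta}$. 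That is not integrable, so the Strichartz route is the one to use.

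The terms without $\Gamma$, and the contributions of $\mathcal R_4$ and $\mathcal R_5$, are treated in the same way and are bounded by $\ve_1^3$. Altogether the source term is $\ls\ve_1^3$.

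The recovery step goes from $\Gamma V$ back to $\Gamma\phi$ through $\Gamma\phi=\Gamma V+\Gamma(\phi\p_t\phi)$. We need $\|A^{\mu/3}\nabla^a\p\Gamma(\phi\p_t\phi)\|_{L^2_tL^\infty_x}\ls\ve_1^2$. In each product, one factor free of $\Gamma$ is put in $L^\infty_{t,x}$ using $|\nabla^{\le7}\p\phi|+|\phi|\ls\ve_1A^{-1}B^{-\mu/4}$, which is bounded, and the factor containing $\Gamma$ is put in $L^2_tL^\infty_x$ using \eqref{eq:CVP-bootstrap-Strichartz-Gamma}. The terms without $\Gamma$ are handled with $\ve_1^2A^{-2}$, which lies in $L^2_t$.

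The main obstacle is the top-order term $\phi\,\nabla^6\p_\alpha\p_t\Gamma\phi$, which the Strichartz bootstrap assumption does not control. For this term we reuse the interpolation bound \eqref{eq:CVP-Gamma-interpolation-top-II} together with $|\phi|\ls\ve_1A^{-1}B^{-\mu/4}$. This gives the pointwise bound $\ve_1^2(1+t)^{-1+\frac\mu3-\frac\mu3}=\ve_1^2(1+t)^{-1}$, which lies in $L^2_t$. Combining all of the above yields \eqref{eq:CVP-weighted-Strichartz-Gamma-conclusion}.
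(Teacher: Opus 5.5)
Your proposal is correct and follows essentially the same route as the paper's proof. You apply Lemma~\ref{lem:weak-weighted-L2t-Linfty-L2} to $\Gamma V$, split the source into top-order and low-order $\Gamma$ factors (using the Strichartz bootstrap for the low-order case), and handle the top-order recovery term $\phi\,\nabla^6\p_\alpha\p_t\Gamma\phi$ with the interpolation bound \eqref{eq:CVP-Gamma-interpolation-top-II}. The only difference is that you take $\beta_2=\frac{3\mu}{4}$ where the paper takes $\beta_2=\mu$; both choices are admissible, and yours merely gives slightly better time exponents.
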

	
	\begin{proof}
		Note that
		\begin{equation}\label{eq:CVP-Gamma-V-equation-Strichartz}
			\Box\Gamma V
			=\Gamma\mathcal R(\phi).
			\end{equation}
		Applying Lemma~\ref{lem:weak-weighted-L2t-Linfty-L2} to \(\Gamma V\), with
		\(\mu/3\) in place of \(\f{\beta_1}2\), we have
		\begin{equation}
			\label{eq:CVP-Gamma-V-linear-Strichartz}
			\begin{aligned}
				&\sum_{\Gamma}
				\big\|
				A^{\frac{\mu}{3}}\Gamma V
				\big\|_{L^2_{t}L^\infty_x}
				+\sum_{\Gamma}
				\sum_{|a|\le 6}
				\big\|
				A^{\frac{\mu}{3}}
				\p\nabla^a\Gamma V
				\big\|_{L^2_{t}L^\infty_x}
				\\
				&\ls
				\sum_{\Gamma}
				\sum_{|a|\le 8}
				\big\|
				\langle x\rangle^{\mu}
				\nabla^a\p\Gamma V(0)
				\big\|_{L^2_x}
				+
				\sum_{\Gamma}
				\sum_{|a|\le 8}
				\big\|
				A^{\mu}
				\nabla^a\Gamma\mathcal R(\phi)
				\big\|_{L^1_tL^2_x}.
			\end{aligned}
		\end{equation}
		Although the proof procedure for \eqref{eq:CVP-weighted-Strichartz-Gamma-conclusion} is almost identical to that in Section~\ref{sec:weighted-pointwise-estimates-II}, we still present the full details for the reader's convenience.
		
		It follows from direct computation that
		\begin{equation}\label{eq:CVP-Gamma-V-initial-Strichartz}
			\sum_{\Gamma}
			\sum_{|a|\le 8}
			\left\|
			\langle x\rangle^{\mu}
			\nabla^a\p\Gamma V(0)
			\right\|_{L^2_x}
			\ls
			\sum_{|a|\le 8}
			\left\|
			\langle x\rangle^{1+\mu}
			\nabla^a\p^{\le 1}\p V(0)
			\right\|_{L^2_x}
			\ls
			\ve.
		\end{equation}
		
		We now treat  $\ds\sum_{\Gamma}\sum_{|a|\le 8}
		\left\|
		A^{\mu}
		\nabla^a\Gamma\mathcal R(\phi)
		\right\|_{L^1_tL^2_x}$.
		Note that for \(|a|\le 8\),
		\begin{equation}
			\label{eq:CVP-Gamma-R3-schematic-Leibniz-III}
			\begin{aligned}
				|\nabla^a\Gamma\mathcal R_3(\phi)|
				\ls
				&\sum_{|a_1|+|a_2|+|a_3|\le |a|}
				\Big(
				|\p^{\le2}\nabla^{a_1}\Gamma\phi|\,
				|\p^{\le2}\nabla^{a_2}\phi|\,
				|\p^{\le2}\nabla^{a_3}\p\phi|
				\\
				&\qquad\qquad\qquad +
				|\p^{\le2}\nabla^{a_1}\phi|\,
				|\p^{\le2}\nabla^{a_2}\Gamma\phi|\,
				|\p^{\le2}\nabla^{a_3}\p\phi|
				\\
				&\qquad\qquad\qquad +
				|\p^{\le2}\nabla^{a_1}\phi|\,
				|\p^{\le2}\nabla^{a_2}\phi|\,
				|\p^{\le2}\nabla^{a_3}\p\Gamma\phi|
				\Big)
				\\
				&+
				\sum_{|b_1|+|b_2|+|b_3|\le |a|}
				|\p^{\le2}\nabla^{b_1}\phi|\,
				|\p^{\le2}\nabla^{b_2}\phi|\,
				|\p^{\le2}\nabla^{b_3}\p\phi|.
			\end{aligned}
		\end{equation}

		If the derivative order of  $\p^{\le2}\nabla^{a_1}\Gamma\phi$ for $\Gamma\phi$ is top, that is $|a_1|>4$, then $|a_2|\le 4$ and $|a_3| \le 4$.
		By  \eqref{eq:CVP-bootstrap-energy-Gamma}, \eqref{eq:CVP-bootstrap-pointwise-phi},
Lemma \ref{lem:CVP-higher-time-derivative-estimates}
		and Lemma \ref{lem:CVP-higher-time-derivative-estimates-Gamma}, one has
		\begin{equation}
			\label{eq:CVP-Gamma-R3-high-Gamma-Strichartz}
			\begin{aligned}
				&
				\left\|
				A^{\mu}
				\p^{\le2}\nabla^{a_1}\Gamma\phi\,
				\p^{\le2}\nabla^{a_2}\phi\,
				\p^{\le2}\nabla^{a_3}\p\phi
				\right\|_{L^2_x}
				\\
				&\ls
				\ve_1^2
				\left\|
				A^{\mu}A^{-2}B^{-\frac{\mu}{2}}
				\p^{\le2}\p\nabla^{a_1}\Gamma\phi
				\right\|_{L^2_x}
				\\
				&\ls
				\ve_1^2
				(1+t)^{-2+\mu}
				\sum_{\Gamma}
				\|\p^{\le2}\p\Gamma\phi(t)\|_{H^{8}_x}
				\\
				&\ls
				\ve_1^3
				(1+t)^{-2+\mu+\eta}.
			\end{aligned}
		\end{equation}
The same estimate holds when the derivative order of \(\p^{\le2}\nabla^{a_2}\Gamma\phi\) for $\Gamma\phi$ or
		\(\p^{\le2}\nabla^{a_3}\p\Gamma\phi\) for $\p\Gamma\phi$  is top. For \(0<\mu<1/2\) and \(\eta\le\mu/100\), it holds that
		\[
		\int_0^T
		(1+t)^{-2+\mu+\eta}\,dt
		\ls 1.
		\]
		Thus,
\begin{equation}\label{YHC-27}
\begin{aligned}
&
				\big\|
				A^{\mu}
				\p^{\le2}\nabla^{a_1}\Gamma\phi\,
				\p^{\le2}\nabla^{a_2}\phi\,
				\p^{\le2}\nabla^{a_3}\p\phi
				\big\|_{L_t^1L^2_x}
+\big\|A^{\mu}\p^{\le2}\nabla^{a_1}\phi\,
\p^{\le2}\nabla^{a_2}\Gamma\phi\,
\p^{\le2}\nabla^{a_3}\p\phi\big\|_{L_t^1L^2_x}\\
&\quad +\big\|A^{\mu}\p^{\le2}\nabla^{a_1}\phi\,
				\p^{\le2}\nabla^{a_2}\phi\,
			\p^{\le2}\nabla^{a_3}\p\Gamma\phi\|_{L_t^1L^2_x}
			\ls\ve_1^3.
\end{aligned}
		\end{equation}

When the derivative order for \(\Gamma\phi\) or \(\p\Gamma\phi\) of the related terms in the first summation
of \eqref{eq:CVP-Gamma-R3-schematic-Leibniz-III} is low, that is $|a_3|\le 4$ in \eqref{eq:CVP-Gamma-R3-low-Gamma-Strichartz},
we also get
		\begin{equation}
			\label{eq:CVP-Gamma-R3-low-Gamma-Strichartz}
			\begin{aligned}
				&
				\left\|
				A^{\mu}
				\p^{\le3}\nabla^{a_1}\phi\,
				\p^{\le3}\nabla^{a_2}\phi\,
				\p^{\le3}\nabla^{a_3}\Gamma\phi
				\right\|_{L^1_tL^2_x}
				\\
				&\ls
				\ve_1
				\int_0^T
				(1+t)^{-1+\frac{2\mu}{3}}
				\|\p^{\le2}\p\phi(t)\|_{H^{8}_x}
				\big\|
				A^{\frac{\mu}{3}}
				\p^{\le3}\nabla^{a_3}\Gamma\phi(t)
				\big\|_{L^\infty_x}
				\,dt
				\\
				&\ls
				\ve_1^2
				\big\|
				(1+t)^{-1+\frac{2\mu}{3}+\eta}
				\big\|_{L^2(0,T)}
				\sum_{\Gamma}
				\big(
				\big\|
				A^{\frac{\mu}{3}}\Gamma\phi
				\big\|_{L^2_tL^\infty_x}
				+\sum_{|c|\le 6}
				\big\|
				A^{\frac{\mu}{3}}
				\p\nabla^c\Gamma\phi
				\big\|_{L^2_tL^\infty_x}
				\big)
				\\
				&\ls
				\ve_1^3,
			\end{aligned}
		\end{equation}
		where $2(-1+\frac{2\mu}{3}+\eta)<-1$ is used.
		
		Therefore, it holds that
		\begin{equation}
			\label{eq:CVP-R3-source-final-Strichartz}
			\sum_{\Gamma}
			\sum_{|a|\le 8}
			\left\|
			A^{\mu}
			\nabla^a\Gamma\mathcal R_3(\phi)
			\right\|_{L^1_tL^2_x}
			\ls
			\ve_1^3
		\end{equation}
		and analogously,
		\begin{equation}
			\label{eq:CVP-R4-R5-source-final-Strichartz}
			\sum_{\Gamma}
			\sum_{|a|\le 8}
			\left\|
			A^{\mu}
			\nabla^a\Gamma\mathcal R_4(\phi)
			\right\|_{L^1_tL^2_x}
			+\sum_{\Gamma}
			\sum_{|a|\le 8}
			\left\|
			A^{\mu}
			\nabla^a\Gamma\mathcal R_5(\phi)
			\right\|_{L^1_tL^2_x}
			\ls
			\ve_1^4+\ve_1^5.
		\end{equation}
		Combining \eqref{eq:CVP-R3-source-final-Strichartz} and
		\eqref{eq:CVP-R4-R5-source-final-Strichartz} yields
		\begin{equation}
			\label{eq:CVP-Gamma-source-main-claim-Strichartz}
			\sum_{\Gamma}
			\sum_{|a|\le 8}
			\left\|
			A^{\mu}
			\nabla^a\Gamma\mathcal R(\phi)
			\right\|_{L^1_tL^2_x}\ls
			\ve_1^3.
		\end{equation}
		Substituting \eqref{eq:CVP-Gamma-V-initial-Strichartz} and
		\eqref{eq:CVP-Gamma-source-main-claim-Strichartz} into
		\eqref{eq:CVP-Gamma-V-linear-Strichartz}, one has
		\begin{equation}
			\label{eq:CVP-Gamma-V-pointwise-final-Strichartz}
			\begin{aligned}
				\sum_{\Gamma}
				\big\|
				A^{\frac{\mu}{3}}\Gamma V
				\big\|_{L^2_{t}L^\infty_x}
				+
				\sum_{\Gamma}
				\sum_{|a|\le 6}
				\big\|
				A^{\frac{\mu}{3}}
				\p\nabla^a\Gamma V
				\big\|_{L^2_{t}L^\infty_x}
				\ls
				\ve+\ve_1^3.
			\end{aligned}
		\end{equation}
		
		It remains to recover \(\Gamma\phi\) from \(\Gamma V\). Note that
		$\Gamma\phi=\Gamma V+\Gamma(\phi\p_t\phi)$
		and
		\begin{equation}\label{eq:CVP-Gamma-zero-recovery-error-Strichartz}
			\begin{aligned}
				\sum_{\Gamma}
				A^{\frac{\mu}{3}}|\Gamma(\phi\p_t\phi)|
				&\ls\sum_{\Gamma}
				A^{\frac{\mu}{3}}
				\left(|\Gamma\phi|\,|\p_t\phi|
				+|\phi|\,|\p\Gamma\phi|
				+|\phi|\,|\p\phi|
				\right)\\
				&\ls\ve_1
				\sum_{\Gamma}
				A^{\frac{\mu}{3}}|\Gamma\phi|
				+\ve_1\sum_{\Gamma}
				A^{\frac{\mu}{3}}|\p\Gamma\phi|
				+\ve_1^2(1+t)^{-2}.
			\end{aligned}
		\end{equation}
		Therefore, by \eqref{eq:CVP-bootstrap-Strichartz-Gamma},
		\begin{equation}
			\label{GM-1}
			\sum_{\Gamma}
			\big\|
			A^{\frac{\mu}{3}}\Gamma(\phi\p_t\phi)
			\big\|_{L^2_{t}L^\infty_x}
			\ls
			\ve_1^2.
		\end{equation}
		For \(|a|\le 6\), it holds that
		\begin{equation}
			\label{eq:CVP-Gamma-recovery-Leibniz-Strichartz}
			\begin{aligned}
				|\nabla^a\p\Gamma(\phi\p_t\phi)|
				&\ls
				\sum_{|a_1|+|a_2|\le |a|}
				\big(|\nabla^{a_1}\p\Gamma\phi|\,
				|\nabla^{a_2}\p\phi|
				+|\nabla^{a_1}\Gamma\phi|\,
				|\nabla^{a_2}\p^2\phi|\\
				&\qquad\qquad\qquad+
				|\nabla^{a_1}\p\phi|\,
				|\nabla^{a_2}\p\Gamma\phi|
				+|\nabla^{a_1}\phi|\,
				|\nabla^{a_2}\p^2\Gamma\phi|
				\big)\\
				&+\sum_{|b|\le |a|+1}
				|\nabla^b\p\phi|
				\sum_{|c|\le |a|}
				|\nabla^c\p^2\phi|.
			\end{aligned}
		\end{equation}
		It is pointed out that all terms in \eqref{eq:CVP-Gamma-recovery-Leibniz-Strichartz} can be
		controlled directly by the bootstrap assumptions and the higher order time
		derivative estimates except the top-order derivative terms of the form
		\begin{equation}\label{YHC-29}
			\begin{aligned}
		A^{\frac{\mu}{3}}
		\phi\,\nabla^a\p_\alpha\p_t\Gamma\phi
		\quad\text{with $|a|=6$.}
		\end{aligned}
		\end{equation}
		We handle the terms in \eqref{YHC-29} by the same interpolation argument as in
		Section~\ref{sec:weighted-pointwise-estimates-II}. Recall that by \eqref{eq:CVP-Gamma-interpolation-top-II},
		\begin{equation}
			\label{eq:CVP-Gamma-interpolation-top-Strichartz}
			\|\nabla^{6}\p\p_t\Gamma\phi(t)\|_{L^\infty_x}
			\ls
			\ve_1(1+t)^{-\frac{\mu}{3}}.
		\end{equation}
		Since
		$|\phi(t,x)|
		\ls
		\ve_1 A^{-1}(t,x)B^{-\frac{\mu}{4}}(t,x)$,
		we get
		\begin{equation}
			\label{eq:CVP-Gamma-recovery-top-error-Strichartz}
			\begin{aligned}
				\|A^{\frac{\mu}{3}}
				\phi\,\nabla^{6}\p_\alpha\p_t\Gamma\phi\|_{L^2_tL^\infty_x}
				\ls
				\ve_1^2
				\|
				(1+t)^{-1}\|_{L^2_t}
				\ls
				\ve_1^2 .
			\end{aligned}
		\end{equation}
		Consequently,
		\begin{equation}
			\label{eq:CVP-Gamma-recovery-error-final-Strichartz}
			\begin{aligned}
				\sum_{\Gamma}
				\sum_{|a|\le 6}
				\big\|
				A^{\frac{\mu}{3}}
				\nabla^a\p\Gamma(\phi\p_t\phi)
				\big\|_{L^2_tL^\infty_x}
				\ls
				\ve_1^2.
			\end{aligned}
		\end{equation}
		Combining
		\eqref{eq:CVP-Gamma-V-pointwise-final-Strichartz}, \eqref{GM-1} and
		\eqref{eq:CVP-Gamma-recovery-error-final-Strichartz} yields
		\begin{equation}
			\label{eq:CVP-Gamma-final-before-absorb-Strichartz}
			\begin{aligned}
				\sum_{\Gamma}
				\big\|
				A^{\frac{\mu}{3}}\Gamma\phi
				\big\|_{L^2_tL^\infty_x}
				+\sum_{\Gamma}
				\sum_{|a|\le 6}
				\big\|
				A^{\frac{\mu}{3}}
				\p\nabla^a\Gamma\phi
				\big\|_{L^2_tL^\infty_x}
				\ls
				\ve+\ve_1^2+\ve_1^3,
			\end{aligned}
		\end{equation}
		which yields
		\eqref{eq:CVP-weighted-Strichartz-Gamma-conclusion}.
	\end{proof}

	\section{Proofs of Theorems \ref{thm:main-global}-\ref{thm:main-global-1}}\label{sec:Proof-Theorem}
	
	At first, based on Sections \ref{sec:CVP-bootstrap-assumptions}-\ref{sec:weighted-Strichartz-estimates}, we start to prove Theorem \ref{thm:main-global}.

	\subsection{Proof of Theorem \ref{thm:main-global}}\label{YHC-31}

	We shall improve the constant \(\varepsilon_1\) in
	\eqref{eq:CVP-bootstrap-energy-ordinary}-\eqref{eq:CVP-bootstrap-Strichartz-Gamma}
	to \(\varepsilon_1/2\).
	From Lemmas~\ref{lem:Chaplygin-energy-estimate}, \ref{lem:CVP-one-Gamma-energy-closure}, \ref{lem:CVP-weighted-pointwise-estimate-new}, \ref{lem:CVP-weighted-pointwise-Gamma-closure} and \ref{lem:CVP-weighted-Strichartz-Gamma-closure}, there exists a constant \(C_1\ge 1\) depending on $\mu$ such that
	\begin{equation}
		\label{eq:improve-energy-ordinary}
		\sup_{0\le t\le T}(1+t)^{-\eta}\|\partial\phi(t)\|_{H^N(\mathbb R^3)}
		\le C_1(\varepsilon+\varepsilon_1^2),
	\end{equation}
	\begin{equation}
		\label{eq:improve-energy-Gamma}
		\sup_{0\le t\le T}(1+t)^{-\eta}\sum_{\Gamma}\|\partial\Gamma\phi(t)\|_{H^{11}(\mathbb R^3)}
		\le C_1(\varepsilon+\varepsilon_1^2+\varepsilon_1^3),
	\end{equation}
	\begin{equation}
		\label{eq:improve-pointwise-phi}
		\bigl\|AB^{\frac{\mu}{4}}\phi\bigr\|_{L^\infty([0,T]\times\mathbb R^3)}
		+\sum_{|a|\le 7}\bigl\|AB^{\frac{\mu}{4}}\nabla^a\partial\phi\bigr\|_{L^\infty([0,T]\times\mathbb R^3)}
		\le C_1(\varepsilon+\varepsilon_1^2+\varepsilon_1^3),
	\end{equation}
	\begin{equation}
		\label{eq:improve-pointwise-Gamma}
		\begin{aligned}
			\sum_{\Gamma}\bigl\|A^{\frac{\mu}{2}}\Gamma\phi\bigr\|_{L^\infty([0,T]\times\mathbb R^3)}
			+\sum_{\Gamma}\sum_{|a|\le 6}\bigl\|A^{\frac{\mu}{2}}\partial\nabla^a\Gamma\phi\bigr\|_{L^\infty([0,T]\times\mathbb R^3)}
			\le C_1(\varepsilon+\varepsilon_1^2+\varepsilon_1^3),
		\end{aligned}
	\end{equation}
	\begin{equation}
		\label{eq:improve-Strichartz-Gamma}
		\begin{aligned}
			\sum_{\Gamma}\bigl\|A^{\frac{\mu}{3}}\Gamma\phi\bigr\|_{L^2([0,T];L^\infty(\mathbb R^3))}
			+\sum_{\Gamma}\sum_{|a|\le 6}\bigl\|A^{\frac{\mu}{3}}\partial\nabla^a\Gamma\phi\bigr\|_{L^2([0,T];L^\infty(\mathbb R^3))}
			\le C_1(\varepsilon+\varepsilon_1^2+\varepsilon_1^3).
		\end{aligned}
	\end{equation}
	Choosing \(\varepsilon_1 = 4C_1\varepsilon_0\) with
	\(\varepsilon_0 = (32C_1^2)^{-1}\) and taking
	\(\varepsilon\le\varepsilon_0\), one has
	\[
	\sup_{0\le t\le T}(1+t)^{-\eta}\|\partial\phi(t)\|_{H^N(\mathbb R^3)}
	\le \frac{\varepsilon_1}{2},
	\]
	and analogously the left-hand sides of
	\eqref{eq:improve-energy-Gamma}-\eqref{eq:improve-Strichartz-Gamma}
	can be bounded by \(\varepsilon_1/2\).
	This, together with the local existence of classical solution to
	\eqref{eq:Chaplygin-Cauchy} and the continuity argument, ensures that
	\eqref{eq:Chaplygin-Cauchy} admits a unique solution
	\(\phi\in C([0,\infty);H^{N+1}(\mathbb R^3))\cap C^1([0,\infty);H^{N}(\mathbb R^3))\).

	From the bootstrap assumptions
	\eqref{eq:CVP-bootstrap-energy-ordinary}-\eqref{eq:CVP-bootstrap-Strichartz-Gamma}, we obtain the following global bounds:
	
	\begin{equation}
		\sup_{t\ge 0}(1+t)^{-\eta}\|\partial\phi(t)\|_{H^N(\mathbb R^3)}
		\le C\varepsilon,
		\label{eq:global-energy-ordinary}
	\end{equation}
	
	\begin{equation}
		\sup_{t\ge 0}(1+t)^{-\eta}\sum_{\Gamma}\|\partial\Gamma\phi(t)\|_{H^{11}(\mathbb R^3)}
		\le C\varepsilon,
		\label{eq:global-energy-Gamma}
	\end{equation}
	
	\begin{equation}
		\bigl\|AB^{\frac{\mu}{4}}\phi\bigr\|_{L^\infty([0,\infty)\times\mathbb R^3)}
		+\sum_{|a|\le 7}\bigl\|AB^{\frac{\mu}{4}}\nabla^a\partial\phi\bigr\|_{L^\infty([0,\infty)\times\mathbb R^3)}
		\le C\varepsilon,
		\label{eq:global-pointwise-phi}
	\end{equation}
	
	\begin{equation}
		\sum_{\Gamma}\bigl\|A^{\frac{\mu}{2}}\Gamma\phi\bigr\|_{L^\infty([0,\infty)\times\mathbb R^3)}
		+\sum_{\Gamma}\sum_{|a|\le 6}\bigl\|A^{\frac{\mu}{2}}\partial\nabla^a\Gamma\phi\bigr\|_{L^\infty([0,\infty)\times\mathbb R^3)}
		\le C\varepsilon,
		\label{eq:global-pointwise-Gamma}
	\end{equation}
	
	\begin{equation}
		\sum_{\Gamma}\bigl\|A^{\frac{\mu}{3}}\Gamma\phi\bigr\|_{L^2([0,\infty);L^\infty(\mathbb R^3))}
		+\sum_{\Gamma}\sum_{|a|\le 6}\bigl\|A^{\frac{\mu}{3}}\partial\nabla^a\Gamma\phi\bigr\|_{L^2([0,\infty);L^\infty(\mathbb R^3))}
		\le C\varepsilon.
		\label{eq:global-Strichartz-Gamma}
	\end{equation}
	Then the proof of
		Theorem \ref{thm:main-global} has been finished.

\subsection{Proof of Theorem \ref{thm:main-global-1}}\label{YHC-33}

Based on Theorem \ref{thm:main-global}, we next prove Theorem \ref{thm:main-global-1}.

\noindent{\bf Proof of Theorem \ref{thm:main-global-1}.} By \eqref{YHC-2}, one has

\begin{equation}\label{YHC-34}
\rho-\bar\rho
=
-\bar\rho
\big(
\partial_t\phi+\frac12|\nabla\phi|^2
\big)
+
O\big(
\left|2\partial_t\phi+|\nabla\phi|^2\right|^2
\big),
\end{equation}	
and
\begin{equation}\label{YHC-35}
v=\nabla\phi.	
\end{equation}	
It follows from \eqref{YHC-34}-\eqref{YHC-35}, \eqref{YHCCC-1} and \eqref{YHCCC-3} that \eqref{YHCCC-01} and \eqref{YHCCC-03} are easily obtained.
In addition, it follows from \eqref{YHCCC-03} that $|\rho-\bar\rho|
\le C\varepsilon$ and further $\rho>\f{\bar\rho}{2}$ for small $\ve$.
Therefore, Theorem \ref{thm:main-global-1} is proved.

	\appendix
	
	\section{Introduction to $A_2$ weights}\label{section a}
	For the reader's convenience, in this section we give a brief introduction to some $A_2$ weight inequalities.
	For further properties of $A_p$ weights, we refer the reader to \cite[Chapter 7]{Grafakos} and \cite[Chapter V]{Stein}.
	\begin{definition}
		A non-negative weight function $w$ is said to be of class $A_2$ if
		\begin{equation}\label{def:A_2 weight}
			[w]_{A_2}=\sup_{\text{all cubes $Q$ in } \mathbb{R}^3}\big(\frac{1}{|Q|} \int_Q w(x) dx\big)\big(\frac{1}{|Q|} \int_Q w(x)^{-1} dx\big)<\infty. 	
		\end{equation}
		
	\end{definition}
	\begin{lemma}\label{lem:w{x} in A2}
		$|x|^\alpha,\ \w{x}^{\alpha} \in A_2(\R^3)$ if and only if $\alpha\in(-3,3)$.
	\end{lemma}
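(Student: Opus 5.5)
The plan is to test the $A_2$ condition on cubes separately for $|x|^\alpha$ and $\w{x}^\alpha$, splitting cubes according to whether they sit near the origin relative to their size. First comes necessity. For $|x|^\alpha$, local integrability of both $w$ and $w^{-1}$ near the origin is needed, since otherwise the averages over a cube containing $0$ are infinite. This requires $\alpha>-3$ and $-\alpha>-3$, that is $|\alpha|<3$. For $\w{x}^\alpha$ local integrability is automatic, so I would instead use large cubes $Q_R=[-R,R]^3$. If $\alpha\ge3$, then $\frac{1}{|Q_R|}\int_{Q_R}\w{x}^{\alpha}dx\sim R^{\alpha}$. Meanwhile $\frac{1}{|Q_R|}\int_{Q_R}\w{x}^{-\alpha}dx\gtrsim R^{-3}$, or $\gtrsim R^{-3}\log R$ when $\alpha=3$. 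The product therefore grows like $R^{\alpha-3}$ (respectively $\log R$), which is unbounded. The case $\alpha\le-3$ is symmetric with the roles of $w$ and $w^{-1}$ exchanged.

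Next comes sufficiency for $|x|^\alpha$ with $|\alpha|<3$. Fix a cube $Q$ of side length $\ell$ with center $x_0$, and split into two cases.

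\textbf{Case 1:} $|x_0|\le 2\sqrt3\,\ell$. Then $Q\subset B(0,C\ell)$. Since $\pm\alpha>-3$, we get $\int_{B(0,C\ell)}|x|^{\pm\alpha}dx\sim \ell^{3\pm\alpha}$. The product of the two averages is therefore $\lesssim \ell^{-6}\ell^{3+\alpha}\ell^{3-\alpha}=O(1)$.

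\textbf{Case 2:} $|x_0|>2\sqrt3\,\ell$. Then $|x|\sim|x_0|$ on $Q$, so both averages are comparable to $|x_0|^{\pm\alpha}$ and their product is $O(1)$.

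Taking the supremum over $Q$ gives $[|x|^\alpha]_{A_2}<\infty$.

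For $\w{x}^\alpha$ with $|\alpha|<3$, I would use that $\w{x}\sim\max\{1,|x|\}$. Cubes with $\ell\le1$ and $|x_0|\le 10$ are harmless, because $\w{x}\sim1$ on them. Cubes with $|x_0|\ge C\max\{\ell,1\}$ see $\w{x}\sim\w{x_0}$, so again both averages are comparable and the product is bounded. In the remaining case, $\ell\gtrsim1$ and $Q\subset B(0,C\ell)$, so
\[
\int_Q\w{x}^{\pm\alpha}dx\lesssim\int_{B(0,C\ell)}(1+|x|)^{\pm\alpha}dx\lesssim \ell^{3\pm\alpha},
\]
using $\pm\alpha>-3$ and $\ell\ge1$. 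The product of the averages is then bounded as in Case 1.

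I expect no real obstacle. The only delicate points are the borderline $\alpha=\pm3$ in the necessity argument for $\w{x}^\alpha$, where the logarithmic divergence must be used, and choosing the case-splitting constants consistently.
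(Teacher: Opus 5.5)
Your proof is correct; the paper gives no argument of its own here and only cites Lemma A.2 of Gao--Li--Yin, so your direct verification supplies the standard self-contained proof. It uses non-integrability of $|x|^{\pm\alpha}$ at the origin for the necessity of $|x|^\alpha$, and the cubes $[-R,R]^3$, with the $\log R$ growth at $\alpha=\pm3$, for the necessity of $\langle x\rangle^\alpha$. For sufficiency it splits cubes into those near the origin and those far from it.

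The one detail to pin down is the constant-consistency point you flagged yourself. Take the far-case constant $C=2\sqrt3<10$; then the leftover case for $\langle x\rangle^\alpha$ genuinely forces $\ell>1$ and $Q\subset B(0,C'\ell)$. If you chose $C>10$, an extra range $\ell\le1$, $10<|x_0|<C$ would remain. That range is harmless anyway, since $\langle x\rangle\sim1$ there.
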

	\begin{proof}
		See \cite[Lemma A.2.]{GaoLiYin2026}
	\end{proof}

	\begin{definition}\label{def:7.4.1}
		Let $0 < \delta$ and $A < \infty$. A function $K(x,y)$ defined for $x,y \in \mathbb{R}^n$ with $x \neq y$ is called a standard kernel (with constants $\delta$ and $A$) if
		\begin{equation}\label{eq:7.4.1}
			|K(x,y)| \leq \frac{A}{|x-y|^n}, \quad x \neq y,
		\end{equation}
		and whenever $|x-x'| \leq \frac12\max\left(|x-y|,|x'-y|\right)$, it holds
		\begin{equation}\label{eq:7.4.2}
			|K(x,y) - K(x',y)| \leq \frac{A|x-x'|^\delta}{\left(|x-y|+|x'-y|\right)^{n+\delta}};
		\end{equation}
		when $|y-y'| \leq \frac12\max\left(|x-y|,|x-y'|\right)$, it holds
		\begin{equation}\label{eq:7.4.3}
			|K(x,y) - K(x,y')| \leq \frac{A|y-y'|^\delta}{\left(|x-y|+|x-y'|\right)^{n+\delta}}.
		\end{equation}
		The class of all kernels that satisfy \eqref{eq:7.4.1}-\eqref{eq:7.4.3} is denoted by $SK(\delta,A)$.
	\end{definition}

	\begin{definition}\label{def:7.4.2}
		Let $0 < \delta$, $A < \infty$ and $K\in SK(\delta,A)$. A Calder\'{o}n-Zygmund operator $T$ associated with $K$ is defined as
		\begin{equation}\label{eq:7.4.5}
			T(f)(x) = \int_{\mathbb{R}^n} K(x,y)f(y)\,dy,
		\end{equation}
		which fulfills
		\begin{equation}\label{eq:7.4.4}
			\|T(f)\|_{L^2} \leq B\|f\|_{L^2}.
		\end{equation}
		Hence, all Calder\'{o}n-Zygmund operators $T$ form the space $CZO(\delta,A,B)$.
	\end{definition}
	\begin{lemma}\label{lem:A2}
		Let $A,B,\beta > 0$ and $T\in CZO(\beta,A,B)$. Then there is a constant $C = C(\beta,[w]_{A_2})$ such that
		for all $w \in A_2$ and $f \in L^2(w)$,
		\[
		\|T(f)\|_{L^2(w)} \leq C\,(A+B)\|f\|_{L^2(w)}.
		\]
	\end{lemma}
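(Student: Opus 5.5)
This is the classical weighted Calderón–Zygmund theorem with an $A_2$ weight, and I would not reprove it from scratch. The plan is to reduce it to two standard ingredients: a good-$\lambda$ (or sharp maximal function) inequality comparing $T$ with the Hardy–Littlewood maximal operator, and Muckenhoupt's theorem that $M$ is bounded on $L^2(w)$ for $w\in A_2$. Both appear in \cite[Chapter 7]{Grafakos} and \cite[Chapter V]{Stein}.

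\textbf{Step 1 (unweighted input).} By \eqref{eq:7.4.4} and the kernel estimates \eqref{eq:7.4.1}--\eqref{eq:7.4.3}, the Calderón–Zygmund decomposition shows that $T$ is of weak type $(1,1)$ with constant $\lesssim A+B$.

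\textbf{Step 2 (pointwise sharp-function bound).} Fix $0<\delta_0<1$. Use the regularity condition \eqref{eq:7.4.3} with exponent $\beta$ to prove
\[
M^\#_{\delta_0}(Tf)(x)\lesssim (A+B)\,Mf(x).
\]
To do this, fix a cube $Q\ni x$ and split $f=f\mathbf 1_{Q^*}+f\mathbf 1_{(Q^*)^c}$, where $Q^*$ is a fixed dilate of $Q$. The local part is controlled through the weak $(1,1)$ bound and Kolmogorov's inequality. The far part is controlled by comparing $T(f\mathbf 1_{(Q^*)^c})(y)$ with its value at the center of $Q$, using the Hölder smoothness of the kernel in $x$. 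Summing over dyadic annuli gives $\sum_j 2^{-j\beta}$ times averages of $|f|$, which are bounded by $Mf(x)$.

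\textbf{Step 3 (weighted conclusion).} An $A_2$ weight satisfies $A_\infty$, with constants depending only on $[w]_{A_2}$. The Fefferman–Stein inequality therefore gives
\[
\|M_{\delta_0}(Tf)\|_{L^2(w)}\lesssim \|M^\#_{\delta_0}(Tf)\|_{L^2(w)},
\]
provided the left side is a priori finite. Combining this with Step 2 and Muckenhoupt's bound $\|Mf\|_{L^2(w)}\le C([w]_{A_2})\|f\|_{L^2(w)}$ gives the claim, since $|Tf|\le M_{\delta_0}(Tf)$ almost everywhere.

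\textbf{Main obstacle.} The expected difficulty is the a priori finiteness required in Fefferman–Stein. I would handle it by first taking $f$ bounded with compact support and $w$ truncated, $w_N=\min(w,N)$, which keeps the $A_2$ constant uniformly controlled. Then I would pass to the limit by monotone convergence and extend to all of $L^2(w)$ by density. Alternatively, I would simply cite \cite[Theorem 7.4.6]{Grafakos}, which states exactly this result.
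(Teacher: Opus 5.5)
Your proposal is correct, and your fallback of citing \cite[Theorem 7.4.6]{Grafakos} is exactly what the paper does: it gives no argument of its own. Your main sketch takes a genuinely different, equally standard route.

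\textbf{Comparison with the cited proof.} As far as I recall, Grafakos proves this result through the Coifman--Fefferman good-$\lambda$ inequality, which compares the maximal truncated operator $T^{(*)}$ with $M$ under an $A_\infty$ weight. That route also gives weighted bounds for $T^{(*)}$, but it requires Cotlar-type control of the truncations.

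Your route instead uses the pointwise estimate $M^\#_{\delta_0}(Tf)\lesssim (A+B)\,Mf$ (the \'Alvarez--P\'erez argument). It works with $T$ directly and reduces everything to Fefferman--Stein plus Muckenhoupt. The price is the a priori finiteness issue you identify.

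\textbf{The truncation step.} Your truncation does handle that issue, and the fact you leave implicit is easy to check. From $\langle \min(w,N)\rangle_Q\le\min(\langle w\rangle_Q,N)$ and $\langle \min(w,N)^{-1}\rangle_Q\le \langle w^{-1}\rangle_Q+N^{-1}$ you get
\[
[\min(w,N)]_{A_2}\le [w]_{A_2}+1 .
\]
Moreover, $\min(w,N)\le N$ and $\|Tf\|_{L^2}\le B\|f\|_{L^2}$ together make $\|M_{\delta_0}(Tf)\|_{L^2(\min(w,N))}$ finite for bounded, compactly supported $f$.

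\textbf{One small slip.} The far-part estimate in Step 2 compares $T(f\mathbf 1_{(Q^*)^c})(y)$ with its value at the center of $Q$. That uses smoothness of $K$ in its first variable, i.e.\ \eqref{eq:7.4.2}, not \eqref{eq:7.4.3}. Condition \eqref{eq:7.4.3} is what Step 1 needs for the Calder\'on--Zygmund decomposition.
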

	\begin{proof}
		See \cite[Theorem 7.4.6]{Grafakos}.
	\end{proof}
	\begin{lemma}\label{lem:riesz L2}
		Let $w \in A_2$ and $\Rj{j}$ be the $j$-th Riesz transformation for $j = 1, 2, 3$. There exists a constant $C = C (\Atwo{w})> 0$ such that
		for any $f \in L^2(w)$,
		\begin{equation}\label{YHCCC-42}
			\begin{aligned}
				\Ltw{\Rj{j}f} \leq C (\Atwo{w}) \Ltw{f}.
			\end{aligned}
		\end{equation}
	\end{lemma}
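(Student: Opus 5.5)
The plan is to reduce Lemma \ref{lem:riesz L2} directly to Lemma \ref{lem:A2}. It suffices to show that each Riesz transformation $R_j$ is a Calder\'on-Zygmund operator in the sense of Definition \ref{def:7.4.2}, i.e. $R_j\in CZO(1,A,B)$ for suitable absolute constants $A,B$. Lemma \ref{lem:A2} then gives $\|R_jf\|_{L^2(w)}\le C(\beta,[w]_{A_2})(A+B)\|f\|_{L^2(w)}$ with $\beta=1$, which is \eqref{YHCCC-42} once the absolute constants are absorbed into $C([w]_{A_2})$.

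\textbf{Step 1: the kernel.} Recall the kernel representation (principal value for $f$ in the Schwartz class)
\[
R_jf(x)=c_3\,\mathrm{p.v.}\int_{\R^3}\frac{x_j-y_j}{|x-y|^4}f(y)\,dy,\qquad K_j(x,y)=c_3\frac{x_j-y_j}{|x-y|^{4}},
\]
with $c_3=\Gamma(2)/\pi^2$. The size bound \eqref{eq:7.4.1} is immediate: $|K_j(x,y)|\le c_3|x-y|^{-3}$. For \eqref{eq:7.4.2}, I will use that $\nabla_x K_j(x,y)=O(|x-y|^{-4})$. Assume $|x-x'|\le\frac12\max(|x-y|,|x'-y|)$. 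Then every point on the segment from $x$ to $x'$ lies at distance comparable to $|x-y|+|x'-y|$ from $y$. The mean value theorem therefore gives
\[
|K_j(x,y)-K_j(x',y)|\lesssim |x-x'|\,(|x-y|+|x'-y|)^{-4},
\]
which is \eqref{eq:7.4.2} with $\delta=1$. Since $K_j(x,y)=-K_j(y,x)$, the same argument yields \eqref{eq:7.4.3}.

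\textbf{Step 2: $L^2$ boundedness.} The bound \eqref{eq:7.4.4} follows from Plancherel, because the symbol of $R_j$ is $-i\xi_j/|\xi|$, which has modulus at most $1$. Hence $B=1$.

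\textbf{Main obstacle.} The only delicate point is the precise meaning of ``operator associated with $K$'' in Definition \ref{def:7.4.2}. The integral \eqref{eq:7.4.5} does not converge absolutely for $R_j$; the correct statement is that $R_jf(x)=\int K_j(x,y)f(y)\,dy$ for $x\notin\supp f$, with $f\in C_0^\infty$. This is exactly the convention used in \cite[Chapter 7]{Grafakos}, so Lemma \ref{lem:A2} applies to $R_j$ on the dense class $C_0^\infty$.

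\textbf{Extension by density.} It remains to pass from $C_0^\infty$ to all of $L^2(w)$. Since $w\in A_2$ is locally integrable, $C_0^\infty$ is dense in $L^2(w)$. The bound just obtained lets $R_j$ extend uniquely and continuously to $L^2(w)$. This extension agrees with the Fourier-multiplier definition of $R_j$ on $L^2(w)\cap L^2$, which completes the proof.
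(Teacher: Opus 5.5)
Your proposal is correct and takes the same route as the paper. The paper's proof is the one-line observation that $R_j$ is a Calder\'on--Zygmund operator, so Lemma~\ref{lem:A2} applies; you supply the routine checks behind that observation: the standard-kernel bounds with $\delta=1$ via the mean value theorem and antisymmetry, $L^2$-boundedness by Plancherel, the convention that the kernel representation is required only off the support of $f\in C_0^\infty$, and the density extension to $L^2(w)$. These details are accurate, and the point about the off-support convention usefully tightens the loose wording of Definition~\ref{def:7.4.2} in the paper.
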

	\begin{proof}
		Since the Riesz transformation is a Calder\'on-Zygmund operator, \eqref{YHCCC-42} is shown by Lemma \ref{lem:A2}.
	\end{proof}
	
	\begin{lemma}\label{lem:weighted bernstein}
		Let $w \in A_2$, and $k\in \Z$. Then there exists a constant $C = C (\Atwo{w})> 0$ such that for any $f \in L^2(w)$,
		\[
		\Ltw{\pk f} \leq C (\Atwo{w}) 2^{-k}\Ltw{\pk \nabla f}
		\]
		and
		\begin{equation}\label{YHCCC-43}
			\begin{split}
				\Ltw{\pk \nabla f} \leq C (\Atwo{w}) 2^{k}\Ltw{\pk f}.
			\end{split}
		\end{equation}
	\end{lemma}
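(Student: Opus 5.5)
The plan is to reduce both inequalities to the boundedness of a single zero-order Fourier multiplier on $L^2(w)$, and then invoke Lemma \ref{lem:A2}. The second inequality \eqref{YHCCC-43} is the easier one. Since $\dot P_{[[k]]}\dot P_k=\dot P_k$, we can write
\[
\dot P_k\nabla f=\nabla \dot P_{[[k]]}\dot P_k f=2^k\, M_k(\dot P_k f),
\]
where $M_k$ is the multiplier with symbol $m_k(\xi)=2^{-k}\,i\xi\,\dot\psi_{[[k]]}(\xi)$. By scaling, $m_k(\xi)=m(2^{-k}\xi)$ with $m(\xi)=i\xi\,\dot\psi_{[[0]]}(\xi)$, a Schwartz function. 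The operator $M_k$ is convolution with $K_k(x)=2^{3k}\check m(2^k x)$, and dilation preserves the standard kernel constants in \eqref{eq:7.4.1}--\eqref{eq:7.4.3}. Concretely, $|K_k(x-y)|\le C|x-y|^{-3}$ and $|\nabla K_k(z)|\le C|z|^{-4}$ uniformly in $k$, because $|z|^{3}|\check m(z)|$ and $|z|^4|\nabla\check m(z)|$ are bounded. The $L^2$ bound \eqref{eq:7.4.4} holds with $B=\|m\|_{L^\infty}$. Hence $M_k\in CZO(1,A,B)$ with constants independent of $k$, and Lemma \ref{lem:A2} gives $\Ltw{M_k g}\le C([w]_{A_2})\Ltw{g}$. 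Taking $g=\dot P_k f$ proves \eqref{YHCCC-43}.

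The first inequality is handled the same way, using a symbol that divides by the frequency. Write
\[
\dot P_k f=\sum_{j=1}^3 \widetilde M_{k,j}(\dot P_k\partial_j f),\qquad \widetilde m_{k,j}(\xi)=\frac{-i\xi_j}{|\xi|^2}\dot\psi_{[[k]]}(\xi).
\]
This identity holds because $\dot\psi_{[[k]]}=1$ on the support of $\dot\psi_k$. Moreover $\widetilde m_{k,j}(\xi)=2^{-k}\widetilde m_j(2^{-k}\xi)$, where $\widetilde m_j$ is Schwartz since $\dot\psi_{[[0]]}$ vanishes near $0$. Repeating the dilation-invariant kernel bounds for $2^k\widetilde M_{k,j}$ and applying Lemma \ref{lem:A2} yields $\Ltw{\dot P_k f}\le C2^{-k}\sum_j\Ltw{\dot P_k\partial_j f}$.

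We also need to justify that $\dot P_k$ itself maps $L^2(w)$ to itself, so that both sides of each inequality make sense for $f\in L^2(w)$. This follows from the same Calder\'on--Zygmund argument applied to the symbol $\dot\psi_k$. We will also need a density argument, since these identities are first verified on Schwartz functions.

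The main obstacle is checking that the standard-kernel constants $A,B$ are independent of $k$. This reduces to the scaling computation above: the kernel $2^{3k}\check m(2^k\cdot)$ satisfies $|K|\lesssim|x|^{-3}$ and $|\nabla K|\lesssim |x|^{-4}$ with constants equal to $\sup_z|z|^3|\check m(z)|$ and $\sup_z|z|^4|\nabla\check m(z)|$, and both of these are finite for Schwartz $\check m$. Once that is in place, the constant $C$ depends only on $[w]_{A_2}$, as required.
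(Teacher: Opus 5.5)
Your proof is correct and is essentially the paper's argument. In both, $\dot P_k f$ (resp.\ $\dot P_k\nabla f$) is written as an annulus-localized Calder\'on--Zygmund multiplier with symbol $\propto \xi/|\xi|^2$ (resp.\ $\xi$) applied to $\dot P_k\nabla f$ (resp.\ $\dot P_k f$), and then Lemma \ref{lem:A2} is invoked. The only difference is where the scaling is absorbed. The paper proves $k=0$ and transfers it to general $k$ by rescaling $f$ and $w$, using $[w(2^{-k}\cdot)]_{A_2}=[w]_{A_2}$. You keep $w$ fixed and check directly that the dilated kernels $2^{3k}\check m(2^k\cdot)$ have $k$-independent standard-kernel constants, which is equally valid.
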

	\begin{proof}
		When $k=0$, it follows from Bernstein's inequality that
		\begin{equation*}
			\|\dot{P}_0 f\|_{L^2(\R^3)}=\|\dot{P}_0 |\nabla|^{-1}R\cdot\nabla f\|_{L^2(\R^3)}\ls  \|\dot{P}_0\nabla f\|_{L^2(\R^3)}.
		\end{equation*}
		Since $\dot{P}_0|\nabla|^{-1}R$ is a Calder\'{o}n-Zygmund operator, then by Lemma \ref{lem:A2}, one has
		
		\[
		\Ltw{\dot{P}_0 f} \leq C_1(\Atwo{w}) \cdot \Ltw{\dot{P}_0 \nabla f}.
		\]
		In addition, due to $\Atwo{w(2^{-k}\cdot)}=\Atwo{w}$, then we arrive at
		\begin{equation}\label{YHCCC-44}
			\begin{split}
				\Ltw{\pk f}  = &2^{-\f32 k}\|\dot{P}_0(f(2^{-k}\cdot))\|_{L^2(w(2^{-k}\cdot))}\\
				\le& 2^{-\f32 k}C_1(\Atwo{w(2^{-k}\cdot)})\|\dot{P}_0\nabla(f(2^{-k}\cdot))\|_{L^2(w(2^{-k}\cdot))}\\
				\le& C (\Atwo{w}) 2^{-k}\Ltw{\pk \nabla f}.
			\end{split}
		\end{equation}
		Analogously, the proof of \eqref{YHCCC-43} can be completed as for \eqref{YHCCC-44}.
	\end{proof}

		\begin{lemma}[{\bf Weighted dyadic \(L^\infty\) estimates}]
		\label{lem:weighted-dyadic-Linfty-sum}
		Let
		\[
		A_\tau(x)=1+\tau+|x|,\qquad \tau\ge0.
		\]
		Then, for every \(k\ge-1\),
		\begin{equation}
			\label{eq:weighted-single-dyadic-Linfty}
			\|A_\tau P_k f\|_{L^\infty_x}
			\ls
			\|A_\tau f\|_{L^\infty_x}.
		\end{equation}
		Moreover, if $k \ge 0$, we have
		\begin{equation}
			\label{eq:weighted-dyadic-Linfty-sum}
			\|A_\tau P_k f\|_{L^\infty_x}
			\ls
			2^{-k}
			\|A_\tau\nabla P_k f\|_{L^\infty_x},
		\end{equation}
		where the implicit constants in \eqref{eq:weighted-single-dyadic-Linfty} and \eqref{eq:weighted-dyadic-Linfty-sum} are independent of \(\tau\) and \(f\).
	\end{lemma}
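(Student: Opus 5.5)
The plan is to treat the weight $A_\tau(x)=1+\tau+|x|$ as essentially constant on the scale of the Littlewood--Paley kernels. Write $P_kf=\check\psi_k*f$ with $\check\psi_k(y)=2^{3k}\check\psi(2^ky)$ for $k\ge0$; for $k=-1$ the kernel is a fixed Schwartz function. The key elementary inequality is the sub-multiplicativity
\[
A_\tau(x)\le A_\tau(x-y)+|y|\le A_\tau(x-y)\,(1+|y|),
\]
which holds because $A_\tau\ge1$. With it,
\[
A_\tau(x)|P_kf(x)|\le \int |\check\psi_k(y)|\,(1+|y|)\,A_\tau(x-y)|f(x-y)|\,dy\le \|A_\tau f\|_{L^\infty}\int|\check\psi_k(y)|(1+|y|)\,dy .
\]
After rescaling, the last integral equals $\int|\check\psi(z)|(1+2^{-k}|z|)\,dz\le \int|\check\psi|(1+|z|)\,dz$, a constant independent of $k\ge-1$, $\tau$ and $f$. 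This proves \eqref{eq:weighted-single-dyadic-Linfty}.

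For \eqref{eq:weighted-dyadic-Linfty-sum} with $k\ge0$, I would use that $\psi_k$ is supported in an annulus $|\xi|\sim2^k$, so $P_k$ can be inverted against a gradient. Concretely, define $\tilde\psi$ as a smooth function equal to $1$ on $\operatorname{supp}\psi_0$ and supported in an annulus away from $0$. Then set $m_j(\xi)=-i\xi_j|\xi|^{-2}\tilde\psi(2^{-k}\xi)$, which gives
\[
P_kf=\sum_j m_j(D)\,\partial_jP_kf .
\]
The kernel of $m_j(D)$ is $2^{-k}\,2^{3k}K_j(2^k\cdot)$ with $K_j$ Schwartz, because the symbol $\xi_j|\xi|^{-2}\tilde\psi(\xi)$ is smooth with compact support away from the origin. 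Applying the same convolution argument to this kernel gives
\[
\|A_\tau P_kf\|_{L^\infty}\ls 2^{-k}\sum_j\|A_\tau\partial_jP_kf\|_{L^\infty}\int|K_j|(1+|z|)\,dz,
\]
which is the claim.

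There is no real obstacle. The only point to check is that the constants are uniform in $k$, which the rescaling $z=2^ky$ handles together with $2^{-k}\le1$ for $k\ge0$. This uniformity is also why the second estimate is restricted to $k\ge0$: for $k=-1$ the symbol $\psi_{-1}$ is not supported away from $\xi=0$, so $|\xi|^{-2}$ cannot be absorbed.
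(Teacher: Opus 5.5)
Your proof is correct and takes essentially the same route as the paper. The paper uses the moderate-weight bound $A_\tau(x)\le(1+|x-y|)A_\tau(y)$ together with the uniform estimate $\int|K_k(z)|(1+|z|)\,dz\lesssim1$ for the rescaled Schwartz kernels. For the second estimate it inverts $P_k$ against $\nabla$ via $P_kf=-2^{-k}\cdot2^kP_{[[k]]}R|\nabla|^{-1}\cdot\nabla P_kf$, which is your multiplier $m_j(D)$ with $\tilde\psi$ playing the role of $\psi_{[[k]]}$.
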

	
	\begin{proof}
		Note that
		\begin{equation}
			\label{eq:A-moderate-weight}
			A_\tau(x)
			=
			1+\tau+|x|
			\le
			(1+|x-y|)(1+\tau+|y|)
			=
			(1+|x-y|)A_\tau(y).
		\end{equation}
		Let \(K_{-1}\) be the kernel of \(P_{-1}\). Then, by
		\eqref{eq:A-moderate-weight},
		\[
		\begin{aligned}
			A_\tau(x)|P_{-1}f(x)|
			&\le
			\int_{\mathbb R^3}
			|K_{-1}(x-y)|A_\tau(x)|f(y)|\,dy
			\\
			&\ls
			\int_{\mathbb R^3}
			|K_{-1}(x-y)|(1+|x-y|)A_\tau(y)|f(y)|\,dy
			\\
			&\ls
			\|A_\tau f\|_{L^\infty_x}.
		\end{aligned}
		\]
		Hence,
		\[
		\|A_\tau P_{-1}f\|_{L^\infty_x}
		\ls
		\|A_\tau f\|_{L^\infty_x}.
		\]
		
		For \(k\ge0\), write the kernel of \(P_k\) as
		$K_k(x)=2^{3k}K(2^kx)$,
		where \(K\in\mathcal S(\mathbb R^3)\). Then the same argument gives
		\[
		\begin{aligned}
			A_\tau(x)|P_kf(x)|
			&\ls
			\int_{\mathbb R^3}
			|K_k(x-y)|(1+|x-y|)A_\tau(y)|f(y)|\,dy
			\\
			&\le
			\|A_\tau f\|_{L^\infty_x}
			\int_{\mathbb R^3}
			|K_k(z)|(1+|z|)\,dz .
		\end{aligned}
		\]
		Since
		\[
		\begin{aligned}
			\int_{\mathbb R^3}|K_k(z)|(1+|z|)\,dz
			&=
			\int_{\mathbb R^3}
			2^{3k}|K(2^kz)|(1+|z|)\,dz
			\\
			&=
			\int_{\mathbb R^3}
			|K(w)|(1+2^{-k}|w|)\,dw
			\ls 1
		\end{aligned}
		\]
		holds uniformly for \(k\ge0\), we obtain
		\[
		\|A_\tau P_kf\|_{L^\infty_x}
		\ls
		\|A_\tau f\|_{L^\infty_x},
		\qquad k\ge0.
		\]
		This proves \eqref{eq:weighted-single-dyadic-Linfty}.
		
		It remains to prove \eqref{eq:weighted-dyadic-Linfty-sum}.
		For \(k\ge 0 \), since \(P_k\) is supported away from the origin, there
		exist smooth Fourier multipliers $2^{k}P_k R|\nabla|^{-1}$, whose
		kernels are of the form
		\[
		L_{k}(x)=2^{3k}L(2^kx),
		\qquad L\in\mathcal S(\mathbb R^3),
		\]
		such that
		\[
		P_k f
		=
		-2^{-k}\cdot2^{k}P_k R|\nabla|^{-1}\cdot \nabla f .
		\]
		Applying the preceding kernel argument to \(2^{k}P_k R|\nabla|^{-1}\), we have that uniformly for \(k\ge 0\),
		\[
		\begin{aligned}
			\|A_\tau P_k f\|_{L^\infty_x}
			&\le
			2^{-k}
			\|A_\tau 2^{k}P_{[[k]]} R|\nabla|^{-1}\cdot P_k\nabla f\|_{L^\infty_x}
			\\
			&\ls
			2^{-k}
			\|A_\tau P_k\nabla f\|_{L^\infty_x}.
		\end{aligned}
		\]	
		\end{proof}

	\section{Technical lemmas}\label{sec:b}
	
	\begin{lemma}[{\bf A trace estimate on spheres}]\label{lem:sphere-trace-H32}
		Let $\kappa>0$. For any $g\in H^{3/2+\kappa}(\mathbb R^3)$, one has
		\begin{equation}\label{eq:trace-sphere-H32}
			\sup_{x\in\mathbb R^3,\ \rho\ge0}
			(1+\rho)
			\big(
			\int_{\mathbb S^2}|g(x+\rho\omega)|^2\,d\omega
			\big)^{1/2}
			\lesssim
			\|g\|_{H^{3/2+\kappa}(\mathbb R^3)} .
		\end{equation}
	\end{lemma}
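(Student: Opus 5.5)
We need a trace estimate: for $g\in H^{3/2+\kappa}(\mathbb R^3)$, uniformly in $x$ and $\rho\ge0$,
$$(1+\rho)\Big(\int_{\mathbb S^2}|g(x+\rho\omega)|^2\,d\omega\Big)^{1/2}\lesssim\|g\|_{H^{3/2+\kappa}}.$$
By translation invariance we take $x=0$. The quantity is then $(1+\rho)\rho^{-1}\|g\|_{L^2(S_\rho)}$, where $S_\rho=\{|y|=\rho\}$ carries surface measure. We split into two regimes, $\rho\le1$ and $\rho\ge1$.

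\textbf{The regime $\rho\le 1$.} Here $1+\rho\sim1$. Since $3/2+\kappa>3/2$, Sobolev embedding gives $H^{3/2+\kappa}(\mathbb R^3)\hookrightarrow L^\infty$. Hence
$$\Big(\int_{\mathbb S^2}|g(\rho\omega)|^2d\omega\Big)^{1/2}\le |\mathbb S^2|^{1/2}\|g\|_{L^\infty}\lesssim\|g\|_{H^{3/2+\kappa}}.$$

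\textbf{The regime $\rho\ge1$.} Here we must prove $\|g\|_{L^2(S_\rho)}\lesssim\|g\|_{H^{1/2+\kappa}}$ uniformly in $\rho\ge1$. This is stronger than needed, since $H^{3/2+\kappa}\subset H^{1/2+\kappa}$.

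We use a dyadic Littlewood--Paley decomposition $g=\sum_{k\ge-1}P_kg$. For each piece we apply the standard uniform trace bound for frequency-localized functions on spheres of radius $\ge1$:
$$\|P_kg\|_{L^2(S_\rho)}\lesssim 2^{k/2}\|P_kg\|_{L^2(\mathbb R^3)}.$$
This bound comes from the Fourier restriction duality $\|\widehat{d\sigma_\rho}*\cdot\|$ (Agmon--Hörmander type), namely
$$\int_{S_\rho}|f|^2\,dS\lesssim \lambda\|f\|_{L^2}^2\quad\text{for }\operatorname{supp}\hat f\subset\{|\xi|\le\lambda\},\ \lambda\ge1,\ \rho\ge1/\lambda.$$
Alternatively, one can prove it elementarily in polar coordinates. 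On the shell $\rho-2^{-k}\le|y|\le\rho+2^{-k}$, the one-dimensional fundamental theorem of calculus in the radial variable gives
$$\rho^2|f(\rho\omega)|^2\lesssim 2^k\int_{|r-\rho|\le2^{-k}}r^2|f(r\omega)|^2dr+2^{-k}\int r^2|\partial_rf|^2dr,$$
using $r\sim\rho$ when $\rho\ge1\ge 2^{-k+1}$; the case $k=-1$ is handled with a unit shell. We then integrate in $\omega$ and use Bernstein, $\|\nabla P_kg\|_{L^2}\lesssim2^k\|P_kg\|_{L^2}$.

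Summing over $k$ and applying Cauchy--Schwarz with the gain $2^{-\kappa k}$ gives
$$\|g\|_{L^2(S_\rho)}\le\sum_k 2^{k/2}\|P_kg\|_{L^2}\lesssim\|g\|_{H^{1/2+\kappa}}.$$
Then
$$(1+\rho)\rho^{-1}\|g\|_{L^2(S_\rho)}\le 2\|g\|_{L^2(S_\rho)}\lesssim\|g\|_{H^{3/2+\kappa}}.$$

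\textbf{Main obstacle.} The delicate point is keeping the constant in the radial trace step uniform in $\rho$. When $\rho\ge1$ the metric factor $r^2/\rho^2$ is comparable to $1$ on the shell, so this works. When $\rho<1$ the argument would degenerate near the origin, which is exactly why that regime is handled separately by the $L^\infty$ embedding.
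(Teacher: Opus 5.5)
Your proof is correct. The regime $\rho\le1$ is handled exactly as in the paper, via $H^{3/2+\kappa}\hookrightarrow L^\infty$, but for $\rho\ge1$ you take a different route.

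\textbf{The paper's route for $\rho\ge1$.} It uses no frequency decomposition. For $h\in C_0^\infty$ it sets $F(r)=r^2\int_{\mathbb S^2}|h(r\omega)|^2\,d\omega$ and writes $F(\rho)=-\int_\rho^\infty F'(s)\,ds$. Using $s\le s^2$ for $s\ge1$ and Cauchy--Schwarz, it bounds $\int_\rho^\infty|F'|\lesssim\|h\|_{L^2}^2+\|h\|_{L^2}\|\nabla h\|_{L^2}$. This is an $H^1$ trace bound, extended to $H^{3/2+\kappa}$ by density.

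\textbf{Your route.} You localize in frequency and use radial shells of width $2^{-k}$. This gives $\|P_kg\|_{L^2(S_\rho)}\lesssim 2^{k/2}\|P_kg\|_{L^2}$ and hence an $H^{1/2+\kappa}$ bound, which is sharper in regularity. The price is extra machinery, and two small points you should make explicit.
\begin{enumerate}
\item The trace of $g$ on $S_\rho$ must be identified with $\sum_k P_kg|_{S_\rho}$. This is fine: for $g\in H^{3/2+\kappa}$ one has $\sum_k\|P_kg\|_{L^\infty}\lesssim\sum_k 2^{3k/2}\|P_kg\|_{L^2}<\infty$, so the series converges uniformly. This step plays the role of the paper's density argument.
\item Your requirement $2^{-k+1}\le1$ excludes $k=0$ as well as $k=-1$, so both low-frequency pieces need a fixed shell of width, say, $1/2$.
\end{enumerate}

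\textbf{Comparison.} The shell localization is not really needed. The paper's half-line argument applied directly to $P_kg$ already gives $\|P_kg\|_{L^2(S_\rho)}^2\lesssim\|P_kg\|_{L^2}^2+\|P_kg\|_{L^2}\|\nabla P_kg\|_{L^2}\lesssim 2^k\|P_kg\|_{L^2}^2$. Moreover, the regime $\rho\le1$ already forces the $H^{3/2+\kappa}$ norm, so the sharper $H^{1/2+\kappa}$ bound buys nothing for the lemma as stated. The paper's $H^1$ argument is therefore the more economical choice, while yours shows that the large-$\rho$ trace needs only slightly more than half a derivative.
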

	
	\begin{proof}
		By translation invariance, it suffices to prove that for each
		$h\in H^{3/2+\kappa}(\mathbb R^3)$,
		\begin{equation}\label{eq:trace-sphere-origin-H32}
			\sup_{\rho\ge0}
			(1+\rho)
			\big(
			\int_{\mathbb S^2}|h(\rho\omega)|^2\,d\omega
			\big)^{1/2}
			\lesssim
			\|h\|_{H^{3/2+\kappa}(\mathbb R^3)} .
		\end{equation}

		We first prove \eqref{eq:trace-sphere-origin-H32} for
		$h\in C_0^\infty(\mathbb R^3)$.
		
		For $0\le\rho\le1$, by the Sobolev embedding
		$H^{3/2+\kappa}(\mathbb R^3)\hookrightarrow L^\infty(\mathbb R^3)$,
		one has
		\begin{equation*}
			\begin{aligned}
				(1+\rho)
				\big(
				\int_{\mathbb S^2}|h(\rho\omega)|^2\,d\omega
				\big)^{1/2}
				\lesssim
				\|h\|_{L^\infty(\mathbb R^3)}
				\lesssim
				\|h\|_{H^{3/2+\kappa}(\mathbb R^3)} .
			\end{aligned}
		\end{equation*}
		
		It remains to consider the case of $\rho\ge1$. Define
		\begin{equation*}
			F(r)
			=
			\int_{|y|=r}|h(y)|^2\,dS_y
			=
			r^2\int_{\mathbb S^2}|h(r\omega)|^2\,d\omega,
			\qquad r>0.
		\end{equation*}
		Due to $h\in C_0^\infty(\mathbb R^3)$, one has
		\begin{equation*}
			F(r)\to0
			\qquad \text{as } r\to\infty .
		\end{equation*}
		Therefore, for $\rho\ge1$,
		\begin{equation}\label{eq:F-rho-integral-H32}
			F(\rho)
			=-\int_\rho^\infty F'(s)\,ds.
		\end{equation}
		Note that
		\begin{equation}\label{eq:F-derivative-H32}
			\begin{aligned}
				F'(s)
				&=
				\frac{d}{ds}
				\left(
				s^2\int_{\mathbb S^2}|h(s\omega)|^2\,d\omega
				\right)
				\\
				&=
				2s\int_{\mathbb S^2}|h(s\omega)|^2\,d\omega
				+
				2s^2
				\int_{\mathbb S^2}
				h(s\omega)\partial_r h(s\omega)\,d\omega,
			\end{aligned}
		\end{equation}
		where $\partial_r h(s\omega)=\omega\cdot\nabla h(s\omega)$.

		It follows from  $\rho\ge1$ and  $s\ge\rho$ that $s\ge1$ and $s\le s^2$.
		Thus,
		\begin{equation}\label{eq:Fprime-first-term-H32}
			\begin{aligned}
				\int_\rho^\infty&
				s\int_{\mathbb S^2}|h(s\omega)|^2\,d\omega\,ds
				\le
				\int_\rho^\infty
				s^2\int_{\mathbb S^2}|h(s\omega)|^2\,d\omega\,ds\\
				&\le
				\int_0^\infty
				s^2\int_{\mathbb S^2}|h(s\omega)|^2\,d\omega\,ds=\|h\|_{L^2(\mathbb R^3)}^2.
			\end{aligned}
		\end{equation}
		In addition,
		\begin{equation}\label{eq:Fprime-second-term-H32}
			\begin{aligned}
				&\int_\rho^\infty
				s^2
				\big(
				\int_{\mathbb S^2}|h(s\omega)|^2\,d\omega
				\big)^{1/2}
				\big(
				\int_{\mathbb S^2}|\partial_r h(s\omega)|^2\,d\omega
				\big)^{1/2}
				ds
				\\
				&\qquad\le
				\big(
				\int_\rho^\infty
				s^2\int_{\mathbb S^2}|h(s\omega)|^2\,d\omega\,ds
				\big)^{1/2}
				\big(
				\int_\rho^\infty
				s^2\int_{\mathbb S^2}|\partial_r h(s\omega)|^2\,d\omega\,ds
				\big)^{1/2}
				\\
				&\qquad\le
				\|h\|_{L^2(\mathbb R^3)}
				\|\nabla h\|_{L^2(\mathbb R^3)} .
			\end{aligned}
		\end{equation}
		Combining \eqref{eq:F-rho-integral-H32}-\eqref{eq:Fprime-second-term-H32} yields
		\begin{equation}\label{eq:F-rho-bound-H32}
			\begin{aligned}
				F(\rho)
				&\le
				\int_\rho^\infty |F'(s)|\,ds
				\\
				&\lesssim
				\|h\|_{L^2(\mathbb R^3)}^2
				+
				\|h\|_{L^2(\mathbb R^3)}
				\|\nabla h\|_{L^2(\mathbb R^3)}
				\\
				&\lesssim
				\|h\|_{H^1(\mathbb R^3)}^2
				\\
				&\lesssim
				\|h\|_{H^{3/2+\kappa}(\mathbb R^3)}^2.
			\end{aligned}
		\end{equation}
		Because of $F(\rho)=\rho^2\int_{\mathbb S^2}|h(\rho\omega)|^2\,d\omega$,
		we conclude that for $\rho\ge1$,
		\begin{equation*}
			\begin{aligned}
				(1+\rho)
				\big(
				\int_{\mathbb S^2}|h(\rho\omega)|^2\,d\omega
				\big)^{1/2}
				&=\frac{1+\rho}{\rho}F(\rho)^{1/2}\lesssim
				\|h\|_{H^{3/2+\kappa}(\mathbb R^3)}.
			\end{aligned}
		\end{equation*}
		Together with the estimate for $0\le\rho\le1$, this proves
		\eqref{eq:trace-sphere-origin-H32} for $h\in C_0^\infty(\mathbb R^3)$.
		
		We now show \eqref{eq:trace-sphere-origin-H32} for $h\in H^{3/2+\kappa}(\mathbb R^3)$.
		Let $\{h_n\}_{n=1}^\infty\subset C_0^\infty(\mathbb R^3)$ satisfy
		\begin{equation*}
			h_n\to h
			\qquad\text{in }H^{3/2+\kappa}(\mathbb R^3).
		\end{equation*}
		This implies
		\begin{equation*}
			h_n\to h
			\qquad\text{in }L^\infty(\mathbb R^3).
		\end{equation*}
		Hence, for each fixed $\rho\ge0$, it holds that
		\begin{equation*}
			\begin{aligned}
				\big(
				\int_{\mathbb S^2}|h_n(\rho\omega)-h(\rho\omega)|^2\,d\omega
				\big)^{1/2}
				&\le
				|\mathbb S^2|^{1/2}
				\|h_n-h\|_{L^\infty(\mathbb R^3)}
				\to0,
			\end{aligned}
		\end{equation*}
		which means
		\begin{equation*}
			\big(
			\int_{\mathbb S^2}|h_n(\rho\omega)|^2\,d\omega
			\big)^{1/2}
			\to
			\big(
			\int_{\mathbb S^2}|h(\rho\omega)|^2\,d\omega
			\big)^{1/2}.
		\end{equation*}

		Due to
		\begin{equation*}
			(1+\rho)
			\big(
			\int_{\mathbb S^2}|h_n(\rho\omega)|^2\,d\omega
			\big)^{1/2}
			\lesssim
			\|h_n\|_{H^{3/2+\kappa}(\mathbb R^3)},
		\end{equation*}
		we have that as $n\to\infty$,
		\begin{equation*}
			(1+\rho)
			\big(
			\int_{\mathbb S^2}|h(\rho\omega)|^2\,d\omega
			\big)^{1/2}
			\lesssim
			\|h\|_{H^{3/2+\kappa}(\mathbb R^3)},
		\end{equation*}
		which yields \eqref{eq:trace-sphere-origin-H32}.
	\end{proof}

	\begin{lemma}[{\bf Weighted Hardy inequality}]
		\label{lem:weighted-Hardy-initial-displacement}
		Let \(\mu>-\frac32\). Suppose $f\in L^2(\mathbb R^3)\cap H^1_{\mathrm{loc}}(\mathbb R^3)$ and
		$\langle x\rangle^{1+\mu}\nabla f\in L^2(\mathbb R^3)$.
		Then
		\begin{equation}
			\label{eq:weighted-Hardy-for-initial-displacement}
			\left\|
			\langle x\rangle^\mu f
			\right\|_{L^2_x}
			\le
			\frac{2}{\min\{3,3+2\mu\}}
			\left\|
			\langle x\rangle^{1+\mu}\nabla f
			\right\|_{L^2_x}.
		\end{equation}
	\end{lemma}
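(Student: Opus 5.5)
The plan is to reduce to a one-dimensional weighted Hardy inequality along rays, using the radial integration-by-parts identity. First I assume $f\in C_0^\infty(\mathbb R^3)$ and treat the general case by density at the end. The starting point is the divergence identity
\[
\Div\big(\langle x\rangle^{2\mu}x\,|f|^2\big)=\big(3\langle x\rangle^{2\mu}+2\mu|x|^2\langle x\rangle^{2\mu-2}\big)|f|^2+2\langle x\rangle^{2\mu}f\,x\cdot\nabla f .
\]
Integrating over $\mathbb R^3$, the left side vanishes. Rearranging then gives
\[
\int \big(3\langle x\rangle^{2\mu}+2\mu|x|^2\langle x\rangle^{2\mu-2}\big)|f|^2\,dx=-2\int\langle x\rangle^{2\mu}f\,x\cdot\nabla f\,dx .
\]

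Next I bound the coefficient on the left from below by $c_\mu\langle x\rangle^{2\mu}$ with $c_\mu=\min\{3,3+2\mu\}$. When $\mu\ge0$ the second term is nonnegative, so $c_\mu=3$. When $-\frac32<\mu<0$ I use $|x|^2\langle x\rangle^{-2}\le1$, which gives the lower bound $(3+2\mu)\langle x\rangle^{2\mu}$. For the right side, Cauchy--Schwarz together with $|x|\le\langle x\rangle$ gives at most $2\|\langle x\rangle^\mu f\|_{L^2}\|\langle x\rangle^{1+\mu}\nabla f\|_{L^2}$. Dividing by $\|\langle x\rangle^\mu f\|_{L^2}$ yields the stated constant $2/\min\{3,3+2\mu\}$ for smooth compactly supported $f$.

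The main obstacle is the density step, because the hypotheses give only $f\in L^2\cap H^1_{\rm loc}$ with $\langle x\rangle^{1+\mu}\nabla f\in L^2$. A priori $\langle x\rangle^\mu f$ need not be in $L^2$ when $\mu>0$, so I cannot divide by its norm directly. I would handle this with cutoffs $f_R=\chi(x/R)f$, where $\chi$ is smooth, equal to $1$ on $B_1$, and supported in $B_2$, after first mollifying to make $f_R$ smooth. Applying the smooth estimate to $f_R$ gives
\[
\|\langle x\rangle^\mu f_R\|_{L^2}\le C\big(\|\langle x\rangle^{1+\mu}\chi_R\nabla f\|_{L^2}+R^{-1}\|\langle x\rangle^{1+\mu}\nabla\chi(\cdot/R)f\|_{L^2}\big).
\]
On the support of $\nabla\chi(\cdot/R)$ we have $|x|\sim R$, so the error term is comparable to $\|\langle x\rangle^\mu f\|_{L^2(R\le|x|\le2R)}$. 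This term is not a priori finite, so I would avoid the cutoff in the ratio form. Instead I run the identity directly with the weight $\langle x\rangle^{2\mu}\chi_R^2$, which is compactly supported and so legitimate for $f\in H^1_{\rm loc}$. This produces an extra term $\int\langle x\rangle^{2\mu}\,x\cdot\nabla(\chi_R^2)\,|f|^2$. Since $x\cdot\nabla\chi\le0$ for radially decreasing $\chi$, that term has a favourable sign and can be dropped. This gives the inequality for $\chi_Rf$ with the right side $\|\langle x\rangle^{1+\mu}\nabla f\|_{L^2}$, and monotone convergence as $R\to\infty$ then concludes. The mollification is standard because the weights are locally bounded.
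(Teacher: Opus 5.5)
There is a genuine gap in your truncation step: the cutoff term has the wrong sign and cannot be dropped. Your smooth-case computation is correct, and it is the same vector-field identity the paper uses: the lower bound $\operatorname{div}(x\langle x\rangle^{2\mu})\ge\min\{3,3+2\mu\}\langle x\rangle^{2\mu}$ followed by Cauchy--Schwarz. Write $D(x)=3\langle x\rangle^{2\mu}+2\mu|x|^2\langle x\rangle^{2\mu-2}$. Integrating by parts against the compactly supported field $\chi_R^2\langle x\rangle^{2\mu}x$ gives
\[
\int\chi_R^2D|f|^2\,dx=-2\int\chi_R^2\langle x\rangle^{2\mu}f\,x\cdot\nabla f\,dx-\int\langle x\rangle^{2\mu}\,x\cdot\nabla(\chi_R^2)\,|f|^2\,dx .
\]
For radially decreasing $\chi$ the last term is $\ge0$. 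It is therefore an extra positive contribution to the upper bound, not something you may discard. To see this concretely, take $f$ constant on $\operatorname{supp}\chi_R$. Then the first term on the right vanishes and the last term equals the whole left-hand side, so dropping it would give $\int\chi_R^2D\le0$. No compactly supported cutoff avoids this, because a Hardy inequality on a bounded region with no boundary condition is false.

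So the cutoff term must be estimated, and the available bound is $C\int_{R\le|x|\le2R}\langle x\rangle^{2\mu}|f|^2\,dx$. For $-\frac32<\mu\le0$ this tends to $0$ because $f\in L^2$, so estimating it instead of dropping it closes your argument in that range. For $\mu>0$ it is exactly the quantity you yourself noted is not known to be finite, so the argument is circular there.

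The missing idea, which is what the paper does, is to regularize the weight rather than only the support. Use $W_\varepsilon=\langle x\rangle^{2\mu}(1+\varepsilon\langle x\rangle)^{-2\mu_+}$ with $\mu_+=\max\{\mu,0\}$. This weight has three useful properties:
\begin{itemize}
\item it is bounded for each fixed $\varepsilon$;
\item it still satisfies $\operatorname{div}(xW_\varepsilon)\ge\min\{3,3+2\mu\}W_\varepsilon$; for $\mu\ge0$ the coefficient is $3+2\mu|x|^2\langle x\rangle^{-2}(1+\varepsilon\langle x\rangle)^{-1}\ge3$;
\item it satisfies $|x|^2W_\varepsilon\le\langle x\rangle^{2+2\mu}$.
\end{itemize}
With $W_\varepsilon$, the cutoff error is $\lesssim_\varepsilon\int_{R\le|x|\le2R}|f|^2\,dx\to0$. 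Also $\|W_\varepsilon^{1/2}f\|_{L^2}<\infty$, so dividing is legitimate. Fatou's lemma as $\varepsilon\to0^+$ then gives the stated constant. Mollification also becomes unnecessary, since $|f|^2\in W^{1,1}_{\rm loc}$ can be integrated by parts directly against the $C^1_0$ field $\chi_R^2xW_\varepsilon$.
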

	
	\begin{proof}
		Set
		$
		\mu_+=\max\{\mu,0\}.
		$
		For \(\varepsilon>0\), define
		\begin{equation*}
			W_\varepsilon(x)
			=
			\langle x\rangle^{2\mu}
			\left(
			1+\varepsilon\langle x\rangle
			\right)^{-2\mu_+}
		\quad\text{and} \quad
		X_\varepsilon(x)
			=xW_\varepsilon(x).
		\end{equation*}
		Then \(W_\varepsilon\in L^\infty(\mathbb R^3)\) for each fixed
		\(\varepsilon>0\), meanwhile,
		\begin{equation}
			\label{eq:weighted-Hardy-regularized-divergence}
			\begin{aligned}
				\operatorname{div}X_\varepsilon
				&=\big(3+2\mu\frac{|x|^2}{\langle x\rangle^2}
				-2\mu_+\frac{\varepsilon |x|^2}
				{\langle x\rangle(1+\varepsilon\langle x\rangle)}
				\big)W_\varepsilon.
			\end{aligned}
		\end{equation}
		We claim that
		\begin{equation}
			\label{eq:weighted-Hardy-divergence-lower}
			\operatorname{div}X_\varepsilon
			\ge
			c_\mu W_\varepsilon,
			\qquad
			c_\mu=\min\{3,3+2\mu\}>0.
		\end{equation}
		Indeed, if \(\mu\ge0\), then \(\mu_+=\mu\), and the coefficient in
		\eqref{eq:weighted-Hardy-regularized-divergence} fulfills
		\[
		3
		+
		2\mu
		\frac{|x|^2}
		{\langle x\rangle^2(1+\varepsilon\langle x\rangle)}
		\ge 3.
		\]
		If \(-\frac32<\mu<0\), then \(\mu_+=0\), and
		\[
		3
		+
		2\mu
		\frac{|x|^2}{\langle x\rangle^2}
		\ge
		3+2\mu.
		\]
		Then \eqref{eq:weighted-Hardy-divergence-lower} is shown.
		
		Let \(\chi\in C_0^\infty(\mathbb R^3)\) satisfy
		\begin{equation*}
			0\le\chi\le1,
			\qquad
			\chi=1
			\quad\text{on }\{|x|\le1\},
			\qquad
			\chi=0
			\quad\text{on }\{|x|\ge2\}.
		\end{equation*}
		Set $\chi_R(x)=\chi\left(\frac{x}{R}\right)$.
		Due to \(f\in H^1_{\mathrm{loc}}(\mathbb R^3)\), one has
		$|f|^2\in W^{1,1}_{\mathrm{loc}}(\mathbb R^3)$ and
		$\nabla |f|^2
		=2f\,\nabla f$.
		In addition,
		\[
		\chi_R^2X_\varepsilon
		\in C_0^1(\mathbb R^3;\mathbb R^3).
		\]
		Due to
		\begin{equation}
			\label{eq:weighted-Hardy-weak-integration-by-parts}
			\begin{aligned}
				\int_{\mathbb R^3}
				\operatorname{div}
				\left(
				\chi_R^2X_\varepsilon
				\right)
				|f|^2\,dx
				=-2\int_{\mathbb R^3}
				\chi_R^2X_\varepsilon
				\cdot\nabla f\,
				f\,dx
			\end{aligned}
		\end{equation}
		and \eqref{eq:weighted-Hardy-divergence-lower}, we have
		\begin{equation}
			\label{eq:weighted-Hardy-cutoff-identity}
			\begin{aligned}
				c_\mu
				\int_{\mathbb R^3}
				W_\varepsilon\chi_R^2|f|^2\,dx
				&\le
				-2\int_{\mathbb R^3}
				\chi_R^2X_\varepsilon
				\cdot\nabla f\,
				 f\,dx
				-2\int_{\mathbb R^3}
				\chi_RX_\varepsilon
				\cdot\nabla\chi_R
				|f|^2\,dx.
			\end{aligned}
		\end{equation}
		
		Note that \(W_\varepsilon\in L^\infty(\mathbb R^3)\) for fixed
		\(\varepsilon>0\) and
		$|\nabla\chi_R|\lesssim	R^{-1}\boldsymbol 1_{\{R\le |x|\le2R\}}$.
		Then one has that as \(R\to\infty\),
		\begin{equation}
			\label{eq:weighted-Hardy-cutoff-error-vanishes}
			\begin{aligned}
				\big|
				\int_{\mathbb R^3}
				\chi_RX_\varepsilon
				\cdot\nabla\chi_R
				|f|^2\,dx
				\big|
				&\lesssim
				\int_{\{R\le |x|\le2R\}}
				|f|^2\,dx
				\longrightarrow0.
			\end{aligned}
		\end{equation}
		On the other hand, by
		$|x|^2W_\varepsilon(x)\le
		\langle x\rangle^{2+2\mu}$, it holds that
		\begin{equation}
			\label{eq:weighted-Hardy-main-term}
			\begin{aligned}
				\big|
				\int_{\mathbb R^3}
				\chi_R^2X_\varepsilon
				\cdot\nabla f\,
				f\,dx
				\big|
				&\le
				\big(
				\int_{\mathbb R^3}
				W_\varepsilon\chi_R^4|f|^2\,dx
				\big)^{1/2}
				\big(
				\int_{\mathbb R^3}
				|x|^2W_\varepsilon
				|\nabla f|^2\,dx
				\big)^{1/2}\\
				&\le
				\big(
				\int_{\mathbb R^3}
				W_\varepsilon\chi_R^2|f|^2\,dx
				\big)^{1/2}
				\big\|
				\langle x\rangle^{1+\mu}
				\nabla f
				\big\|_{L^2_x}.
			\end{aligned}
		\end{equation}
		
		Letting \(R\to\infty\) in
		\eqref{eq:weighted-Hardy-cutoff-identity}, and using
		\eqref{eq:weighted-Hardy-cutoff-error-vanishes} and
		\eqref{eq:weighted-Hardy-main-term}, we get
		\[
		c_\mu
		\big\|
		W_\varepsilon^{1/2}f
		\big\|_{L^2_x}^2
		\le
		2
		\big\|
		W_\varepsilon^{1/2}f
		\big\|_{L^2_x}
		\big\|
		\langle x\rangle^{1+\mu}
		\nabla f
		\big\|_{L^2_x}.
		\]
		Hence,
		\begin{equation}
			\label{eq:weighted-Hardy-epsilon-estimate}
			\big\|
			W_\varepsilon^{1/2}f
			\big\|_{L^2_x}
			\le
			\frac{2}{c_\mu}
			\big\|
			\langle x\rangle^{1+\mu}
			\nabla f
			\big\|_{L^2_x}.
		\end{equation}
		It follows from
		$W_\varepsilon(x)\to\langle x\rangle^{2\mu}$
		as 	$\varepsilon\to0^+$,
		Fatou's lemma and \eqref{eq:weighted-Hardy-epsilon-estimate} that
		\[
		\left\|
		\langle x\rangle^\mu f
		\right\|_{L^2_x}
		\le
		\frac{2}{c_\mu}
		\left\|
		\langle x\rangle^{1+\mu}
		\nabla f
		\right\|_{L^2_x},
		\]
		which yields \eqref{eq:weighted-Hardy-for-initial-displacement}.
	\end{proof}

	\begin{lemma}
		\label{lem:weighted-data-in-Sh}
		Let $\mu>-\f32$. Suppose that
		$f\in L^2(\R^3)\cap H^1_{\mathrm{loc}}(\R^3)$
		and $\langle x\rangle^{1+\mu}\nabla f\in L^2(\R^3)$.
		Then $\langle x\rangle^{1+\mu}f\in
			L^6(\R^3)\subset\mathcal S'_h(\R^3)$
		and
		\begin{equation}
			\label{eq:weighted-data-L6-estimate}
			\left\|
			\langle x\rangle^{1+\mu}f
			\right\|_{L^6_x}
			+
			\left\|
			\langle x\rangle^{1+\mu}f
			\right\|_{\dot{H}^1_x}
			\ls
			\left\|
			\langle x\rangle^{1+\mu}\nabla f
			\right\|_{L^2_x}.
		\end{equation}
	\end{lemma}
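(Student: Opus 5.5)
The plan is to reduce the statement to a weighted Hardy–Sobolev argument. Set $g=\langle x\rangle^{1+\mu}f$. Lemma~\ref{lem:weighted-Hardy-initial-displacement} bounds $f$ itself with a weight one power lower than the gradient weight. Since $\mu>-\frac32$, it gives
\[
\|\langle x\rangle^{\mu}f\|_{L^2_x}\ls\|\langle x\rangle^{1+\mu}\nabla f\|_{L^2_x}.
\]

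Next, compute
\[
\nabla g=\langle x\rangle^{1+\mu}\nabla f+(1+\mu)\langle x\rangle^{\mu-1}x\,f ,
\]
and use $|x|\langle x\rangle^{\mu-1}\le\langle x\rangle^{\mu}$. Then $\nabla g\in L^2$, with
\[
\|\nabla g\|_{L^2}\ls\|\langle x\rangle^{1+\mu}\nabla f\|_{L^2}+\|\langle x\rangle^{\mu}f\|_{L^2}\ls\|\langle x\rangle^{1+\mu}\nabla f\|_{L^2}.
\]
This is the $\dot H^1$ part of \eqref{eq:weighted-data-L6-estimate}, interpreted as the $L^2$ norm of the distributional gradient.

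The main obstacle is passing from $\nabla g\in L^2$ to $g\in L^6$. The Sobolev inequality $\|g\|_{L^6}\ls\|\nabla g\|_{L^2}$ holds only for functions that vanish at infinity in a suitable sense, because constants have zero gradient. I would handle this by truncation. Take $\chi_R$ as in the proof of Lemma~\ref{lem:weighted-Hardy-initial-displacement} and consider $g\chi_R$, which is compactly supported and lies in $H^1$ since $f\in H^1_{\rm loc}$. Sobolev applied to $g\chi_R$ gives
\[
\|g\chi_R\|_{L^6}\ls\|\chi_R\nabla g\|_{L^2}+\|g\nabla\chi_R\|_{L^2}.
\]
The last term is bounded by
\[
R^{-1}\|\langle x\rangle^{1+\mu}f\|_{L^2(R\le|x|\le2R)}\ls\|\langle x\rangle^{\mu}f\|_{L^2(|x|\ge R)},
\]
because $\langle x\rangle\ls R$ on that annulus. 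This tail is finite by the first step, so it is uniformly bounded, and in fact tends to zero as $R\to\infty$. Letting $R\to\infty$ and applying Fatou's lemma gives $\|g\|_{L^6}\ls\|\langle x\rangle^{1+\mu}\nabla f\|_{L^2}$.

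Finally, $g\in L^6$ implies $g\in\mathcal S'_h$. Applying the lemma of Section~\ref{subsec:Notations-for-dyadic-decomposition} with $p=6$, it suffices that $P_{-1}g\in L^6$, which holds by Young's inequality because the kernel of $P_{-1}$ lies in $L^1$. Alternatively, one can check directly that $\|\theta(\lambda D)g\|_{L^\infty}\ls\lambda^{-1/2}\|g\|_{L^6}\to0$ as $\lambda\to\infty$, using Hölder's inequality against the $L^{6/5}$ norm of the rescaled kernel.
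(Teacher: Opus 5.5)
Your proposal is correct and follows the paper's proof essentially step by step: the weighted Hardy inequality, the gradient bound for $\langle x\rangle^{1+\mu}f$, and truncation with $\chi_R$ followed by Sobolev and Fatou. The one addition is your explicit justification of $L^6\subset\mathcal S'_h$ (via Young's inequality for $P_{-1}$, or the $\lambda^{-1/2}$ decay of $\theta(\lambda D)$), which the paper simply asserts.
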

	
	\begin{proof}
		Set $G=\langle x\rangle^{1+\mu}f$.
		By Lemma~\ref{lem:weighted-Hardy-initial-displacement},
		\begin{equation}
			\label{eq:weighted-data-Hardy}
			\left\|
			\langle x\rangle^\mu f
			\right\|_{L^2_x}
			\ls
			\left\|
			\langle x\rangle^{1+\mu}\nabla f
			\right\|_{L^2_x}.
		\end{equation}
		Due to $\nabla G=\langle x\rangle^{1+\mu}\nabla f
			+(1+\mu)x\langle x\rangle^{\mu-1}f$,
		one has
		\begin{equation}
			\label{eq:weighted-data-gradient}
			\begin{aligned}
				\|\nabla G\|_{L^2_x}
				&\ls
				\left\|
				\langle x\rangle^{1+\mu}\nabla f
				\right\|_{L^2_x}
				+
				\left\|
				\langle x\rangle^\mu f
				\right\|_{L^2_x}
				\ls
				\left\|
				\langle x\rangle^{1+\mu}\nabla f
				\right\|_{L^2_x}.
			\end{aligned}
		\end{equation}
		Moreover,
		\begin{equation}
			\label{eq:weighted-data-over-x}
			\big\|
			\frac{G}{\langle x\rangle}
			\big\|_{L^2_x}
			=
			\left\|
			\langle x\rangle^\mu f
			\right\|_{L^2_x}
			\ls
			\left\|
			\langle x\rangle^{1+\mu}\nabla f
			\right\|_{L^2_x}.
		\end{equation}
		
		Let $\chi\in C_0^\infty(\R^3)$ satisfy
		\begin{equation*}
			0\le\chi\le1,
			\qquad
			\chi(x)=1
			\quad\text{for }|x|\le1,
			\qquad
			\chi(x)=0
			\quad\text{for }|x|\ge2,
		\end{equation*}
		and set $\chi_R(x)
			=\chi\left(\frac{x}{R}\right)$ for
			$R\ge1$.
		
		By the Sobolev inequality,
		\begin{equation*}
			\label{eq:weighted-data-truncated-Sobolev}
			\begin{aligned}
				\|\chi_R G\|_{L^6_x}
				&\ls
				\|\nabla(\chi_R G)\|_{L^2_x}
				\ls
				\|\nabla G\|_{L^2_x}
				+
				\|G\nabla\chi_R\|_{L^2_x}.
			\end{aligned}
		\end{equation*}
		By $|\nabla\chi_R(x)|\ls R^{-1}
			\boldsymbol 1_{\{R\le|x|\le2R\}}$,
		it follows that
		\begin{equation*}
			\label{eq:weighted-data-cutoff-error}
			\begin{aligned}
				\|G\nabla\chi_R\|_{L^2_x}
				&\ls
				\big\|
				\boldsymbol 1_{\{R\le|x|\le2R\}}
				\frac{G}{\langle x\rangle}
				\big\|_{L^2_x}
				\longrightarrow0
				\qquad
				\text{as }R\to\infty,
			\end{aligned}
		\end{equation*}
		where \eqref{eq:weighted-data-over-x} is used.
		Therefore, by Fatou's lemma, it holds that
		\begin{equation}
			\label{eq:weighted-data-L6}
			\begin{aligned}
				\|G\|_{L^6_x}
				&\le
				\liminf_{R\to\infty}
				\|\chi_R G\|_{L^6_x}
				\ls
				\|\nabla G\|_{L^2_x}
				\ls
				\left\|
				\langle x\rangle^{1+\mu}\nabla f
				\right\|_{L^2_x}.
			\end{aligned}
		\end{equation}
		Combining \eqref{eq:weighted-data-gradient} and \eqref{eq:weighted-data-L6} yields \eqref{eq:weighted-data-L6-estimate}.
		
	\end{proof}

	\vskip 0.2 true cm
	{\bf \color{blue}{Conflict of interest}}
	\vskip 0.2 true cm
	
	{\bf On behalf of all authors, the corresponding author states that there is no conflict of interest.}
	
	\vskip 0.2 true cm

	{\bf \color{blue}{Data availability}}
	
	\vskip 0.2 true cm
	
	{\bf Data sharing is not applicable to this article as no new data were created.}

	\end{document}